\title[On the eigenvalues of the Robin Laplacian]{On the eigenvalues of the Robin Laplacian with a complex parameter}
\author{Sabine B\"ogli}
\author{James B.~Kennedy}
\author{Robin Lang}
\address{Sabine B\"ogli, Department of Mathematical Sciences, Durham University, Lower Mountjoy, Stockton Road, Durham DH1 3LE, United Kingdom}
\email{sabine.boegli@durham.ac.uk}
\address{James B.~Kennedy, Grupo de F{\'i}sica Matem\'atica, Faculdade de Ci\^encias, Universidade de Lisboa, Campo Grande, Edif{\'i}cio C6, P-1749-016 Lisboa, Portugal}
\email{jbkennedy@fc.ul.pt}
\address{Robin Lang, Institut f\"ur Analysis, Dynamik und Modellierung, Universit\"at Stuttgart, Pfaffenwaldring 57, D-70569 Stuttgart, Germany}
\email{robin.lang@mathematik.uni-stuttgart.de}
\newtheorem{theorem}{Theorem}[section]
\newtheorem{lemma}[theorem]{Lemma}
\newtheorem{proposition}[theorem]{Proposition}
\newtheorem{corollary}[theorem]{Corollary}
\newtheorem{conjecture}[theorem]{Conjecture}
\theoremstyle{definition}
\newtheorem{definition}[theorem]{Definition}
\newtheorem{assumption}[theorem]{Assumption}
\newtheorem{oproblem}[theorem]{Open Problem}
\theoremstyle{remark}
\newtheorem{remark}[theorem]{Remark}
\numberwithin{equation}{section}
\numberwithin{figure}{section}
\newcommand{\R}{\mathbb{R}}
\newcommand{\N}{\mathbb{N}}
\newcommand{\C}{\mathbb{C}}
\renewcommand{\S}{\mathbb{S}}
\newcommand{\A}{\mathcal{A}}
\DeclareMathOperator{\divergence}{div}
\DeclareMathOperator{\dist}{dist}
\DeclareMathOperator{\tr}{tr}
\DeclareMathOperator{\conv}{conv}
\DeclareMathOperator{\myacc}{acc}
\renewcommand{\Im}{{\rm Im}\,}
\renewcommand{\Re}{{\rm Re}\,}
\newcommand{\I}{{\rm i}}
\newcommand{\e}{{\rm e}}
\newcommand{\Res}{{\rm Res}}
\newcommand{\lm}{\lambda}
\newcommand{\rd}{{\rm d}}
\DeclareMathOperator*{\essinf}{{\rm ess}\,{\rm inf}}
\DeclareMathOperator*{\esssup}{{\rm ess}\,{\rm sup}}
\newcommand{\dx}{{\rm d}x}
\newcommand{\dsigma}{{\rm d}\sigma}
\let\phi\varphi
\let\rho\varrho
\let\epsilon\varepsilon
\begin{document}

\begin{abstract}
We study the spectrum of the Robin Laplacian with a complex Robin parameter $\alpha$ on a bounded Lipschitz domain $\Omega$. We start by establishing a number of properties of the corresponding operator, such as generation properties, 
analytic dependence of the eigenvalues and eigenspaces on $\alpha \in \C$, and basis properties of the eigenfunctions.
Our focus, however, is on bounds and asymptotics for the eigenvalues as functions of $\alpha$: we start by providing estimates on the numerical range of the associated operator, which lead to new eigenvalue bounds even in the case $\alpha \in \R$. For the asymptotics of the eigenvalues as $\alpha \to \infty$ in $\C$, in place of the min-max characterisation of the eigenvalues and Dirichlet-Neumann bracketing techniques commonly used in the real case, we exploit the duality between the eigenvalues of the Robin Laplacian and the eigenvalues of the Dirichlet-to-Neumann map. We use this to show that along every analytic curve of eigenvalues, the Robin eigenvalues either diverge absolutely in $\C$ or converge to the Dirichlet spectrum, as well as to classify all possible points of accumulation of Robin eigenvalues for large $\alpha$. We also give a comprehensive treatment of the special cases where $\Omega$ is an interval, a hyperrectangle or a ball. This leads to the conjecture that on a general smooth domain in dimension $d\geq 2$ all eigenvalues converge to the Dirichlet spectrum if $\Re \alpha$ remains bounded from below as $\alpha \to \infty$, while if $\Re \alpha \to -\infty$, then there is a family of divergent eigenvalue curves, each of which behaves asymptotically like $-\alpha^2$. 
\end{abstract}


\thanks{\emph{Mathematics Subject Classification} (2010) 35J05 (35J25 35P10 35P15 35S05 47A10 81Q12)}

\thanks{\emph{Key words and phrases}. Laplacian, Robin boundary conditions, spectral theory of non-self-adjoint operators, estimates on eigenvalues}

\thanks{The authors would like to thank Timo Weidl for suggesting the problem, Andr\'e Froehly, Matthias Hofmann and Jens Wirth for helpful advice and suggestions, in particular regarding the use of the Dirichlet-to-Neumann operator, and a number of colleagues, in particular Marco Marletta, Hugo Tavares and Alexander Thumm, for providing valuable assistance with references to the literature on various topics. The work of the authors was supported by the Funda{\c{c}}{\~a}o para a Ci{\^e}ncia e a Tecnologia, Portugal, via the program ``Investigador FCT'', reference IF/01461/2015 (J.B.K.), and project PTDC/MAT-CAL/4334/2014 (all authors). The work of Robin Lang was supported by the Deutsche Forschungsgemeinschaft (DFG) through the Research Training
Group 1838: Spectral Theory and Dynamics of Quantum Systems.}

\date{\today}

\maketitle

\tableofcontents


\section{Introduction}
\label{sec:introduction}

In recent years a large body of literature has developed around the asymptotic behaviour of the eigenvalues of the Robin Laplacian
\begin{equation}
\label{eq:robin-laplacian}
\begin{aligned}
	-\Delta u &= \lambda u \qquad &&\text{in } \Omega,\\
	\frac{\partial u}{\partial\nu} + \alpha u &=0 \qquad &&\text{on } \partial\Omega,
\end{aligned}
\end{equation}
defined on a fixed domain $\Omega$, that is, a sufficiently smooth, bounded open set in $\R^d$, $d\geq 1$, as the parameter $\alpha \in \R$ appearing in the boundary condition tends to $\pm \infty$ (here and throughout $\nu$ denotes the \emph{outer} unit normal to $\partial\Omega$; if $d=1$, then we understand $\Omega$ to be a bounded interval). Denote these eigenvalues, which depend smoothly on $\alpha$, by $\lambda_1 (\alpha)  \leq \lambda_2 (\alpha) \leq \ldots \to \infty$, and the eigenvalues of the Dirichlet Laplacian, i.e., the solutions of
\begin{equation}
\label{eq:dirichlet-laplacian}
\begin{aligned}
	-\Delta u &= \lambda u \qquad &&\text{in } \Omega,\\
	u &=0 \qquad &&\text{on } \partial\Omega,
\end{aligned}
\end{equation}
by $\lambda_1 \leq \lambda_2 \leq \ldots \to \infty$. Then it is known that $\lambda_k (\alpha) \to \lambda_k$ from below as $\alpha \to +\infty$ for each $k\in\N$ \cite{Filinovskiy0,Filinovskiy}. If $\alpha \to -\infty$, then the situation is more complicated: if $\Omega$ is $C^1$, then $\lambda_k (\alpha) \sim -\alpha^2$ as $\alpha\to -\infty$ for each fixed $k \in \N$, but there are further curves of eigenvalues which converge to eigenvalues of the Dirichlet Laplacian from above \cite{AfrouziBrown,CCH,DanersKennedy,GiorgiSmits,GiorgiSmits2,LOS,LouZhu}; moreover, in the last few years very precise asymptotics have been developed for the divergent eigenvalues in the case $\alpha \to -\infty$ \cite{ExnerMinakovParnovski,FreitasKrejcirik,HelfferKachmar,KovarikPankrashkin,PankrashkinPopoff}. The case of less regularity, namely when $\Omega$ has a finite number of ``model corners'' and the asymptotic behaviour is different, has also been extensively considered \cite{BruneauPopoff,Khalile,KOBP,KhalilePankrashkin,LevitinParnovski}. We refer to \cite{BFK} for a recent summary of the problem, its history and more references.

Our principal goal is to investigate what happens when $\alpha \in \C$ is a large \emph{complex} parameter; the corresponding boundary condition is often called an impedance boundary condition, where it appears frequently in the context of electromagnetic and acoustic scattering (see, e.g., \cite{CCM,ChandlerWilde,LCS}). In this case, it is easy to see that the problem \eqref{eq:robin-laplacian} still admits a discrete spectrum, and studying this problem should give a more complete picture of the eigenvalue behaviour even in the real case. However, for $\alpha \in \C \setminus \R$, the Robin Laplacian obviously ceases to be self-adjoint, and thus neither the known results themselves, nor their methods of proof, which to a large extent rely on variational methods in some form, are applicable. Thus new methods and insights are required.

What is more, although there seems to be a burgeoning interest in non-self-adjoint Robin Laplacians in various contexts such as half-spaces \cite{ChandlerWilde,CossettiKrejcirik,PZ} and scattering problems (for example \cite{AlberRamm,CCM,LakshtanovVainberg,LCS} among many others); waveguides (e.g., \cite{Borisov08,Novak,Olendski11,Olendski12}); thin layers \cite{Borisov12,KRRS}; triangles \cite{McCartin,Shanin}; and metric graphs \cite{HKS}, to say nothing of the extensive physics literature on impedance boundary conditions (see for example the references in \cite{ChandlerWilde,LakshtanovVainberg,LCS,Olendski11}, etc.), to date many basic spectral properties of this operator on general (bounded) domains seem not yet to have been established. Thus we also wish to give a thorough and systematic treatment of these spectral properties; our first result is as follows.

\begin{theorem}
\label{thm:analytic-dependence}
Suppose $\Omega \subset \R^d$, $d\geq 1$, is a bounded Lipschitz domain. Then:
\begin{enumerate}
\item each eigenvalue has finite algebraic multiplicity and depends locally analytically on $\alpha \in \C$: more precisely, if $(\lambda_k (\alpha_0))_{k \in \N}$ is an enumeration of the eigenvalues (each repeated according to its finite algebraic multiplicity) for some $\alpha_0 \in \R$, then each $\lambda_k (\alpha_0)$ may be extended to a meromorphic function $\lambda_{k} (\alpha)$ such that for any $\alpha \in \C$, these eigenvalues form the spectrum of the corresponding Robin Laplacian;
\item away from crossing points of eigenvalues, each eigenvalue $\lambda_{k} (\alpha)$ and the corresponding eigenprojection are holomorphic functions of $\alpha$, whereas at the crossing points the weighted eigenvalue mean and the total projection are holomorphic;
\item if $\lambda_k (\alpha)$ is simple with eigenfunction $\psi=\psi(\alpha)$, then $\lambda_k'(\alpha)$ is given by
\begin{displaymath}
	\lambda_k'(\alpha) = \frac{\int_{\partial\Omega}\psi^2\,\dsigma(x)}{\int_\Omega \psi^2\,\dx}
\end{displaymath}
(where the right-hand side is to be interpreted as a holomorphic continuation in the event that the denominator is zero, as any singularities are removable);
\item for any $\alpha \in \C$, the set of eigenfunctions and generalised eigenfunctions corresponding to the eigenvalues $\{ \lambda_{ k} (\alpha): {k} \in \N \}$ can be chosen to form an Abel basis of $L^2(\Omega)$, of order $(d-1)/2+\delta$ for any $\delta>0$, and even a Riesz basis if $d=1$;
\item however, for any $\alpha \in \C \setminus \R$, the eigenfunctions 
can \emph{not} be chosen to form an orthonormal basis of $L^2 (\Omega)$.
\end{enumerate}
\end{theorem}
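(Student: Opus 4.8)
The plan is to argue by contradiction: suppose that for some $\alpha \in \C \setminus \R$ the eigenfunctions of the Robin Laplacian $-\Delta_\alpha$ can be chosen to form an orthonormal basis of $L^2(\Omega)$. An operator with discrete spectrum that admits an orthonormal basis of eigenfunctions is necessarily normal, so this assumption forces $-\Delta_\alpha$ to be a normal operator. I would then derive a contradiction by showing that $-\Delta_\alpha$ is normal if and only if $\alpha \in \R$. The cleanest route is to compute the adjoint: since the sesquilinear form associated with $-\Delta_\alpha$ is $\mathfrak{a}_\alpha[u,v] = \int_\Omega \nabla u \cdot \overline{\nabla v}\,\dx + \alpha \int_{\partial\Omega} u\,\overline{v}\,\dsigma$, one reads off that the adjoint operator is $-\Delta_{\overline{\alpha}}$, i.e. the Robin Laplacian with parameter $\overline{\alpha}$. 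Hence normality of $-\Delta_\alpha$ would require $-\Delta_\alpha$ and $-\Delta_{\overline\alpha}$ to commute, and in particular to share the same domain and the same action; I would show this fails when $\Im \alpha \neq 0$.

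The concrete mechanism I would use is the boundary condition itself. A function $u$ in the domain of $-\Delta_\alpha$ satisfies $\partial_\nu u + \alpha u = 0$ on $\partial\Omega$ (in the appropriate weak sense), while a function in the domain of $-\Delta_{\overline\alpha}$ satisfies $\partial_\nu u + \overline\alpha u = 0$. For these two domains to coincide one needs: whenever $\partial_\nu u + \alpha u = 0$ on $\partial\Omega$, also $\partial_\nu u + \overline\alpha u = 0$, i.e. $(\alpha - \overline\alpha) u = 0$ on $\partial\Omega$, forcing $u|_{\partial\Omega} = 0$ and hence $\partial_\nu u|_{\partial\Omega}=0$. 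But this is false: the domain of $-\Delta_\alpha$ certainly contains functions whose trace on $\partial\Omega$ is not identically zero — for instance, one can take any sufficiently smooth $u$ (say harmonic, or with prescribed Laplacian) and check that the Robin condition $\partial_\nu u = -\alpha u$ can be met with $u|_{\partial\Omega}\not\equiv 0$; alternatively, an eigenfunction of $-\Delta_\alpha$ has nonvanishing trace (otherwise it would be a simultaneous Dirichlet and Neumann eigenfunction, hence zero by unique continuation). Thus $\mathrm{dom}(-\Delta_\alpha) \neq \mathrm{dom}(-\Delta_{\overline\alpha})$ when $\alpha \notin \R$, so $-\Delta_\alpha \neq (-\Delta_\alpha)^*$; moreover since the two domains differ, $-\Delta_\alpha$ cannot be normal (a normal operator is in particular self-adjoint-like in that $\mathrm{dom}(A) = \mathrm{dom}(A^*)$ with $\|Au\| = \|A^*u\|$). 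This contradiction completes the argument.

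An even slicker variant, which I would probably present as the main line, avoids domain bookkeeping: by part (3) of the theorem, along the analytic branch $\lambda_k(\alpha)$ one has $\lambda_k'(\alpha) = \int_{\partial\Omega}\psi^2\,\dsigma / \int_\Omega \psi^2\,\dx$, and at a real $\alpha$ with simple $\lambda_k$ one may take $\psi$ real-valued and not identically zero on $\partial\Omega$, so $\lambda_k'(\alpha) > 0$ there; in particular $\lambda_k$ is non-constant and genuinely moves off the real axis as $\Im\alpha$ becomes nonzero (its imaginary part has nonzero derivative in the direction $\I$). A normal operator with compact resolvent has real spectrum only if it is self-adjoint, and more to the point: if $-\Delta_\alpha$ had an orthonormal basis of eigenfunctions $(\psi_j)$ with eigenvalues $(\mu_j)$, then $-\Delta_{\overline\alpha} = (-\Delta_\alpha)^*$ would have the \emph{same} eigenfunctions with eigenvalues $(\overline{\mu_j})$; but $-\Delta_{\overline\alpha}$ is also a Robin Laplacian, whose eigenfunctions we may independently analyze, and matching the two descriptions of the same eigenfunctions forces each $\psi_j$ to satisfy both Robin conditions, hence to vanish on $\partial\Omega$ — impossible as above.

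The main obstacle is making rigorous the claim that every element of $\mathrm{dom}(-\Delta_\alpha)$, or at least every eigenfunction, has a trace on $\partial\Omega$ that does not vanish identically: this is where the Lipschitz regularity of $\Omega$ and the precise weak formulation of the Robin condition must be invoked carefully (the normal derivative $\partial_\nu u$ lives a priori only in a negative-order boundary space, so the identity $(\alpha-\overline\alpha)u|_{\partial\Omega}=0$ must be interpreted as an equality of functionals on $H^{1/2}(\partial\Omega)$). The unique continuation step — a function that is both a Dirichlet and a Neumann eigenfunction of $-\Delta$ must be zero — is standard but should be cited; on a Lipschitz domain one can instead argue directly that an eigenfunction with zero trace would, by the form identity $\mathfrak{a}_\alpha[\psi,\psi] = \lambda \|\psi\|^2$, reduce to a Dirichlet eigenfunction whose normal derivative also vanishes, and extend it by zero to a weak eigenfunction on a larger domain, contradicting unique continuation for $-\Delta$.
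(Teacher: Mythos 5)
Your proposal addresses only part (5) of the theorem (the other parts are covered in the paper by the results on holomorphic dependence, the derivative formula and the Abel/Riesz basis theorem), so I compare it with the paper's proof of that part, Theorem \ref{thm:NoONB}. Your argument is correct in outline but takes a genuinely different route. The paper assumes the eigenfunctions form an orthonormal basis, deduces from the expansion of $(\A(\alpha)^\ast u,u)$ that $\mathrm{cl}(W(a_\alpha))=\mathrm{cl}(\conv(\sigma(-\Delta_\Omega^\alpha)))$, shows that for $\alpha\notin\R$ no eigenvalue can be real (a real eigenvalue would force the eigenfunction to have zero trace and zero normal derivative), and then obtains a contradiction because the first \emph{Dirichlet} eigenvalue, which is real, always lies in $\mathrm{cl}(W(a_\alpha))$. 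You instead convert the orthonormal-basis hypothesis into normality of $-\Delta_\Omega^\alpha$, use $(-\Delta_\Omega^\alpha)^\ast=-\Delta_\Omega^{\overline\alpha}$ (which the paper provides, Theorem \ref{thm:holo}(1)) to conclude $D(-\Delta_\Omega^\alpha)=D(-\Delta_\Omega^{\overline\alpha})$, and then exploit the two incompatible boundary conditions $\partial_\nu u+\alpha u=0$ and $\partial_\nu u+\overline\alpha u=0$ to force every element of the domain -- in particular every eigenfunction, which exists under the hypothesis -- to have vanishing trace, contradicting unique continuation. Interestingly, both proofs hinge on the same elementary fact (an eigenfunction with zero trace is simultaneously a Dirichlet and a Neumann eigenfunction, hence zero); your version is more elementary and isolates the algebraic obstruction cleanly, while the paper's version yields as a by-product the identity between the numerical range and the convex hull of the spectrum and the absence of real eigenvalues for nonreal $\alpha$, which is of independent interest. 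Note also that since the paper defines $D(-\Delta_\Omega^\alpha)$ with $\partial_\nu u\in L^2(\partial\Omega)$, the subtraction of the two boundary conditions takes place in $L^2(\partial\Omega)$ and your worry about negative-order trace spaces disappears.

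Two points should be tightened. First, the assertion that an operator with an orthonormal basis of eigenfunctions is normal needs a short argument: let $N$ be the diagonal operator in that basis with the same eigenvalues; closedness of $-\Delta_\Omega^\alpha$ gives $N\subset-\Delta_\Omega^\alpha$, and since both operators are bijective after subtracting a common resolvent point (available because the Robin spectrum is discrete and contains all the $\mu_k$), they coincide, so the operator is normal; this also justifies the claim in your second variant that the adjoint has the \emph{same} eigenfunctions with conjugate eigenvalues. Second, normality gives $D(A)=D(A^\ast)$ and $\|Au\|=\|A^\ast u\|$, \emph{not} ``the same action''; fortunately your proof only uses the equality of domains, so this misstatement is harmless, and the side remarks in the second variant (positivity of $\lambda_k'(\alpha)$ at real $\alpha$, realness of the spectrum of normal operators) are not needed for the conclusion.
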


This theorem combines statements from several theorems which we will give below, namely Theorems~\ref{thm:robin-operator},~\ref{thm:holo},~\ref{thm:derivative},~\ref{thm:NoONB} and~\ref{thm:abel-basis}, and for its proof we refer to the respective proofs of these results. As a consequence of Theorem~\ref{thm:analytic-dependence}, the question of the asymptotic behaviour of the eigenvalues is meaningful since we can speak of analytic curves of eigenvalues in the complex plane (up to crossing points).

However, our main focus is on the location of these eigenvalues in the complex plane, in particular as regards their behaviour for large $\alpha$. Let us start by examining what we \emph{expect} to happen in the general case. Based on explicit calculations on the interval (which we will perform in Section~\ref{sec:interval}) and other concrete examples, we can expect that the behaviour should mirror the real case.

\begin{conjecture}
\label{conj:general}
Let $\Omega \subset \R^d$, $d\geq 2$, be a bounded Lipschitz domain, and suppose $\alpha \in \C$, $|\alpha| \to \infty$.
\begin{enumerate}
\item If $\Re \alpha \to -\infty$, then there exists a sequence of 
absolutely divergent eigenvalues.
Any limit point of non-divergent analytic eigenvalue curves of eigenvalues is an eigenvalue of the Dirichlet Laplacian (that is, a solution of \eqref{eq:dirichlet-laplacian}).
\begin{enumerate}
\item[(i)] If $\Omega$ has $C^1$ boundary, then each divergent eigenvalue behaves asymptotically like $-\alpha^2 + o (\alpha^2)$.
\item[(ii)] If $\Omega$ has Lipschitz boundary, then for any divergent analytic curve of eigenvalues $\lambda=\lambda_k(\alpha)$, there is a constant $C_{\Omega,k} \in[1,\infty)$ depending only on $k$ from Theorem \ref{thm:analytic-dependence},
such that $\lambda_k(\alpha) = -C_{\Omega,k} \alpha^2 + o (\alpha^2)$.
\end{enumerate}
\item If $\Re \alpha$ remains bounded from below, then each eigenvalue converges to an eigenvalue of the Dirichlet Laplacian.
\end{enumerate}
\end{conjecture}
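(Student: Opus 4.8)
The plan is to route every assertion through the duality between the Robin eigenvalues and the spectrum of the Dirichlet-to-Neumann (DtN) operator, and then to analyse that operator in the relevant asymptotic regime. Write $\Lambda(\lambda)$ for the DtN operator at frequency $\lambda$, defined for $\lambda$ outside the Dirichlet spectrum by $\Lambda(\lambda)f=\partial_\nu u|_{\partial\Omega}$, where $-\Delta u=\lambda u$ in $\Omega$ and $u|_{\partial\Omega}=f$; it has compact resolvent and depends meromorphically on $\lambda$, with poles exactly at the Dirichlet eigenvalues. The Robin condition $\partial_\nu u+\alpha u=0$ then reads $\Lambda(\lambda)f=-\alpha f$, so that for $\lambda\notin\sigma(-\Delta_\Omega^{\mathrm D})$ one has $\lambda\in\sigma(-\Delta_\Omega^\alpha)$ precisely when $-\alpha$ is an eigenvalue of $\Lambda(\lambda)$. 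Tracking an analytic eigenvalue curve $\lambda_k(\alpha)$ is therefore the same as following a branch $\mu_j(\lambda)$ of DtN eigenvalues and solving $\mu_j(\lambda)=-\alpha$ for $\lambda$; since, on a fixed branch, $\mu_j(\lambda)$ blows up only as $\lambda$ approaches $\sigma(-\Delta_\Omega^{\mathrm D})$, the dichotomy ``diverge absolutely in $\C$, or accumulate only at points of $\sigma(-\Delta_\Omega^{\mathrm D})$'' — and with it the second sentence of part~(1) — follows from the properties of $\Lambda$; I would simply invoke the dichotomy and the classification of accumulation points that we prove below. This also reduces part~(2) to the single statement that no fixed analytic eigenvalue curve can diverge absolutely while $\Re\alpha$ stays bounded below.

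For part~(1) I would construct and follow the divergent branches via a boundary-layer analysis. On a smooth domain, localising near $\partial\Omega$ exhibits $\Lambda(\lambda)$, modulo smoothing, as a first-order pseudodifferential operator whose principal symbol in the induced boundary metric is $\sqrt{|\xi'|^2-\lambda}$ (the half-space model computation), uniformly for $\lambda$ in a sector about $(-\infty,0)$; rescaling $\xi'=\sqrt{-\lambda}\,\eta'$ then gives $\Lambda(\lambda)=\sqrt{-\lambda}\,(I+o(1))$ in a sense strong enough to control each fixed eigenvalue branch, so that $\mu_j(\lambda)=\sqrt{-\lambda}\,(1+o(1))$ as $|\lambda|\to\infty$ there. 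Substituting into $\mu_j(\lambda)=-\alpha$ — which has a solution with $|\lambda|\to\infty$ exactly in the regime $\Re\alpha\to-\infty$ — yields $\lambda=-\alpha^2(1+o(1))$, which is part~(1)(i) and simultaneously produces the asserted sequence of absolutely divergent eigenvalues; the merely $C^1$ case is handled by replacing the symbol calculus with the boundary-layer and Hardy-type estimates familiar from the real case. For a Lipschitz domain with corners I would instead compare $\Lambda(\lambda)$ near each corner point to the DtN operator of the corresponding model cone or wedge, whose large-$|\lambda|$ spectrum can be computed, patch the local contributions together with a partition of unity and min-max-type bracketing transported to the DtN side, and read off that the $k$-th divergent branch behaves like $\sqrt{C_{\Omega,k}}\,\sqrt{-\lambda}$ with $C_{\Omega,k}\in[1,\infty)$ determined by the corner geometry (largest at the sharpest corners), giving $\lambda_k(\alpha)=-C_{\Omega,k}\alpha^2+o(\alpha^2)$.

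Part~(2) then splits into a soft half and a hard half. For the soft half, suppose $\lambda_k(\alpha_n)$ stays bounded along some $\alpha_n\to\infty$ with $\Re\alpha_n$ bounded below, and let $\psi_n$ be the normalised eigenfunctions, so that $\|\nabla\psi_n\|_{L^2(\Omega)}^2=\lambda_k(\alpha_n)-\alpha_n\|\psi_n\|_{L^2(\partial\Omega)}^2$. Taking real and imaginary parts and using $\|\nabla\psi_n\|^2\ge 0$ forces $\|\psi_n\|_{L^2(\partial\Omega)}\to 0$ and $\|\psi_n\|_{H^1(\Omega)}$ bounded; by Rellich and compactness of the trace map on the Lipschitz domain $\Omega$, a subsequence of $\psi_n$ converges in $L^2(\Omega)$ to some $\psi\in H^1_0(\Omega)$ with $-\Delta\psi=(\lim\lambda_k)\psi$, so the limit of $\lambda_k$ lies in $\sigma(-\Delta_\Omega^{\mathrm D})$. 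The hard half — and the reason the statement is still only a conjecture — is to rule out divergence in the first place: in the DtN picture a divergent curve with $\Re\alpha\ge -M$ would be forced to approach the positive real axis, since along the regular branches $\Re(-\alpha)=\Re\mu_j(\lambda)$ is to leading order $\sqrt{|\lambda|}$ times a positive factor that tends to zero only as $\arg\lambda\to 0$; such a curve would thus have to thread out to infinity through a thin parabolic neighbourhood of $\sigma(-\Delta_\Omega^{\mathrm D})$. Excluding this requires a quantitative resolvent bound for the non-self-adjoint operator $\Lambda(\lambda)$, uniform for $|\lambda|$ large and $-\alpha$ in the half-plane $\{\Re\zeta\le M\}$, precisely in the region where $\Lambda(\lambda)$ is no longer a well-behaved pseudodifferential operator — equivalently, a bound on how closely high Robin eigenvalues can shadow the Dirichlet spectrum. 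Establishing such a bound is the main obstacle; in its absence the conjecture rests on the explicit verifications for the interval, the hyperrectangle and the ball.
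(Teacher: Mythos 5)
The statement you were asked to prove is Conjecture~\ref{conj:general}: the paper deliberately leaves it open and offers only partial results in its direction --- the numerical-range bound of Theorem~\ref{thm:numerical-range} (which gives the lower bound $\Re\lambda\geq-\tfrac{C_1^2}{4}|\Re\alpha|^2-C_2|\Re\alpha|$, consistent with but weaker than the claimed asymptotics), the dichotomy along analytic eigencurves of Theorem~\ref{thm:Eigencurve_EitherConvDir_Div}, the accumulation-point result of Theorem~\ref{thm:EitherConvDir_Div}, and complete verifications only for the interval, hyperrectangles and balls in Section~\ref{sec:examples}, where the Dirichlet-to-Neumann operator is explicitly diagonalisable. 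Your proposal does correctly retrace the parts that the paper actually establishes: the DtN duality and the meromorphy of $\lambda\mapsto M(\lambda)$ with poles exactly at $\sigma(-\Delta_\Omega^D)$ is precisely how the paper proves the dichotomy (Lemma~\ref{MeroDN} and the proof of Theorem~\ref{thm:Eigencurve_EitherConvDir_Div}), and your ``soft half'' of part (2) is essentially the compactness argument of Lemma~\ref{lem:convergence-h1-criterion} and Theorem~\ref{thm:robin-accum-sector} (trace inequality plus Rellich plus compactness of the trace map, using $\Re\alpha$ bounded below together with $|\alpha|\to\infty$ to kill the boundary term). You also correctly identify, and candidly concede, the principal open obstacle: excluding divergent curves that thread out to infinity near the positive real axis when $\Re\alpha$ stays bounded below.

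The genuine gaps are in part (1), where your argument is programmatic rather than a proof. Deducing the branchwise asymptotics $\mu_j(\lambda)=\sqrt{-\lambda}\,(1+o(1))$ of the DtN eigenvalues from the principal symbol ``modulo smoothing'' is exactly the step that is nontrivial here: for non-real $\lambda$ (equivalently non-real $\alpha$) the family $M(\lambda)$ is non-self-adjoint, and symbol or quadratic-form asymptotics do not by themselves control individual eigenvalue branches without a uniform resolvent bound in the relevant sector --- which is why the paper only carries this out where explicit formulas are available (trigonometric functions in 1D, Bessel/Hankel expansions for the ball, followed by a Rouch\'e inversion). Likewise, your proposed ``min-max-type bracketing transported to the DtN side'' for Lipschitz domains with corners is unavailable: there is no variational characterisation of the eigenvalues of the non-self-adjoint operators $M(\lambda)$ or $-\Delta_\Omega^\alpha$, and the paper explicitly names the failure of variational and Dirichlet--Neumann bracketing techniques as the reason the complex case requires new methods; indeed even for real $\alpha$ the Lipschitz asymptotics in (1)(ii) is an open problem in the literature. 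So your write-up should be read as a reasonable research programme, consistent with the paper's partial results, rather than a proof; the statement remains a conjecture both in the paper and after your argument.
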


We repeat that most statements in Conjecture~\ref{conj:general} are known for real $\alpha$, although some questions are still open; in particular the asymptotics on general Lipschitz domains has not yet been settled, see \cite[Open Problems~4.17 and~4.20]{BFK}. Regarding the divergent eigenvalues, we emphasise that it is now possible for them to have large \emph{positive} real part: $\Re (-\alpha^2) \to +\infty$ when $\alpha \to \infty$ in $\C$, if $|\Im \alpha|$ grows faster than $|\Re \alpha|$.

As mentioned above, existing techniques used in the real case are completely inapplicable to Conjecture~\ref{conj:general}, as they rely in an essential way either on the variational characterisation of the eigenvalues and test function arguments, as in \cite{DanersKennedy,GiorgiSmits,GiorgiSmits2,LOS}, or, what for our purposes amounts to the same thing, on Dirichlet-Neumann bracketing techniques (or equivalent) to decompose the operator, as in \cite{ExnerMinakovParnovski,LevitinParnovski,PankrashkinPopoff} etc.

Here, while we are not able to give a complete answer to Conjecture~\ref{conj:general}, and also leave open the question of the higher terms in the corresponding asymptotic expansions, we will make progress on two fronts. Firstly, we give sharp trace-type estimates on the boundary integral of the Robin eigenfunctions -- the only term in the expression for the eigenvalues with possibly non-zero imaginary part -- to control the location of the spectrum of the Robin Laplacian for fixed $\alpha \in \C$ inside an explicitly specified parabolic-type region of the complex plane.

\begin{theorem}
\label{thm:general-eigenvalue-estimate}
Suppose $\Omega \subset \R^d$, $d\geq 2$, is a bounded Lipschitz domain. Then there exist constants $C_1 \geq 2$ and $C_2>0$ depending only on $\Omega$, such that for any $\alpha \in \C$, any corresponding eigenvalue $\lambda \in \C$ of \eqref{eq:robin-laplacian} is contained in the set
\begin{displaymath}
	\Lambda_{\Omega,\alpha} := \left\{ t + \alpha \cdot s \in \C: t \geq 0,\, s \in [0,C_1\sqrt{t} + C_2] \right\};
\end{displaymath}
in particular, we have the estimate
\begin{displaymath}
	\Re \lambda \geq -\frac{C_1^2}{4}|\Re\alpha|^2 - C_2|\Re\alpha|.
\end{displaymath}
If $\Omega$ has $C^2$ boundary, then we may choose $C_1=2$.
\end{theorem}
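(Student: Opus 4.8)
The plan is to start from the variational identity for an eigenpair $(\lambda,\psi)$ of \eqref{eq:robin-laplacian}: multiplying by $\overline\psi$ and integrating by parts gives
\begin{equation*}
	\int_\Omega |\nabla\psi|^2\,\dx + \alpha\int_{\partial\Omega}|\psi|^2\,\dsigma(x) = \lambda\int_\Omega|\psi|^2\,\dx.
\end{equation*}
Normalising $\|\psi\|_{L^2(\Omega)}=1$ and setting $t := \int_\Omega|\nabla\psi|^2\,\dx \geq 0$ and $s := \int_{\partial\Omega}|\psi|^2\,\dsigma(x) \geq 0$, we obtain $\lambda = t + \alpha s$, which is exactly the parametrisation defining $\Lambda_{\Omega,\alpha}$. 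So the entire content of the theorem reduces to the trace-type inequality $s \leq C_1\sqrt t + C_2$, i.e.
\begin{equation*}
	\int_{\partial\Omega}|\psi|^2\,\dsigma(x) \leq C_1\Bigl(\int_\Omega|\nabla\psi|^2\,\dx\Bigr)^{1/2} + C_2
\end{equation*}
for every $\psi\in H^1(\Omega)$ with $\|\psi\|_{L^2(\Omega)}=1$, with $C_1\geq 2$ in general and $C_1=2$ when $\partial\Omega$ is $C^2$.

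The key step is therefore to prove this sharpened trace inequality. The standard trace theorem on a Lipschitz domain gives $\|\psi\|_{L^2(\partial\Omega)}^2 \leq C\|\psi\|_{H^1(\Omega)}\|\psi\|_{L^2(\Omega)}$ by interpolation, which is not quite enough because the constant in front of $\|\nabla\psi\|_{L^2(\Omega)}$ matters and we need to separate the $\sqrt t$ behaviour from a bounded remainder. I would obtain the refined bound via a Young-type splitting: first establish, for every $\varepsilon>0$,
\begin{equation*}
	\int_{\partial\Omega}|\psi|^2\,\dsigma(x) \leq \varepsilon\int_\Omega|\nabla\psi|^2\,\dx + \frac{C_\Omega}{\varepsilon}\int_\Omega|\psi|^2\,\dx,
\end{equation*}
which is the classical "trace inequality with small parameter" valid on any bounded Lipschitz domain (proved by a partition of unity reducing to local graphs, a one-dimensional fundamental-theorem-of-calculus estimate transverse to the boundary, and Young's inequality). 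Then optimise over $\varepsilon$: with $\|\psi\|_{L^2(\Omega)}=1$ and $t=\int_\Omega|\nabla\psi|^2$, choosing $\varepsilon = \sqrt{C_\Omega/t}$ (when $t>0$) gives $s \leq 2\sqrt{C_\Omega}\,\sqrt t$; absorbing the small-$t$ regime into an additive constant $C_2$ yields $s \leq C_1\sqrt t + C_2$ with $C_1 = 2\sqrt{C_\Omega}$. For the final estimate on $\Re\lambda$, write $\Re\lambda = t + (\Re\alpha)s \geq t - |\Re\alpha|(C_1\sqrt t + C_2)$ and minimise the right-hand side over $t\geq 0$ (a quadratic in $\sqrt t$), which gives precisely $\Re\lambda \geq -\tfrac{C_1^2}{4}|\Re\alpha|^2 - C_2|\Re\alpha|$.

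The main obstacle is the sharp constant $C_1=2$ in the $C^2$ case, which forces $C_\Omega$ in the small-parameter trace inequality to effectively be $1$ in the leading order as $\varepsilon\to 0$. To get this I would localise near $\partial\Omega$ using the $C^2$ regularity to introduce normal coordinates in a tubular neighbourhood: for a point $x'\in\partial\Omega$ and the inward normal segment $x'-\rho\nu(x')$, $\rho\in[0,\delta]$, one has
\begin{equation*}
	|\psi(x')|^2 = |\psi(x'-\delta\nu(x'))|^2 - \int_0^\delta \frac{\partial}{\partial\rho}|\psi(x'-\rho\nu(x'))|^2\,\rd\rho,
\end{equation*}
and integrating over $\partial\Omega$, using that the coordinate change has Jacobian $1+O(\rho)$ and that $\partial_\rho|\psi|^2 = 2\Re(\overline\psi\,\partial_\rho\psi)$ with $|\partial_\rho\psi|\leq|\nabla\psi|$, Cauchy–Schwarz gives a bound whose leading coefficient is $2$ as $\delta\to 0$, with all curvature corrections absorbable into the $C_\Omega/\varepsilon$ term (hence ultimately into $C_2$). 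On a merely Lipschitz domain the same argument goes through with the inward direction replaced by a fixed cone direction on each chart, but the non-orthogonality of that direction to the boundary inflates the leading constant, which is why only $C_1\geq 2$ (not $C_1=2$) can be claimed there. I would also remark that the constant $C_1=2$ is sharp, as seen already on the interval or the ball where the divergent eigenvalue asymptotics $\lambda\sim-\alpha^2$ saturate the bound $\Re\lambda\geq-\tfrac{C_1^2}{4}|\Re\alpha|^2$.
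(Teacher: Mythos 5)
Your overall strategy coincides with the paper's: the weak eigenvalue equation gives $\lambda = t+\alpha s$ with $t=\|\nabla\psi\|_2^2$ and $s=\int_{\partial\Omega}|\psi|^2\,\dsigma$ for a normalised eigenfunction (the paper proves this for the whole numerical range in Theorem~\ref{thm:numerical-range}), so the theorem reduces to the trace bound $s\le C_1\sqrt t+C_2$, which is exactly the paper's Lemma~\ref{lem:boundary-control}; and your minimisation in $t$ of $t-|\Re\alpha|(C_1\sqrt t+C_2)$ reproduces the paper's derivation of $\Re\lambda\ge-\tfrac{C_1^2}{4}|\Re\alpha|^2-C_2|\Re\alpha|$. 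For merely Lipschitz $\Omega$ your route (multiplicative trace inequality plus Young/optimisation in $\varepsilon$, or local charts with a fixed transversal direction) is correct and essentially equivalent to the paper's local divergence-theorem argument, which yields $C_{1,z}=2/\essinf\nu_d\ge 2$ on each chart.

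The gap is in the $C^2$ case, precisely at the step the paper singles out as the real difficulty: obtaining $C_1=2$ exactly rather than any $C_1>2$. In your normal-coordinate argument the curvature enters through the Jacobian $J(x',\rho)=\prod_i(1-\rho\kappa_i)$. If you integrate $|\psi(x')|^2=|\psi(x'-\delta\nu)|^2-\int_0^\delta\partial_\rho|\psi|^2\,\rd\rho$ over $\partial\Omega$ and convert to an integral over the collar using $\dx=J\,\rd\rho\,\dsigma$, the factor $J^{-1}=1+O(\delta)$ sits on the mixed term $2|\psi||\partial_\rho\psi|$, so Cauchy--Schwarz gives the coefficient $2(1+C\delta)$ in front of $\|\nabla\psi\|_2$ for each fixed $\delta>0$; you cannot let $\delta\to 0$ because the terms you push into $C_2$ blow up. As written the argument therefore only yields $s\le(2+\epsilon)\sqrt t+C_2(\epsilon)$ for every $\epsilon>0$, which is strictly weaker than $C_1=2$ (the resulting eigenvalue bound has leading coefficient $1+\epsilon'$ instead of $1$). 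Moreover, the inner term $\int_{\partial\Omega}|\psi(x'-\delta\nu(x'))|^2\,\dsigma$ is itself a trace on the parallel surface and is not controlled by $\|\psi\|_{L^2(\Omega)}^2$ alone; you need an average over $\delta$ or a cutoff vanishing there. Both defects are cured simultaneously by restructuring the identity so that the curvature correction multiplies $|\psi|^2$ rather than $|\psi||\nabla\psi|$: differentiate $|\psi|^2J$ in $\rho$ (so the correction is $|\psi|^2\partial_\rho J$), or, as the paper does, integrate $\divergence(|\psi|^2\varphi\nabla d_\Omega)$ over the collar with a cutoff $\varphi$ constant on the level sets of the distance function; since $|\nabla d_\Omega|\equiv 1$ and $0\le\varphi\le1$, the mixed term carries the coefficient exactly $2$, while the curvature appears only as $|\psi|^2\varphi\Delta d_\Omega$ and is absorbed into $C_2$ (whence the paper's expression $C_2=\varepsilon^{-1}+(d-1)\max|\bar{\kappa}|$). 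With that modification your proof matches the paper's.
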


Actually, we will prove a slightly stronger version of this theorem, namely for the numerical range of the associated form: see Section~\ref{sec:numerical-range} for details, including a description of the parabolic-type region $\Lambda_{\Omega,\alpha}$, and in particular Theorem~\ref{thm:numerical-range} for the stronger version and its proof.

In the case of real negative $\alpha$, Theorem~\ref{thm:general-eigenvalue-estimate} is already new; for general reference, we will formulate it here explicitly:

\begin{corollary}
\label{cor:neg-alpha-bound}
Suppose $\Omega \subset \R^d$, $d\geq 2$, is a bounded Lipschitz domain. Then there exist constants $c_1\geq 1$ and $c_2>0$ depending only on $\Omega$ such that for any $\alpha<0$ and any corresponding eigenvalue $\lambda \in \R$ we have
\begin{displaymath}
	\lambda \geq -c_1\alpha^2 - c_2\alpha.
\end{displaymath}
If $\Omega$ has $C^2$ boundary, then we may choose $c_1=1$.
\end{corollary}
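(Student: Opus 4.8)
The plan is to read off Corollary~\ref{cor:neg-alpha-bound} directly from Theorem~\ref{thm:general-eigenvalue-estimate} by specialising to $\alpha \in \R$ with $\alpha < 0$. Fix such an $\alpha$ and a real eigenvalue $\lambda$ of the Robin Laplacian \eqref{eq:robin-laplacian}, and let $C_1 \geq 2$ and $C_2 > 0$ be the constants supplied by that theorem. The quickest route is to quote its ``in particular'' estimate $\Re\lambda \geq -\frac{C_1^2}{4}|\Re\alpha|^2 - C_2|\Re\alpha|$: since here both $\alpha$ and $\lambda$ are real and $|\Re\alpha| = |\alpha| = -\alpha$, this reads $\lambda \geq -\frac{C_1^2}{4}\alpha^2 - C_2|\alpha|$, so the claim holds with $c_1 := C_1^2/4$ and $c_2 := C_2$. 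Because $C_1 \geq 2$ we obtain $c_1 \geq 1$, and when $\partial\Omega$ is $C^2$ the admissible value $C_1 = 2$ forces $c_1 = 1$, which is the sharpened assertion.

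For completeness I would also indicate how the same bound falls out geometrically from the containment $\lambda \in \Lambda_{\Omega,\alpha}$, since on the real negative axis the ``parabolic'' region $\Lambda_{\Omega,\alpha}$ collapses to a half-line. Writing $\lambda = t + \alpha s$ with $t \geq 0$ and $0 \leq s \leq C_1\sqrt{t} + C_2$, and using that $\alpha < 0$ makes $\alpha s$ smallest at the largest admissible $s$, one gets $\lambda \geq t + \alpha\bigl(C_1\sqrt{t} + C_2\bigr)$. Substituting $u = \sqrt{t}$ and minimising $u \mapsto u^2 + \alpha C_1 u + \alpha C_2$ over $u \geq 0$ --- the vertex $u = -\alpha C_1 / 2$ is positive, hence admissible --- gives minimum value $-\frac{C_1^2}{4}\alpha^2 + C_2\alpha = -\frac{C_1^2}{4}\alpha^2 - C_2|\alpha|$, recovering the same inequality.

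There is essentially no obstacle here: the corollary is nothing more than the restriction of the numerical-range estimate of Theorem~\ref{thm:general-eigenvalue-estimate} (equivalently, of its form version Theorem~\ref{thm:numerical-range}) to real negative $\alpha$ and real $\lambda$. The only points requiring attention are the elementary one-variable minimisation that produces the leading coefficient $C_1^2/4$, and the bookkeeping that transfers the normalisations $C_1 \geq 2$ and ``$C_1 = 2$ for $C^2$ boundaries'' into $c_1 \geq 1$ and $c_1 = 1$ respectively.
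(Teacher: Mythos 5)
Your proposal is correct and coincides with the paper's own (implicit) proof: Corollary~\ref{cor:neg-alpha-bound} is never argued separately there, but is obtained exactly as you do, by specialising Theorem~\ref{thm:general-eigenvalue-estimate} (equivalently the numerical-range bound of Theorem~\ref{thm:numerical-range}) to real $\alpha<0$, with the same one-variable minimisation over $s\in[0,C_1\sqrt t+C_2]$ producing the coefficient $C_1^2/4$ and the identifications $c_1=C_1^2/4$, $c_2=C_2$. One bookkeeping caveat: what you actually derive is $\lambda\geq -\tfrac{C_1^2}{4}\alpha^2 - C_2|\alpha| = -\tfrac{C_1^2}{4}\alpha^2 + C_2\alpha$ for $\alpha<0$, whereas the corollary's displayed inequality reads $\lambda\geq -c_1\alpha^2 - c_2\alpha$, which for $\alpha<0$ has the opposite (and, for $c_1=1$ on a $C^2$ domain, impossible, since $\lambda_1(\alpha)<-\alpha^2$) sign on the linear term; this is a sign slip in the corollary's statement rather than in your argument, as the introduction's phrasing ``a bound of the form $\lambda\geq -\alpha^2 + c_2\alpha$'' confirms, so your constants and inequality are the intended content.
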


Among other things, this essentially answers \cite[Open Problem~4.17]{BFK} in the affirmative (see Remark~\ref{rem:neg-alpha-bound} for more details): as $\alpha \to -\infty$, for any bounded Lipschitz domain $\Omega$, there exists a constant $c_1 = c_1(\Omega) >0$ such that $\lambda_1 (\alpha) \gtrsim -c_1 \alpha^2$. To the best of our knowledge, this is also the first time a bound of the form $\lambda \geq -\alpha^2 + c_2 \alpha$ has been found which is valid for all $\alpha < 0$ and general smooth domains; in this case, the constant $c_2 = c_2 (\Omega)$ can be estimated explicitly in terms of the geometry of $\Omega$ and is related to the maximal mean curvature of $\partial\Omega$ (see Remark~\ref{rem:comega}).

The other part of our approach is based on the duality between the Robin Laplacian on $L^2 (\Omega)$ and Dirichlet-to-Neumann-type operators defined on $L^2 (\partial\Omega)$. Suppose that some $\lambda \in \C$ is an eigenvalue of the Robin Laplacian for some given $\alpha \in \C$ and not in the spectrum of the Dirichlet Laplacian. Then $\alpha$ is an eigenvalue of the operator which maps given Dirichlet data $g \in L^2 (\partial \Omega)$ to the (negative of the) outer normal derivative $-\frac{\partial u}{\partial\nu}$, if one exists, of the solution $u$ of the Dirichlet problem
\begin{equation}
\label{eq:helmholtz}
\begin{aligned}
	-\Delta u &= \lambda u \qquad &&\text{in } \Omega,\\
	u &=g \qquad &&\text{on } \partial\Omega
\end{aligned}
\end{equation}
for this value of $\lambda$, and vice versa. The Dirichlet-to-Neumann operator is defined in such a way that $\lambda \in \C$ is an eigenvalue of the Robin Laplacian \eqref{eq:robin-laplacian} for a given $\alpha \in \C$ if and only if $\alpha$ is an eigenvalue of the Dirichlet-to-Neumann eigenvalue for the corresponding spectral parameter $\lambda$. As such, the study of the Robin eigenvalues is equivalent to the problem of studying the dependence of the Dirichlet-to-Neumann eigenvalues $\alpha$ as functions of $\lambda$; and indeed the duality between the two has been explored and exploited frequently in various other contexts such as  \cite{ArendtMazzeo,AtE11,Daners,GesztesyMitrea,Marletta}, among numerous others. It turns out that it is often easier to study the behaviour of $\alpha$ as a function of $\lambda$ than the other way round, and this is the approach we will take. It firstly allows us to give a short proof of a dichotomy result which forms part of Conjecture~\ref{conj:general}.

\begin{theorem}
\label{thm:Eigencurve_EitherConvDir_Div}
Let $\Omega \subset \R^d$, $d\geq 1$, be a bounded Lipschitz domain and $\alpha \in \C$. As $\alpha \to \infty$ in $\C$, each analytic eigenvalue curve $\lambda = \lambda (\alpha) $ of \eqref{eq:robin-laplacian} either converges to a point in the Dirichlet spectrum or diverges to $\infty$ in $\C$.
\end{theorem}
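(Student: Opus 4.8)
The plan is to transfer the question to the Dirichlet-to-Neumann side, as announced. Write $\Sigma$ for the Dirichlet spectrum, i.e.\ the set of eigenvalues of \eqref{eq:dirichlet-laplacian}, and suppose the analytic eigenvalue curve $\lambda=\lambda(\alpha)$ does \emph{not} diverge to $\infty$ as $\alpha\to\infty$; then there is a sequence $\alpha_n\to\infty$ along which $\lambda(\alpha_n)$ stays bounded, and, after passing to a subsequence, $\lambda(\alpha_n)\to\mu\in\C$. I would first reduce everything to showing $\mu\in\Sigma$. The set of accumulation points in $\C\cup\{\infty\}$ of $\lambda(\cdot)$ as $\alpha\to\infty$ is $\bigcap_{R>0}\overline{\lambda(\{|\alpha|>R\})}$, a nested intersection of non-empty compact connected subsets of the Riemann sphere (connected since $\{|\alpha|>R\}$ is connected and $\lambda$ extends continuously to a map into $\C\cup\{\infty\}$ there, with poles sent to $\infty$), hence itself non-empty, compact and connected. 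Once every finite accumulation point is known to lie in $\Sigma$, this set is a connected subset of the totally disconnected set $\Sigma\cup\{\infty\}$, so a single point — a Dirichlet eigenvalue (convergence to the Dirichlet spectrum) or $\infty$ (divergence).

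So assume, for contradiction, $\mu\notin\Sigma$, and fix $\rho>0$ with $\overline{B(\mu,2\rho)}\cap\Sigma=\emptyset$, so that $\lambda(\alpha_n)\in B(\mu,\rho)$ for large $n$. On $\overline{B(\mu,2\rho)}$ the Dirichlet-to-Neumann operators $M(\lambda)$ (sending Dirichlet data $g$ to $-\partial u/\partial\nu$ for the solution of \eqref{eq:helmholtz}) form a holomorphic family of closed operators with compact resolvent on $L^2(\partial\Omega)$, and by the duality the hypothesis becomes $\alpha_n\in\sigma\bigl(M(\lambda(\alpha_n))\bigr)$. The next step is to locate the $\alpha_n$. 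Splitting $M(\lambda)=\Re M(\lambda)+\I\,\Im M(\lambda)$, the self-adjoint part $\Re M(\lambda)$ has numerical range bounded above and $\Im M(\lambda)=\tfrac{1}{2\I}\bigl(M(\lambda)-M(\bar\lambda)\bigr)$ is a bounded operator, both uniformly for $\lambda\in\overline{B(\mu,2\rho)}$ (alternatively one may quote Theorem~\ref{thm:general-eigenvalue-estimate}, which confines a Robin eigenvalue $\lambda$ for parameter $\alpha$ to the region $\Lambda_{\Omega,\alpha}$); either way $\sigma(M(\lambda))\subseteq\{z:\Re z\le c_1,\ |\Im z|\le c_2\}$ for all such $\lambda$, so $\Re\alpha_n\to-\infty$ while $\Im\alpha_n$ stays bounded.

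It remains to contradict $\alpha_n\to\infty$, and this is where the bulk of the work lies. Morally, inverting the curve locally presents $\alpha$ as an analytic function of $\lambda$ taking values in $\sigma(M(\lambda))$ — a branch of Dirichlet-to-Neumann eigenvalues — which, continued over the compact set $\overline{B(\mu,\rho)}$ disjoint from $\Sigma$, must be bounded; the difficulty is that this bound a priori depends on $n$, because distinct $\alpha_n$ may sit on distinct eigenvalue branches of $M(\cdot)$ carrying ever larger values. I would handle this by comparing $M(\lambda)$, for $\lambda$ near $\mu$, with the self-adjoint operator $M(\lambda_0)$ at a fixed negative $\lambda_0$ lying below the Dirichlet spectrum (so $M(\lambda_0)$ has discrete real spectrum accumulating only at $-\infty$), using that $M(\lambda)-M(\lambda_0)$ is compact and depends holomorphically on $\lambda$: a standard compactness argument then shows that, for $\Re z$ sufficiently negative, $\sigma(M(\lambda))$ is contained — uniformly over $\lambda\in\overline{B(\mu,\rho)}$ — in small pairwise disjoint clusters about the eigenvalues of $M(\lambda_0)$. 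Consequently the open set $\{\alpha:\Re\alpha<-T,\ \lambda(\alpha)\in B(\mu,\rho)\}$ (for large fixed $T$) splits into relatively clopen pieces indexed by these clusters, so each of its connected components is bounded and is mapped by $\lambda(\cdot)$ properly, hence surjectively, onto $B(\mu,\rho)$; since the $\alpha_n$ occupy components attached to clusters escaping to $-\infty$, the curve would attain the value $\mu$ infinitely often, at points tending to $\infty$. Showing that a single analytic eigenvalue curve cannot do this — equivalently, cannot trail the Dirichlet-to-Neumann spectral clusters out to infinity while its value stays near a fixed point off $\Sigma$ — is the crux, and is where I expect the main obstacle; here one should exploit the real-analyticity of the branch on $\R$, where it is strictly monotone by the derivative formula of Theorem~\ref{thm:analytic-dependence}(3), together with the reflection $\lambda(\bar\alpha)=\overline{\lambda(\alpha)}$, in the spirit of the real-variable theory. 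Everything else in the argument is soft.
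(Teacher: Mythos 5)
You set up the same Dirichlet-to-Neumann duality that the paper uses, and your preliminary reduction is fine (indeed the observation that the accumulation set of the curve on the Riemann sphere is connected while $\sigma(-\Delta_\Omega^D)\cup\{\infty\}$ is totally disconnected is a tidy way to get the full dichotomy from "every finite accumulation point is Dirichlet"). But the proof is not complete, and you say so yourself: excluding that a single analytic curve has $\alpha_n\to\infty$ while $\lambda(\alpha_n)\to\mu\notin\sigma(-\Delta_\Omega^D)$ is precisely the content of the theorem, and you leave it as "the crux". The scaffolding you erect toward it is also shaky. The claimed "standard compactness argument" giving, uniformly for $\lambda\in\overline{B(\mu,\rho)}$, small pairwise disjoint clusters of $\sigma(M(\lambda))$ around the eigenvalues of a fixed self-adjoint $M(\lambda_0)$ is doubtful: $M(\lambda)-M(\lambda_0)$ is smoothing only of order one (bounded $H^{-1/2}(\partial\Omega)\to H^{1/2}(\partial\Omega)$), so it perturbs the $k$-th eigenvalue by an amount of order one, while the Dirichlet-to-Neumann eigenvalues behave like $-ck^{1/(d-1)}$, whose gaps stay bounded for $d=2$ and shrink for $d\geq 3$; there is no uniform separation into small disjoint clusters, and even granting it you concede the argument does not close. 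Two smaller points: your uniform strip bound on $\sigma(M(\lambda))$ (real part bounded above, imaginary part bounded) needs $\|P(\lambda)g\|_{L^2(\Omega)}\lesssim\|g\|_{L^2(\partial\Omega)}$ uniformly on the compact set, which is not established in the paper and is not elementary on Lipschitz domains, and Theorem~\ref{thm:general-eigenvalue-estimate} by itself does not bound $\Im\alpha_n$ (the boundary term $s$ may tend to zero).

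The missing idea — and essentially the whole of the paper's proof — is the holomorphy of the \emph{family} $\lambda\mapsto M(\lambda)$, Lemma~\ref{MeroDN}: $M(\lambda)$ is a self-adjoint holomorphic family of type (a), meromorphic in $\lambda$, whose singularities are exactly the points of $\sigma(-\Delta_\Omega^D)$. By Theorem~\ref{thm:robin-dn-duality}, the values $\alpha_n$ dual to a single analytic Robin eigencurve lie on a single analytic branch $\alpha(\lambda)$ of eigenvalues of $M(\cdot)$; such a branch is analytic in $\lambda$ wherever the family is holomorphic, hence locally bounded near any $\mu\notin\sigma(-\Delta_\Omega^D)$, so $\alpha(\lambda_n)\to\infty$ with $\lambda_n\to\mu$ forces $\mu$ to be a singularity of the family, i.e.\ a Dirichlet eigenvalue. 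Your argument only ever uses fibrewise spectral information about each $M(\lambda)$ separately, and that cannot suffice: as Theorem~\ref{thm:EitherConvDir_Div}(2) shows, sequences of Robin eigenvalues drawn from different branches can accumulate at any point of $\C$, so the uniformity problem you identify ("distinct $\alpha_n$ may sit on distinct branches") can only be resolved by exploiting the analytic dependence on $\lambda$ of the Dirichlet-to-Neumann eigenvalue branch itself, not by compact-perturbation clustering or by monotonicity on the real axis.
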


This will be proved in Section~\ref{sec:dno} (see also Theorem~\ref{Interval:thm:ConvergenceDirichletSpectrum} for the case $d=1$). In the real case, although this was expected, it does not previously seem to have been formally proved, see \cite[Open Problem~4.11]{BFK}; thus, Theorem~\ref{thm:Eigencurve_EitherConvDir_Div} also fills this small gap in the literature in the case of real $\alpha$.

Instead of looking at individual eigencurves, we can consider the asymptotic distribution of eigenvalues across all eigencurves in their entirety; more precisely, we can consider all possible points of accumulation of the Robin eigenvalues as $\alpha \to \infty$ in $\C$. Away from the negative real semi-axis, we have a stronger statement than the one of Theorem~\ref{thm:Eigencurve_EitherConvDir_Div}, namely that regardless of how we choose the eigenvalues, as $\alpha \to \infty$ the only points of accumulation are Dirichlet eigenvalues. If $\alpha$ is allowed near the negative real semi-axis, however, the situation is more complicated; it is for these values of $\alpha$ that the eigenvalue ``crossings'' accumulate.

\begin{theorem}
\label{thm:EitherConvDir_Div}
Let $\Omega \subset \R^d$, $d\geq 2$, be a bounded Lipschitz domain and $\alpha \in \C$.
\begin{enumerate}
\item If $\alpha \to \infty$ in $\C$ in such a way that either $\Re \alpha$ remains bounded from below \emph{or} $\left|\frac{\Re \alpha}{\Im \alpha}\right|$ remains bounded, then the only points of accumulation of the Robin Laplacian eigenvalues as $\alpha \to \infty$ are eigenvalues of the Dirichlet Laplacian.
\item However, any $\lambda \in \C$ is a point of accumulation of the eigenvalues of the Robin Laplacian if $\alpha \in \C$ is allowed to be arbitrary. More precisely, given any $\lambda \in \C$ there exist $\alpha_k \in \C$, $k\in \N$, $|\alpha_k| \to \infty$, such that $\lambda$ is an eigenvalue of the Robin Laplacian with parameter $\alpha_k$, for all $k \in \N$.
\end{enumerate}
\end{theorem}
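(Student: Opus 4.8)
The plan is to prove the two parts by exploiting the Robin--to--Dirichlet-to-Neumann duality described above: for $\lambda\in\C$ not in the Dirichlet spectrum, $\lambda$ is a Robin eigenvalue for parameter $\alpha$ precisely when $\alpha$ is an eigenvalue of the Dirichlet-to-Neumann operator $\mathcal D(\lambda)$ on $L^2(\partial\Omega)$. So the whole question is turned around: instead of tracking $\lambda(\alpha)$ we track the DtN eigenvalues $\alpha_j(\lambda)$ and ask, for a fixed target $\lambda$, whether there are DtN eigenvalues $\alpha_j(\lambda)$ with $|\alpha_j(\lambda)|\to\infty$, and along which directions in $\C$ they escape.

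For part (2), fix $\lambda\in\C$. If $\lambda$ happens to lie in the Dirichlet spectrum we can perturb it slightly first, or handle it separately (a Dirichlet eigenvalue is a Robin eigenvalue for $\alpha=\infty$ in an appropriate limiting sense, but more simply it is approximated by nearby non-Dirichlet $\lambda$'s, and we can diagonalise); so assume $\lambda$ is not a Dirichlet eigenvalue. Then $\mathcal D(\lambda)$ is a well-defined closed operator on $L^2(\partial\Omega)$ with compact resolvent (this should follow from the mapping properties already set up in the DtN section, Section~\ref{sec:dno}), hence has infinitely many eigenvalues $\alpha_1(\lambda),\alpha_2(\lambda),\dots$ accumulating only at $\infty$. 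Pick $\alpha_k:=\alpha_k(\lambda)$; then $|\alpha_k|\to\infty$ and by construction $\lambda$ is a Robin eigenvalue for parameter $\alpha_k$, which is exactly the assertion. The only point needing care is to confirm that $\mathcal D(\lambda)$ genuinely has infinitely many eigenvalues for every non-Dirichlet $\lambda\in\C$ (not just real $\lambda$): this follows because $\mathcal D(\lambda)$ differs from the classical (self-adjoint, $\lambda=0$) Dirichlet-to-Neumann operator by an operator that is relatively compact, so discreteness of the spectrum and the accumulation of eigenvalues at $\infty$ persist; the set of eigenvalues cannot be finite since the resolvent is compact but not finite-rank.

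For part (1), I argue by contraposition using Theorems~\ref{thm:Eigencurve_EitherConvDir_Div} and~\ref{thm:general-eigenvalue-estimate}. Suppose $\lambda_0\in\C$ is a point of accumulation of Robin eigenvalues along some sequence $\alpha_k\to\infty$ with the stated restriction on $\Re\alpha_k$, i.e.\ there are eigenvalues $\mu_k\to\lambda_0$ of the Robin Laplacian with parameter $\alpha_k$. Since the $\mu_k$ stay bounded (they converge to $\lambda_0$), they do not diverge; by the analyticity/enumeration of Theorem~\ref{thm:analytic-dependence} each $\mu_k$ lies on an analytic eigencurve, and by Theorem~\ref{thm:Eigencurve_EitherConvDir_Div} a bounded eigencurve along $\alpha\to\infty$ must converge into the Dirichlet spectrum; a compactness/normal-families argument then forces $\lambda_0$ to be a Dirichlet eigenvalue. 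The role of the hypothesis ``$\Re\alpha$ bounded below \emph{or} $|\Re\alpha/\Im\alpha|$ bounded'' is to rule out the escape route exposed in part (2): these are exactly the regimes in which the parabolic containment region $\Lambda_{\Omega,\alpha_k}$ of Theorem~\ref{thm:general-eigenvalue-estimate} either recedes to the left (when $\Re\alpha_k\to-\infty$ with $|\Im\alpha_k|$ controlled, so $\Re\mu_k\gtrsim-C_1^2|\Re\alpha_k|^2/4\to-\infty$ eventually forces divergence, contradicting boundedness unless $\lambda_0$ sits on a non-divergent branch) or stays within a fixed parabola (when $\Re\alpha_k$ is bounded below), in which case bounded eigenvalues again must be Dirichlet by the dichotomy. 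The main obstacle is the bookkeeping in this last step: one must show that a \emph{bounded} sequence of eigenvalues, one for each $\alpha_k$, can be organised into (pieces of) analytic curves to which Theorem~\ref{thm:Eigencurve_EitherConvDir_Div} applies, uniformly enough that the limit is pinned to the Dirichlet spectrum; this is where the duality is again the cleanest tool, since on the DtN side ``$\lambda_0$ is not Dirichlet'' means $\mathcal D(\lambda_0)$ is a genuine compact-resolvent operator and continuity of its eigenvalues in $\lambda$ near $\lambda_0$ would force the $\alpha_k$ to stay bounded, contradicting $\alpha_k\to\infty$ --- so $\lambda_0$ must be Dirichlet after all.
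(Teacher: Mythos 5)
Your part (2) is essentially the paper's own argument: for $\lambda\notin\sigma(-\Delta_\Omega^D)$ one uses that $M(\lambda)$ has compact resolvent in $L^2(\partial\Omega)$ (Lemma~\ref{lem:dno-bounded-h1-l2}), hence admits eigenvalues $\alpha_k$ with $|\alpha_k|\to\infty$, and then the duality of Theorem~\ref{thm:robin-dn-duality}. Two caveats: the paper treats $\lambda\in\sigma(-\Delta_\Omega^D)$ not by perturbing $\lambda$ (which only gives the weaker accumulation statement, not that $\lambda$ itself is an eigenvalue for the $\alpha_k$) but via the multivalued Dirichlet-to-Neumann operator, Remark~\ref{rem:dno-various}(2); and your side remark that the spectrum ``cannot be finite since the resolvent is compact but not finite-rank'' is not a valid inference for non-self-adjoint operators (a compact quasinilpotent resolvent has spectrum $\{0\}$), so the unboundedness of $\sigma(M(\lambda))$ needs the argument of Section~\ref{sec:dno} rather than that shortcut.

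The genuine gap is in part (1), and it is not mere bookkeeping. First, Theorem~\ref{thm:Eigencurve_EitherConvDir_Div} is a statement about individual analytic eigencurves, whereas part (1) concerns arbitrary sequences $\mu_k\in\sigma(-\Delta_\Omega^{\alpha_k})$ which may be drawn from different curves; part (2) itself shows that, without the hypothesis on $\alpha$, \emph{every} $\lambda\in\C$ is an accumulation point even though each analytic curve either diverges or converges to the Dirichlet spectrum, so no organisation of a bounded sequence into curve pieces can give the conclusion unless the hypothesis on the $\alpha_k$ is used quantitatively. Second, your fallback on the Dirichlet-to-Neumann side is false: if $\lambda_0\notin\sigma(-\Delta_\Omega^D)$, then $M(\lambda_0)$ is unbounded with eigenvalues accumulating at $\infty$, so $\mu_k\to\lambda_0$ with $\alpha_k\in\sigma(M(\mu_k))$ is perfectly compatible with $\alpha_k\to\infty$ --- this is exactly the mechanism exploited in part (2); your argument would ``prove'' part (1) with no restriction on $\alpha$, contradicting part (2). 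The numerical range bound of Theorem~\ref{thm:general-eigenvalue-estimate} cannot rescue this, since the region $\Lambda_{\Omega,\alpha}$ always contains plenty of bounded non-Dirichlet points. The paper's actual proof (Theorem~\ref{thm:robin-accum-sector}) is a direct eigenfunction estimate: normalise $\|\psi_k\|_2=1$; if $\Re\alpha_k\geq 0$, take real parts of the weak eigenvalue identity to bound $\|\nabla\psi_k\|_2$, while if $|\Re\alpha_k/\Im\alpha_k|$ stays bounded, take imaginary parts to bound $\alpha_k\int_{\partial\Omega}|\psi_k|^2\,\dsigma$ and hence again $\|\nabla\psi_k\|_2$; then Lemma~\ref{lem:convergence-h1-criterion} (the boundary term tends to $0$ because $\alpha_k\to\infty$, and the weak $H^1$ limit lies in $H^1_0(\Omega)$ and satisfies the Dirichlet eigenvalue equation) pins every accumulation point to $\sigma(-\Delta_\Omega^D)$, together with weak convergence of the eigenfunctions. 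That a priori $H^1$ bound, obtained separately in the two regimes of the hypothesis, is the missing ingredient in your proposal.
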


For the proof and a more detailed discussion of the statement (in particular the contrast between parts (1) and (2)), see Section~\ref{sec:limit-points}; we refer in particular to the more precise version of (1) that is the statement of Theorem~\ref{thm:robin-accum-sector}, as well as Remark~\ref{rem:limit-points-various}.

We will also use Dirichlet-to-Neumann operators to give a detailed analysis of the asymptotic behaviour of the Robin eigenvalues in a number of concrete examples, namely the interval, rectangles and hyperrectangles, and balls in $d\geq 2$ dimensions, which support Conjecture~\ref{conj:general}. We expect that many of the ideas here could be carried over to more general settings. For example, the case of quantum graphs with some $\delta$ vertex conditions, that is, the Laplacian defined on a metric graph with complex Robin-type potentials at some of the vertices, can be analysed using the same ideas and will be treated in a later work.

This paper is organised as follows. To motivate our results, we start out in Section~\ref{sec:interval} by sketching the case of the eigenvalues of \eqref{eq:robin-laplacian} in the special case when $\Omega$ is a bounded interval and everything can be calculated explicitly. Divergence of the eigenvalues $\lambda$ outside an arbitrarily small sector around the positive real semi-axis is shown to be possible only if $\Re \alpha \to -\infty$; in this case, one obtains exactly two divergent eigenvalues, which behave like $-\alpha^2$, while the rest converge to the Dirichlet spectrum. If $\Re \alpha$ remains bounded from below, then, at least outside such a sector, all eigenvalues are convergent (see Theorems~\ref{Interval:thm:ConvergenceDirichletSpectrum} and~\ref{thm:DivergingEigenvaluesInterval}, as well as Proposition~\ref{prop:strip}).

In Section~\ref{sec:robin-operator}, we then introduce the Robin Laplacian as an operator on $L^2(\Omega)$ and establish basic spectral and generation properties such as m-sectoriality. Section~\ref{sec:holomorphy} is devoted to the holomorphic dependence of the eigenvalues and eigenfunctions on $\alpha$ (Theorem~\ref{thm:holo} and Remark~\ref{rem:cont}) based on Kato's theory, the question of the possible existence of eigennilpotents at eigenvalue crossing points (Remarks~\ref{rem:eigennilpotents} and~\ref{rem:eigennilpotents-not-zero}) as well as the proof of the formula for the derivative of a simple eigenvalue (Theorem~\ref{thm:derivative}). In Section~\ref{sec:eigenfunctions} we treat the failure of the eigenfunctions to form an orthonormal basis in $L^2$ (Theorem~\ref{thm:NoONB}), as well as the positive result that they at least form an Abel basis (Theorem~\ref{thm:abel-basis}). Theorem~\ref{thm:analytic-dependence} follows immediately from the results in Sections~\ref{sec:holomorphy} and~\ref{sec:eigenfunctions} (plus elementary properties of the operator given in Section~\ref{sec:robin-operator}). The bounds on the region in $\C$ in which eigenvalues can be found are in Section~\ref{sec:numerical-range}; in particular, we give the statement and proof of our main Theorem~\ref{thm:numerical-range}, which in particular implies Theorem~\ref{thm:general-eigenvalue-estimate} and hence also Corollary~\ref{cor:neg-alpha-bound}.

In Section~\ref{sec:dno}, we introduce and prove a few basic properties of the Dirichlet-to-Neumann operator, including the ``duality'' between the Robin and Dirichlet-to-Neumann eigenvalue problems, which is well known in the real case; we also give the proof of Theorem~\ref{thm:Eigencurve_EitherConvDir_Div}. We then use the Dirichlet-to-Neumann operator among other tools to prove Theorem~\ref{thm:EitherConvDir_Div} on the points of accumulation of the Robin eigenvalues in Section~\ref{sec:limit-points}. Finally, in Section~\ref{sec:examples}, we give three concrete examples: we start with the interval, where we furnish a number of technical details omitted from the exposition in Section~\ref{sec:interval}, including a consideration of the relation between the eigenvalues diverging near the positive real semi-axis and the parameter $\alpha$. We then use our results on the interval to deal with $d$-dimensional rectangles (hyperrectangles), see Theorem~\ref{hyperrectangles:spectrum}, and finally, we treat $d$-dimensional balls in Section~\ref{subsec:balls}, see in particular Theorems~\ref{lem:Ball:DirichletConvergenceRate} and~\ref{thm:ExistenceDiv}. In these examples we also pay attention to the error estimates appearing in the asymptotic expansions.

\begin{remark}
Some of our results are valid in essentially the same form if $\alpha$ is allowed to be variable, that is, a complex-valued function $\alpha \in L^\infty (\partial \Omega, \C)$, in place of a constant $\alpha \in \C$. This is especially true of the basic operator-theoretic properties collected in Section~\ref{sec:robin-operator} (see Remark~\ref{rem:operator-variable-alpha}), as well as our estimates on the numerical range (see Remark~\ref{rem:range-variable-alpha}), since these are all based on trace-type estimates which continue to hold for $\alpha \in L^\infty(\partial\Omega,\C)$. However, in most other cases it introduces significant complications and many results are unlikely to hold in the same form (as for example with the Dirichlet-to-Neumann operator, or see also Remark~\ref{rem:eigennilpotents-not-zero}). Since for our main problem, namely the asymptotic behaviour of the eigenvalues for large $\alpha$, it is customary and of most interested to treat $\alpha$ as a (real or in this case complex) parameter, we will not consider the case of variable $\alpha$ beyond the aforementioned remarks.
\end{remark}

\section{A motivating example: the interval}
\label{sec:interval}

To gain insight into what to expect in general, we start by looking at the case of intervals, where everything can be computed explicitly. We start by fixing $a>0$ and consider the interval $\Omega = (-a,a) \subset \R$ of length $2a$. Here we will present a slightly abridged version; the (somewhat tedious) details of the calculations are given in Section~\ref{subsec:ExampleInterval}. In one dimension, our problem becomes
\begin{equation}
\label{eq:interval-robin}
\begin{aligned}
	-\Delta u = -u'' &= \lambda u \qquad \text{on } (-a,a), \\
	-u'(-a) + \alpha u(-a) &= 0, \\
	u'(a) + \alpha u(a) &= 0
\end{aligned}
\end{equation}
for given $\alpha \in \C$ (where the sign in front of $u'(\pm a)$ corresponds to the outer normal derivative at $\pm a$). We will study this problem with the help of the inhomogeneous Dirichlet problem
\begin{equation}
\label{Interval:DirichletProblem}
\begin{aligned}
	  -u''&=\lambda u\qquad\text{on }(-a,a), \\
	  u(-a)&= g_1, \\
	  u(+a)&= g_2,
\end{aligned}
\end{equation}
for given Dirichlet data $g:=(g_1, g_2)^T\in\mathbb{C}^2$ and $\lambda\in\C$. A number $\lambda$ solving \eqref{Interval:DirichletProblem} for given $g$ is an eigenvalue of the Laplacian with complex Robin boundary conditions \eqref{eq:interval-robin} if and only if there is a solution $u$ of \eqref{Interval:DirichletProblem} such that $u'(-a) = \alpha g_1$ and $-u'(a) = \alpha g_2$. Let us write $M(\lambda)$ for the mapping which takes $(g_1,g_2)^T$ to $(u'(-a),-u'(a))^T$, that is, $M(\lambda) \in \mathbb{C}^{2\times 2}$ is the \emph{Dirichlet-to-Neumann operator} (matrix) mapping given Dirichlet data to the associated Neumann data of the corresponding $\lambda$-harmonic function $u$, which we study in more detail in Section~\ref{sec:dno}. Thus a Robin eigenvalue $\lambda$ for given $\alpha$ corresponds to an eigenvalue $\alpha$ of the equation
\begin{equation}
\label{Interval:RobinCondition}
	M(\lambda)g = \alpha g = \alpha ( g_1, g_2)^T
\end{equation}
for given $\lambda$. In anticipation of our later strategy, to study the behaviour of the Robin eigenvalues, we will in fact study the eigenvalues $\alpha$ of the matrix $M(\lambda)$. To this end, starting with the general solution of \eqref{Interval:DirichletProblem} given by
\begin{equation}
\label{Interval:GeneralSolution}
	u(x)=C_+\cos(\sqrt{\lambda} x)+ C_-\sin(\sqrt{\lambda} x),
\end{equation}
whose coefficients $C_+$ and $C_-$ depend on $a$, 
$\sqrt{\lm}$,
and $g$, it is not difficult to derive the representation
\begin{equation}
\label{Interval:DNOperatorMatrix}
	M(\lambda)= \sqrt{\lambda}\begin{pmatrix}
		-\cot 2\sqrt\lambda a & \csc 2 \sqrt\lambda a \\
		\csc 2 \sqrt\lambda a & -\cot 2\sqrt\lambda a
	\end{pmatrix}.
\end{equation}
We see that this matrix is well defined, and has two eigenvalues, except at the singularities of $\cot$ and $\csc$. These correspond exactly to the values $\pi^2j^2/(4a^2)$, 
$j \in \mathbb{Z}$, 
of $\lambda$, that is, the eigenvalues of the Dirichlet Laplacian on $(-a,a)$ together with $0$, which is a double eigenvalue. From this representation we can also deduce that the eigenvalues 
\begin{equation}
\label{eq:alpha-curves-1D}
	\alpha_\pm = \sqrt{\lambda}\left(\pm\csc(2a\sqrt{\lambda})-\cot(2a\sqrt{\lambda})\right)
\end{equation}
of \eqref{Interval:RobinCondition} depend analytically on $\lambda \neq \pi^2j^2/(4a^2)$. Moreover, apart from the crossing at $\lambda = 0$, the two curves $\alpha_\pm$ described by \eqref{eq:alpha-curves-1D} have no points of intersection; and their respective derivatives $\mathrm{d}\alpha_\pm/\mathrm{d}\lambda$ never vanish. Hence, away from this one crossing, the eigenvalues $\lambda = \lambda (\alpha)$ of \eqref{eq:interval-robin} are simple and depend analytically on $\alpha \in \C$. A more general version of this will be discussed in Section~\ref{sec:holomorphy}. At any rate, for this reason, whenever we speak of divergent or convergent eigenvalues $\lambda (\alpha)$ as $\alpha \to \infty$ in $\C$, we have a family of (in general meromorphic, here even analytic) functions and are considering the asymptotic behaviour of each of these.

Moreover, to establish what types of behaviour of $\lambda (\alpha)$ are possible as $\alpha \to \infty$, we may equally ask what conditions on $\lambda$ guarantee that the eigenvalues $\alpha$ of the matrix $M(\lambda)$ diverge. To this end, we classify the different situations in which this can happen as follows:
\begin{enumerate}
\item $\sqrt{\lambda}$ approaches a pole of $\cot$ or $\csc$, which represent the Dirichlet eigenvalues. In this case, as $\alpha \to \infty$ the Robin eigenvalue $\lambda$ converges to a Dirichlet eigenvalue;
\item $\lambda$ diverges to $\infty$ in $\C$ away from the positive real axis, where the poles of $\cot$ and $\csc$ are located. In this case, as we shall see, both eigenvalues of $M(\lambda)$ diverge as $\pm \I\sqrt\lambda$, corresponding to two divergent Robin eigenvalues $\lambda \sim -\alpha^2$;
\item $\lambda$ diverges to $\infty$ but remains within a finite distance of the real axis. While it is clear that the eigenvalues of $M(\lambda)$ must also diverge in this case, the relationship between $\alpha$ and $\lambda$ appears to be more complicated owing to the proximity of $\sqrt{\lambda}$ to the poles of $M(\lambda)$.
\end{enumerate}
Let us examine each situation a little more closely.

\subsection{Convergence to the Dirichlet spectrum}


Consider the behaviour of the eigenvalues $\alpha(\lambda)$ of $M(\lambda)$ as $\sqrt{\lambda}$ approaches a singularity of $\cot$ or $\csc$, that is, $\lambda$ approaches an eigenvalue of the Dirichlet Laplacian: this is the only case in which $\alpha$ may diverge while $\lambda$ remains bounded. Inverting this statement by writing $\lambda$ as a function of $\alpha$ leads to the following theorem, whose proof will be given in Section \ref{subsec:ExampleInterval}; see also Theorem~\ref{thm:EitherConvDir_Div}.

\begin{theorem}
\label{Interval:thm:ConvergenceDirichletSpectrum}
Suppose the analytic eigencurve $\lambda=\lambda(\alpha)$ remains bounded as $\alpha \to \infty$ in $\C$. Then it converges to some eigenvalue of the Dirichlet Laplacian, that is, there exists some $j\in\mathbb{Z}$ such that
\begin{displaymath}
	\lambda(\alpha)\to \frac{\pi^2j^2}{4a^2}
\end{displaymath}
as $\alpha \to \infty$.
\end{theorem}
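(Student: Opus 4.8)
The plan is to work directly with the explicit eigenvalue curves $\alpha_{\pm}(\lambda)$ given in~\eqref{eq:alpha-curves-1D}, since the eigencurves $\lambda = \lambda(\alpha)$ are precisely the (local) inverses of these two functions. Concretely, $\lambda(\alpha)$ is an analytic eigencurve that stays bounded as $\alpha \to \infty$ means: there is a sequence $\alpha_n \to \infty$ in $\C$ such that $\lambda(\alpha_n)$ lies in a fixed compact set $K \subset \C$, with $\alpha_n = \alpha_{+}(\lambda(\alpha_n))$ or $\alpha_n = \alpha_{-}(\lambda(\alpha_n))$ for all $n$ (passing to a subsequence, we may assume the sign is fixed, say $+$). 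First I would pass to a further subsequence so that $\lambda(\alpha_n) \to \lambda_\infty$ for some $\lambda_\infty \in K$. The claim is then that $\sqrt{\lambda_\infty}$ must be a pole of $\csc(2a\cdot)$ or $\cot(2a\cdot)$, i.e.\ $\lambda_\infty = \pi^2 j^2/(4a^2)$ for some $j \in \mathbb Z$.

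The key step is the following dichotomy: either $\sqrt{\lambda_\infty}$ is such a pole, or the function $\alpha_{+}(\lambda) = \sqrt{\lambda}\,(\csc(2a\sqrt\lambda) - \cot(2a\sqrt\lambda))$ is holomorphic and hence bounded in a neighbourhood of $\lambda_\infty$ (one should check that the apparent branch-point singularity of $\sqrt{\lambda}$ at $\lambda = 0$ is removable, which it is: using the half-angle identity $\csc\theta - \cot\theta = \tan(\theta/2)$, one gets $\alpha_{+}(\lambda) = \sqrt\lambda \tan(a\sqrt\lambda)$, which is an even, entire function of $\sqrt\lambda$ and therefore holomorphic in $\lambda$ away from the poles of $\tan(a\sqrt\lambda)$, namely $a\sqrt\lambda \in \pi/2 + \pi\mathbb Z$, i.e.\ $\lambda = \pi^2(2k+1)^2/(4a^2)$; similarly $\alpha_{-}(\lambda) = -\sqrt\lambda \cot(a\sqrt\lambda)$ is holomorphic away from $\lambda = \pi^2 k^2/a^2$, and together the two exceptional sets are exactly $\{\pi^2 j^2/(4a^2) : j \in \mathbb Z\}$). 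In the second (non-pole) case, $\alpha_{+}$ is continuous at $\lambda_\infty$, so $\alpha_n = \alpha_{+}(\lambda(\alpha_n)) \to \alpha_{+}(\lambda_\infty) \in \C$, contradicting $\alpha_n \to \infty$. Hence $\lambda_\infty$ is one of the exceptional points, i.e.\ a Dirichlet eigenvalue (or $0$).

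It remains to upgrade convergence along the subsequence to convergence of the full curve, and to rule out the value $\lambda_\infty = 0$. For the latter: near $\lambda = 0$ both $\alpha_{\pm}$ are bounded (indeed $\alpha_{+}(0) = 0$ and $\alpha_{-}(0) = -1/a$ after taking the removable-singularity limit), so the same continuity argument excludes $\lambda_\infty = 0$; thus $j \neq 0$. For the full-curve convergence, note that the argument above shows that \emph{every} subsequential limit of $\lambda(\alpha)$, as $\alpha \to \infty$ along the chosen eigencurve, lies in the discrete set $\{\pi^2 j^2/(4a^2) : j \geq 1\}$; since $\lambda(\alpha)$ is bounded, continuous in $\alpha$ (away from the single crossing at $\lambda = 0$, which we have just excluded as a limit), and its limit set is a connected subset of a discrete set, that limit set is a single point, giving $\lambda(\alpha) \to \pi^2 j^2/(4a^2)$.

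The main obstacle I anticipate is purely bookkeeping rather than conceptual: one must be careful about the multivaluedness of $\sqrt\lambda$ and verify that the two branches of eigenvalues, after the substitution making the $\lambda = 0$ singularity removable, are genuinely single-valued holomorphic functions of $\lambda$ on $\C$ minus the Dirichlet eigenvalues; and one must confirm that an eigencurve $\lambda(\alpha)$ which is a priori only defined piecewise (between crossings) is globally well-defined here because, as already observed in the text, the only crossing is at $\lambda = 0$ and that point has been ruled out as a limit. Everything else reduces to the elementary fact that a holomorphic function is locally bounded, together with the connectedness-of-the-limit-set argument.
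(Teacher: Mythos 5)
Your argument is correct and takes essentially the same route as the paper: since $\alpha$ must be an eigenvalue of the Dirichlet-to-Neumann matrix $M(\lambda)$, which is meromorphic in $\lambda$ with poles only at the Dirichlet spectrum, a bounded eigencurve with $\alpha\to\infty$ can only approach such a pole. Your extra bookkeeping (single-valuedness via $\alpha_+=\sqrt{\lambda}\tan(a\sqrt{\lambda})$ and $\alpha_-=-\sqrt{\lambda}\cot(a\sqrt{\lambda})$, ruling out $\lambda=0$, and the connectedness-of-the-limit-set step upgrading subsequential to full convergence) merely makes explicit details that the paper's short proof leaves implicit.
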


\subsection{Divergent eigenvalues away from the positive real axis}
\label{subsec:interval_divergent}
Suppose now that $\lambda \to \infty$ in $\C$ in such a way that its distance to the positive real axis diverges. For simplicity, we will actually suppose that $\lambda$ diverges in a sector away from the positive real axis; more precisely, we start by dividing the complex plane in the following fashion:

\begin{definition}
\label{def:sectors}
\begin{enumerate}
\item Let $0<\theta<\pi/2$ be an (arbitrarily small) angle and define the open sectors
\begin{equation}\label{eq:sectors}
	S_\theta^+:=\{z\in\C\;:\; \theta<\arg z < \pi-\theta\}\qquad\text{and}\qquad T_\theta^+ :=\{z\in\C\;:\; |\arg z|<\theta\}
\end{equation}
in the upper and right-hand half-planes, respectively (here we assume that the principal argument is always between $-\pi$ and $\pi$). We then define $S_\theta^- := -S_\theta^+$ and $T_\theta^-:=-T_\theta^+$ to be the corresponding sectors reflected in the real and imaginary axes, respectively, so that the complex plane is, up to two straight lines mutually crossing in $z=0$, symmetrically partitioned into four sectors; see Figure~\ref{fig:sectors}.
\item If $\theta = \pi/2$, the sectors $S_\theta^\pm$ vanish and $T_\theta^\pm$ are defined as in \eqref{eq:sectors}.
\end{enumerate}
 Furthermore, if $\pi/2<\theta'<\pi$, we set define $T_{\theta'}^+$ by \eqref{eq:sectors}, that is, a partition of the complex plane in two sectors $T_{\theta'}^+$ and $T_{\pi-\theta'}^-$
\end{definition}

\begin{figure}[h]
  \includegraphics[scale=0.42]{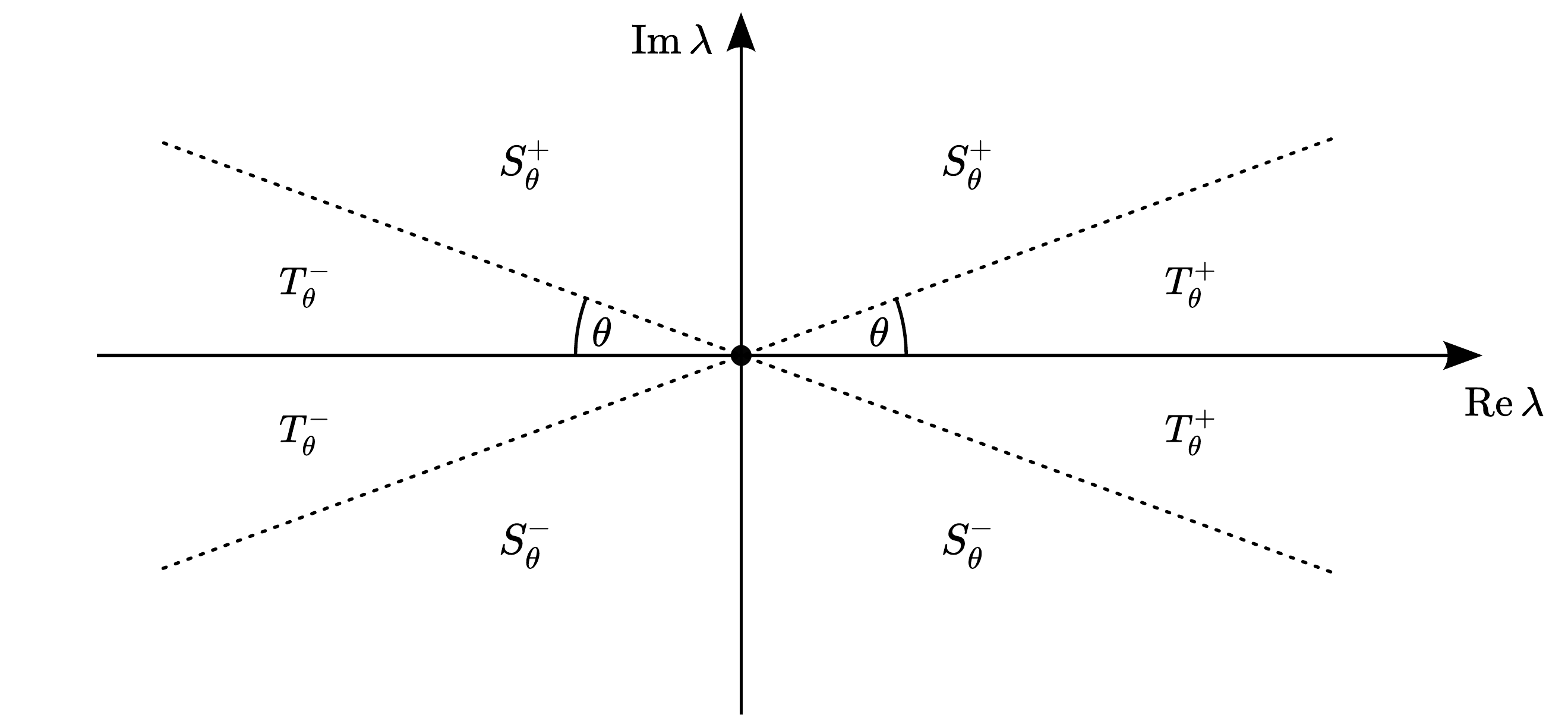}
  \caption{The four sectors $S_\theta^\pm$ and $T_\theta^\pm$.}
  \label{fig:sectors}
\end{figure}

We then make the following assumption.

\begin{assumption}
\label{assumptionSector}
We suppose that $\lambda$ diverges in the sector $\C \setminus T_{2\theta}^+$ for some small $\theta \in (0,\pi/2)$.
\end{assumption}

This ensures that $\lambda$ does not approach any eigenvalue $\lambda_j\in\sigma(-\Delta_\Omega^D)\subset\R$ of the Dirichlet Laplacian; moreover, the assumption is equivalent to $\sqrt{\lambda}$ diverging to $\infty$ in one of the sectors $S_\theta^\pm$.

But this implies in particular that $\Im \sqrt{\lambda} \to \pm \infty$, and for such $\sqrt{\lambda}$ we can determine the asymptotic behaviour of the Dirichlet-to-Neumann matrix \eqref{Interval:DNOperatorMatrix}, based on
\begin{equation}
\label{cotAsy}
	\cot z = \I \left(1+ \frac{2}{\e^{2\I z}-1} \right) = \mp \I + \mathcal{O} \left( \e^{\mp 4\Im z}\right)
\end{equation}
and
\begin{equation}
\label{cscAsy}
	\csc z = \frac{2\I}{\e^{\I z} - \e^{-\I z}} = \mathcal{O} \left(\e^{\mp 2 \Im z}\right)
\end{equation}
as $\Im z \to \pm \infty$, independently of $\Re z$. Indeed, Assumption~\ref{assumptionSector} allows us to choose $z=a\sqrt\lambda$, which leads to
\begin{equation}\label{eq:IntervalDNAsymptotics}
	M(\lambda)=\I\sqrt{\lambda} 
	\begin{pmatrix}
	\pm 1 & 0 \\
	0 & \pm 1
	\end{pmatrix}+\mathcal{O} \left(\sqrt{\lambda}\e^{\mp 2a\Im\sqrt{\lambda}}\right). 
\end{equation}
Recalling \eqref{Interval:RobinCondition}, in each of the cases $\Im\sqrt{\lambda}\to +\infty$ and $\Im\sqrt{\lambda} \to -\infty$ we obtain the respective existence of exactly one diverging eigenvalue behaving like $\alpha = \alpha (\lambda)$, whose square satisfies the behaviour
\begin{equation}
\label{Interval:Asymptotic_alpha(lambda)}
	\alpha^2 = -\lambda + \mathcal{O} \left(\lambda\e^{\mp 2a\Im\sqrt{\lambda}}\right)
\end{equation}
as $\Im \sqrt{\lambda} \to \pm \infty$. Inverting the equation from $\alpha(\lambda)$ to $\lambda(\alpha)$ and noting that these eigenvalues always correspond to $\Re \alpha \to -\infty$ (more precisely, we want $\alpha \to \infty$ in the left half-plane away from the imaginary axis, in order to guarantee that $-\alpha^2$ remains away from the positive real axis), we arrive at the following result.

\begin{theorem}
\label{thm:DivergingEigenvaluesInterval}
For the interval $\Omega=(-a,a)$, if $\alpha \to \infty$ in a sector of the form $T_\varphi^-$ for any $\varphi \in (0,\pi/2)$ (see Definition~\ref{def:sectors}), then for any $\theta \in (0,\pi-2\varphi)$ there are exactly two divergent eigenvalues of the Robin Laplacian in the sector $\C \setminus T_\theta^+$; these satisfy the asymptotics
\begin{equation}
\label{eq:DivergingEigenvaluesInterval}
	\lambda(\alpha)=-\alpha^2+\mathcal{O}\left( \alpha^2\e^{2a\Re\alpha}\right)
\end{equation} 
as $\alpha \to \infty$ in $T_\varphi^-$. If $\alpha \to \infty$ in such a way that $\Re \alpha$ remains bounded from below, then the Robin Laplacian has no divergent eigenvalues in $\C \setminus T_\theta^+$, for any $\theta > 0$.
\end{theorem}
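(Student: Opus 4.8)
The plan is to invert the two explicit eigenvalue branches of the Dirichlet-to-Neumann matrix. By the correspondence \eqref{Interval:RobinCondition}, for $\alpha\in\C$ the Robin eigenvalues $\lambda$ that are not Dirichlet eigenvalues are exactly the solutions of $\alpha_+(\lambda)=\alpha$ or $\alpha_-(\lambda)=\alpha$, with $\alpha_\pm$ as in \eqref{eq:alpha-curves-1D}; using $\csc z-\cot z=\tan(z/2)$ and $\csc z+\cot z=\cot(z/2)$ these become $\alpha_+(\lambda)=\sqrt\lambda\,\tan(a\sqrt\lambda)$ and $\alpha_-(\lambda)=-\sqrt\lambda\,\cot(a\sqrt\lambda)$, which are even in $\sqrt\lambda$ and hence well-defined and meromorphic in $\lambda$. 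I would therefore pass to the variable $w:=\sqrt\lambda$, chosen with $\arg w\in[\theta/2,\pi-\theta/2]$ (the branch uniquely determined for $\lambda\in(\C\setminus T_\theta^+)\setminus\{0\}$), set $\phi_+(w):=w\tan(aw)$ and $\phi_-(w):=-w\cot(aw)$ so that $\alpha_\pm(w^2)=\phi_\pm(w)$, and study the equations $\phi_\pm(w)=\alpha$ as $\alpha\to\infty$ in $T_\varphi^-$. Both $\phi_\pm$ are holomorphic off the real $w$-axis, hence on the relevant cone.

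The key preliminary observations are geometric. If $\lambda\in\C\setminus T_\theta^+$ has $|\lambda|$ large, then $\Im w\geq\sin(\theta/2)\,|w|\to\infty$, and in this regime \eqref{cotAsy}--\eqref{cscAsy} (equivalently \eqref{Interval:Asymptotic_alpha(lambda)}) give
\[
	\phi_\pm(w)=\I w+\mathcal{O}(w\,\e^{-2a\Im w}),
\]
uniformly, the two branches differing only in the exponentially small remainder. On the other hand, for $\alpha\in T_\varphi^-$ one has $-\alpha^2\in T_{2\varphi}^-$; since $\theta<\pi-2\varphi$ this point lies in $\C\setminus T_\theta^+$ at angular distance at least $\tfrac12(\pi-2\varphi-\theta)>0$ from its boundary, and the leading-order candidate solution $w_0:=-\I\alpha$ of $\I w=\alpha$ satisfies $\arg w_0\in[\pi/2-\varphi,\pi/2+\varphi]\subset(\theta/2,\pi-\theta/2)$ and $\Im w_0=|\Re\alpha|\to\infty$, so that $\e^{-2a\Im w_0}=\e^{2a\Re\alpha}$.

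Next I would carry out the inversion by Rouch\'e's theorem. Fix a small $\varepsilon\in(0,\tfrac12)$ and put $W:=\{w:\arg w\in[\theta/2,\pi-\theta/2],\ \varepsilon|\alpha|\leq|w|\leq\varepsilon^{-1}|\alpha|\}$. On $\partial W$ one has $\Im w\geq\varepsilon\sin(\theta/2)|\alpha|$, so the displayed asymptotics give $|\phi_\pm(w)-(\I w-\alpha)|=|\phi_\pm(w)-\I w|\leq C\varepsilon^{-1}|\alpha|\,\e^{-2a\varepsilon\sin(\theta/2)|\alpha|}\to0$, whereas $|\I w-\alpha|\gtrsim|\alpha|$ on the two circular arcs and $|\I w-\alpha|\gtrsim|w|+|\alpha|$ on the two radial segments --- here one uses that on $\{\arg w\in\{\theta/2,\pi-\theta/2\}\}$ the direction of $\I w$ stays at angular distance $\geq\tfrac12(\pi-2\varphi-\theta)>0$ from $\arg\alpha\in(\pi-\varphi,\pi]\cup(-\pi,-\pi+\varphi)$, which is precisely the role of the hypothesis $\theta<\pi-2\varphi$. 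Hence $|\phi_\pm(w)-(\I w-\alpha)|<|\I w-\alpha|$ on $\partial W$ once $|\alpha|$ is large, so each of $\phi_+,\phi_-$ has exactly one zero in $W$, as does $w\mapsto\I w-\alpha$ (namely $w_0$, which lies strictly inside $W$). Call these zeros $w_+$ and $w_-$: they are distinct, since $w_+=w_-$ would force $\tan^2(aw_+)=-1$, impossible for $w_+\neq0$, and both have positive imaginary part, so $\lambda_\pm:=w_\pm^2$ are two distinct (and, being large, simple) Robin eigenvalues, analytic in $\alpha$; solving $\phi_\pm(w_\pm)=\alpha$ and using $\Im w_\pm\sim|\Re\alpha|$ yields $w_\pm=-\I\alpha+\mathcal{O}(|\alpha|\e^{2a\Re\alpha})$, hence $\lambda_\pm=-\alpha^2+\mathcal{O}(\alpha^2\e^{2a\Re\alpha})$, which is \eqref{eq:DivergingEigenvaluesInterval}. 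Conversely, any divergent eigencurve $\lambda(\alpha)$ lying in $\C\setminus T_\theta^+$ for large $\alpha$ has $w:=\sqrt{\lambda(\alpha)}$ in the cone $\arg w\in[\theta/2,\pi-\theta/2]$ with $\Im w\to\infty$, so $|\phi_\pm(w)|$ is of order $|w|$; then $\phi_\pm(w)=\alpha$ forces $|w|$ to be of order $|\alpha|$, hence $w\in W$ and $\lambda(\alpha)\in\{\lambda_+(\alpha),\lambda_-(\alpha)\}$ --- exactly two divergent eigenvalues in $\C\setminus T_\theta^+$, the remaining eigencurves being bounded and convergent to the Dirichlet spectrum by Theorem~\ref{Interval:thm:ConvergenceDirichletSpectrum}. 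Finally, if instead $\Re\alpha$ stays bounded from below while some eigencurve diverged inside $\C\setminus T_\theta^+$ for some $\theta>0$, the same asymptotics give $\alpha=\phi_\pm(w)=\I w+\mathcal{O}(w\e^{-2a\Im w})=\I w+o(1)$ with $\Im w\to\infty$, so $\Re\alpha=-\Im w+o(1)\to-\infty$, a contradiction.

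The main obstacle is making the inversion uniform over all of $T_\varphi^-$ and over every admissible $\theta$ at once: one must verify that the Rouch\'e contour $\partial W$ can always be placed inside the ``good'' sector $\C\setminus T_\theta^+$, in the regime where the exponential estimates hold, and that on its radial pieces the comparison function $w\mapsto\I w-\alpha$ is bounded away from zero with a constant independent of $\alpha$ --- this is exactly where $\theta<\pi-2\varphi$ enters, and it also forces one to keep track of which square root of $-\alpha^2$ is the relevant $w_0$. Everything else is routine book-keeping of the exponentially small error terms.
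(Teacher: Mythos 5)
Your proposal is correct and follows essentially the same route as the paper's proof: the Dirichlet-to-Neumann duality for the interval, the exponentially small asymptotics of the eigenvalue branches as $\Im\sqrt{\lambda}\to+\infty$, and a Rouch\'e-type inversion to pass from $\alpha(\lambda)$ back to $\lambda(\alpha)$, with the same count of exactly two divergent eigenvalues and the same error term $\mathcal{O}(\alpha^2\e^{2a\Re\alpha})$. The only differences are in execution rather than substance: you separate the even/odd branches $w\tan(aw)$ and $-w\cot(aw)$ explicitly and apply Rouch\'e on a macroscopic annular sector, recovering the exponentially small remainder by a bootstrap, whereas the paper applies Rouch\'e directly on a disc of exponentially small radius $C|\alpha|\e^{2a\Re\alpha}$ centred at $-\I\alpha$ and organises the two eigenvalues via the two regimes $\Im\sqrt{\lambda}\to\pm\infty$.
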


A special case and immediate implication of the latter theorem is $\alpha$ diverging on any ray (half-line) in the left half-plane and thus in a sector $T_\varphi^-$ for some given $\varphi \in (0,\pi/2)$: we suppose $\alpha$ may be written as a function $\alpha: (0,\infty)\ni t\mapsto t\e^{\I\vartheta}\in\C$ for some fixed $\pi/2 <\vartheta<3\pi/2$, which in particular means that $\alpha(t)\in T_\varphi^-$ for all $t>0$.

\begin{corollary}
For the interval $\Omega=(-a,a)$, if $\alpha(t)=t\e^{\I\vartheta}\to\infty$ for any fixed $\pi/2 <\vartheta<3\pi/2$, then for any $\theta \in (0,\pi-2\vartheta)$, for sufficiently large $t>0$ there are exactly two eigenvalues $\lambda$ of the Robin Laplacian in the sector $\C\setminus T_\theta^+$, and these both satisfy the asymptotics
\begin{equation}\label{Interval:LambdaForRays}
\lambda(\alpha(t)) = -t^2\e^{2\I\vartheta} + \mathcal{O}\left(t^2\e^{2\cos(\vartheta)at} \right)
\end{equation}
as $t\rightarrow\infty$.
\end{corollary}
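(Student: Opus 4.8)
The plan is to read the statement off directly from Theorem~\ref{thm:DivergingEigenvaluesInterval} by specialising $\alpha$ to the ray $\alpha(t)=t\e^{\I\vartheta}$. First I would check that such $\alpha$ fall under the hypothesis of that theorem. Since $\vartheta$ is fixed in $(\pi/2,3\pi/2)$, the principal argument of $\alpha(t)$ differs from $\pi$ (modulo $2\pi$) by exactly $|\vartheta-\pi|<\pi/2$, independently of $t>0$; recalling that $T_\varphi^-$ is the sector of half-opening $\varphi$ about the negative real semi-axis, this shows $\alpha(t)\in T_\varphi^-$ for every $t>0$ and every $\varphi\in\bigl(|\vartheta-\pi|,\pi/2\bigr)$. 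Fixing such a $\varphi$, Theorem~\ref{thm:DivergingEigenvaluesInterval} applies as $t\to\infty$: for every $\theta\in(0,\pi-2\varphi)$ there are exactly two divergent Robin eigenvalue curves contained in $\C\setminus T_\theta^+$, and they satisfy $\lambda(\alpha)=-\alpha^2+\mathcal{O}\bigl(\alpha^2\e^{2a\Re\alpha}\bigr)$. Letting $\varphi\downarrow|\vartheta-\pi|$ covers every $\theta$ below $\pi-2|\vartheta-\pi|$ (which is $>0$ precisely because $\vartheta\in(\pi/2,3\pi/2)$); this is the range for which the conclusion is meant to hold.

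It then remains to substitute $\alpha=\alpha(t)=t\e^{\I\vartheta}$ into the expansion. One has $-\alpha^2=-t^2\e^{2\I\vartheta}$, $|\alpha^2|=t^2$ and $\Re\alpha=t\cos\vartheta$ with $\cos\vartheta<0$, so the error term becomes $\mathcal{O}\bigl(t^2\e^{2at\cos\vartheta}\bigr)$, which is exactly \eqref{Interval:LambdaForRays}. To upgrade ``exactly two \emph{divergent} eigenvalues in $\C\setminus T_\theta^+$'' to ``exactly two eigenvalues in $\C\setminus T_\theta^+$'' I would add the observation that, by Theorem~\ref{Interval:thm:ConvergenceDirichletSpectrum}, every eigencurve that does not diverge converges to some $\pi^2j^2/(4a^2)$, which lies in the interior of $T_\theta^+$; since the Dirichlet eigenvalues accumulate only at $+\infty$ along the positive real axis, all but finitely many eigencurves are already deep inside $T_\theta^+$ uniformly in $t$, and the remaining finitely many (those not among the two divergent ones) enter $T_\theta^+$ once $t$ is large, leaving precisely the two divergent curves outside.

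There is essentially no genuine obstacle here: the substance is entirely contained in Theorem~\ref{thm:DivergingEigenvaluesInterval}, and the work reduces to elementary bookkeeping of the sector angles --- matching $\varphi$, $\vartheta$ and $\theta$, and noting that $-\e^{2\I\vartheta}$ stays bounded away from the positive real semi-axis (indeed by angular distance $\pi-2|\vartheta-\pi|$) precisely because $\vartheta$ is bounded away from $\pi/2$ and $3\pi/2$ --- together with the routine counting argument just indicated.
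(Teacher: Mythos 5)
Your proposal is correct and coincides with the paper's own treatment: the corollary is stated there as an immediate special case of Theorem~\ref{thm:DivergingEigenvaluesInterval}, obtained exactly as you do by noting $\alpha(t)=t\e^{\I\vartheta}\in T_\varphi^-$ for suitable $\varphi\in(|\vartheta-\pi|,\pi/2)$ and substituting $-\alpha^2=-t^2\e^{2\I\vartheta}$, $\Re\alpha=t\cos\vartheta$ into the error term. Your additional bookkeeping (the exact range of admissible $\theta$, and the observation via Theorem~\ref{Interval:thm:ConvergenceDirichletSpectrum} that the non-divergent curves end up inside $T_\theta^+$ for large $t$) only spells out what the paper leaves implicit.
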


For full details and a proof of the theorem we refer to Section~\ref{subsec:ExampleInterval}. The eigenvalue behaviour described in Theorems~\ref{Interval:thm:ConvergenceDirichletSpectrum} and~\ref{thm:DivergingEigenvaluesInterval}, and our approach taken here, should be compared with the corresponding case of real $\alpha$ discussed in \cite[Section~4.3.1]{BFK}.

\subsection{Divergent eigenvalues near the positive real axis}
\label{subsec:interval_divergent_positive_real}

The other possibility is that $\lambda \to \infty$ inside a sector of the form $T_{2\theta}^+$, equivalently, that $\sqrt{\lambda} \to \infty$ in one of the sectors $T_\theta^\pm$. While it is clear that the corresponding eigenvalues $\alpha$ of $M(\lambda)$ must diverge, equivalently, that the divergent eigenvalues $\lambda(\alpha)$ in this sector correspond to divergent $\alpha$, the situation is complicated by the proximity of $\sqrt{\lambda}$ to the poles of $\cot$ and $\csc$. In such cases, we can expect $\Im\alpha \to \pm \infty$, consistent with the asymptotics $\lambda \sim -\alpha^2$. 
However, in general any particular $\lambda$-curve
such that $\lambda$ diverges along a path within a strip of fixed width around the positive real axis, the corresponding eigenvalues $\alpha (\lambda)$ of $M(\lambda)$ satisfy $|\Im \alpha (\lambda)| \to \infty$ and $\Re \alpha (\lambda)$ oscillates and diverges indefinitely:

\begin{proposition}\label{prop:strip}
Suppose $\lambda$ diverges along a path within a strip of fixed width around the positive real axis. Then the corresponding eigenvalues $\alpha (\lambda)$ of $M(\lambda)$ satisfy $|\Im \alpha (\lambda)| \to \infty$ and $\Re \alpha (\lambda)$ oscillates and diverges indefinitely.
\end{proposition}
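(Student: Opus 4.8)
The plan is to use the explicit diagonalisation of $M(\lambda)$ recorded in \eqref{Interval:DNOperatorMatrix}, whose eigenvalues are the functions $\alpha_\pm(\lambda)$ of \eqref{eq:alpha-curves-1D}. Using the half-angle identities $\csc z-\cot z=\tan(z/2)$ and $\csc z+\cot z=\cot(z/2)$ with $z=2a\sqrt\lambda$, these simplify to
\[
	\alpha_+(\lambda)=\sqrt\lambda\,\tan(a\sqrt\lambda),\qquad \alpha_-(\lambda)=-\sqrt\lambda\,\cot(a\sqrt\lambda),
\]
so it suffices to track these two meromorphic functions along the given path. Write $\lambda=\mu+\I\tau$; divergence of $\lambda$ inside a strip $\{|\Im z|\le w,\ \Re z\ge -c\}$ forces $\mu=\Re\lambda\to+\infty$ with $|\tau|\le w$, hence $\sqrt\lambda\to\infty$ staying near the positive real axis, with $\Re\sqrt\lambda\sim\sqrt\mu\to\infty$ and $\Im\sqrt\lambda=\tau/(2\Re\sqrt\lambda)\to 0$. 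The poles of $\alpha_\pm$ are precisely the Dirichlet eigenvalues $\lambda_j=\pi^2j^2/(4a^2)$ (odd $j$ for $\alpha_+$, even $j$ for $\alpha_-$), all on the positive real axis; the path avoids them but, since $\Re\lambda\to\infty$, its real part eventually passes through each $\lambda_j$.

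The core step is a \emph{uniform Laurent expansion} near these poles. Fixing a pole $\lambda_0=\lambda_0(m)$ of $\alpha_+$, a short computation using $\tan(a\sqrt\lambda)=-\cot(a(\sqrt\lambda-\sqrt{\lambda_0}))$ and $\sqrt\lambda-\sqrt{\lambda_0}=(\lambda-\lambda_0)/(\sqrt\lambda+\sqrt{\lambda_0})$ gives
\[
	\alpha_+(\lambda)=\frac{-2\lambda_0/a}{\lambda-\lambda_0}+R_m(\lambda),
\]
where $R_m$ is holomorphic on the disc $|\lambda-\lambda_0|<\rho$ — which, for fixed $\rho$ and all large $m$, contains no other pole, since consecutive Dirichlet eigenvalues are spaced $\sim\pi^2 m/(2a^2)\to\infty$ apart — and, crucially, $\sup_{|\lambda-\lambda_0|\le\rho}|R_m(\lambda)|\le C(\rho)$ uniformly in $m$. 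The residue is real and negative with modulus $2\lambda_0/a\to\infty$. The analogous expansion holds for $\alpha_-$ at its poles.

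Now fix $\rho_0$ with $0<\rho_0\le w$ and $\rho=2w$, and for each large $m$ follow the path across the disc around $\lambda_0$: by the intermediate value theorem (applied to $\Re\lambda(t)$, which is not assumed monotone, so the relevant times are defined as suitable suprema/infima of level sets) there are times $t_m^-<t_m^0<t_m^{++}$ at which $\Re\lambda$ equals $\lambda_0-\rho_0$, $\lambda_0$, $\lambda_0+\rho_0$ respectively, with the intervening sub-arc lying in $|\lambda-\lambda_0|<\rho$ since $|\Im\lambda|\le w$. Substituting $\lambda-\lambda_0\in\{-\rho_0+\I\tau,\ \I\tau',\ \rho_0+\I\tau''\}$ (with moduli $\le w$, and $\tau'\ne 0$ because the path avoids $\lambda_0$) into the expansion and using the sign of the residue yields, for $\kappa:=\rho_0/(\rho_0^2+w^2)>0$,
\[
	\Re\alpha_+(\lambda(t_m^-))\ge\kappa\cdot\tfrac{2\lambda_0}{a}-C\to+\infty,\qquad \Re\alpha_+(\lambda(t_m^{++}))\le-\kappa\cdot\tfrac{2\lambda_0}{a}+C\to-\infty,
\]
while at $t_m^0$ one has $\Re\alpha_+=O(1)$ but $|\Im\alpha_+(\lambda(t_m^0))|=(2\lambda_0/a)/|\tau'|+O(1)\ge 2\lambda_0/(aw)-C\to\infty$. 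Since the eigencurve $\alpha_+(\lambda(t))$ is continuous along the path (it meets none of the poles), $\Re\alpha_+$ changes sign on each interval $(t_m^-,t_m^{++})$; letting $m\to\infty$ this produces infinitely many sign changes and $\limsup\Re\alpha_+=+\infty$, $\liminf\Re\alpha_+=-\infty$ — i.e.\ $\Re\alpha$ oscillates with unbounded amplitude — while $|\Im\alpha_+|$ is unbounded along the path. The identical argument applied to $\alpha_-$ at the even Dirichlet eigenvalues completes the proof for both eigenvalue branches.

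The main obstacle is the second step: controlling the remainder $R_m$ uniformly as $m\to\infty$. Because the residues themselves diverge, one must verify that the regular part of $\alpha_\pm$ at the $m$-th pole stays bounded on a disc of fixed radius; expanding $\cot$ and using $\sqrt\lambda-\sqrt{\lambda_0}=(\lambda-\lambda_0)/(\sqrt\lambda+\sqrt{\lambda_0})$ shows that on the circle $|\lambda-\lambda_0|=\rho$ this regular part equals a universal constant plus $O(\rho)$, whence the bound follows from the maximum principle. A secondary, purely bookkeeping, difficulty is that $\Re\lambda(t)$ need not be monotone, so the times $t_m^\pm$ must be chosen with care to keep the relevant sub-arcs inside the disc where the expansion is valid. (If one wishes the literal conclusion $|\Im\alpha|\to\infty$ rather than unboundedness, one should read it as: the supremum of $|\Im\alpha|$ over each tail of the path tends to $\infty$, which is what the argument above delivers.)
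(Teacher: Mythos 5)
Your proof is correct, and it takes a genuinely different route from the paper's. The paper stays global: starting from the closed form \eqref{eq:interval-DN-EV} it writes $\sqrt\lambda=x+\I y$ with $x\to\infty$, $y=\mathcal{O}(1/x)$, computes $\Re\alpha_+$ explicitly as a bounded quotient plus a term with numerator $x\sin(2ax)$ over the bounded oscillating denominator $\cosh(2ay)-\cos(2ax)$, and reads off the unbounded oscillation of $\Re\alpha_+$ directly; the imaginary-part claim is not treated separately there. You instead rewrite $\alpha_+=\sqrt\lambda\tan(a\sqrt\lambda)$, $\alpha_-=-\sqrt\lambda\cot(a\sqrt\lambda)$ (consistent with \eqref{eq:alpha-curves-1D}) and localise at the poles: the residue $-2\lambda_0/a$ at the Dirichlet eigenvalue $\lambda_0$ is correct, and your uniform bound on the regular part on a disc of fixed radius is the one step that genuinely needs care and you supply it correctly -- on $|\lambda-\lambda_0|=\rho$ the regular part equals $-\bigl(1+\sqrt{\lambda_0}/(\sqrt\lambda+\sqrt{\lambda_0})\bigr)/a+\mathcal{O}(\rho)$, so the maximum principle gives a bound independent of the pole index, and the growing spacing of the poles keeps the discs disjoint. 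Evaluating just before, at, and just after each forced crossing of $\Re\lambda$ over $\lambda_0$ then gives quantitative lower bounds ($\gtrsim\lambda_0$ for $|\Re\alpha_\pm|$ with alternating sign, $\gtrsim\lambda_0/w$ for $|\Im\alpha_\pm|$), so your argument yields the unbounded sign-changing oscillation of $\Re\alpha_\pm$ \emph{and} an explicit treatment of the imaginary part, which the paper's short computation does not spell out; what the paper's route buys is brevity and a single formula valid along the entire path rather than only near the poles. Your closing caveat is apt and in fact necessary: the literal limit $|\Im\alpha_\pm(\lambda)|\to\infty$ fails along such paths, since when $\Re\lambda$ crosses a zero of the relevant trigonometric factor (e.g.\ $\mu_0=k^2\pi^2/a^2$ for $\alpha_+$) one has $\alpha_+\approx\tfrac a2(\lambda-\mu_0)=\mathcal{O}(w)$, so the statement can only hold in the unboundedness ($\limsup$) sense you prove -- which is also the sense in which the paper's own argument establishes the real-part assertion.
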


For the proof, see again Section~\ref{subsec:ExampleInterval}. Among other things, this intimates, when combined with the proof of Theorem \ref{thm:DivergingEigenvaluesInterval}, that the Robin Laplacian can only have divergent eigenvalues in the regime $\Re\alpha \to -\infty$ (indeed, if $\Re\alpha$ remains bounded from below, then the conjecture rules out divergent eigenvalues $\lambda$ such that $\Im\sqrt{\lambda}$ remains bounded); while by Theorem~\ref{thm:DivergingEigenvaluesInterval} and its proof there can be no divergent eigenvalues $\lambda$ such that $\Im\sqrt{\lambda} \to \pm \infty$. This observation, in particular, supports Conjecture~\ref{conj:general}.

A more complete description of the relationship between $\lambda$ and $\alpha$ in this case will however be deferred to a later work.


\section{The Robin Laplacian with complex parameter}
\label{sec:robin-operator}

In this section we will collect a number of basic properties of the Robin Laplacian. We will be using the framework of Kato \cite[Chapter V and VII]{Kato}, and we start by recalling some definitions from there.

We assume throughout that $H$ is a Hilbert space with inner product $(\,\cdot\,,\,\cdot\,)$ and norm $\|\cdot\|_H$, $\mathcal{A}:D(\mathcal{A})\subset H\to H$ is a closed, densely defined linear operator with spectrum $\sigma (\mathcal{A}) \subset \C$, point spectrum (set of eigenvalues) $\sigma_p(\mathcal A)$ and resolvent set $\rho (\mathcal{A}) = \C \setminus \sigma(\mathcal{A})$, and $a: D(a) \times D(a) \subset H \times H \to \C$ is a densely defined sesquilinear form. We call the set
\begin{equation}
\label{eq:operator-numerical-range}
	W(\mathcal{A}):=\{(\mathcal{A}u,u): u \in D(\mathcal{A}) \text{ and } \|u\|_H = 1\} \subset \C
\end{equation}
the \emph{numerical range of $\mathcal{A}$} and, likewise, the set
\begin{equation}
\label{eq:form-numerical-range}
	W(a):=\{a[u,u]: u \in D(a) \text{ and } \|u\|_H = 1\} \subset \C
\end{equation}
the \emph{numerical range of $a$}. If $\mathcal{A}$ is the operator associated with $a$, that is, if $\mathcal{A}$ is defined by
\begin{displaymath}
\begin{aligned}
	D(\mathcal{A}) &= \{ u \in D(a): \exists h \in H \text{ such that } a[u,v]=(h,v)\;\; \forall v \in D(a) \},\\
	\mathcal{A}u &= h,
\end{aligned}
\end{displaymath}
then it follows immediately from the definitions that $\sigma_p (\mathcal{A}) \subset W (\mathcal{A}) \subset W (a)$.

Finally, we call $\mathcal{A}$ \emph{m-sectorial} (of semi-angle $\theta$) if there exist a vertex $\gamma \in \R$ and an angle $0\leq \theta < \pi/2$ such that
\begin{equation}
\label{eq:sectorial}
	W (\mathcal{A}) \subset \{z \in \C: |\arg (z-\gamma)| \leq \theta \}
\end{equation}
(where, again, the principal argument of a complex number is taken to be between $-\pi$ and $\pi$) and for all $\lambda \in \C$ with $\Re \lambda < \gamma$ we have that $\lambda \in \rho(\mathcal{A})$ satisfies the resolvent norm estimate
\begin{displaymath}
	\| (\mathcal{A}-\lambda I)^{-1} \|_{H\to H} \leq \frac{1}{|\gamma - \Re \lambda|}.
\end{displaymath}
The form $a$ is likewise called \emph{sectorial} (of semi-angle $\theta$) if \eqref{eq:sectorial} holds for $W (a)$.

Now let $\Omega \subset \R^d$, $d \geq 1$, be a bounded domain, that is, a bounded open set with a finite number of connected components, and (if $d\geq 2$) assume that its boundary $\partial\Omega$ is locally the graph of a Lipschitz function, for short Lipschitz. For $\alpha \in \C$ we define the sesquilinear form $a_\alpha : H^1 (\Omega) \times H^1 (\Omega) \to \C$ by
\begin{equation}
\label{eq:robin-form}
	a_\alpha[u,v]=\int_\Omega \nabla u\cdot\overline{\nabla v}\,\dx +\int_{\partial\Omega}\alpha u\overline{v}\,\mathrm{d}\sigma(x),
\end{equation}
where the boundary integral is to be understood in the sense of traces, as is customary; more precisely, we have written $u$ and $\overline{v}$ as shorthand for the traces $\tr u,\overline{\tr v} \in L^2(\partial\Omega)$ of the functions $u,v \in H^1(\Omega)$, respectively (see, e.g., \cite[Section~1]{AtE11} and also Lemma~\ref{lem:robin-operator} below). We will refer to the form $a_\alpha$ as the \emph{Robin form} (for the parameter $\alpha$) and call the operator on $L^2 (\Omega)$ associated with $a_\alpha$ the \emph{Robin Laplacian}, denoted by $-\Delta_\Omega^\alpha$. The arguments of, e.g., \cite[Section~2]{ArendtMazzeo} or \cite[Section~1]{RohlederBook} for real $\alpha$ may be repeated verbatim here to show that this operator is given by
\begin{displaymath}
\begin{aligned}
	D(-\Delta_\Omega^\alpha) &= \left\{ u \in H^1 (\Omega): \Delta u \in L^2 (\Omega) \text{ and } \frac{\partial u}{\partial\nu} \in L^2(\partial\Omega)
	\text{ with } \frac{\partial u}{\partial\nu} + \alpha u = 0 \right\},\\
	-\Delta_\Omega^\alpha u &= -\Delta u,
\end{aligned}
\end{displaymath}
where $\Delta u = \sum_{i=1}^d \frac{\partial^2 u}{\partial x_i^2}$ is the distributional Laplacian and $\frac{\partial u}{\partial \nu}$ is the outer normal derivative of $u$, that is, the function $\frac{\partial u}{\partial \nu} =: h$ such that
\begin{equation}
\label{eq:l2-normal-derivative}
	\int_\Omega \nabla u \cdot \overline{\nabla v} + \Delta u \overline{v}\,\dx = \int_{\partial\Omega} h\overline{v}\,\textrm{d}\sigma
\end{equation}
for all $v \in H^1 (\Omega)$. One may show that any $u \in H^1 (\Omega)$ with $\Delta u \in L^2 (\Omega)$ has an outer normal derivative in $L^2 (\partial\Omega)$ in the sense of \eqref{eq:l2-normal-derivative}. For more details on this approach to the Robin Laplacian (for real $\alpha$), we again refer to \cite[Section~2]{ArendtMazzeo} or \cite[Section~1]{RohlederBook}. If $\alpha = 0$, then we write $-\Delta_\Omega^N$ in place of $-\Delta_\Omega^0$ for the operator associated with the form $a_0$, which we call the \emph{Neumann Laplacian}, and if $a_0$ is restricted to $H^1_0 (\Omega) \times H^1_0 (\Omega)$, then we call the associated operator the \emph{Dirichlet Laplacian}, which we denote by $-\Delta_\Omega^D$:
\begin{displaymath}
\begin{aligned}
	D(-\Delta_\Omega^D) &= \left\{ u \in H^1_0 (\Omega): \Delta u \in L^2 (\Omega) \right\},\\
	-\Delta_\Omega^D u &= -\Delta u.
\end{aligned}
\end{displaymath}
The following theorem is well known.

\begin{theorem}
\label{thm:dirichlet-and-neumann}
Let $\Omega \subset \R^d$, $d \geq 1$, be a bounded Lipschitz domain. The operators $-\Delta_\Omega^D$ and $-\Delta_\Omega^N$ are self-adjoint and semi-bounded from below in $L^2(\Omega)$. Their spectra $\sigma(-\Delta_\Omega^D) \subset (0,\infty)$, $\sigma (-\Delta_\Omega^N) \subset [0,\infty)$ are discrete, consisting only of eigenvalues of finite multiplicity, whose algebraic and geometric multiplicities always coincide, and with $+\infty$ as their only point of accumulation.
\end{theorem}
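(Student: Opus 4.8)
The plan is to realise both operators through their associated quadratic forms and then invoke the standard form-representation theorem together with a compactness argument. First I would treat the form $a_0[u,v] = \int_\Omega \nabla u \cdot \overline{\nabla v}\,\dx$ on $D(a_0) = H^1(\Omega)$ for the Neumann case, and its restriction to $H^1_0(\Omega)$ for the Dirichlet case. In both cases the form is symmetric, non-negative, and closed: closedness of $a_0$ on $H^1(\Omega)$ is exactly the statement that the form norm $\|u\|_{a_0}^2 = \|\nabla u\|_{L^2}^2 + \|u\|_{L^2}^2$ is equivalent to the $H^1(\Omega)$-norm, under which $H^1(\Omega)$ is complete; closedness on $H^1_0(\Omega)$ follows because $H^1_0(\Omega)$ is a closed subspace of $H^1(\Omega)$. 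The first representation theorem (\cite[Chapter~VI]{Kato}) then yields self-adjoint, non-negative operators $-\Delta_\Omega^N$ and $-\Delta_\Omega^D$; self-adjointness and semi-boundedness from below (by $0$) are immediate, and one identifies the operators with the differential-operator descriptions given above in the standard way (integration by parts / \eqref{eq:l2-normal-derivative}).

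Next I would establish discreteness of the spectrum. The key ingredient is that the form domain embeds compactly into $L^2(\Omega)$: for a bounded Lipschitz domain, the Rellich--Kondrachov theorem gives that $H^1(\Omega) \hookrightarrow L^2(\Omega)$ compactly, and a fortiori $H^1_0(\Omega) \hookrightarrow L^2(\Omega)$ compactly. Consequently the resolvent $(-\Delta_\Omega^N + I)^{-1}$ (respectively $(-\Delta_\Omega^D)^{-1}$, using that $0$ is not in the Dirichlet spectrum by the Poincaré inequality on $H^1_0(\Omega)$) is a compact, self-adjoint, positive operator on $L^2(\Omega)$: it factors through the bounded map $L^2(\Omega) \to$ form domain followed by the compact embedding back into $L^2(\Omega)$. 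The spectral theorem for compact self-adjoint operators then gives that each resolvent has a discrete spectrum accumulating only at $0$, with eigenvalues of finite multiplicity; transporting this back, $\sigma(-\Delta_\Omega^N)$ and $\sigma(-\Delta_\Omega^D)$ consist only of eigenvalues of finite multiplicity accumulating only at $+\infty$. Since the operators are self-adjoint, algebraic and geometric multiplicities coincide (there are no generalised eigenvectors, i.e.\ no Jordan blocks).

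Finally I would pin down the spectral inclusions $\sigma(-\Delta_\Omega^D) \subset (0,\infty)$ and $\sigma(-\Delta_\Omega^N) \subset [0,\infty)$. Non-negativity of the forms gives $\sigma \subset [0,\infty)$ in both cases; for Dirichlet one upgrades this to strict positivity via the Poincaré (Friedrichs) inequality $\|u\|_{L^2}^2 \leq C_\Omega \|\nabla u\|_{L^2}^2$ valid on $H^1_0(\Omega)$, which forces $a_0[u,u] \geq C_\Omega^{-1}\|u\|_{L^2}^2 > 0$ for $u \neq 0$, so $0 \notin \sigma(-\Delta_\Omega^D)$. For Neumann, $0$ is in the spectrum (constants are eigenfunctions), so only $[0,\infty)$ holds. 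The main obstacle — such as it is — is the compact embedding $H^1(\Omega) \hookrightarrow\hookrightarrow L^2(\Omega)$ for a general bounded Lipschitz domain, which requires the Lipschitz hypothesis in an essential way (an extension-operator argument reduces it to the classical Rellich theorem on a ball); everything else is standard form theory. Since this theorem is classical, one may alternatively simply cite a standard reference such as \cite[Section~1]{RohlederBook} or \cite[Section~2]{ArendtMazzeo} and omit the details.
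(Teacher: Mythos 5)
Your argument is correct and is exactly the standard form-theoretic proof the paper has in mind: the paper states Theorem~\ref{thm:dirichlet-and-neumann} without proof as ``well known'', and the machinery you use (Kato's first representation theorem for the closed, symmetric, non-negative form on $H^1(\Omega)$ resp.\ $H^1_0(\Omega)$, the compact embedding of the form domain into $L^2(\Omega)$ for bounded Lipschitz $\Omega$, and the spectral theorem for the compact self-adjoint resolvent, with Poincar\'e giving $0\notin\sigma(-\Delta_\Omega^D)$) is precisely the framework the paper itself invokes for the Robin operator in Theorem~\ref{thm:robin-operator}. Your closing remark that one may simply cite a standard reference matches the paper's treatment.
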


We now turn to the Robin Laplacian. The following lemma is key to establishing its properties.

\begin{lemma}
\label{lem:robin-operator}
Let $\Omega \subset \R^d$, $d \geq 1$, be a bounded Lipschitz domain and $\alpha \in \C$. The Robin form given by \eqref{eq:robin-form} is bounded in $H^1(\Omega)$ and sectorial of semi-angle $\theta$ for any $0 < \theta < \pi/2$.
\end{lemma}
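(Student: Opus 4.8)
The statement has two parts: boundedness of $a_\alpha$ in $H^1(\Omega)$, and sectoriality of semi-angle $\theta$ for every $\theta \in (0,\pi/2)$. The single ingredient underlying both is a trace inequality: for a bounded Lipschitz domain $\Omega$, the trace operator $\tr\colon H^1(\Omega)\to L^2(\partial\Omega)$ is bounded, and moreover for every $\varepsilon>0$ there is a constant $C_\varepsilon=C_\varepsilon(\Omega)>0$ such that
\begin{equation}
\label{eq:trace-ehrling}
	\|\tr u\|_{L^2(\partial\Omega)}^2 \leq \varepsilon \|\nabla u\|_{L^2(\Omega)}^2 + C_\varepsilon \|u\|_{L^2(\Omega)}^2 \qquad \text{for all } u \in H^1(\Omega).
\end{equation}
This is standard (it follows from the boundedness of the trace together with the compactness of $H^1(\Omega)\hookrightarrow L^2(\Omega)$ and an Ehrling-type argument, or directly from a partition of unity reducing to the half-space model). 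I would state \eqref{eq:trace-ehrling} explicitly at the start of the proof, since everything else is then a short computation.

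\textbf{Boundedness.}
By Cauchy--Schwarz in $L^2(\Omega)$ and the boundedness of the trace operator,
\begin{displaymath}
	|a_\alpha[u,v]| \leq \|\nabla u\|_{L^2(\Omega)}\|\nabla v\|_{L^2(\Omega)} + |\alpha|\,\|\tr u\|_{L^2(\partial\Omega)}\|\tr v\|_{L^2(\partial\Omega)} \leq (1 + |\alpha|\, C_{\mathrm{tr}}^2)\,\|u\|_{H^1(\Omega)}\|v\|_{H^1(\Omega)},
\end{displaymath}
where $C_{\mathrm{tr}}$ is the norm of the trace operator; this is boundedness in $H^1(\Omega)$.

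\textbf{Sectoriality.}
Fix $\theta\in(0,\pi/2)$; we must produce $\gamma\in\R$ with $W(a_\alpha)\subset\{z:|\arg(z-\gamma)|\leq\theta\}$. Write $\alpha = \Re\alpha + \I\,\Im\alpha$ and compute, for $\|u\|_{L^2(\Omega)}=1$,
\begin{displaymath}
	\Re a_\alpha[u,u] = \|\nabla u\|_{L^2(\Omega)}^2 + (\Re\alpha)\|\tr u\|_{L^2(\partial\Omega)}^2, \qquad \Im a_\alpha[u,u] = (\Im\alpha)\|\tr u\|_{L^2(\partial\Omega)}^2.
\end{displaymath}
The asymmetric part is controlled by the real part plus a shift: apply \eqref{eq:trace-ehrling} with $\varepsilon$ chosen small (depending on $\theta$ and $|\alpha|$). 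Concretely, using \eqref{eq:trace-ehrling} with $\varepsilon = \frac{\tan\theta}{|\Im\alpha|}$ when $\Im\alpha\neq 0$ (the case $\Im\alpha=0$ being trivial, the form then being symmetric and bounded below), one gets
\begin{displaymath}
	|\Im a_\alpha[u,u]| = |\Im\alpha|\,\|\tr u\|^2_{L^2(\partial\Omega)} \leq (\tan\theta)\,\|\nabla u\|_{L^2(\Omega)}^2 + |\Im\alpha|\,C_\varepsilon,
\end{displaymath}
while, again by \eqref{eq:trace-ehrling} (now with, say, $\varepsilon=1$ to absorb the $\Re\alpha$ term when $\Re\alpha<0$),
\begin{displaymath}
	\Re a_\alpha[u,u] \geq \|\nabla u\|_{L^2(\Omega)}^2 - |\Re\alpha|\big(\|\nabla u\|_{L^2(\Omega)}^2 + C_1\big) \geq \|\nabla u\|_{L^2(\Omega)}^2(1 - |\Re\alpha|) - |\Re\alpha|\,C_1.
\end{displaymath}
Setting $\gamma$ to be a sufficiently negative real number (depending on $\alpha$, $\theta$, $C_\varepsilon$, $C_1$) and combining the two displays shows $|\Im a_\alpha[u,u]| \leq (\tan\theta)\big(\Re a_\alpha[u,u] - \gamma\big)$, which is exactly $a_\alpha[u,u] - \gamma \in \{z : |\arg z| \leq \theta\}$. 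Hence $a_\alpha$ is sectorial of semi-angle $\theta$; since $\theta\in(0,\pi/2)$ was arbitrary, we are done. (One should be slightly careful to handle both signs of $\Re\alpha$ and $\Im\alpha$ uniformly; choosing $\varepsilon$ in terms of $|\Im\alpha|$ and a single large negative $\gamma$ absorbing all the $L^2$-error terms handles every case at once.)

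\textbf{Main obstacle.}
There is no genuine obstacle: the proof is routine once \eqref{eq:trace-ehrling} is available. The only points requiring a little care are (i) bookkeeping the constants so that a single vertex $\gamma$ works for all unit vectors $u$, and (ii) the degenerate cases $\Im\alpha=0$ (form symmetric, sectorial trivially) and the sign of $\Re\alpha$ (if $\Re\alpha\geq 0$ the real part is already bounded below by $0$ with no need to absorb anything, so the estimate only gets easier). I would present the computation in the generic case $\Im\alpha\neq0$, $\Re\alpha<0$ and remark that the others are simpler.
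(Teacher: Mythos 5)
Your approach is the same as the paper's: the proof there consists of a one-line appeal to the trace theorem, i.e.\ precisely the $\varepsilon$-trace inequality you state as your starting point (it appears explicitly in the paper as \eqref{eq:variable-alpha-trace} in Remark~\ref{rem:operator-variable-alpha}), and your boundedness estimate is exactly what is meant there. So conceptually there is nothing missing.

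However, as written your sectoriality computation does not quite close in the case $\Re\alpha<0$. You spend the entire budget $\tan\theta$ on absorbing the imaginary part (choosing $\varepsilon=\tan\theta/|\Im\alpha|$, so that $|\Im a_\alpha[u,u]|\leq \tan\theta\,\|\nabla u\|_2^2+|\Im\alpha|C_\varepsilon$), but then bound the real part with $\varepsilon=1$, giving only $\Re a_\alpha[u,u]\geq (1-|\Re\alpha|)\|\nabla u\|_2^2-|\Re\alpha|C_1$. Combining these, the target inequality $|\Im a_\alpha[u,u]|\leq\tan\theta\,(\Re a_\alpha[u,u]-\gamma)$ requires $\tan\theta\,\|\nabla u\|_2^2\leq\tan\theta\,(1-|\Re\alpha|)\|\nabla u\|_2^2+\text{const}$, which fails for large $\|\nabla u\|_2$ whenever $\Re\alpha<0$; no choice of the vertex $\gamma$ can compensate a deficit proportional to $\|\nabla u\|_2^2$, since $\gamma$ only shifts by a constant. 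The repair is immediate: split the absorption, e.g.\ take $\varepsilon_1$ with $|\Im\alpha|\varepsilon_1\leq\tfrac{1}{2}\tan\theta$ for the imaginary part and $\varepsilon_2$ with $|\Re\alpha|\varepsilon_2\leq\tfrac12$ for the real part, so that
\begin{displaymath}
	|\Im a_\alpha[u,u]|\leq\tfrac{1}{2}\tan\theta\,\|\nabla u\|_2^2+c_1,\qquad
	\Re a_\alpha[u,u]\geq\tfrac12\|\nabla u\|_2^2-c_2,
\end{displaymath}
and then any $\gamma\leq -(c_2+c_1/\tan\theta)$ gives $W(a_\alpha)\subset\{z:|\arg(z-\gamma)|\leq\theta\}$. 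With this adjustment your argument is complete and coincides with the paper's intended proof.
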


\begin{proof}
By the trace theorem, $a_\alpha$ is well defined, bounded on $H^1 (\Omega) \times H^1 (\Omega)$
and sectorial.
\end{proof}

The following theorem now follows from Kato's first representation theorem \cite[Theorem~VI.2.1]{Kato}, the subsequent corollary \cite[Corollary ~VI.2.3]{Kato} and \cite[Theorem~VII.4.2]{Kato}, plus the fact that the form domain $H^1 (\Omega)$ is densely and compactly embedded in $L^2(\Omega)$ since $\Omega$ is a bounded Lipschitz domain.

\begin{theorem}
\label{thm:robin-operator}
Let $\Omega \subset \R^d$, $d \geq 1$, be a bounded Lipschitz domain and $\alpha \in \C$. The operator $-\Delta_\Omega^\alpha$ is:
\begin{enumerate}
\item semi-bounded from below in $L^2(\Omega)$,
\item locally uniformly (in $\alpha \in \C$) m-sectorial of semi-angle $\theta$ for any $0 < \theta < \pi/2$,
\item densely defined on $L^2(\Omega)$ and $H^1(\Omega)$, and 
\item its spectrum $\sigma(-\Delta_\Omega^\alpha)$ is discrete, consisting of eigenvalues of finite algebraic multiplicity, with their only point of accumulation being $\infty\in\overline{\C}$.
\end{enumerate}
Moreover, $-\Delta_\Omega^\alpha$ is self-adjoint if and only if $\alpha\in\R$. Finally, for any given $\alpha\in\R$, its eigenfunctions may be chosen to form an orthonormal basis of $L^2(\Omega)$.
\end{theorem}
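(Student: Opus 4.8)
The plan is to read every assertion off Lemma~\ref{lem:robin-operator} together with Kato's form machinery \cite[Chapters~VI--VII]{Kato} and the compact embedding $H^1(\Omega)\hookrightarrow L^2(\Omega)$ (Rellich's theorem, valid since $\Omega$ is bounded Lipschitz; for $d=1$ this is elementary). The first point is that the conclusion of Lemma~\ref{lem:robin-operator}, namely that $a_\alpha$ is bounded on $H^1(\Omega)$ and sectorial, in fact says that $a_\alpha$ is a \emph{closed} sectorial form that is densely defined in $L^2(\Omega)$: boundedness together with the trace inequality $\|u\|_{L^2(\partial\Omega)}^2\le\varepsilon\|\nabla u\|_{L^2(\Omega)}^2+C_\varepsilon\|u\|_{L^2(\Omega)}^2$ (with $C_\varepsilon$ independent of $\alpha$) shows that for $\gamma$ sufficiently negative the form norm $u\mapsto(\Re a_\alpha[u,u]-\gamma\|u\|_{L^2(\Omega)}^2)^{1/2}$ is equivalent to the $H^1$-norm, so completeness of $H^1(\Omega)$ gives closedness, and $H^1(\Omega)$ is dense in $L^2(\Omega)$. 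Kato's first representation theorem \cite[Theorem~VI.2.1]{Kato} and its corollary \cite[Corollary~VI.2.3]{Kato} then attach to $a_\alpha$ a unique m-sectorial operator, which one identifies with $-\Delta_\Omega^\alpha$ exactly as in \cite{ArendtMazzeo,RohlederBook}, giving (2); its vertex being real gives semi-boundedness from below, hence (1); and the fact that $D(-\Delta_\Omega^\alpha)$ is a core of $a_\alpha$ shows it is dense both in $H^1(\Omega)$ and in $L^2(\Omega)$, which is (3). The local uniformity in $\alpha$ is the observation that $C_\varepsilon$ does not depend on $\alpha$, so for $\alpha$ in a compact set one may pick $\varepsilon$, the vertex $\gamma$ and the semi-angle $\theta$ uniformly.

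For (4) I would invoke \cite[Theorem~VII.4.2]{Kato} (or simply Riesz--Schauder theory): since the form domain $H^1(\Omega)$ is compactly embedded in $L^2(\Omega)$ and $\rho(-\Delta_\Omega^\alpha)\neq\emptyset$ by m-sectoriality, the resolvent of $-\Delta_\Omega^\alpha$ is compact, whence $\sigma(-\Delta_\Omega^\alpha)$ is a discrete set of eigenvalues of finite algebraic multiplicity with $\infty$ as its only possible accumulation point.

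For the self-adjointness dichotomy I would compute the adjoint. The adjoint of the operator associated with a closed sectorial form is the operator associated with the adjoint form, and here $a_\alpha^*[u,v]=\overline{a_\alpha[v,u]}=a_{\bar\alpha}[u,v]$, so $(-\Delta_\Omega^\alpha)^*=-\Delta_\Omega^{\bar\alpha}$. If $\alpha\in\R$ this reads $(-\Delta_\Omega^\alpha)^*=-\Delta_\Omega^\alpha$. If $\alpha\notin\R$, then $a_\alpha\neq a_{\bar\alpha}$ as forms on $H^1(\Omega)$ --- for instance $a_\alpha[\mathbf 1,\mathbf 1]$ is the positive number $\bigl(\int_{\partial\Omega}\dsigma\bigr)\alpha$, which differs from $a_{\bar\alpha}[\mathbf 1,\mathbf 1]$ --- so, by the bijectivity of the correspondence between densely defined closed sectorial forms and m-sectorial operators \cite[Theorem~VI.2.1]{Kato}, the two operators are distinct and $-\Delta_\Omega^\alpha\neq(-\Delta_\Omega^\alpha)^*$. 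Finally, for $\alpha\in\R$ the operator is self-adjoint, bounded below and has compact resolvent, so the spectral theorem provides an orthonormal basis of $L^2(\Omega)$ of its eigenfunctions.

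Each step is short, and no step is deep: the content really sits in Lemma~\ref{lem:robin-operator} and in the cited theorems of Kato. The part requiring the most care is the uniform-in-$\alpha$ upgrade from ``bounded and sectorial'' to ``closed'' via the trace inequality, together with the bookkeeping of how the vertex $\gamma$ and semi-angle $\theta$ depend on the compact $\alpha$-set; the only genuinely non-mechanical point is the ``only if'' direction of the self-adjointness statement, which the adjoint-form computation plus the uniqueness clause in Kato's representation theorem dispose of cleanly.
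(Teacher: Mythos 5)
Your proposal is correct and follows essentially the same route as the paper, which deduces the theorem from Kato's first representation theorem \cite[Theorem~VI.2.1]{Kato}, \cite[Corollary~VI.2.3]{Kato} and \cite[Theorem~VII.4.2]{Kato} together with the bound and sectoriality of $a_\alpha$ from Lemma~\ref{lem:robin-operator} and the dense, compact embedding $H^1(\Omega)\hookrightarrow L^2(\Omega)$. You merely make explicit what the paper leaves to the citations (closedness of the form via the trace inequality, compactness of the resolvent, and the adjoint-form computation $(-\Delta_\Omega^\alpha)^*=-\Delta_\Omega^{\overline{\alpha}}$ for the self-adjointness dichotomy), all of which is sound.
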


For future reference, we state explicitly the weak form of the eigenvalue equation: $\lambda$ is an eigenvalue of the operator $-\Delta_\Omega^\alpha$, with eigenfunction $\psi$, if and only if
\begin{equation}
\label{eq:weak-eigenvalue-equation}
	a_\alpha [\psi,v] = \int_\Omega \nabla \psi \cdot \overline{\nabla v}\,\dx + \int_{\partial\Omega} \alpha \psi\, \overline{v}\,\dsigma(x)
	= \lambda \int_\Omega \psi\, \overline{v}\,\dx \qquad \text{for all } v \in H^1 (\Omega).
\end{equation}

We also briefly state for the record a result on the well-posedness of the associated parabolic equation. We will not need this here, so we do not go into any details.

\begin{theorem}
\label{thm:semigroup}
Let $\Omega \subset \R^d$, $d \geq 1$, be a bounded Lipschitz domain and $\alpha \in \C$. The operator $\Delta_\Omega^\alpha$ generates a holomorphic $C_0$-semigroup of operators of semi-angle $\theta$, for any $0 < \theta < \pi/2$.
\end{theorem}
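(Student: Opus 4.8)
The statement to prove is Theorem~\ref{thm:semigroup}: the operator $\Delta_\Omega^\alpha$ (i.e.\ the negative of the Robin Laplacian $-\Delta_\Omega^\alpha$) generates a holomorphic $C_0$-semigroup of semi-angle $\theta$ for every $0<\theta<\pi/2$.

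The plan is to deduce this directly from the $m$-sectoriality statement already established in Theorem~\ref{thm:robin-operator}(2) together with standard semigroup generation theory for sectorial forms, as developed in Kato~\cite[Chapter~IX]{Kato} (or equivalently in the references on forms cited earlier). First I would recall the precise chain of implications: since the Robin form $a_\alpha$ is densely defined, bounded and sectorial of semi-angle $\theta$ for every $0<\theta<\pi/2$ by Lemma~\ref{lem:robin-operator}, and since it is automatically closed (being bounded on the Hilbert space $H^1(\Omega)$, which is continuously embedded in $L^2(\Omega)$), Kato's first representation theorem identifies $-\Delta_\Omega^\alpha$ as the $m$-sectorial operator associated with $a_\alpha$, with numerical range contained in a sector $\{z:|\arg(z-\gamma)|\le\theta\}$ for a suitable vertex $\gamma=\gamma(\alpha)\in\R$ and every $\theta\in(0,\pi/2)$. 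This part is already contained in Theorem~\ref{thm:robin-operator}; the only new ingredient is the passage from $m$-sectoriality to semigroup generation.

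Next I would invoke the classical generation result \cite[Theorem~IX.1.24]{Kato}: if $\mathcal A$ is $m$-sectorial with vertex $\gamma$ and semi-angle $\theta<\pi/2$, then $-\mathcal A$ generates a holomorphic $C_0$-semigroup $\{e^{-t\mathcal A}\}$ which is bounded (indeed contractive after the shift by $\gamma$) on each sector $\{t\in\C:|\arg t|<\pi/2-\theta\}$, and this semigroup is analytic of semi-angle $\pi/2-\theta$. Applying this with $\mathcal A=-\Delta_\Omega^\alpha$ gives that $\Delta_\Omega^\alpha=-\mathcal A$ generates a holomorphic $C_0$-semigroup of semi-angle $\pi/2-\theta$. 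Since $\theta\in(0,\pi/2)$ was arbitrary, $\pi/2-\theta$ ranges over all of $(0,\pi/2)$, which yields the stated conclusion. A brief remark would note that the estimate on the sector of holomorphy is uniform for $\alpha$ in bounded subsets of $\C$, consistent with the ``locally uniformly $m$-sectorial'' phrasing of Theorem~\ref{thm:robin-operator}(2), although this refinement is not needed for the statement.

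There is essentially no obstacle here: the entire content is a citation of Kato's theory once Lemma~\ref{lem:robin-operator} and Theorem~\ref{thm:robin-operator} are in place. The only point requiring a word of care — and the closest thing to a ``main step'' — is checking the closedness of the form $a_\alpha$ so that Kato's representation and generation theorems apply; but this is immediate because $a_\alpha$ is bounded on $H^1(\Omega)$ and $H^1(\Omega)$ is itself a Hilbert space compactly, hence continuously, embedded in $L^2(\Omega)$, so the form norm is equivalent to the $H^1$-norm and completeness is inherited. Thus the proof is a short paragraph: state that $a_\alpha$ is a closed sectorial form, cite \cite[Theorem~IX.1.24]{Kato} for the associated operator, and conclude.
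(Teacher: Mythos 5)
Your argument is correct and is essentially the paper's own: both proofs reduce the statement to the (locally uniform) m-sectoriality of $-\Delta_\Omega^\alpha$ established in Theorem~\ref{thm:robin-operator}(2) and then invoke a standard generation theorem for sectorial operators, the only difference being that you cite Kato's Theorem~IX.1.24 while the paper cites Proposition~3.7.4 and Theorem~3.7.11 of \cite{ABHN}. Your handling of the semi-angle quantifier (sectoriality of semi-angle $\theta$ for arbitrarily small $\theta$ yields holomorphy on sectors of semi-angle $\pi/2-\theta$, hence any semi-angle below $\pi/2$) matches the intended reading of the statement.
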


For more on holomorphic semigroups, including their definition, see \cite[Chapter~3]{ABHN}.

\begin{proof}
This follows immediately from the resolvent estimate contained in the m-sectoriality assertion of Theorem~\ref{thm:robin-operator}, combined with Proposition~3.7.4 and Theorem~3.7.11 of \cite{ABHN}.
\end{proof}

Finally, we briefly summarise what happens if $\alpha$ is allowed to be a function instead of a constant.

\begin{remark}
\label{rem:operator-variable-alpha}
If $\alpha \in L^\infty (\partial\Omega,\C)$, then the form $a_\alpha$ may be defined in the same way and maintains its properties (in particular Lemma~\ref{lem:robin-operator}) due to the continued validity of the trace theorem and hence the estimate
\begin{equation}
\label{eq:variable-alpha-trace}
	\left| \int_{\partial\Omega} \alpha |u|^2\,\dsigma(x)\right| \leq 
	\|\alpha\|_{L^\infty(\partial\Omega)}\|u\|_{L^2(\partial\Omega)}^2
	\leq \varepsilon \|\nabla u\|_{L^2(\Omega)}^2 + C(\varepsilon,\|\alpha\|_{L^\infty(\partial\Omega)})\|u\|_{L^2(\Omega)}^2.
\end{equation}
It follows that Theorem~\ref{thm:robin-operator} holds with the obvious modifications that $-\Delta_\Omega^\alpha$ is self-adjoint if and only if $\alpha (x) \in \R$ for all $x \in \partial\Omega$, and that the local uniform sectoriality depends only on $\|\alpha\|_{L^\infty(\partial\Omega)}$, since for given semi-angle $\theta$ the vertex in the sectoriality estimate can be chosen in dependence only on the estimate given in \eqref{eq:variable-alpha-trace}. Theorem~\ref{thm:semigroup} then holds verbatim.
\end{remark}

\section{Dependence of the Robin eigenvalues and eigenfunctions on the parameter}
\label{sec:holomorphy}

In this section we wish to study the dependence of the eigenvalues of $-\Delta_\Omega^\alpha$, and the corresponding eigenprojections, on the parameter $\alpha \in \C$, for a fixed domain $\Omega$. We do this in two parts: firstly, we apply Kato's theory of \emph{holomorphic families of operators} to show that there is a family of eigencurves (as functions of $\alpha \in \C$), each of them analytic apart from at potential crossing points, which describe the totality of the spectrum for any fixed $\alpha$, and that the eigenprojections as operators on $L^2(\Omega)$ likewise depend analytically on $\alpha$, except at the crossing points. However, here caution is recommended: the \emph{normalised} eigenfunctions themselves do not change analytically: see Theorem~\ref{thm:normalised-ef-are-not-holomorphic}. Then, in Section~\ref{sec:derivative}, we obtain a formula for the derivative of an eigencurve with respect to $\alpha$, at any point where the corresponding eigenprojection is one-dimensional (i.e., the eigenvalue is simple).

\subsection{A holomorphic family of operators}
\label{sec:family}

As mentioned, we will start by applying Kato's theory, see \cite[Chapter~VII]{Kato}, to study the behaviour of the eigenvalues and eigenprojections of the Robin Laplacians $-\Delta_\Omega^\alpha$ in dependence on the parameter $\alpha \in \C$ (where, as before, $\Omega \subset \R^d$, $d\geq 1$, is a fixed bounded, Lipschitz domain); to emphasise this dependence and for ease of notation, in this section we will write
\begin{equation}
\label{eq:robin-operator-family-notation}
	\mathcal{A} (\alpha) := -\Delta_\Omega^\alpha.
\end{equation}
We first recall some more theory. For an isolated eigenvalue $\lambda$ of a linear operator $\mathcal{A}$ on a Hilbert space $H$, its \emph{eigenprojection} $Q_{\lambda}$ is defined as follows (see \cite[Section~III.6.5]{Kato}). Take a closed curve $\Gamma_{\lambda}\subset\rho(\mathcal{A})$ enclosing $\lambda$ but no other point of $\sigma(\mathcal{A})$ and define
\begin{equation}
\label{eq:eigen}
	Q_{\lambda}=-\frac{1}{2\pi\I}\oint_{\Gamma_{\lambda}}(\mathcal{A}-z I )^{-1}\,\rd z.
\end{equation}
Then $Q_{\lambda}$, which is independent of the choice of $\Gamma_{\lambda}$, is a projection onto the algebraic eigenspace of $\lambda$ in~$H$.

\begin{theorem}
\label{thm:holo}
Let $\Omega \subset \R^d$, $d\geq 1$, be a bounded, Lipschitz domain and let $\mathcal{A} (\alpha)$, $\alpha \in \C$, be given by~\eqref{eq:robin-operator-family-notation}.
\begin{enumerate}
\item The operator family $\mathcal{A}(\alpha)$, $\alpha\in \C$, is holomorphic and even self-adjoint holomorphic, i.e.\ $\mathcal{A}(\alpha)^*=\mathcal{A}(\overline{\alpha})$.
\item Each eigenvalue $\lambda_k (\alpha)$ can be extended to a meromorphic function with at most algebraic singularities at non-real crossing points of eigenvalues, and there are only finitely many eigenvalue curves meeting at locally finitely many crossing points. The same is true of the corresponding eigenprojections $Q_\lambda$ and eigennilpotents $(\mathcal{A}(\alpha)-\lambda(\alpha))Q_{\lambda(\alpha)}$.
\end{enumerate}
\end{theorem}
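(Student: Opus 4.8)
The plan is to verify the hypotheses of Kato's theory of holomorphic families of operators of type~(a) (equivalently, type~(B) at the level of forms), and then invoke the structural results of \cite[Chapter~VII]{Kato} on the analyticity of eigenvalues and eigenprojections. First I would work at the level of the forms: the family $\alpha \mapsto a_\alpha$ given by \eqref{eq:robin-form} has the fixed form domain $H^1(\Omega)$ and, for each fixed pair $u,v \in H^1(\Omega)$, the map $\alpha \mapsto a_\alpha[u,v] = \int_\Omega \nabla u \cdot \overline{\nabla v}\,\dx + \alpha \int_{\partial\Omega} u\overline{v}\,\dsigma(x)$ is affine-linear, hence entire, in $\alpha$. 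Combined with Lemma~\ref{lem:robin-operator} (uniform boundedness and sectoriality on $H^1(\Omega)$, with the sectoriality constants locally uniform in $\alpha$ as recorded in Theorem~\ref{thm:robin-operator}(2)), this shows $(a_\alpha)_{\alpha \in \C}$ is a holomorphic family of forms of type~(a) in the sense of \cite[Section~VII.4]{Kato}, so by \cite[Theorem~VII.4.2]{Kato} the associated operators $\mathcal{A}(\alpha) = -\Delta_\Omega^\alpha$ form a holomorphic family of type~(B), in particular a holomorphic family in the sense of \cite[Chapter~VII]{Kato}. The self-adjointness relation $\mathcal{A}(\alpha)^* = \mathcal{A}(\overline{\alpha})$ follows by taking complex conjugates in \eqref{eq:robin-form}: $\overline{a_\alpha[u,v]} = a_{\overline{\alpha}}[v,u]$, so the family is self-adjoint holomorphic in Kato's sense, which gives part~(1).

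For part~(2), I would apply the general perturbation theory for holomorphic families with compact resolvent. Since $H^1(\Omega)$ is compactly embedded in $L^2(\Omega)$ (as $\Omega$ is bounded Lipschitz), each $\mathcal{A}(\alpha)$ has compact resolvent, so the spectrum is discrete as already noted in Theorem~\ref{thm:robin-operator}(4); the resolvent $(\mathcal{A}(\alpha) - z)^{-1}$ is jointly holomorphic in $(\alpha,z)$ on $\{(\alpha,z): z \in \rho(\mathcal{A}(\alpha))\}$. Fix a real base point $\alpha_0$, where $\mathcal{A}(\alpha_0)$ is self-adjoint with real eigenvalues. By \cite[Theorem~VII.1.8]{Kato} (or the results of \cite[Section~VII.1.3]{Kato} on analytic perturbation of isolated eigenvalues), each eigenvalue $\lambda_k(\alpha_0)$ persists and extends, along any path in $\C$ avoiding exceptional points, to a branch of an analytic function; the eigenvalues, eigenprojections \eqref{eq:eigen} and eigennilpotents $(\mathcal{A}(\alpha) - \lambda(\alpha))Q_{\lambda(\alpha)}$ are holomorphic in $\alpha$ except at isolated exceptional points, where the branches may be permuted and the eigenvalues have at worst algebraic branch-point singularities (Puiseux series), while the total projection onto the sum of the colliding eigenspaces remains holomorphic. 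That the exceptional (crossing) points are locally finite, and that only finitely many branches meet at each, is part of the conclusion of Kato's theory for this setting: on any compact set in $\C$ one encloses a finite union of eigenvalue branches in a fixed contour $\Gamma$ and the exceptional set inside is the zero set of a scalar holomorphic function (a resultant/determinant built from the finite-dimensional reduction $Q_\lambda(\alpha)\mathcal{A}(\alpha)Q_\lambda(\alpha)$), hence discrete. To upgrade "exceptional points may occur anywhere" to "exceptional points are non-real", I would use the fact that $\mathcal{A}(\alpha_0)$ is self-adjoint for real $\alpha_0$, so each eigenvalue is semisimple there and, more importantly, by analytic perturbation along the real axis the eigenvalues stay real and distinct branches cannot merge (a crossing on $\R$ would still be a genuine eigenvalue collision but without Jordan blocks); combined with the derivative formula of Theorem~\ref{thm:derivative} — whose right-hand side $\int_{\partial\Omega}\psi^2\,\dsigma / \int_\Omega \psi^2\,\dx$ is strictly positive at a real simple eigenvalue — one sees the real eigencurves are strictly monotone, hence the relevant algebraic singularities (where the meromorphic continuation genuinely branches) can only occur off the real axis. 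The meromorphic (as opposed to merely analytic-with-algebraic-singularities) language in the statement is then justified because reciprocals/quotients entering the Dirichlet-to-Neumann correspondence can introduce poles; alternatively one simply records that each branch is analytic away from a discrete set of algebraic branch points.

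The main obstacle, I expect, is not any single estimate but rather the careful bookkeeping at the exceptional points: one must combine Kato's local Puiseux description (\cite[Section~VII.1.3]{Kato}), the finite-dimensionality of the total reduced operator $Q\mathcal{A}(\alpha)Q$ on the sum of colliding eigenspaces, and a global argument (exhaustion of $\C$ by compact sets plus the identity theorem applied to the relevant scalar determinant) to conclude simultaneously that there are only locally finitely many crossing points and only finitely many curves at each; and then to argue that, because the family is self-adjoint holomorphic and the base point is real, no genuine algebraic singularities occur on $\R$. The rest — holomorphy of the resolvent, of the Riesz projection \eqref{eq:eigen}, and of the eigennilpotent, all of which are contour integrals of the resolvent — is routine once the family is known to be holomorphic of type~(B).
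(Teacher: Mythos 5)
Your overall route is the paper's: show that $(a_\alpha)_{\alpha\in\C}$ is a holomorphic family of forms of type (a) with fixed domain $H^1(\Omega)$, invoke \cite[Theorem~VII.4.2]{Kato} to obtain a holomorphic family of type (B), read off self-adjoint holomorphy from $\overline{a_\alpha[u,v]}=a_{\overline\alpha}[v,u]$, and then use \cite[Theorem~VII.1.8]{Kato} together with compactness of the resolvent for local analyticity of eigenvalues, eigenprojections and eigennilpotents, with the self-adjoint structure ruling out singularities at real crossings. However, two points need repair. First, your argument for ``no singularities at real crossing points'' leans on the derivative formula of Theorem~\ref{thm:derivative} and strict monotonicity of the real eigencurves; this is circular (the proof of Theorem~\ref{thm:derivative} uses the holomorphy being established here) and beside the point, since monotonicity of individual real branches neither prevents crossings nor controls branching at a crossing. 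The correct and sufficient reason, which the paper cites directly, is Rellich's theorem for self-adjoint holomorphic families, \cite[Section~VII.3.1]{Kato}: at real crossings the eigenvalues and eigenprojections remain holomorphic.

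Second, and more substantively, the statement requires each $\lambda_k(\alpha_0)$ to extend to all of $\C$ (away from crossing points), and \cite[Theorem~VII.1.8]{Kato} is only local; continuation ``along any path avoiding exceptional points'' leaves open the possibility that an eigenvalue curve blows up as $\alpha$ approaches a finite point, and your sketch does not exclude this. The paper's proof addresses exactly this step: it fixes a compact set $K$ and a point $z$ in the common resolvent set $\bigcap_{\alpha\in K}\rho(\mathcal{A}(\alpha))$ (nonempty by the locally uniform sectoriality of Theorem~\ref{thm:robin-operator}(2)), passes to the bounded holomorphic compact resolvent family $R_z(\alpha)=(\mathcal{A}(\alpha)-zI)^{-1}$, continues the bounded eigenvalue curves $\mu_k(\alpha)$ of $R_z(\alpha)$, argues that $\mu_k$ cannot vanish on $K$, and recovers $\lambda_k(\alpha)=1/\mu_k(\alpha)+z$ on all of $K$. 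This reduction to a bounded family is the main technical content of the paper's proof of the global extension and is missing from your proposal. Your finiteness argument for the crossing points (a determinant of the finite-dimensional reduction, exhaustion by compacta, identity theorem) is an acceptable variant of the paper's argument via the locally constant rank of the total projection, \cite[Problem~III.3.21]{Kato}, combined with the identity theorem.
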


\begin{remark}
\label{rem:cont}
If two different eigenvalue curves $\lambda_1(\alpha)$ and $\lambda_2(\alpha)$ meet at $\lambda$ for $\alpha=\alpha_0$, i.e.\ $\lambda = \lambda_1(\alpha_0) = \lambda_2(\alpha_0)$, the corresponding separating curves $\Gamma_{\lambda_1(\alpha)}$, $\Gamma_{\lambda_2(\alpha)}$ in \eqref{eq:eigen} do not exist in the limit $\alpha\to\alpha_0$.
However, the holomorphic continuation of the total projection $\widehat{Q}_{\lambda}(\alpha):=Q_{\lambda_1(\alpha)}+Q_{\lambda_2(\alpha)}$ exists in $\alpha_0$ and is equal to the eigenprojection for $\lambda$ of $\mathcal{A}(\alpha_0)$. In addition, by \cite[Sections~VII.4.5,~II.2]{Kato}, the weighted eigenvalue mean $\widehat\lambda(\alpha):=\frac{1}{m}(m_1\lambda_1(\alpha)+m_2\lambda_2(\alpha))$ (with $m_j$ denoting the respective algebraic multiplicities, which are locally constant, and $m=m_1+m_2$ the multiplicity at $\alpha_0$) is holomorphic in $\alpha_0$.
A corresponding statement holds in the case of more than two curves meeting at $\lambda$, but in general the eigennilpotents may be discontinuous in $\alpha_0$.
\end{remark}

\begin{remark}
\label{rem:spectral-exhaustion}
Theorem~\ref{thm:holo} proves in particular parts (1) and (2) of Theorem~\ref{thm:analytic-dependence}. Let us briefly explain in particular how we obtain the fact that the extensions of the eigenvalues $\lambda_k (\alpha_0)$ for given $\alpha_0 \in \R$ exhaust the spectrum for any $\alpha \in \C$. Indeed, if there were some $\alpha \in \C$ and an eigenvalue $\lambda(\alpha)$ which did not lie on any of the eigencurves $\lambda_k (\alpha)$, then $\lambda (\alpha)$ could itself be extended to an analytic eigenvalue curve on $\C$ by Theorem~\ref{thm:holo}(2), and in particular we would have an eigenvalue $\lambda (\alpha_0)$ not included among the the $\lambda_k (\alpha_0)$, a contradiction to the assumption that $(\lambda_k (\alpha_0))_{k \in \N}$ (counting multiplicities) is the totality of the spectrum at $\alpha_0$.
\end{remark}

\begin{proof}[Proof of Theorem~\ref{thm:holo}]
(1) By \cite[Theorem~VII.4.2]{Kato}, $\mathcal{A}(\alpha)$, $\alpha\in\C$, is a holomorphic family of operators, and by \cite[Remark~VII.4.7]{Kato}, it is a self-adjoint holomorphic family. (2) Then it follows from \cite[Theorem~VII.1.8]{Kato} that the eigenvalues and eigenprojections depend (locally) holomorphically on $\alpha$, and hence so do the eigennilpotents.
Since the operator family is self-adjoint holomorphic, there are no singularities at real crossing points of eigenvalues, see \cite[Section~VII.3.1]{Kato}. 
The finiteness of the number of eigenvalue curves meeting at a crossing point, and of the local number of crossing points, follows from $\mathcal A(\alpha)$ having compact resolvent and from the holomorphy of the eigenvalue curves.
More precisely, since the total projection (see Remark~\ref{rem:cont}) is locally holomorphic, \cite[Problem~III.3.21]{Kato} implies that the dimension of its range is locally constant and thus finite. This also implies that if there were infinitely many crossing points in a compact set, then finitely many eigenvalue curves meet at infinitely many points which have an accumulation point; now the identity theorem implies that the eigenvalue curves have to be identical. 

It remains to prove that for any fixed $\alpha_0 \in \C$ each eigenvalue $\lambda_k (\alpha_0)$ can be extended to a function which is holomorphic on $\C$ except at the crossing points. We fix such a $\lambda_k (\alpha_0)$ and take an arbitrary compact subset $K\subset\C$ that is the closure of an open, connected set. It suffices to prove that if $K$ contains $\alpha_0$ in its interior, then there is a bounded holomorphic (except for crossing points) eigenvalue curve $\lambda_k (\alpha)$, $\alpha \in K$, which coincides with $\lambda_k (\alpha_0)$ at $\alpha = \alpha_0$.

To this end we consider the resolvent of $\A(\alpha)$ for $\alpha\in K$. We set $\rho_\Omega^K:=\bigcap_{\alpha\in K} \rho(-\Delta_\Omega^\alpha)$. Note that $\rho_\Omega^K\neq\emptyset$ since the operator family $\A(\alpha)$, $\alpha\in K,$ is uniformly sectorial, see Theorem~\ref{thm:robin-operator}(2). Fix $z\in\rho_\Omega^K$; then the resolvent family $R_z(\alpha)=(\A(\alpha)-zI)^{-1}$, $\alpha\in K$, is not only compact but bounded-holomorphic \cite[Theorem~VII.1.3]{Kato}. Thus, the point spectrum $\sigma_p(R_z(\alpha)) = \sigma (R_z (\alpha)) \setminus \{0\}$ consists of eigenvalues of finite algebraic multiplicity, and with $0$ as their only point of accumulation. Denote the eigenvalues of $R_z (\alpha_0)$ by $\mu_j (\alpha_0)$, where the ordering is chosen in such a way that
\begin{displaymath}
	\lambda_j (\alpha_0) = \frac{1}{\mu_j (\alpha_0)} + z
\end{displaymath}
for all $j$. Now the eigenvalue $\mu_k (\alpha_0)$ may be extended to a holomorphic eigenvalue curve, first to a neighbourhood of $\alpha_0$. 
This curve $\mu_k (\alpha)$ cannot take on the value $0$ for any $\alpha \in K$, since otherwise $R_z (\alpha)$ would not be invertible; hence its modulus has a nonzero minimum on any compact set. Together with the bounded-holomorphy of $\A(\alpha)$, $\alpha\in K,$ we obtain that $\mu_k (\alpha)$ can be extended holomorphically to all of $K$ except at only finitely many crossing points with other eigenvalue curves.
Via the identification $\lambda_k (\alpha) = 1/\mu_k (\alpha) + z$ we obtain that $\lambda_k (\alpha)$ is well defined and holomorphic on all of $K$ except at the crossing points.  Since $\alpha_0 \in \C$ and $k$ were arbitrary, this completes the proof.
\end{proof}

Even though Theorem~\ref{thm:holo} establishes that the eigenprojections can be continued holomorphically (away from possible crossing points), the eigenfunctions lose this property when normalised to have $L^2(\Omega)$-norm one:

\begin{theorem}
\label{thm:normalised-ef-are-not-holomorphic}
Let $H$ be a separable Hilbert space and let $D\subset\C$ be an open, connected set. Let $A(\alpha)$ be an operator family on $H$ such that its eigenfunctions $u(\alpha)$ depend holomorphically on $\alpha\in D$. Then the norm $\|u(\alpha)\|_H$ is non-constant on $D$ or u does not depend on $\alpha\in D$.
\end{theorem}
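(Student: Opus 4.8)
The plan is to observe that the hypotheses on the operator family $A(\alpha)$ and on $u$ being an eigenfunction play no role whatsoever: the statement is equivalent to the purely function-theoretic fact that a holomorphic $H$-valued function $u\colon D\to H$ whose norm $\|u(\alpha)\|_H$ is constant on the connected open set $D$ must itself be constant. So I would argue by contradiction, assuming that $\|u(\alpha)\|_H$ equals a constant $\rho\geq 0$ on $D$ while $u$ is non-constant. If $\rho=0$ then $u\equiv 0$ and there is nothing to prove, so assume $\rho>0$.

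The key step is to pass from the norm, which is \emph{not} a holomorphic function of $\alpha$ (it involves $\overline\alpha$), to the partially conjugated scalar function $\phi(\alpha):=(u(\alpha),u(\alpha_0))$ for a fixed base point $\alpha_0\in D$. Since $u$ is holomorphic and $\phi$ is obtained by pairing with the fixed vector $u(\alpha_0)$, the function $\phi$ is holomorphic on $D$. By the Cauchy--Schwarz inequality, $|\phi(\alpha)|\leq \|u(\alpha)\|_H\,\|u(\alpha_0)\|_H=\rho^2$ for every $\alpha\in D$, with equality at $\alpha=\alpha_0$ because $\phi(\alpha_0)=\rho^2$. Hence $|\phi|$ attains its maximum over $D$ at the interior point $\alpha_0$, and the maximum modulus principle together with the connectedness of $D$ forces $\phi\equiv \rho^2$ on all of $D$.

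To conclude, I would expand, for an arbitrary $\alpha\in D$,
\[
	\|u(\alpha)-u(\alpha_0)\|_H^2 = \|u(\alpha)\|_H^2 - 2\Re (u(\alpha),u(\alpha_0)) + \|u(\alpha_0)\|_H^2 = \rho^2 - 2\rho^2 + \rho^2 = 0,
\]
so that $u(\alpha)=u(\alpha_0)$ for all $\alpha\in D$, contradicting the assumption that $u$ depends on $\alpha$. The only genuinely non-routine point is the choice of $\phi$ in place of $\|u(\cdot)\|_H^2$: once one notices that this ``half-conjugated'' pairing is holomorphic and that Cauchy--Schwarz pins its modulus to a maximum exactly at the base point, the rest is just the maximum modulus principle. (Alternatively, one could expand $u(\alpha)=\sum_{n\geq 0}a_n(\alpha-\alpha_0)^n$ in a norm-convergent power series near $\alpha_0$ and integrate $\|u(\alpha_0+r\e^{\I\theta})\|_H^2$ over $\theta\in[0,2\pi]$, obtaining $\sum_{n\geq 0}r^{2n}\|a_n\|_H^2=\rho^2$ for all small $r>0$, whence $\|a_n\|_H=0$ for $n\geq 1$; the identity theorem for vector-valued holomorphic functions then propagates constancy from a neighbourhood of $\alpha_0$ to all of $D$.)
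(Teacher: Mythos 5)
Your proposal is correct and follows essentially the same route as the paper's proof: pair $u(\alpha)$ against the fixed vector $u(\alpha_0)$ to get a scalar holomorphic function bounded by Cauchy--Schwarz, apply the maximum modulus principle to conclude the pairing is identically $\rho^2$, and then expand $\|u(\alpha)-u(\alpha_0)\|_H^2$ to deduce $u$ is constant. The only (harmless) differences are that you treat a general constant norm $\rho$ rather than normalising to $1$, and you place $u(\alpha)$ in the linear slot of the inner product, which is in fact the cleaner choice for holomorphy given the paper's sesquilinearity convention.
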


\begin{proof}
Let $\alpha\in D$, assume the family of normalised eigenfunctions $u(\alpha)$ of $\A(\alpha)$ to be holomorphic and fix an arbitrary $\alpha_0\in D$. Then, the function $f:D\to\C$ defined by $f(\alpha)=(u(\alpha_0),u(\alpha))$ satisfies 
\begin{equation*}
|f(\alpha)|\leq \|u(\alpha_0)\|_H \|u(\alpha)\|_H = 1,
\end{equation*}
that is, $f$ is contractive on $D$. Now, since $f(\alpha_0)=1$, the maximum principle yields that $|f|\equiv 1$ is constant and by $f(\alpha_0)=1$ we conclude $f\equiv 1$. Furthermore, for any $\alpha\in D$ we have
\begin{align*}
\|u(\alpha)-u(\alpha_0)\|_H^2&=(u(\alpha)-u(\alpha_0),u(\alpha)-u(\alpha_0)) \\
	&= \|u(\alpha)\|_H^2 + \|u(\alpha_0)\|_H^2 -2\Re (u(\alpha_0),u(\alpha)) = 0.
\end{align*}
Consequently, $u(\alpha)= u(\alpha_0)$ and the family of eigenfunctions is independent of $\alpha$, a contradiction.
\end{proof}

The question whether the eigenfunctions of $-\Delta_{(-a,a)}^\alpha$ are orthogonal in $L^2((-a,a))$ will be clarified in Section \ref{sec:eigenfunctions}.

\begin{remark}
\label{rem:eigennilpotents}
In the case of the domains where one can describe the eigenvalues explicitly (that is, as solutions of transcendental equations), namely intervals, balls and (hyper-) rectangles, it is possible to show that the eigennilpotents are always zero; see Remark~\ref{rem:hyperrectangle-riesz} for the case of hyperrectangles and 
Remark~\ref{rem:ball-riesz} 
for the case of the ball. It thus seems reasonable to expect that the eigennilpotents are zero on any Lipschitz domain.
\end{remark}

\begin{remark}
\label{rem:eigennilpotents-not-zero}
However, it is easy to see that there can be nontrivial eigennilpotents if $\alpha$ is allowed to be a function on the boundary. Take the simplest possible case of an interval $\Omega=(-a,a)$ and suppose $\alpha:\{-a,a\}\to\C$ is a function. Then for some values of $\alpha$ the eigennilpotents are non-zero: indeed, following \cite[Section 3]{KBZ}, we let $t\in\R$ and consider purely imaginary $\alpha_t(x)$ of the form
\begin{displaymath}
\alpha_t(x)=
\begin{cases}
-\I t &\text{for}\; x=-a,\\
+\I t &\text{for}\; x=+a.
\end{cases}
\end{displaymath}
Then the spectrum of the Robin Laplacian $\A(\alpha_t)=-\Delta_{(-a,a)}^{\alpha_t}$ reads $\sigma\left(\A(\alpha_t)\right)=\left\{t^2\right\}\cup\left\{k_j^2\right\}_{j\in\N}$, where $k_j:=\frac{\pi j}{2a}$. That is, the spectrum consists of the eigenvalues of the Neumann Laplacian independently of $t$, plus the eigenvalue $t^2$. This eigenvalue has eigenfunction $u_0(x)=\e^{-\I t x}$, while the rest of the eigenfunctions for $k_j^2$ read 
\begin{displaymath}
u_j(x)=\cos(k_j x)-\I \left(\frac{\I t}{k_j}\right)^{(-1)^j}\sin(k_j x).
\end{displaymath}
Note that each $u_j$, $j\geq 1$, is, like its eigenvalue, independent of $t$. Fix $j \in \N$. The eigenvalue curves $t^2$ and $k_j^2$ obviously cross at $t=k_j$, meaning that the algebraic multiplicity at this point should be two; however, the eigenfunctions $u_0$ and $u_j$ converge to the same function as $t\to k_j$. It can be checked that for the function
\begin{displaymath}
g(x)= \frac{\I}{2}x\e^{-\I t x} - \frac{\e^{2\I ta}}{4t^2}\e^{\I t x}
\end{displaymath}
when $t=k_j$ the corresponding eigennilpotent satisfies 
\begin{displaymath}
\left(\A(\alpha_t)-t^2\right) g = u_0 \neq 0 \qquad\text{and} \qquad
\left(\A(\alpha_t)-t^2\right)^2 g =0.
\end{displaymath}
Consequently, $g$ is a root vector and the geometric and algebraic eigenspaces do not coincide at $t=k_j$.
\end{remark}

It would take us too far afield to explore the question of the eigennilpotents here and so we leave it as an open problem to investigate them in the case that $\alpha$ is independent of $x \in \partial\Omega$.

\begin{oproblem}
\label{prob:eigennilpotents}
Given any bounded Lipschitz domain $\Omega \subset \R^d$, suppose that $\lambda = \lambda (\alpha)$ is a repeated eigenvalue of $\mathcal{A}(\alpha) = -\Delta_\Omega^\alpha$ for some $\alpha \in \C$. Is the eigennilpotent $(\mathcal{A}(\alpha) - \lambda(\alpha)) Q_{\lambda(\alpha)}$ necessarily equal to zero (where $Q_{\lambda(\alpha)}$ is the eigenprojection, see \eqref{eq:eigen})?
\end{oproblem}

\subsection{The derivative with respect to $\alpha$}
\label{sec:derivative}

We now give a formula for the derivative of a simple eigenvalue $\lambda$ with respect to $\alpha \in \C$ (that is, along its corresponding eigencurve), which by Theorem~\ref{thm:holo} always exists. For $\alpha \in \R$ the corresponding formula is reasonably well known (especially but not only in the special case $\alpha = 0$); see \cite[Section~4.3.2]{BFK} and the references therein.

\begin{theorem}
\label{thm:derivative}
Let $\Omega \subset \R^d$, $d\geq 1$, be a bounded, Lipschitz domain, let $\alpha_0 \in \C$, and let $\lambda=\lambda(\alpha)$ be any meromorphic family of eigenvalues. Suppose that for all $\alpha$ in some neighbourhood $B_\delta (\alpha_0)$ of $\alpha_0$, $\lambda (\alpha)$ is a simple eigenvalue of $-\Delta_\Omega^\alpha$, with eigenfunction $\psi(\alpha)$ which is chosen to be holomorphic in $\alpha$. Then in a neighbourhood of $\alpha$ the function
\begin{equation}
\label{eq:deriv-frac}
	\alpha \mapsto \frac{\int_{\partial\Omega} \psi(\alpha)^2\,\dsigma(x)}{\int_\Omega \psi(\alpha)^2\,\dx}
\end{equation}
is meromorphic with at most removable singularities. Its holomorphic continuation is equal to $\lambda'(\alpha)$ at every point in $B_\delta (\alpha_0)$.
\end{theorem}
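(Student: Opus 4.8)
The plan is to differentiate the weak eigenvalue equation~\eqref{eq:weak-eigenvalue-equation} with respect to $\alpha$ and exploit the self-adjoint holomorphic structure of the family $\mathcal{A}(\alpha)$ from Theorem~\ref{thm:holo}. Concretely, fix $\alpha_0$ and work in the ball $B_\delta(\alpha_0)$ where $\lambda(\alpha)$ is simple with holomorphic eigenfunction $\psi = \psi(\alpha)$. Test~\eqref{eq:weak-eigenvalue-equation} against a fixed $v \in H^1(\Omega)$ and differentiate in $\alpha$; since $\psi$ is holomorphic in $\alpha$ with values in $H^1(\Omega)$, both sides are differentiable, and we get
\begin{equation*}
	\int_\Omega \nabla \dot\psi \cdot \overline{\nabla v}\,\dx + \int_{\partial\Omega} \psi\,\overline v\,\dsigma + \int_{\partial\Omega} \alpha \dot\psi\,\overline v\,\dsigma = \lambda' \int_\Omega \psi\,\overline v\,\dx + \lambda \int_\Omega \dot\psi\,\overline v\,\dx,
\end{equation*}
where $\dot\psi := \partial_\alpha \psi$. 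The key trick is the choice of test function: because $\mathcal{A}(\alpha)^* = \mathcal{A}(\overline\alpha)$, the eigenfunction of the adjoint operator for eigenvalue $\overline{\lambda(\alpha)}$ is $\overline{\psi(\overline\alpha)}$, so on $B_\delta(\alpha_0)$ one should take $v = \overline{\psi(\overline\alpha)}$ (holomorphic in $\alpha$), which makes $\overline v = \psi$. Equivalently and more concretely, since the bilinear (not sesquilinear) pairing $a_\alpha[\cdot,\cdot]$ with complex-conjugation removed is symmetric, we may simply test the \emph{bilinear} form identity $\int_\Omega \nabla\psi\cdot\nabla v + \int_{\partial\Omega}\alpha\psi v = \lambda\int_\Omega \psi v$ against $v = \psi$ and against $v = \dot\psi$.

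Carrying this out: differentiating the bilinear identity tested against $v=\psi$ gives the term I want, but the cleanest route is to differentiate with general $v$, set $v = \psi$ in the result, and separately use the (undifferentiated) bilinear identity with $v = \dot\psi$ to cancel the $\dot\psi$ contributions. Writing $B_\alpha[u,v] := \int_\Omega \nabla u\cdot\nabla v\,\dx + \int_{\partial\Omega}\alpha uv\,\dsigma$, symmetry gives $B_\alpha[\dot\psi,\psi] = B_\alpha[\psi,\dot\psi] = \lambda\int_\Omega \psi\dot\psi\,\dx$. Plugging $v = \psi$ into the differentiated identity:
\begin{equation*}
	B_\alpha[\dot\psi,\psi] + \int_{\partial\Omega}\psi^2\,\dsigma = \lambda'\int_\Omega\psi^2\,\dx + \lambda\int_\Omega\dot\psi\,\psi\,\dx,
\end{equation*}
and the first term on the left equals the last term on the right by the symmetry identity above, leaving
\begin{equation*}
	\int_{\partial\Omega}\psi^2\,\dsigma = \lambda'(\alpha)\int_\Omega\psi^2\,\dx.
\end{equation*}
This is exactly~\eqref{eq:deriv-frac} wherever the denominator $\int_\Omega\psi^2\,\dx$ is nonzero. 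The remaining point is the meromorphy/removable-singularity claim: $\psi(\alpha)$ is holomorphic with values in $H^1(\Omega)$, hence $\alpha \mapsto \int_\Omega\psi(\alpha)^2\,\dx$ and $\alpha\mapsto\int_{\partial\Omega}\psi(\alpha)^2\,\dsigma$ are both holomorphic scalar functions (the boundary integral via continuity of the trace $H^1(\Omega)\to L^2(\partial\Omega)$). Since $\lambda'(\alpha)$ is holomorphic on $B_\delta(\alpha_0)$ (again by Theorem~\ref{thm:holo}, $\lambda$ being simple there) and equals the ratio on the open dense set where the denominator does not vanish, the ratio extends holomorphically across the zeros of the denominator, with value $\lambda'(\alpha)$ everywhere; in particular the denominator cannot vanish identically unless $\psi\equiv 0$, which is excluded.

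The main obstacle is purely bookkeeping: justifying that one may differentiate~\eqref{eq:weak-eigenvalue-equation} termwise in $\alpha$ — i.e.\ that $\alpha\mapsto\psi(\alpha)\in H^1(\Omega)$ is genuinely holomorphic (not merely that the eigenprojection is), which is where the hypothesis "$\psi(\alpha)$ chosen to be holomorphic" is used, together with the fact that the simple eigenprojection $Q_{\lambda(\alpha)}$ being holomorphic (Theorem~\ref{thm:holo}) lets one produce such a holomorphic normalisation locally, e.g.\ $\psi(\alpha) = Q_{\lambda(\alpha)}\psi_0$ for a fixed reference vector $\psi_0$ with $Q_{\lambda(\alpha_0)}\psi_0\neq 0$. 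One also needs that $\psi(\alpha)\in D(-\Delta_\Omega^\alpha)$ so that the boundary condition $\partial_\nu\psi + \alpha\psi = 0$ holds and the integration-by-parts implicit in using the bilinear form is legitimate; this is immediate from $\psi$ being an eigenfunction. Everything else is the symmetry of $B_\alpha$ and elementary complex analysis in one variable.
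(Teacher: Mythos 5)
Your algebraic core is exactly the paper's computation: take the \emph{bilinear} weak identity $\int_\Omega(\nabla\psi)^2\,\dx+\alpha\int_{\partial\Omega}\psi^2\,\dsigma=\lambda\int_\Omega\psi^2\,\dx$, differentiate in $\alpha$, and cancel the $\dot\psi$-terms by using the eigenvalue equation tested against $\dot\psi$ (equivalently, symmetry of the bilinear form), arriving at $\int_{\partial\Omega}\psi^2\,\dsigma=\lambda'\int_\Omega\psi^2\,\dx$; the treatment of the removable singularities via holomorphy of numerator, denominator and $\lambda'$ is also the same as in the paper. However, there is a genuine gap at the point you dismiss as ``purely bookkeeping''. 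Theorem~\ref{thm:holo} (Kato's theory) gives holomorphy of the resolvent and of the eigenprojection $Q_{\lambda(\alpha)}$ only as bounded operators on $L^2(\Omega)$; hence $\psi(\alpha)=Q_{\lambda(\alpha)}\psi_0$ is, a priori, holomorphic only as an $L^2(\Omega)$-valued map, and $\dot\psi=\psi'(\alpha)$ exists only as an element of $L^2(\Omega)$. Your differentiated identity contains $\nabla\dot\psi$ and the boundary integral $\int_{\partial\Omega}\alpha\,\dot\psi\,\overline v\,\dsigma$, i.e.\ the trace of $\dot\psi$, and your cancellation step uses $\dot\psi$ as a test function in the weak form; none of this is even well defined unless one first proves $\psi'(\alpha)\in H^1(\Omega)$ (equivalently, that the difference quotients of $\psi$ are bounded in $H^1$). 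This is precisely what the paper establishes in a separate lemma (Lemma~\ref{lem:ef-h1}), by a quantitative estimate on $\|\nabla(\psi(\alpha+z)-\psi(\alpha))\|_2^2/|z|^2$ obtained from the weak equations for $\lambda(\alpha+z)$ and $\lambda(\alpha)$ together with the trace inequality. Your proposal asserts $H^1$-holomorphy of $\psi$ but supplies no argument for it; the hypothesis of the theorem (``$\psi(\alpha)$ chosen to be holomorphic'') does not deliver it, since that holomorphy is the $L^2$-holomorphy coming from the eigenprojections. Supplying this missing regularity step (either by the paper's difference-quotient estimate, or by upgrading the resolvent holomorphy to $\mathcal{B}(L^2,H^1)$ using uniform sectoriality of the forms) is what would make your proof complete.
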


This justifies writing simply
\begin{equation}
\label{eq:derivative}
	\lambda'(\alpha) = \frac{\int_{\partial\Omega} \psi(\alpha)^2\,\dsigma(x)}{\int_\Omega \psi(\alpha)^2\,\dx}
	\qquad \text{for all } \alpha \in B_\delta (\alpha_0),
\end{equation}
and in particular Theorem~\ref{thm:derivative} implies Theorem~\ref{thm:analytic-dependence}(3).

We leave it as an open problem to determine whether the mapping \eqref{eq:deriv-frac} can actually have (removable) singularities, or whether the denominator never vanishes.

\begin{oproblem}
Let $\lambda(\alpha)$ be any simple eigenvalue of $-\Delta_\Omega^\alpha$ for some $\Omega \subset \R^d$ bounded and Lipschitz and $\alpha \in \C$, and denote by $\psi(\alpha)$ its eigenfunction, scaled arbitrarily. Does it follow that
\begin{displaymath}
	\int_\Omega \psi(\alpha)^2\,\dx \neq 0 \ ?
\end{displaymath}
\end{oproblem}

We first prove that under the assumptions of the theorem the derivative of the eigenfunction $\psi$ with respect to $\alpha$, which we denote by $\psi'(\alpha)$ (and which exists as an element of $L^2(\Omega)$ by another application of Theorem~\ref{thm:holo}) is actually in $H^1(\Omega)$. Notationally, we will take $z \in \C$ to be small enough that $\alpha + z \in B_\delta (\alpha_0)$, that is, $|\alpha + z - \alpha_0| < \delta$.

\begin{lemma}
\label{lem:ef-h1}
Under the assumptions of Theorem~\ref{thm:derivative}, we have $\psi'(\alpha) \in H^1(\Omega)$ for all $\alpha \in B_\delta (\alpha_0)$.
\end{lemma}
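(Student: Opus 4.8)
The plan is to differentiate the weak eigenvalue equation \eqref{eq:weak-eigenvalue-equation} with respect to $\alpha$ and read off an equation for $\psi'(\alpha)$ that exhibits it as an element of $H^1(\Omega)$. Fix $\alpha \in B_\delta(\alpha_0)$. By Theorem~\ref{thm:holo} the eigenfunction $\psi = \psi(\alpha)$ and eigenvalue $\lambda = \lambda(\alpha)$ are holomorphic in $\alpha$ (away from the crossing points, of which there are none here by simplicity), and the difference quotient
\[
	\psi_z := \frac{\psi(\alpha + z) - \psi(\alpha)}{z}
\]
converges in $L^2(\Omega)$ to $\psi'(\alpha)$ as $z \to 0$. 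The first task is to upgrade this to convergence in $H^1(\Omega)$. Writing the weak equation \eqref{eq:weak-eigenvalue-equation} for the parameter $\alpha + z$ and for $\alpha$, subtracting, and dividing by $z$ gives, for all $v \in H^1(\Omega)$,
\[
	\int_\Omega \nabla \psi_z \cdot \overline{\nabla v}\,\dx
	+ \int_{\partial\Omega} \alpha\, \psi_z\, \overline{v}\,\dsigma(x)
	= \lambda(\alpha)\int_\Omega \psi_z\,\overline{v}\,\dx
	+ \frac{\lambda(\alpha+z) - \lambda(\alpha)}{z}\int_\Omega \psi(\alpha+z)\,\overline{v}\,\dx
	- \int_{\partial\Omega} \psi(\alpha+z)\,\overline{v}\,\dsigma(x).
\]
In other words, $\psi_z$ is the unique $H^1$-solution of a Robin-type boundary value problem with parameter $\alpha$, right-hand side $f_z \in L^2(\Omega)$ (the first two terms on the right) and inhomogeneous boundary data coming from the trace of $\psi(\alpha+z)$.

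Next I would use the fact that the form $a_\alpha - \beta$ is coercive on $H^1(\Omega)$ for $\beta \in \R$ sufficiently negative (this is precisely the semi-boundedness and $H^1$-boundedness in Lemma~\ref{lem:robin-operator} and Theorem~\ref{thm:robin-operator}): testing the above identity with $v = \psi_z$, moving $\beta\|\psi_z\|_{L^2}^2$ to the left, and estimating the right-hand side using the trace theorem, one obtains
\[
	\|\psi_z\|_{H^1(\Omega)}^2 \lesssim \big(\|f_z\|_{L^2(\Omega)} + \|\psi(\alpha+z)\|_{L^2(\partial\Omega)}\big)\|\psi_z\|_{H^1(\Omega)},
\]
hence a uniform bound $\|\psi_z\|_{H^1(\Omega)} \leq C$ for $z$ small, since $f_z$ and the traces of $\psi(\alpha+z)$ are uniformly bounded (the latter by the trace theorem and continuity of $z \mapsto \psi(\alpha+z)$ in $H^1$, which itself follows from holomorphy of the eigenprojection and $\Delta\psi = -\lambda\psi \in L^2$). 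A uniform $H^1$-bound plus $L^2$-convergence to $\psi'(\alpha)$ gives, by weak compactness, that $\psi'(\alpha) \in H^1(\Omega)$, and in fact $\psi_z \rightharpoonup \psi'(\alpha)$ weakly in $H^1$; passing to the limit $z \to 0$ in the displayed identity then shows $\psi'(\alpha)$ solves
\[
	\int_\Omega \nabla \psi' \cdot \overline{\nabla v}\,\dx + \int_{\partial\Omega} \alpha\,\psi'\,\overline{v}\,\dsigma
	= \lambda \int_\Omega \psi'\,\overline{v}\,\dx + \lambda' \int_\Omega \psi\,\overline{v}\,\dx - \int_{\partial\Omega} \psi\,\overline{v}\,\dsigma
	\qquad \text{for all } v \in H^1(\Omega).
\]

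The main obstacle I anticipate is the bookkeeping around the inhomogeneous boundary term: the difference quotient $\psi_z$ does \emph{not} lie in the domain of a single fixed Robin operator because the boundary condition itself depends on $\alpha$, so one cannot simply invert a resolvent. The clean way around this is to stay entirely at the level of the sesquilinear form on $H^1(\Omega)$, as above, and never pass to the operator; the form identity already encodes the $\alpha$-dependent boundary condition, and coercivity of the shifted form on all of $H^1(\Omega)$ is exactly what is needed to extract the uniform $H^1$-bound. One subtlety to check carefully is that $z \mapsto \psi(\alpha+z)$ is continuous (indeed holomorphic) as an $H^1(\Omega)$-valued map, not merely $L^2$-valued: this follows because $\psi(\alpha+z) = Q_{\lambda(\alpha+z)}\psi(\alpha)/\|\cdots\|$ up to normalisation, the eigenprojection is holomorphic in operator norm on $L^2$ by Theorem~\ref{thm:holo}, its range consists of functions with $\Delta u \in L^2$, and elliptic regularity for the weak problem converts $L^2$-control of $u$ and $\Delta u$ into $H^1$-control. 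With that in hand the limiting argument is routine and yields $\psi'(\alpha) \in H^1(\Omega)$, which is the assertion of the lemma. (This identity for $\psi'$ will also be the starting point for the proof of Theorem~\ref{thm:derivative} itself, upon taking $v = \psi$ and using the normalisation.)
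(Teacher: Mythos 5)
Your proposal is correct, and it is essentially the paper's argument: both proofs work purely at the level of the weak formulation, derive a uniform $H^1$-bound on the difference quotients $\bigl(\psi(\alpha+z)-\psi(\alpha)\bigr)/z$ by absorbing the boundary terms with the trace inequality $\int_{\partial\Omega}|u|^2\,\dsigma \le \varepsilon\|\nabla u\|_2^2 + C_\varepsilon\|u\|_2^2$, and then conclude from the known $L^2$-convergence of the difference quotients that $\psi'(\alpha)\in H^1(\Omega)$. The only real difference is organisational: the paper tests each eigenvalue equation with $v=\psi(\alpha+z)-\psi(\alpha)$ and expands $\|\nabla(\psi(\alpha+z)-\psi(\alpha))\|_2^2$ directly, splitting the boundary integrand so that the inhomogeneous piece is $\tfrac{|z|^2}{2}|\psi(\alpha)|^2$; this way only $\|\nabla\psi(\alpha)\|_2$ at the \emph{fixed} $\alpha$ enters the constants, and no uniform $H^1$-bound on $\psi(\alpha+z)$ for varying $z$ is needed. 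Your version, which subtracts the two equations first and tests with $\psi_z$, does require such a bound (through $\|\psi(\alpha+z)\|_{L^2(\partial\Omega)}$), and your sketch of it via the eigenprojection and elliptic regularity is valid but more elaborate than necessary: it suffices to test the eigenvalue equation at $\alpha+z$ with $\psi(\alpha+z)$ itself and absorb the boundary term, using the local boundedness of $\lambda(\alpha+z)$ and $\|\psi(\alpha+z)\|_2$. The limiting weak identity you record for $\psi'(\alpha)$ is the same one the paper obtains in the proof of Theorem~\ref{thm:derivative}.
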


\begin{proof}
We will show that
\begin{equation}
\label{eq:ef-h1-gradient-est}
	\limsup_{z \to 0} \frac{\|\nabla \psi(\alpha+z)-\nabla \psi(\alpha)\|_2^2}{|z|^2} < \infty.
\end{equation}
Since we already know that $\nabla \psi'(\alpha)$ exists in the distributional sense (as $\psi'(\alpha) \in L^2(\Omega)$), it will then follow from \eqref{eq:ef-h1-gradient-est} that actually $\nabla \psi'(\alpha) \in L^2 (\Omega)$.

To prove \eqref{eq:ef-h1-gradient-est}, we fix $z \in \C$ sufficiently small (as explained above) and use the weak form of the equation for both $\lambda(\alpha+z)$ and $\lambda(\alpha)$ to obtain (with $(\cdot,\cdot)$ the inner product on $L^2(\Omega)$)
\begin{displaymath}
\begin{split}
	&\|\nabla(\psi(\alpha+z)-\psi(\alpha))\|_2^2\\
	&\qquad=\Re \int_\Omega (\nabla \psi(\alpha+z)-\nabla \psi(\alpha))\cdot \overline{(\nabla \psi(\alpha+z)-\nabla\psi(\alpha))}\,\dx\\
	&\qquad=\Re \left[\lambda (\alpha+z) \big( \psi(\alpha+z),\psi(\alpha+z)-\psi(\alpha) \big)\right] - \Re \left[(\alpha+z)\int_{\partial\Omega}
		\psi(\alpha+z)\overline{(\psi(\alpha+z)-\psi(\alpha))}\,\dsigma(x)\right]\\
	&\qquad\qquad -\Re \left[\lambda (\alpha) \big( \psi(\alpha),\psi(\alpha+z)-\psi(\alpha) \big)\right] + \Re \left[\alpha\int_{\partial\Omega}
		\psi(\alpha)\overline{(\psi(\alpha+z)-\psi(\alpha))}\,\dsigma(x)\right]\\
	&\qquad=\Re \big( (\lambda(\alpha+z)\psi(\alpha+z)-\lambda(\alpha)\psi(\alpha)),\psi(\alpha+z)-\psi(\alpha) \big)\\
	&\qquad\qquad -\Re\int_{\partial\Omega}((\alpha+z)\psi(\alpha+z)-\alpha\psi(\alpha))\overline{(\psi(\alpha+z)-\psi(\alpha))}\,\dsigma(x).
\end{split}
\end{displaymath}
We next estimate the integrand in the boundary integral as follows:
\begin{displaymath}
\begin{split}
	&-\Re \Big[((\alpha+z)\psi(\alpha+z)-\alpha\psi(\alpha))\overline{(\psi(\alpha+z)-\psi(\alpha))}\Big]\\
	&\qquad = -\Re (\alpha+z)|\psi(\alpha+z)-\psi(\alpha)|^2 + \Re \left[z\psi(\alpha)\overline{(\psi(\alpha+z)-\psi(\alpha))}\right]\\
	&\qquad \leq -\Re (\alpha+z)|\psi(\alpha+z)-\psi(\alpha)|^2 + \frac{1}{2}|\psi(\alpha+z)-\psi(\alpha)|^2 + \frac{|z|^2}{2}|\psi(\alpha)|^2.
\end{split}
\end{displaymath}
Applying the trace inequality in the form
\begin{displaymath}
	\int_{\partial\Omega} |u|^2\,\dsigma(x) \leq \varepsilon \|\nabla u\|^2 + C_\varepsilon \|u\|^2
\end{displaymath}
for all $u \in H^1(\Omega)$, where $C_\varepsilon>0$ depends only on $\varepsilon>0$, to each of the two integrals
\begin{displaymath}
	\left|-\Re(\alpha+z) + \frac{1}{2}\right|\int_{\partial\Omega} |\psi(\alpha+z)-\psi(\alpha)|^2\,\dsigma(x) \quad \text{and} \quad
	\frac{|z|^2}{2}\int_{\partial\Omega} |\psi(\alpha)|^2 \,\dx,
\end{displaymath}
and choosing $\varepsilon > 0$ small enough that $\eta := \varepsilon [-\Re(\alpha+z) + \frac{1}{2}] < 1$ leads us to
\begin{displaymath}
\begin{split}
	&\|\nabla(\psi(\alpha+z)-\psi(\alpha))\|_2^2\\
	&\qquad\leq \Re \big( (\lambda(\alpha+z)\psi(\alpha+z)-\lambda(\alpha)\psi(\alpha)),\psi(\alpha+z)-\psi(\alpha) \big)\\
	&\qquad\quad +\eta \|\nabla(\psi(\alpha+z)-\psi(\alpha))\|_2^2 + C_\varepsilon
		\left(-\Re(\alpha+z) + \frac{1}{2}\right) \|\psi(\alpha+z)-\psi(\alpha)\|_2^2\\
	&\qquad\quad + |z|^2\left(\frac{\varepsilon}{2}\|\nabla\psi(\alpha)\|_2^2 + \frac{C_\varepsilon}{2}\|\psi(\alpha)\|_2^2\right).
\end{split}
\end{displaymath}
Now $\varepsilon$ may be chosen independently of $\alpha \in B_\delta (\alpha_0)$; in particular, with such a choice, the coefficient of $|z|^2$ depends only on $\alpha$, that is, we may write
\begin{displaymath}
	C_\alpha := \frac{\varepsilon}{2}\|\nabla\psi(\alpha)\|_2^2 + \frac{C_\varepsilon}{2}\|\psi(\alpha)\|_2^2
\end{displaymath}
for this coefficient. We now divide by $|z|^2$ and pass to the limit as $z \to 0$ to obtain
\begin{displaymath}
\begin{split}
	\limsup_{z \to 0} \frac{\|\nabla(\psi(\alpha+z)-\psi(\alpha))\|^2}{|z|^2}
	&\leq \frac{1}{1-\eta}\Re ( \lambda'(\alpha)\psi(\alpha)+\lambda(\alpha)\psi'(\alpha),\psi'(\alpha) )\\
	&\qquad+\frac{1}{1-\eta}C_\varepsilon \left(-\Re \alpha + \frac{1}{2}\right)\|\psi'(\alpha)\|^2 + C_\alpha.
\end{split}
\end{displaymath}
Since we already know that $\psi'(\alpha) \in L^2 (\Omega)$, the right-hand side of the above inequality is finite. This establishes \eqref{eq:ef-h1-gradient-est} and hence completes the proof of the lemma.
\end{proof}

\begin{proof}[Proof of Theorem~\ref{thm:derivative} and hence of Theorem~\ref{thm:analytic-dependence}(3)]
We choose $\psi(\alpha) \in H^1 (\Omega)$ as a test function in the weak form of the eigenvalue equation for $\lambda(\alpha)$:
\begin{displaymath}
	\int_{\Omega}(\nabla\psi(\alpha))^2\,\dx+\alpha\int_{\partial\Omega}\psi(\alpha)^2\,\dsigma(x)-\lambda(\alpha)\int_{\Omega}\psi(\alpha)^2\,\dx=0.
\end{displaymath}
The left-hand side clearly depends holomorphically on $\alpha$. Moreover, since $\psi'(\alpha) \in H^1 (\Omega)$ by Lemma~\ref{lem:ef-h1}, we may calculate its derivative as
\begin{displaymath}
\begin{split}
	2\int_{\Omega}\nabla\psi'(\alpha)\cdot\nabla\psi(\alpha)\,\dx+ &\int_{\partial\Omega}\psi(\alpha)^2\,\dsigma(x) + 
	 2\alpha \int_{\partial\Omega}\psi'(\alpha)\psi(\alpha)\,\dsigma(x)\\ &\qquad -\lambda'(\alpha)\int_{\Omega}\psi(\alpha)^2\,\dx
	-2\lambda(\alpha)\int_{\Omega}\psi'(\alpha)\psi(\alpha)\,\dx = 0.
\end{split}
\end{displaymath}
But the weak form of the eigenvalue equation for $\lambda (\alpha)$ also implies that
\begin{displaymath}
	2\int_{\Omega}\nabla\psi'(\alpha)\cdot\nabla\psi(\alpha)\,\dx+2\alpha \int_{\partial\Omega}\psi'(\alpha)\psi(\alpha)\,\dsigma(x)
	= 2\lambda(\alpha)\int_{\Omega}\psi'(\alpha)\psi(\alpha)\,\dx,
\end{displaymath}
whence
\begin{equation}
\label{eq:almost-deriv}
	\lambda'(\alpha)\int_{\Omega}\psi(\alpha)^2\,\dx = \int_{\partial\Omega}\psi(\alpha)^2\,\dsigma(x).
\end{equation}
This yields \eqref{eq:derivative} in the case that $\int_{\Omega}\psi(\alpha)^2\,\dx \neq 0$. But since we know that $\lambda'(\alpha)$ is holomorphic in $B_\delta (\alpha_0)$, as are the mappings
\begin{displaymath}
	\alpha \mapsto \int_\Omega \psi(\alpha)^2\,\dx, \qquad \alpha \mapsto \int_{\partial\Omega}\psi(\alpha)^2\,\dsigma(x),
\end{displaymath}
if the left-hand side of \eqref{eq:almost-deriv} vanishes at some point, then the right-hand side must vanish as well, and to the same order. It follows that any singularities of the mapping \eqref{eq:deriv-frac} in $B_\delta (\alpha_0)$ are removable. This completes the proof of the theorem.
\end{proof}


\section{Basis properties of the eigenfunctions}
\label{sec:eigenfunctions}

Given the analytic dependence of the eigenfunctions $\{e_k(\alpha)\}_{k\geq 1}$ of the Robin Laplacian on $\alpha \in \C$, it is a natural question to ask whether they also still have reasonable basis properties for non-real $\alpha$. In this section we will explore this question and, in particular, prove parts (4) and (5) of Theorem~\ref{thm:analytic-dependence}. 

We start with the negative result (5), that the eigenfunctions do not generally form an orthonormal basis.

\begin{theorem}\label{thm:NoONB}
Let $\Omega\in\R^d$, $d\geq 1$, be a bounded Lipschitz domain and $\alpha\in\C$. Then the eigenfunctions $e_k(\alpha)$, $k\in\N$, of $-\Delta_\Omega^\alpha$ can be chosen to form an orthonormal basis of $L^2(\Omega)$ if and only if $\alpha\in\R$.
\end{theorem}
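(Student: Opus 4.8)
The plan is to prove the two directions separately. The ``if'' direction ($\alpha \in \R \Rightarrow$ orthonormal basis) is already contained in Theorem~\ref{thm:robin-operator}, since for real $\alpha$ the operator $-\Delta_\Omega^\alpha$ is self-adjoint with compact resolvent, so it only remains to prove the ``only if'' direction: if the eigenfunctions of $-\Delta_\Omega^\alpha$ can be chosen to form an orthonormal basis of $L^2(\Omega)$, then $\alpha \in \R$. So suppose $\{e_k\}_{k \in \N}$ is an orthonormal basis consisting of eigenfunctions, with $-\Delta_\Omega^\alpha e_k = \lambda_k e_k$.

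First I would show that this forces $-\Delta_\Omega^\alpha$ to be \emph{normal}, in fact self-adjoint. The cleanest route: since $\{e_k\}$ is an orthonormal basis of eigenvectors with eigenvalues $\lambda_k$, one can write down the candidate self-adjoint-up-to-conjugation operator; more directly, I would argue that $-\Delta_\Omega^\alpha$ has an orthonormal basis of eigenvectors and compact resolvent, hence by the spectral theorem for such operators (e.g.\ viewing the resolvent $R_z = (-\Delta_\Omega^\alpha - z)^{-1}$, which is compact and has the same eigenvectors $e_k$ with eigenvalues $(\lambda_k - z)^{-1}$) the operator is normal. A normal operator which is also m-sectorial (Theorem~\ref{thm:robin-operator}(2)) with all eigenvalues... here one must be a little careful, since a normal operator need not be self-adjoint. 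The extra input I would use is the \emph{form} side: the operator is associated to the sesquilinear form $a_\alpha$ of \eqref{eq:robin-form}, and $-\Delta_\Omega^\alpha$ is self-adjoint iff $a_\alpha$ is symmetric, which (taking $u = v$) happens iff $\int_{\partial\Omega} \alpha |u|^2\,\dsigma(x) \in \R$ for all $u \in H^1(\Omega)$, i.e.\ iff $\alpha \in \R$ (choosing any $u$ with nonzero trace). So the real content is: an orthonormal basis of eigenfunctions forces $a_\alpha$ to be symmetric.

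To nail that down I would proceed concretely rather than invoke abstract normality. Pick two eigenfunctions $e_j, e_k$ with $j \neq k$; orthonormality gives $(e_j, e_k) = 0$. Plug $v = e_k$ into the weak eigenvalue equation \eqref{eq:weak-eigenvalue-equation} for $e_j$ and $v = e_j$ into the equation for $\overline{e_k}$-type pairing; more precisely, from \eqref{eq:weak-eigenvalue-equation},
\begin{displaymath}
	\int_\Omega \nabla e_j \cdot \overline{\nabla e_k}\,\dx + \alpha \int_{\partial\Omega} e_j \overline{e_k}\,\dsigma(x) = \lambda_j (e_j, e_k) = 0,
\end{displaymath}
and conjugating the analogous identity for $e_k$ tested against $e_j$ gives
\begin{displaymath}
	\int_\Omega \nabla e_j \cdot \overline{\nabla e_k}\,\dx + \overline{\alpha} \int_{\partial\Omega} e_j \overline{e_k}\,\dsigma(x) = \overline{\lambda_k (e_k, e_j)} = 0.
\end{displaymath}
Subtracting, $(\alpha - \overline{\alpha}) \int_{\partial\Omega} e_j \overline{e_k}\,\dsigma(x) = 0$ for all $j \neq k$; and taking $j = k$ in the first identity shows $\int_{\partial\Omega}|e_j|^2\,\dsigma(x) = (\lambda_j \|e_j\|^2 - \|\nabla e_j\|^2)/\alpha$. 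So if $\alpha \notin \R$, then the traces $\{\tr e_j\}_{j}$ are pairwise orthogonal in $L^2(\partial\Omega)$. The main obstacle — and the step I expect to require the most care — is to derive a contradiction from this: I need to know that the traces $\tr e_j$ cannot be pairwise orthogonal in $L^2(\partial\Omega)$, equivalently that $\{e_j\}$ cannot simultaneously be orthonormal in $L^2(\Omega)$ \emph{and} have orthogonal traces. One way: the span of $\{e_j\}$ is dense in $L^2(\Omega)$, hence (by the trace theorem and density of $H^1$) the closed span of $\{\tr e_j\}$ contains $\tr H^1(\Omega)$, which is dense in $L^2(\partial\Omega)$; if the $\tr e_j$ were pairwise orthogonal, after normalising they would form an orthonormal basis of $L^2(\partial\Omega)$, forcing $\sum_j \|\tr e_j\|_{L^2(\partial\Omega)}^2 \cdot |c_j|^2$-type Parseval identities that are incompatible with the two-sided trace inequality $c\|u\|_{L^2(\partial\Omega)}^2 \le \|u\|_{H^1(\Omega)}^2$ failing to control $\|u\|_{L^2(\Omega)}$ from below — more cleanly, pick $u = e_1 + e_2$: then $\|u\|_{L^2(\Omega)}^2 = 2$ while $\|\tr u\|_{L^2(\partial\Omega)}^2 = \|\tr e_1\|^2 + \|\tr e_2\|^2$, and iterating with $N$ eigenfunctions, compare $\sum_{j\le N}\|\tr e_j\|^2_{L^2(\partial\Omega)}$ against what the trace inequality permits for functions of $L^2(\Omega)$-norm $\sqrt N$ whose $H^1$-norm grows like $\sqrt{\sum \lambda_j}$. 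Since $\sum_{j \le N}\|\tr e_j\|^2_{L^2(\partial\Omega)}$ would diverge (its terms being $(\lambda_j\|e_j\|^2 - \|\nabla e_j\|^2)/\alpha$ and the eigenvalue asymptotics forcing infinitely many with nonvanishing boundary contribution), one gets a contradiction with the trace inequality applied to suitable finite linear combinations. I would expect to streamline this by instead observing that pairwise-orthogonal traces plus density would make $\{\tr e_j / \|\tr e_j\|\}$ an orthonormal basis of $L^2(\partial\Omega)$ and then testing the trace inequality on a single well-chosen element to contradict boundedness — the precise cleanest contradiction is the part I'd polish in the write-up. This completes the proof modulo that final contradiction.
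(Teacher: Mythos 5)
Your ``if'' direction and the reduction step in the ``only if'' direction are fine: testing the weak eigenvalue equation for $e_j$ against $e_k$ and conjugating the $(k,j)$ identity does give $(\alpha-\overline{\alpha})\int_{\partial\Omega}e_j\overline{e_k}\,\dsigma=0$, so for $\alpha\notin\R$ an orthonormal family of eigenfunctions would have pairwise orthogonal traces, and $\int_{\partial\Omega}|e_j|^2\,\dsigma=\Im\lambda_j/\Im\alpha$. (A small point you skip: to normalise the traces you must also rule out $\tr e_j=0$, which forces $e_j$ to be simultaneously a Dirichlet and a Neumann eigenfunction -- the same observation the paper uses -- but that is fixable.) The genuine problem is that the final contradiction, which is the heart of the theorem, is missing, and both of the routes you sketch for it fail. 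First, the density claim is a non sequitur: density of $\myspan\{e_j\}$ in $L^2(\Omega)$ only gives convergence of the partial sums of an expansion in $L^2(\Omega)$, and the trace map is not continuous from $L^2(\Omega)$ to $L^2(\partial\Omega)$, so you cannot conclude that the closed span of $\{\tr e_j\}$ contains $\tr H^1(\Omega)$, let alone is dense in $L^2(\partial\Omega)$.

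Second, the counting argument against the trace inequality does not close either. For $u=\sum_{j\le N}e_j$ one has $\|u\|_2^2=N$, $\|\tr u\|_{L^2(\partial\Omega)}^2=\sum_{j\le N}\|\tr e_j\|^2=(\Im\alpha)^{-1}\sum_{j\le N}\Im\lambda_j$, and, since $a_\alpha[e_j,e_k]=\lambda_j\delta_{jk}$, $\|\nabla u\|_2^2=\sum_{j\le N}\Re\lambda_j-\Re\alpha\,(\Im\alpha)^{-1}\sum_{j\le N}\Im\lambda_j$. By the numerical-range bound of Theorem~\ref{thm:numerical-range}, $\Im\lambda_j\lesssim\sqrt{\Re\lambda_j}$, so the left-hand side of the trace inequality grows much more slowly than $\varepsilon\|\nabla u\|_2^2\sim\varepsilon\sum_{j\le N}\Re\lambda_j$; the inequality $\|\tr u\|_{L^2(\partial\Omega)}^2\le\varepsilon\|\nabla u\|_2^2+C_\varepsilon\|u\|_2^2$ is therefore satisfied comfortably and yields no contradiction. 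So as it stands the proposal establishes a true and nontrivial necessary condition (orthogonality of the traces), but not the theorem. For comparison, the paper argues quite differently: it uses the \emph{completeness} of the hypothetical orthonormal basis to show that the closure of the numerical range $W(a_\alpha)$ would have to coincide with the closed convex hull of $\sigma(-\Delta_\Omega^\alpha)$; since every Robin eigenvalue has nonzero imaginary part (again via the Dirichlet/Neumann unique-continuation observation), while the first Dirichlet eigenvalue $\lambda_1\in\R$ lies in $W(a_\alpha)$ because $H^1_0(\Omega)\subset H^1(\Omega)$ and the boundary term vanishes there, this is impossible. If you want to salvage your approach, you need an input of that completeness/numerical-range type rather than the trace inequality.
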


\begin{proof}
For ease of notation, in this section we will write $\A(\alpha):=-\Delta_\Omega^\alpha$. For $\alpha\in\R$ the claim follows from the selfadjointness of $\A(\alpha)$.

Let $\alpha\in\C\setminus\R$ and assume that the eigenfunctions $\{e_k(\alpha)\}_{k=1}^\infty$ of $\A(\alpha)$ do form an orthonormal basis of $L^2(\Omega)$. To distinguish the notation from the complex conjugation $\overline{z}$ of $z\in\C$ and $M^\ast$ of $M\subset\C$, let $\mathrm{cl}(M)$ be the closure of $M$. Let $u\in D\left(\A(\alpha)^\ast\right)=D(\A(\overline{\alpha}))\subset L^2(\Omega)$ have $L^2(\Omega)$-norm one. Then there is a unique representation of $u$,
\begin{displaymath}
u=\sum_{k=1}^\infty \left(u,e_k(\alpha)\right)e_k(\alpha),
	\qquad
	1=\|u\|_2^2 = \sum_{k=1}^\infty |(u,e_k(\alpha))|^2,
\end{displaymath} 
which we use to calculate
\begin{align}
\overline{\left(u,\A(\alpha)^\ast u\right)} 
	&=\left(\A(\alpha)^\ast u,u\right)  \notag \\
	&= \sum_{k=1}^\infty \overline{\left(u,e_k(\alpha)\right)}\left((-\Delta_\Omega^\alpha)^\ast u,e_k(\alpha)\right)\notag \\
	&= \sum_{k=1}^\infty \left(e_k(\alpha),u\right)\left( u,\lambda_k(\alpha) e_k(\alpha)\right)
	 = \sum_{k=1}^\infty \left|\left(u,e_k(\alpha)\right)\right|^2\overline{\lambda_k(\alpha)}. \label{eq:convex-combination-eigenvalues}
\end{align} 
By the definition of the numerical range \eqref{eq:form-numerical-range} and the identity $\mathrm{cl}(W(a_\alpha))= \mathrm{cl}(W(\A(\alpha)))$ \cite[Corollary~VI.2.3]{Kato}
we obtain
\begin{equation}
\label{eq:numerical-ranges-complex-conjugated}
\mathrm{cl}(W(\A(\alpha)^\ast))
	=\mathrm{cl}(W(a_\alpha^\ast))
		=\mathrm{cl}(W(a_\alpha))^\ast
			=\mathrm{cl}(W(\A(\alpha)))^\ast.
\end{equation}
Note that due to the normalisation of $u$ the right-hand side of \eqref{eq:convex-combination-eigenvalues} can be interpreted as a convex combination of the complex conjugated elements $\lambda_k(\alpha)\in\sigma(\A(\alpha))$; the convex hull of the whole spectrum will be denoted by
\begin{displaymath}
\conv\left(\sigma(\A(\alpha))\right)=\conv \left\{ \lambda_k(\alpha) : k\in\mathbb{N} \right\}.
\end{displaymath}
Due to \eqref{eq:numerical-ranges-complex-conjugated} and \eqref{eq:convex-combination-eigenvalues} we obtain
\begin{displaymath}
\mathrm{cl}(W(\A(\alpha)))^\ast = \mathrm{cl}(W(\A(\alpha)^\ast)) = \mathrm{cl}\left(\conv\left(\sigma(\A(\alpha))\right)^\ast\right)
\end{displaymath}
and by complex conjugation of both sides we arrive at
\begin{equation}\label{eq:numerical-range-equal-convex-hull}
\mathrm{cl}(W(a_\alpha))=\mathrm{cl}(W(\A(\alpha))) = \mathrm{cl}\left(\conv(\sigma(\A(\alpha)))\right).
\end{equation}
This equation leads us to a contradiction as follows. Since $\A(\alpha)$ is sectorial and its resolvent is compact, for any sufficiently large $r>0$ the \textit{truncated} convex hull
\begin{displaymath}
P_r(\alpha):=\conv\left\{\lambda_k(\alpha) : k\in\N,\; |\lambda_k(\alpha)|\leq r  \right\} \subset\C
\end{displaymath}
is a polygon which contains at most finitely many eigenvalues of $\A(\alpha)$. To show that $P_r(\alpha)$ is contained in the upper half-plane it is sufficient to prove $\Im\lambda_k(\alpha)>0$ for all $k\in\N$: due to
\begin{displaymath}
\sigma(A(\alpha))\subset W(a_\alpha)\subset \{z\in\C : \Im z\geq 0\}
\end{displaymath}
it is clear that $\Im\lambda_k(\alpha)\geq 0$ for all $k\in\N$. Now assume that there exists an eigenvalue $\lambda\in P_r(\alpha)\cap\R$, that is, we find a corresponding (normalised) eigenfunction $u\in D(\A(\alpha))$ such that
\begin{displaymath}
\lambda=(\A(\alpha)u,u) = \int_\Omega|\nabla u|^2\dx + \alpha \int_{\partial\Omega} |u|^2 \dsigma(x) \in\R.
\end{displaymath}
This holds if and only if $u|_{\partial\Omega}=0$, that is, $u$ is an eigenfunction of the Dirichlet Laplacian $\A^D:=-\Delta_\Omega^D$. Furthermore, $u\in D(\A(\alpha))$ yields
\begin{displaymath}
0=\partial_\nu u + \alpha u = \partial_\nu u
\end{displaymath}
on $\partial\Omega$ and $u$ is additionally a Neumann eigenfunction, a contradiction. In other words, we have shown that 
\begin{equation}\label{eq:numerical-range-upper-half-plane}
\mathrm{cl}\left(\conv(\sigma(\A(\alpha)))\right)\cap\R=\emptyset.
\end{equation}
Since the principal eigenvalue $\lambda_1=\min\sigma(\A^D)$ can be represented by the variational max-min characterisation, there exists a normalised minimising function (the associated eigenfunction) $u_1\in H_0^1(\Omega)$ such that $\lambda_1 =\int_\Omega |\nabla u_1|^2\dx$. Recall that the domains of the Dirichlet Laplacian and the Robin form satisfy 
\begin{displaymath}
D(\A^D)\subset H_0^1(\Omega)\subset H^1(\Omega)=D(a_\alpha),
\end{displaymath}
see Section \ref{sec:robin-operator}. Consequently, $\lambda_1\in \mathrm{cl}(W(a_\alpha))$, a contradiction to \eqref{eq:numerical-range-equal-convex-hull} and \eqref{eq:numerical-range-upper-half-plane}.
Hence  the eigenfunctions $\{e_k(\alpha)\}_{k=1}^\infty$ of $\A(\alpha)$ do not form an orthonormal basis of $L^2(\Omega)$.
\end{proof}

One may show by explicit calculation that, even on the interval $\Omega = (-a,a)$, consistent with Theorem~\ref{thm:NoONB}, for given $\alpha \in \C \setminus \R$ the eigenfunctions of the Robin Laplacian $-\Delta_\Omega^\alpha$ belonging to different eigenspaces are not in general orthogonal to each other. Hence, for our positive result, we necessarily need to introduce ``weaker'' notions of basis. Here we will consider three: Bari, Riesz and Abel bases. The definitions of the first two of these, namely Definitions \ref{def:riesz-basis} and \ref{def:bari-basis}, are taken from \cite[3.6.16-19]{Istratescu}. For what follows we assume $(H,\|\cdot\|_H)$ to be a separable complex Hilbert space.

\begin{definition}
\label{def:basis}
A set $\mathcal{B}=\{e_k\}_{k=1}^\infty\subset H$ is called a \emph{basis} of $H$ if for each $h\in H$ there exists a unique, convergent series representation $h=\sum_{k=1}^\infty h_k e_k$ with coefficients $h_k=h_k(h)\in\mathbb{C}$.
\end{definition}

\begin{definition}
\label{def:riesz-basis}
Let $\mathcal{B}=\{e_k\}_{k=1}^\infty$ be a basis of $H$. Then $\mathcal{B}$ is called a \emph{Riesz basis} if there are constants $0<m\leq M$ such that 
\begin{displaymath}
m \|(h_k)_k\|_{\ell^2} \leq \|h\|_H \leq M \|(h_k)_k\|_{\ell^2}
\end{displaymath}
holds for any $h=\sum_{k=1}^\infty h_k e_k\in H$.
\end{definition}

\begin{definition}
\label{def:bari-basis}
A set $\mathcal{B}=\{e_k\}_{k=1}^\infty\subset H$ is called a \emph{Bari basis} of $H$ if there exists an orthonormal basis $\mathcal{B}'=\{e_k'\}_{k=1}^\infty$ of $H$ such that $\mathcal{B}$ is quadratically near $\mathcal{B}'$, that is
\begin{displaymath}
\sum_{k=1}^\infty \| e_k-e_k'\|_H^2 <\infty.
\end{displaymath}
\end{definition}


An Abel basis, as first introduced in \cite{Lidskii} and also defined for example in \cite[Section~1.2.13]{YaYa}, is always defined with respect to the eigenvectors and generalised eigenvectors (for short, generalised eigenvectors) of a densely defined sectorial operator $A$. The intuitive idea is that the formal series expansion
\begin{displaymath}
	\sum_{k=1}^\infty h_k e_k
\end{displaymath}
of an element $h\in H$ in the generalised eigenvectors $e_k$ of $A$ may not converge, but if the Fourier coefficients $h_k$ can be multiplied by a weight $\e^{-\lambda_k^\gamma t}$ (where $\lambda_k$ is the eigenvalue corresponding to $e_k$), such that
\begin{displaymath}
	\sum_{k=1}^\infty h_k \e^{-\lambda_k^\gamma t} e_k
\end{displaymath}
converges for each fixed $t>0$, and this series then converges to $h$ as $t \to 0$, then $\{e_k\}_{k=1}^\infty$ is an Abel basis of order $\gamma \geq 0$. (Note that an Abel basis will not generally be a basis in the sense of Definition~\ref{def:basis}, since the unweighted Fourier series expansion is explicitly not required to converge.) We will follow the definition given in \cite{YaYa}.

\begin{definition}
\label{def:abel-basis}
Suppose $A: H \supset D(A) \to H$ is a densely defined operator with purely discrete spectrum, such that all but finitely many of its eigenvalues lie in the sector $T_\theta^+ = \{ z \in \C: |\arg z| < \theta \}$ for some $\theta \in (0,\pi)$. Then we say that the generalised eigenvectors of $A$ form an \emph{Abel basis of $H$ of order $\gamma \geq 0$} if $\gamma \theta < \pi/2$ and there exists an enumeration of the eigenvalues $\{\lambda_k\}_{k=1}^\infty$ (with $\{e_k\}_{k=1}^\infty$ the corresponding enumeration of the generalised eigenvectors) such that for this fixed enumeration, for each $h \in H$, there exists a sequence of coefficients $h_k \in \C$ for which the series
\begin{equation}
\label{eq:ht}
	h(t) := \sum_{k=1}^\infty h_k \e^{-\lambda_k^\gamma t} e_k
\end{equation}
is convergent for all $t>0$, and $h(t) \to h$ in $H$ as $t \to 0^+$.

For the eigenvalues $\lambda$ which do not lie in $T_\theta^+$, the weight $\e^{-\lambda_k^\gamma t}$ in \eqref{eq:ht} is to be interpreted as $1$, while if some $\lambda$ is a repeated eigenvalue and $\{e_j,\ldots,e_{j+\ell}\}$ is a basis of its eigenspace, then the corresponding terms in the series \eqref{eq:ht} are to be interpreted in terms of the eigenprojection, that is, $\sum_{k=j}^{j+\ell} h_k \e^{-\lambda^\gamma t} e_k$ is to be replaced by
\begin{displaymath}
	\frac{1}{2\pi\I} \oint_\Gamma \e^{-\lambda^\gamma t} (A - zI)^{-1} h\,\textrm{d}z,
\end{displaymath}
where $\Gamma$ is any closed path in $\C$ separating $\lambda$ from the rest of the spectrum.
\end{definition}

The definition can be extended to allow $\gamma\theta < \pi$ in place of $\gamma\theta < \pi/2$; we refer, again, to \cite[Section~1.2.13]{YaYa}.

One may derive from the definitions that an orthonormal basis is always a Bari basis, a Bari basis is always a Riesz basis, and a Riesz basis, if it consists of the generalised eigenfunctions of a suitable operator, is always an Abel basis of order zero. The latter, in turn, is an Abel basis of any positive order $\gamma > 0$, provided only that the sectoriality estimate $\gamma\theta < \pi$ still holds.

Our goal is to show that the eigenfunctions of $-\Delta_\Omega^\alpha$ form (at least) an Abel basis of $L^2(\Omega)$, for any $\alpha\in\C$. This is based on a theorem of Agranovich (the main theorem of \cite{Agranovich94}), which we recall here for ease of reference.

\begin{theorem}
\label{thm:agranovich}
Suppose $H$ and $V$ are separable complex Hilbert spaces such that $V \hookrightarrow H$ is compact, and suppose that $a: V \times V \to \C$ is a bounded, coercive sesquilinear form. Denote by $b:= \Re a = (a + \overline{a})/2$ and $c: = \I\Im a = a - b$ the real and imaginary forms, respectively, which add to give $a$. Denote by $A$ and $B$ the operators on $H$ associated with $a$ and $b$, respectively. Suppose that
\begin{enumerate}
\item[(i)] there exist $0 \leq q \leq 1$ and $m>0$ such that
\begin{equation}
\label{eq:agranovich-trace}
	|c [u,u] | \leq m\|B^{1/2}u\|_H^{2q} \|u\|_H^{2-2q}
\end{equation}
for all $u \in V$, and
\item[(ii)] there exists $p>0$ such that the sequence of eigenvalues $\lambda_k (B)$, $k\geq 1$, of $B$ (bounded from below by assumption), repeated according to their multiplicities, has the asymptotic behaviour
\begin{equation}
\label{eq:agranovich-asymptotics}
	\limsup_{k\to\infty} \frac{\lambda_k(B)}{k^p} > 0.
\end{equation}
\end{enumerate}
Then $A$ has discrete spectrum, the invariant subspaces of $A$ are all finite dimensional, and the corresponding eigenfunctions and generalised eigenfunctions of $A$ constitute
\begin{enumerate}
\item a Bari basis of $H$ if $p(1-q)>1$, or
\item a Riesz basis of $H$ if $p(1-q)=1$, or
\item an Abel basis of $H$, of order $1/p + (q-1) + \delta$ for any (sufficiently small) $\delta>0$, if $p(1-q)<1$.
\end{enumerate}
\end{theorem}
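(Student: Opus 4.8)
The plan is to reduce the statement to the abstract completeness-and-basis machinery for non-self-adjoint compact operators of Keldysh--Markus--Matsaev type (which is the substance of Agranovich's argument). Since $b=\Re a$ is bounded and coercive on $V$ and $V\hookrightarrow H$ is compact, the associated operator $B$ is self-adjoint and positive (after a harmless shift, $B\geq I$), has compact resolvent, and satisfies $D(B^{1/2})=V$ with equivalent norms, so its normalised eigenvectors form an orthonormal basis of $H$ with eigenvalues $\lambda_n(B)\to\infty$. Coercivity also gives $0\in\rho(A)$, so $A^{-1}$ is compact and has the same root subspaces as $A$. Conjugating the form $a$ by $B^{1/2}$ (i.e.\ passing to $\hat a[\xi,\eta]:=a[B^{-1/2}\xi,B^{-1/2}\eta]$ on $H$, whose associated operator is the bounded operator $I+L$ with $L:=B^{-1/2}CB^{-1/2}$, $C$ the operator of the form $c$) yields
\[
	A^{-1}=B^{-1/2}(I+L)^{-1}B^{-1/2}.
\]
Since $c$ is skew-symmetric, $L$ is bounded and skew-adjoint, hence $I+L$ is invertible and $\Re(I+L)^{-1}=(I-L^2)^{-1}$ is self-adjoint and uniformly positive. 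Therefore $A^{-1}=S+R$ with self-adjoint part $S:=B^{-1/2}(I-L^2)^{-1}B^{-1/2}$ compact, injective and comparable to $B^{-1}$, and non-self-adjoint remainder $R:=-B^{-1/2}L(I-L^2)^{-1}B^{-1/2}$.

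Next I would turn the two hypotheses into quantitative information about this decomposition. On the spectral subspace $H_N$ of $B$ spanned by the eigenvectors of index $\geq N$ one has $(B^{-1}\xi,\xi)\leq\lambda_N(B)^{-1}\|\xi\|_H^2$, so assumption (i) bounds the (purely imaginary) quadratic form $\xi\mapsto(L\xi,\xi)$ on $H_N$ by $m\lambda_N(B)^{q-1}\|\xi\|_H^2$; since $\I L$ is self-adjoint, the min--max/interlacing principle then gives $s_N(L)=O(\lambda_N(B)^{q-1})$ for the singular values of $L$, which in particular makes $L$ compact when $q<1$. Assumption (ii) says precisely that the counting function of $B$, and hence of $S$, is $O(\lambda^{1/p})$ along a sequence $\lambda\to\infty$. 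Combining the two: $A^{-1}$ lies in the Schatten ideal $\mathfrak S_{1/p}$ in the weak sense provided by (ii), its eigenvalue and resolvent asymptotics are governed by the comparable self-adjoint $S$, and when $q<1$ the singular values of $R$ decay like $\lambda_n(B)^{q-2}$ --- strictly faster than those of $S$, the gain in the exponent being exactly $p(1-q)$. Thus $A^{-1}$ is a Schatten-small non-self-adjoint perturbation of the self-adjoint $S$, subordinate with order controlled by $q$.

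The final step invokes the abstract machinery. With $S\geq 0$ self-adjoint, injective, compact and with the above eigenvalue asymptotics, Keldysh's theorem yields completeness of the system of root vectors of $A^{-1}=S+R$, hence of $A$; this rests on a Phragm\'en--Lindel\"of argument in which a vector orthogonal to all root vectors produces a bounded entire function of minimal exponential type, using resolvent bounds off the sector of $A$ that follow from the subordination in (i) together with the clustering control in (ii). The quantitative refinements of Markus--Matsaev (and ultimately Agranovich's own) then upgrade completeness to a basis property, of the three types according to whether $R$ is ``Hilbert--Schmidt-small with summable squares'' ($p(1-q)>1$: Bari), borderline ($p(1-q)=1$: Riesz) or neither ($p(1-q)<1$: Abel). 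In the Abel regime one works with $A$ directly, which is m-sectorial of some semi-angle $\theta$ with $\gamma\theta<\pi/2$ for $\gamma:=1/p+(q-1)+\delta$, and realises the Abel-summed partial sums as the contour integrals $h(t)=\frac{1}{2\pi\I}\oint_\Gamma\e^{-z^{\gamma}t}(zI-A)^{-1}h\,\rd z$; the resolvent decay off the sector (again from the perturbation analysis) gives convergence for each $t>0$ and $h(t)\to h$ as $t\to 0^+$, while balancing the Schatten exponents of $S$ and $R$ against the eigenvalue spacing of $S$ pins the order to $1/p+(q-1)+\delta$. I expect the main obstacle to be precisely this last, abstract, step: establishing completeness of the root system of a non-self-adjoint compact operator and, above all, the passage from completeness to the \emph{sharp} basis conclusion, which demands uniform bounds on the Riesz projections attached to clusters of nearby eigenvalues --- the bookkeeping that separates the Bari, Riesz and Abel regimes and fixes the Abel order.
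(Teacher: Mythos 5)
The first thing to note is that the paper does not prove this statement at all: it is Agranovich's theorem, quoted from \cite{Agranovich94} ``for ease of reference'', and the paper's only justification is that citation. So there is no in-paper argument to compare yours against; the question is whether your sketch stands as a proof on its own, and it does not. Your preparatory reductions are sound and do reflect the internal structure of Agranovich's argument: after a shift $B\geq I$, $D(B^{1/2})=V$ with equivalent norms, the conjugation $A^{-1}=B^{-1/2}(I+L)^{-1}B^{-1/2}$ with $L=B^{-1/2}CB^{-1/2}$ bounded and skew-adjoint, the splitting $A^{-1}=S+R$ with $S$ self-adjoint, positive and comparable to $B^{-1}$, and the conversion of hypothesis (i) into subordination of $L$ to $B$. (Two small caveats: passing from the bound on the compressed quadratic form on the spectral subspace $H_N$ to $s_N(L)=\mathcal{O}(\lambda_N(B)^{q-1})$ requires the usual min--max bookkeeping and an index shift, since the compression only controls the $(N+1)$-th positive and negative eigenvalues of $\I L$; and hypothesis (ii) is a $\limsup$, so it controls the counting function of $B$, hence of $S$, only along a sequence, which your phrase ``in the weak sense'' acknowledges but does not resolve.)

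The genuine gap is the final step, which is where the entire content of the theorem lives and which you explicitly leave to ``abstract machinery'': Keldysh completeness of the root system of $A^{-1}=S+R$, the Markus--Matsaev criteria that convert the relative Schatten/subordination data into a Bari basis when $p(1-q)>1$ and a Riesz basis when $p(1-q)=1$, and the Abel summability with the precise order $1/p+(q-1)+\delta$ when $p(1-q)<1$. Invoking ``Agranovich's own refinements'' at this point is circular --- that is exactly the theorem to be proved --- and none of the hard analysis (the Phragm\'en--Lindel\"of resolvent estimates off the sector, the uniform bounds on the Riesz projections attached to clusters of eigenvalues, the balancing argument that pins down the Abel order) is actually carried out; you yourself flag this as the main obstacle. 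As a reduction to the literature your outline is reasonable and is in the same spirit as the paper, which simply cites \cite{Agranovich94}; as a self-contained proof it is incomplete at precisely the decisive step.
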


Theorem~\ref{thm:agranovich} was already used for a similar purpose in \cite[Section~5]{HKS} to prove a corresponding one-dimensional result; more precisely, for the Laplacian on a compact metric graph, equipped with complex $\delta$ conditions at one or more of the vertices (corresponding to a complex Robin condition), one can apply (2) to obtain a Riesz basis. With this background, we can now state our main positive result, which corresponds to Theorem~\ref{thm:analytic-dependence}(4).

\begin{theorem}
\label{thm:abel-basis}
Let $\Omega \subset \R^d$, $d\geq 1$, be a bounded Lipschitz domain and $\alpha \in \C$.
\begin{enumerate}
\item If $d=1$, then there is a Riesz basis of $L^2(\Omega)$ consisting of the eigenfunctions and generalised eigenfunctions of $-\Delta_\Omega^\alpha$;
\item If $d\geq 2$, then there is an Abel basis of $L^2(\Omega)$ of order $(d-1)/2 + \delta$ for any (sufficiently small) $\delta>0$, consisting of the eigenfunctions and generalised eigenfunctions of $-\Delta_\Omega^\alpha$.
\end{enumerate}
\end{theorem}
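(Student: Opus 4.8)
The plan is to deduce the statement from Agranovich's Theorem~\ref{thm:agranovich}, applied with $H=L^2(\Omega)$, $V=H^1(\Omega)$ and $a$ a shifted version of the Robin form. Since $a_\alpha$ itself is generally not coercive on $H^1(\Omega)$ (when $\Re\alpha<0$), the first step is to choose, via the trace inequality \eqref{eq:variable-alpha-trace}, a real constant $\mu=\mu(\alpha)$ large enough that $\tilde a[u,v]:=a_\alpha[u,v]+\mu(u,v)_{L^2(\Omega)}$ satisfies $\Re\tilde a[u,u]\geq\tfrac12\|u\|_{H^1(\Omega)}^2$; combined with the boundedness from Lemma~\ref{lem:robin-operator} this makes $\tilde a$ a bounded and coercive sesquilinear form, while $H^1(\Omega)\hookrightarrow L^2(\Omega)$ is compact since $\Omega$ is bounded Lipschitz. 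The operator associated with $\tilde a$ is $-\Delta_\Omega^\alpha+\mu I$, which has the same eigenfunctions, generalised eigenfunctions, eigenprojections and eigennilpotents as $-\Delta_\Omega^\alpha$ (only the eigenvalues are translated by $\mu$), so it is enough to prove the assertion for this operator. Splitting $\tilde a=b+c$ into its real and imaginary parts in the sense of Theorem~\ref{thm:agranovich}, one finds $b[u,v]=\int_\Omega\nabla u\cdot\overline{\nabla v}\,\dx+\Re\alpha\int_{\partial\Omega}u\overline{v}\,\dsigma(x)+\mu(u,v)$, whose associated self-adjoint operator is $B=-\Delta_\Omega^{\Re\alpha}+\mu I$, and $c[u,v]=\I\,\Im\alpha\int_{\partial\Omega}u\overline{v}\,\dsigma(x)$.

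Next I would verify the two hypotheses of Theorem~\ref{thm:agranovich}. For the eigenvalue asymptotics \eqref{eq:agranovich-asymptotics}: using \eqref{eq:variable-alpha-trace} one sandwiches $b$ between fixed positive multiples of the shifted Neumann form $\|\nabla u\|^2+\|u\|^2$, so by the min-max principle and the Weyl asymptotics for the Neumann Laplacian on a bounded Lipschitz domain one obtains $\lambda_k(B)\asymp k^{2/d}$, hence \eqref{eq:agranovich-asymptotics} with $p=2/d$. For the perturbation estimate \eqref{eq:agranovich-trace}: since $|c[u,u]|=|\Im\alpha|\,\|u\|_{L^2(\partial\Omega)}^2$ and, by coercivity of $b$, $\|u\|_{H^1(\Omega)}^2\asymp b[u,u]=\|B^{1/2}u\|_{L^2(\Omega)}^2$, it suffices to interpolate the boundary norm. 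If $d\geq2$ I would use the trace-interpolation inequality $\|u\|_{L^2(\partial\Omega)}^2\leq C\|u\|_{H^1(\Omega)}^{1+2\varepsilon}\|u\|_{L^2(\Omega)}^{1-2\varepsilon}$, valid on bounded Lipschitz domains for every $\varepsilon\in(0,\tfrac12)$ — obtained by composing the trace embedding $H^{1/2+\varepsilon}(\Omega)\hookrightarrow L^2(\partial\Omega)$ with the interpolation inequality between $H^1(\Omega)$ and $L^2(\Omega)$ — which gives \eqref{eq:agranovich-trace} with $q=\tfrac12+\varepsilon$. If $d=1$ the boundary is a finite set of points and the trace is point evaluation, so the one-dimensional Gagliardo--Nirenberg (Agmon) inequality $\|u\|_{L^\infty(\Omega)}^2\leq C\|u\|_{H^1(\Omega)}\|u\|_{L^2(\Omega)}$ yields \eqref{eq:agranovich-trace} with the sharp exponent $q=\tfrac12$.

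Feeding these values into Theorem~\ref{thm:agranovich} then finishes the proof. For $d=1$, $p(1-q)=2\cdot\tfrac12=1$, so part~(2) produces a Riesz basis of $L^2(\Omega)$ consisting of the eigenfunctions and generalised eigenfunctions of $-\Delta_\Omega^\alpha+\mu I$, hence of $-\Delta_\Omega^\alpha$. For $d\geq2$, $p(1-q)=\tfrac2d(\tfrac12-\varepsilon)<1$, so part~(3) yields an Abel basis, consisting of eigenfunctions and generalised eigenfunctions, of order $\tfrac1p+(q-1)+\delta'=\tfrac d2-\tfrac12+\varepsilon+\delta'=\tfrac{d-1}{2}+(\varepsilon+\delta')$; since $\varepsilon\in(0,\tfrac12)$ and the parameter $\delta'>0$ appearing in Theorem~\ref{thm:agranovich}(3) may both be taken arbitrarily small, this is an Abel basis of $L^2(\Omega)$ of order $\tfrac{d-1}{2}+\delta$ for any sufficiently small $\delta>0$, and again the same functions form such a basis for $-\Delta_\Omega^\alpha$.

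The main point requiring care — and the source of the dimensional dichotomy — is the sharpness of the exponent $q$ in \eqref{eq:agranovich-trace}. In one dimension point evaluation is controlled with the exact exponent $q=\tfrac12$, which is precisely what upgrades the conclusion from an Abel basis to a Riesz basis; on a Lipschitz domain with $d\geq2$, however, the borderline failure of the trace embedding $H^{1/2}(\Omega)\hookrightarrow L^2(\partial\Omega)$ forces the loss $q=\tfrac12+\varepsilon$, which is why one obtains only an Abel basis and why its order carries the extra $\delta$. A secondary technical ingredient is the Weyl asymptotics for the Neumann (equivalently, Robin) Laplacian on a merely Lipschitz domain; in fact only the lower bound $\lambda_k\gtrsim k^{2/d}$ is needed, which is classical.
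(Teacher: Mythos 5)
Your proof follows essentially the paper's own route: the same coercive shift of the form, the same splitting with $B=-\Delta_\Omega^{\Re\alpha}+\mu I$, and the same two hypotheses of Theorem~\ref{thm:agranovich} verified by a boundary-trace estimate and the Weyl law (with the correct exponent $p=2/d$; the exponent $k^{d/2}$ appearing in the paper's proof is a slip, and your $k^{2/d}$ is what actually yields the order $(d-1)/2+\delta$). The one genuine difference is how you check \eqref{eq:agranovich-trace} for $d\geq 2$: the paper uses the endpoint multiplicative trace inequality \eqref{eq:alternative-trace-form}, $\int_{\partial\Omega}|u|^2\,\dsigma\leq C\|u\|_{H^1}\|u\|_2$, established for Lipschitz domains by the divergence-theorem argument of Remark~\ref{rem:alternative-trace-form}, and hence works with the sharp $q=1/2$; you instead take $q=\tfrac12+\varepsilon$ via the non-endpoint trace embedding $H^{1/2+\varepsilon}(\Omega)\hookrightarrow L^2(\partial\Omega)$ plus interpolation and absorb $\varepsilon$ into $\delta$, which is perfectly adequate for the statement as formulated since the Abel order already carries a $+\delta$. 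Be aware, though, that your closing diagnosis is inaccurate: the $q=1/2$ inequality is \emph{not} lost on Lipschitz domains in dimension $d\geq 2$ --- it does not require the endpoint embedding $H^{1/2}(\Omega)\hookrightarrow L^2(\partial\Omega)$, only the vector-field/divergence argument --- and the Riesz-versus-Abel dichotomy is driven by the Weyl exponent rather than by any trace loss: even with $q=1/2$ one gets $p(1-q)=1/d$, which equals $1$ exactly when $d=1$. Your $d=1$ argument via Agmon's inequality is the same application of Theorem~\ref{thm:agranovich} that the paper delegates to \cite{HKS}.
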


We leave it as an open problem to determine whether in fact the eigenfunctions of $-\Delta_\Omega^\alpha$ still form a Riesz basis of $L^2(\Omega)$ if $\Omega \subset \R^d$ is a bounded Lipschitz domain in dimension $d\geq 2$, as they do when $d=1$, and when $\Omega$ is rectangular (see Remark~\ref{rem:hyperrectangle-riesz}); and we recall Open Problem~\ref{prob:eigennilpotents}, to establish that the eigennilpotents are always trivial, that is, that all generalised eigenfunctions are in fact eigenfunctions.

\begin{oproblem}
Let $\Omega \subset \R^d$, $d\geq 2$, be bounded and Lipschitz and let $\alpha \in \C$. Do the eigenfunctions of $-\Delta_\Omega^\alpha$ form a Riesz basis of $L^2(\Omega)$?
\end{oproblem}

\begin{proof}[Proof of Theorem~\ref{thm:abel-basis}]
We only need to apply Theorem~\ref{thm:agranovich}, as was done in \cite[Section~5]{HKS} for $d=1$, and in fact we refer there for the proof in this case.

So suppose that $d\geq 2$. Obviously, we choose $H=L^2(\Omega)$ and $V=H^1(\Omega)$. Given $\alpha \in \C$, which will be fixed throughout, we suppose $\omega \geq 0$ to be such that $a_\alpha [u,u] + \omega (u,u)$ is coercive $H^1 (\Omega)$, which we may always do by the trace inequality, cf.~Lemma~\ref{lem:robin-operator} (here and throughout $(\cdot,\cdot)$ is the inner product on $L^2(\Omega)$). We then choose
\begin{displaymath}
	a[u,v] :=a_\alpha [u,v] + \omega (u,v),
\end{displaymath}
so that $b=a_{\Re \alpha} + \omega (\cdot,\cdot)$, $A = -\Delta_\Omega^\alpha + \omega I$, and $B = -\Delta_\Omega^{\Re \alpha} + \omega I$, and to apply Theorem~\ref{thm:agranovich} we only need to check the conditions (i) and (ii). We will show that (i) holds for $q=1/2$ and (ii) holds for any $p\leq d/2$, leading in particular to the order of the Abel basis claimed in the theorem.

For (i), first note that for any operator $B$ satisfying the assumptions of the theorem, we have that
\begin{displaymath}
	(B^{1/2}u,B^{1/2}u) = (Bu,u) =  b[u,u]
\end{displaymath}
for all $u \in D(B)$, and in particular $\|B^{1/2}u\|_{2} = b[u,u]^{1/2}$ for all $u \in V$. 

Up to a possibly different constant, the form $a_{\Re \alpha} + \omega (\cdot,\cdot)$ defines an equivalent norm on $H^1 (\Omega)$. Thus, in our setting, and with $q=1/2$, \eqref{eq:agranovich-trace} reduces to the question of the existence of a constant $m>0$ such that, for all $u \in H^1(\Omega)$
\begin{displaymath}
	|\Im \alpha| \int_\Omega |u|^2\,\dsigma \leq m\|u\|_{H^1(\Omega)}\|u\|_{2}.
\end{displaymath}
But this, in turn, follows immediately from the trace inequality \eqref{eq:alternative-trace-form} of Remark~\ref{rem:alternative-trace-form}, to be proved below.

For (ii), note that the constant $\omega$ has no effect on the asymptotic behaviour of the eigenvalues; thus we may assume without loss of generality that $\omega=0$. We are thus interested in the smallest $p>0$ such that
\begin{displaymath}
	\limsup_{k\to\infty} \frac{\lambda_k (-\Delta_\Omega^{\Re \alpha})}{k^p} > 0.
\end{displaymath}
But by the Weyl asymptotics for the Robin Laplacian, valid for any $\Re \alpha \in \R$, in fact
\begin{displaymath}
	\lambda_k (-\Delta_\Omega^{\Re\alpha}) = C_d(|\Omega|) k^{d/2} + o (k^{d/2})
\end{displaymath}
as $k\rightarrow\infty$ for a constant $C_d(|\Omega|)>0$ (see, for example, \cite{Ivrii,VaSa}), leading to $p \leq d/2$.
\end{proof}


\section{On the numerical range}
\label{sec:numerical-range}

In this section we will give bounds on the numerical range of the form $a_\alpha$ associated with the operator $-\Delta_\Omega^\alpha$ on a general Lipschitz domain $\Omega \subset \R^d$, which we recall is given by
\begin{displaymath}
	W (a_\alpha) = \left\{ \int_\Omega |\nabla u|^2\,\dx + \int_{\partial\Omega}\alpha |u|^2\,\dsigma :
	u \in H^1(\Omega)\text{ with } \|u\|_2=1 \right\} \subset \C.
\end{displaymath}
Since here $\sigma (-\Delta_\Omega^\alpha) = \sigma_p (-\Delta_\Omega^\alpha) \subset W (-\Delta_\Omega^\alpha) \subset W (a_\alpha)$, in addition to giving an independent proof of the sectoriality of the form and the operator claimed in Section~\ref{sec:robin-operator}, these bounds will more importantly give an estimate on the rate at which any eigenvalues can diverge in the regime $\Re \alpha < 0$, as well as the size of the imaginary part of the eigenvalues: in particular, the following theorem contains Theorem~\ref{thm:general-eigenvalue-estimate}. We obtain somewhat different results, and require a different method of proof, in the case where $\partial\Omega$ is sufficiently smooth ($C^2$) on the one hand, and Lipschitz on the other.

\begin{theorem}
\label{thm:numerical-range}
Suppose $\Omega \subset \R^d$, $d\geq 2$, is a bounded Lipschitz domain. Then there exist constants $C_1\geq 2$ and $C_2>0$ depending only on $\Omega$ such that for $\alpha \in \C$ the set $W (a_\alpha)$ is contained in
\begin{displaymath}
	\Lambda_{\Omega,\alpha} = \left\{ t+\alpha\cdot s \in \C: t\geq 0,\, s\in [0,C_1\sqrt{t}+C_2] \right\}.
\end{displaymath}
In particular, we have the estimate
\begin{equation}
\label{eq:numerical-range-real-part}
	\Re \lambda \geq -\frac{C_1^2}{4}|\Re \alpha|^2 - C_2|\Re \alpha|
\end{equation}
for all $\lambda \in \sigma (-\Delta_\Omega^\alpha)$. If $\Omega$ has $C^2$ boundary, then we may choose $C_1 = 2$.
\end{theorem}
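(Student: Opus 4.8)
The plan is to reduce the whole statement to a single trace-type inequality for $H^1(\Omega)$ and then to some elementary plane geometry. For $u\in H^1(\Omega)$ with $\|u\|_2=1$ put $t:=\int_\Omega|\nabla u|^2\,\dx\ge 0$ and $s:=\int_{\partial\Omega}|u|^2\,\dsigma\ge 0$, so that $a_\alpha[u,u]=t+\alpha s$. The containment $W(a_\alpha)\subset\Lambda_{\Omega,\alpha}$ is then \emph{exactly} the assertion that $s\le C_1\sqrt{t}+C_2$ for every such $u$; by homogeneity of degree $2$ this is the trace inequality
\begin{equation}
\label{eq:plan-trace}
	\int_{\partial\Omega}|u|^2\,\dsigma\le C_1\Bigl(\int_\Omega|\nabla u|^2\,\dx\Bigr)^{1/2}\Bigl(\int_\Omega|u|^2\,\dx\Bigr)^{1/2}+C_2\int_\Omega|u|^2\,\dx,\qquad u\in H^1(\Omega).
\end{equation}
Granting \eqref{eq:plan-trace}, we get $W(a_\alpha)\subset\Lambda_{\Omega,\alpha}$, and since $\sigma(-\Delta_\Omega^\alpha)=\sigma_p(-\Delta_\Omega^\alpha)\subset W(-\Delta_\Omega^\alpha)\subset W(a_\alpha)$ (Section~\ref{sec:robin-operator}), every eigenvalue lies in $\Lambda_{\Omega,\alpha}$. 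The real-part bound \eqref{eq:numerical-range-real-part} follows by writing, for $\Re\alpha<0$, $\Re(t+\alpha s)=t+s\,\Re\alpha\ge t-(C_1\sqrt t+C_2)|\Re\alpha|$ and minimising the right-hand side over $t\ge 0$: the minimum is attained at $\sqrt t=\tfrac12 C_1|\Re\alpha|$ and equals $-\tfrac14 C_1^2|\Re\alpha|^2-C_2|\Re\alpha|$ (for $\Re\alpha\ge0$ the real part is trivially nonnegative).

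To prove \eqref{eq:plan-trace} I would use the standard divergence-theorem device. Suppose $F\colon\overline\Omega\to\R^d$ is a Lipschitz vector field with $F\cdot\nu\ge c_0>0$ for $\sigma$-a.e.\ point of $\partial\Omega$. Since $\nabla(|u|^2)=2\Re(\overline u\,\nabla u)\in L^1(\Omega)$ for $u\in H^1(\Omega)$, we have $|u|^2F\in W^{1,1}(\Omega)$, so the Gauss--Green formula on the Lipschitz domain $\Omega$ yields
\begin{displaymath}
	\int_{\partial\Omega}|u|^2(F\cdot\nu)\,\dsigma=\int_\Omega|u|^2\divergence F\,\dx+2\int_\Omega\Re(\overline u\,\nabla u)\cdot F\,\dx,
\end{displaymath}
and hence, by Cauchy--Schwarz,
\begin{displaymath}
	c_0\int_{\partial\Omega}|u|^2\,\dsigma\le\|\divergence F\|_\infty\|u\|_2^2+2\|F\|_\infty\|u\|_2\|\nabla u\|_2.
\end{displaymath}
This is \eqref{eq:plan-trace} with $C_1=2\|F\|_\infty/c_0$ and $C_2=\|\divergence F\|_\infty/c_0$, and automatically $C_1\ge 2$ since $\|F\|_\infty\ge c_0$.

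It remains to construct such an $F$, depending only on $\Omega$. If $\partial\Omega$ is $C^2$, take the signed distance $d$ to $\partial\Omega$, which is $C^2$ on a tubular neighbourhood $U$ of $\partial\Omega$ with $|\nabla d|\equiv 1$ there and $\nabla d=\nu$ on $\partial\Omega$; set $F:=\chi\nabla d$ for a cutoff $\chi\in C^\infty_c(U)$, $0\le\chi\le 1$, $\chi\equiv 1$ near $\partial\Omega$. Then $F\in C^1(\overline\Omega)$, $|F|\le 1$, $F\cdot\nu\equiv 1$ on $\partial\Omega$, and $\divergence F=\chi\Delta d+\nabla\chi\cdot\nabla d\in L^\infty$, so $c_0=1$, $\|F\|_\infty\le1$, giving $C_1=2$. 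For a general Lipschitz domain, cover $\partial\Omega$ by finitely many coordinate cylinders in each of which $\partial\Omega$ is the graph of a Lipschitz function with constant $L_j$; in the $j$th chart the constant unit vector $e_{d,j}$ (the ``vertical'' direction of that chart) satisfies $e_{d,j}\cdot\nu\ge(1+L_j^2)^{-1/2}$ on the corresponding piece of $\partial\Omega$. Gluing with a smooth partition of unity $\{\psi_j\}$ with $\sum_j\psi_j\le 1$ on $\overline\Omega$ and $\sum_j\psi_j\equiv 1$ near $\partial\Omega$, and setting $F:=\sum_j\psi_j e_{d,j}$, one gets $|F|\le 1$, $\divergence F=\sum_j\nabla\psi_j\cdot e_{d,j}\in L^\infty$, and $F\cdot\nu\ge c_0:=\min_j(1+L_j^2)^{-1/2}>0$ on $\partial\Omega$; all resulting constants depend only on $\Omega$.

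Everything here is elementary once $F$ is in hand, so the one genuine point of work is the construction of a Lipschitz (resp.\ $C^1$) vector field uniformly transversal to $\partial\Omega$ with $L^\infty$-bounded divergence, together with the bookkeeping showing that $c_0,\|F\|_\infty,\|\divergence F\|_\infty$ depend only on $\Omega$ — classical for Lipschitz domains but requiring the partition-of-unity argument above. It is also worth recording in a remark that the value $C_1=2$ in the $C^2$ case is sharp, as it is precisely what produces the leading term $-\alpha^2$ in the conjectured eigenvalue asymptotics; one can see the near-optimality by testing with functions concentrating near a point of $\partial\Omega$.
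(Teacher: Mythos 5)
Your proposal is correct and follows essentially the same route as the paper: the theorem is reduced to the trace-type inequality of Lemma~\ref{lem:boundary-control}, which in the $C^2$ case is obtained from the field $\chi\nabla d_\Omega$ built from the signed distance function (the paper packages the same computation via the coarea formula and Lemma~\ref{lem:boundary-geometry}), and in the Lipschitz case from locally ``vertical'' directions transversal to the graph pieces of $\partial\Omega$ (the paper estimates chart by chart and sums over a finite cover, whereas you glue the directions into one global field $F$ by a partition of unity --- a cosmetic difference), followed by the same elementary optimisation in $t$ that the paper performs via an AM--GM step. The resulting constants $C_1=2\|F\|_\infty/c_0\geq 2$, $C_2=\|\divergence F\|_\infty/c_0$, with $C_1=2$ for $C^2$ boundary, agree with the paper's.
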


The regions $\Lambda_{\Omega,\alpha}$ for different values of $\alpha$ are depicted in Figures~\ref{fig:SimpleCurve} and~\ref{fig:MultipleCurves}. The constants $C_1, C_2$ depend on the geometry of $\partial\Omega$, and with our method of proof it should be possible to give an estimate on them, at least in principle. See~Remark~\ref{rem:comega} for a discussion of the meaning of $C_2$ in the case of smooth domains, where we obtain an expression for $C_2$ related to the curvature of $\partial\Omega$. It does not seem clear that we should expect $C_1 = 2$ in Theorem~\ref{thm:numerical-range} for domains of class $C^1$, not just $C^2$; cf.~Remark~\ref{rem:neg-alpha-bound}(1).

\begin{figure}[h]
  \includegraphics[scale=0.2]{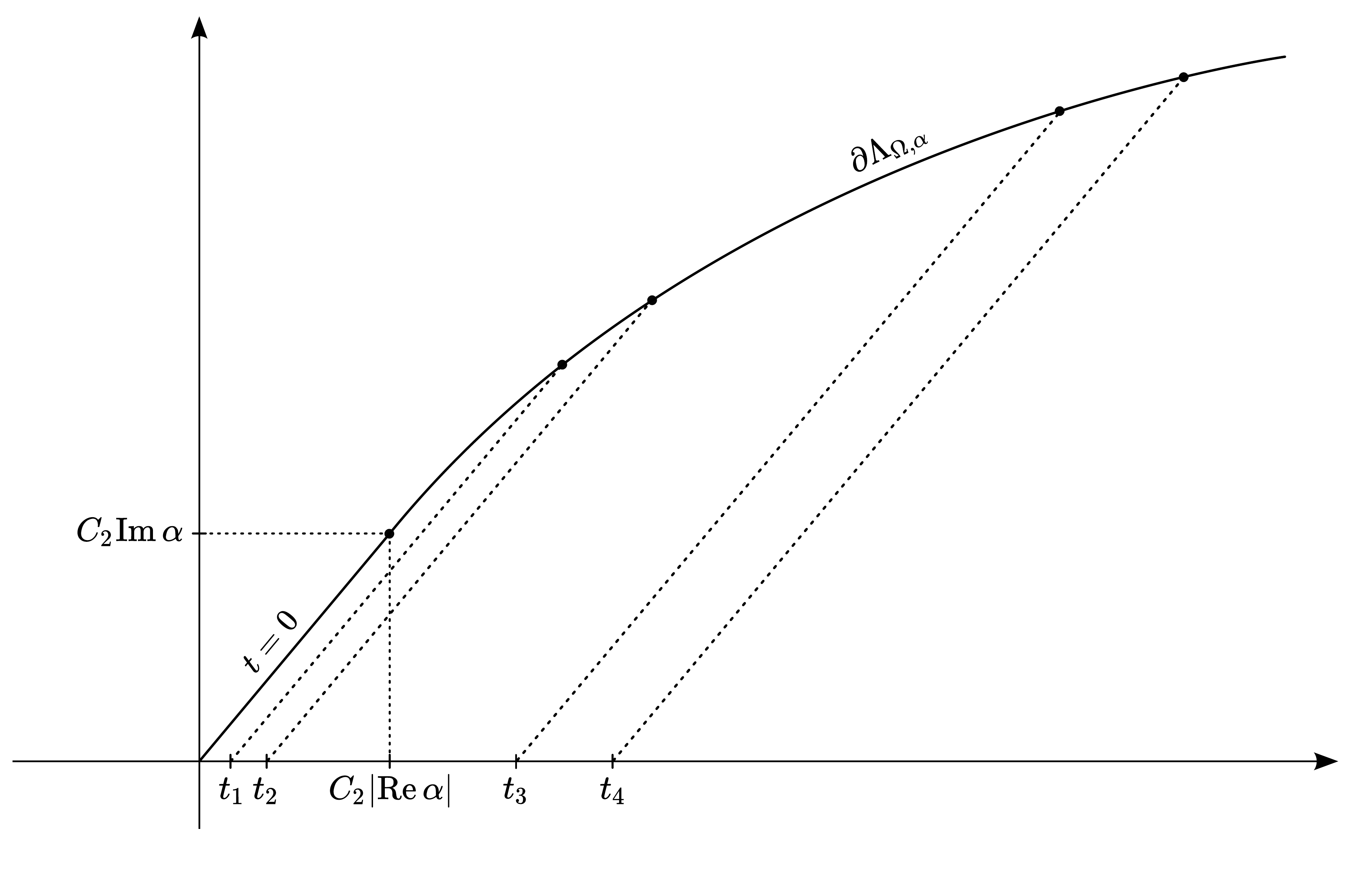}
  \caption{The set $\Lambda_{\Omega,\alpha}$, which contains the numerical range $W(a_\alpha)$, for a representative choice of $\Re \alpha >0$ and $\Im \alpha >0$, corresponding to the region between the curve $\partial\Lambda_{\Omega,\alpha}$ and the real axis. The region is composed of the union of segments of the form $\{ t+\alpha\cdot s \in \C: s\in [0,C_1\sqrt{t}+C_2] \}$, each of slope $\Im\alpha/\Re\alpha$, for different values of $t \geq 0$; the dotted lines show these segments for selected values of $t_1,\ldots, t_4 > 0$. Their endpoints form a parabolic section of $\partial\Lambda_{\Omega,\alpha}$ open to the right.}
  \label{fig:SimpleCurve}
\end{figure}

\begin{figure}[h]
  \includegraphics[scale=0.2]{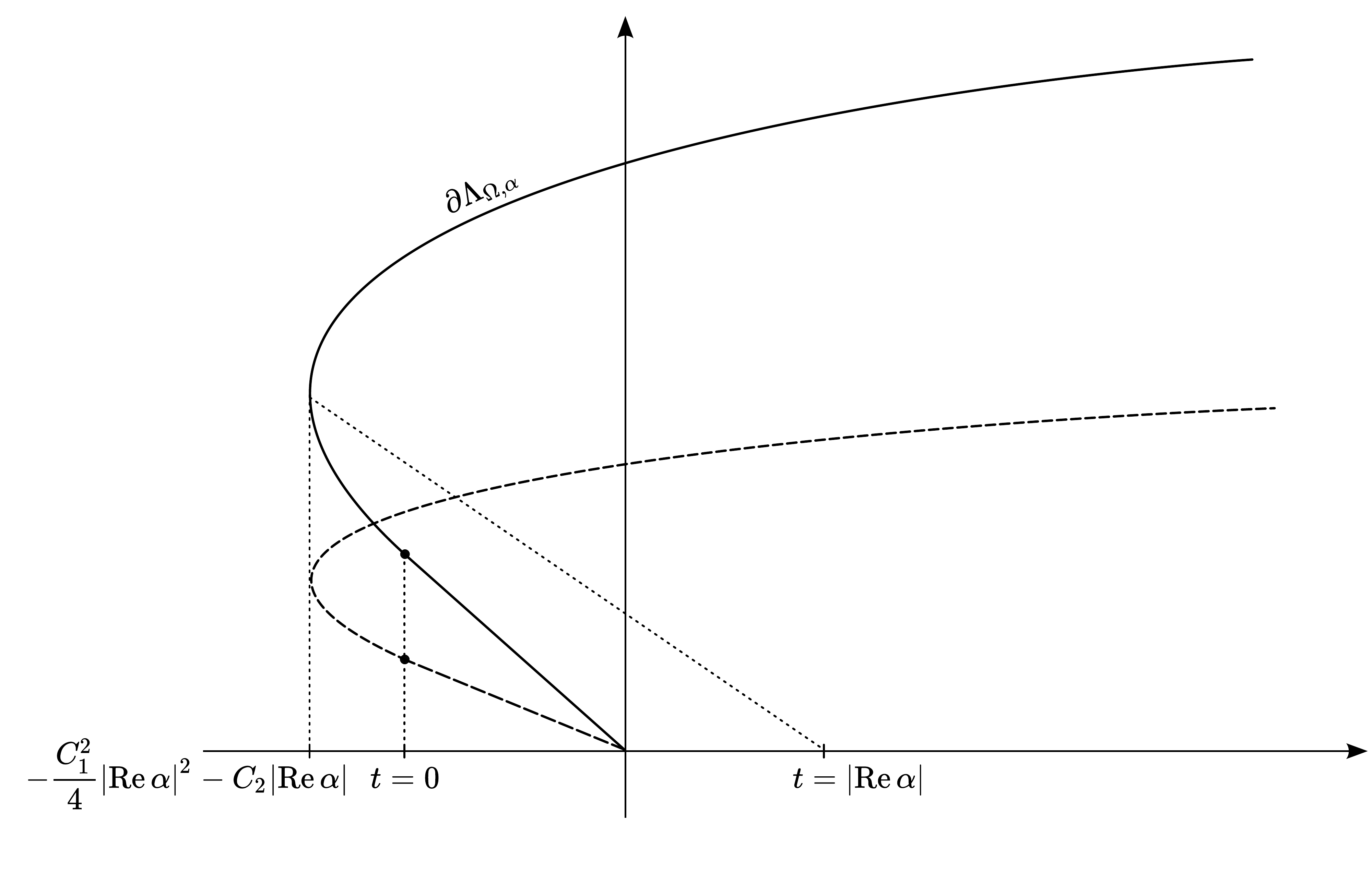}
  \caption{The set $\Lambda_{\Omega,\alpha}$ for $\Re \alpha < 0$ and two different choices of $\Im \alpha > 0$ (whose upper boundaries correspond to the solid and dashed curves, respectively). As $\Im \alpha \to 0$, the region collapses to the part of the real axis from $-\frac{C_1^2}{4}|\Re\alpha|^2 - C_2 |\Re\alpha|$ to $+\infty$.}
  \label{fig:MultipleCurves}
\end{figure}

Since these bounds are new even in the case of real negative $\alpha$ (see also Corollary~\ref{cor:neg-alpha-bound}), we wish to discuss how they fit in with known results before we turn to the proofs.

\begin{remark}
\label{rem:neg-alpha-bound}
(1) We recall the bound
\begin{equation}
\label{eq:neg-alpha-upper-bound}
	\lambda_1 (\alpha) < -|\alpha|^2
\end{equation}
on the principal Robin eigenvalue $\lambda_1 (\alpha) = \lambda_1 (-\Delta_\Omega^\alpha)$ of any bounded Lipschitz domain $\Omega \subset \R^d$ for $\alpha < 0$, which may be obtained by a simple variational argument (see \cite[Theorem~2.3]{GiorgiSmits} or \cite[Proposition~4.12]{BFK}). Together with this bound, Corollary~\ref{cor:neg-alpha-bound} gives a new, simpler, proof of the asymptotic behaviour $\lambda_1 (\alpha) = -|\alpha|^2 + \mathcal{O}(\alpha)$ as $\alpha \to -\infty$, if $\Omega$ is $C^2$. The only other proof that $\lambda_1 (\alpha) = -|\alpha|^2 + o(\alpha^2)$ on $C^2$ -- actually $C^1$ -- domains, which is completely different and involves a blow-up argument, is the principal result of \cite{LouZhu}; all other proofs (which give more terms in the expansion) require more boundary regularity. Indeed, if $\Omega$ is $C^3$, then, as $\alpha \to -\infty$,
\begin{equation}
\label{eq:two-term-asymptotics}
	\lambda_1 (\alpha) = -|\alpha|^2 - (d-1)\bar{\kappa}_{\max}|\alpha| + \mathcal{O} (\alpha^{2/3}),
\end{equation}
where $\bar{\kappa}_{\max}$ denotes the maximal mean curvature of $\partial\Omega$; see \cite[Section~4.4.2.1]{BFK} for a discussion and references. It is interesting to note that in the case of smooth $\Omega$ the constant $C_2$ appearing in Theorem~\ref{thm:numerical-range} is likewise related to the curvature of $\partial\Omega$ (see Remark~\ref{rem:comega} for more details); the presence of the curvature suggests that the ``smooth'' version of Theorem~\ref{thm:numerical-range} (i.e., with $C_1=2$) does not hold under significantly weaker regularity assumptions than $C^2$. We leave it as an open problem to determine whether a better bound than ours is possible for $C^1$ domains (see Open Problem~\ref{prob:C1} below).

(2) We recall that for domains $\Omega$ with piecewise smooth boundary and a finite number of ``model corners'', the asymptotic behaviour of the principal eigenvalue becomes
\begin{equation}
\label{eq:neg-alpha-lipschitz-principal-ev}
	\lambda_1 (\alpha) = -C|\alpha|^2 + o(\alpha^2)
\end{equation}
as $\alpha \to -\infty$, for a constant $C\geq 1$ depending on the opening angle(s) of the ``most acute'' corner(s) of $\Omega$ (we refer to \cite[pp.~94--95]{BFK} for details and references); it is an open problem to show that \eqref{eq:neg-alpha-lipschitz-principal-ev} also holds on general Lipschitz domains \cite[Open~Problem~4.17]{BFK}. The lower bound of Theorem~\ref{thm:numerical-range} in the form of Corollary~\ref{cor:neg-alpha-bound}, together with \eqref{eq:neg-alpha-upper-bound}, at least implies a two-sided asymptotic bound of this form.
\end{remark}

\begin{oproblem}
\label{prob:C1}
Let $\Omega \subset \R^d$ be bounded and of class $C^1$ and suppose that $\alpha \in \R$. Is it true that
\begin{displaymath}
	\lambda_k (\alpha) = -|\alpha|^2 + \mathcal{O} (\alpha)
\end{displaymath}
as $\alpha \to -\infty$, for each $k \in \N$?
\end{oproblem}

\begin{remark}
\label{rem:cosine-function}
We note in passing that the bound of Theorem~\ref{thm:numerical-range} on the numerical range (and the spectrum) of $-\Delta_\Omega^\alpha$ implies that, for any Lipschitz domain $\Omega \subset \R^d$ and any $\alpha \in \C$, the operator $\Delta_\Omega^\alpha$ generates a \emph{cosine function}, that is, the corresponding wave equation is well posed (see \cite[Section~3.14]{ABHN} for more details on cosine functions of operators); in fact, it is known that an operator is the generator of a cosine function if and only if its numerical range and spectrum are contained in a parabolic region such as $-\Lambda_{\Omega,\alpha}$; see \cite[Theorem~3.17.4]{ABHN}. Here, we see that $-\Lambda_{\Omega,\alpha}$ is contained in the parabolic region described in that theorem for sufficiently large $\omega>0$, how large depending on $\alpha$, $C_1$ and $C_2$.
\end{remark}

The proof of Theorem~\ref{thm:numerical-range} is based on the following trace-type inequality.

\begin{lemma}
\label{lem:boundary-control}
Let $\Omega \subset \R^d$, $d\geq 2$, be a bounded Lipschitz domain. Then there exist constants $C_1\geq 2$ and $C_2>0$, both depending only on $\Omega$, such that
\begin{equation}
\label{eq:boundary-control}
	\int_{\partial\Omega} |u|^2\,\dsigma \leq C_1\|\nabla u\|_2 + C_2
\end{equation}
for all $u \in H^1(\Omega)$ with $\|u\|_2=1$. If $\Omega$ has $C^2$ boundary, then we may choose $C_1=2$.
\end{lemma}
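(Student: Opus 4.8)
The plan is to prove the inequality in two stages, treating the $C^2$ case first and then bootstrapping to the Lipschitz case. For the $C^2$ case, the starting point is the divergence-theorem identity: if $F$ is a $C^1$ vector field on $\overline\Omega$ with $F\cdot\nu = 1$ on $\partial\Omega$ (such an $F$ exists precisely because $\partial\Omega$ is $C^2$, e.g. by extending the outer normal field smoothly into a collar neighbourhood and cutting off), then for $u\in H^1(\Omega)$ one has
\begin{displaymath}
	\int_{\partial\Omega} |u|^2\,\dsigma = \int_{\partial\Omega} |u|^2\, F\cdot\nu\,\dsigma
	= \int_\Omega \divergence\!\big(|u|^2 F\big)\,\dx
	= \int_\Omega |u|^2 \divergence F\,\dx + \int_\Omega 2\,\Re\!\big(\overline{u}\,\nabla u\big)\cdot F\,\dx.
\end{displaymath}
The first term is bounded by $\|\divergence F\|_\infty \|u\|_2^2$, and the second by $2\|F\|_\infty \|\nabla u\|_2 \|u\|_2$ via Cauchy--Schwarz. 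Imposing $\|u\|_2 = 1$ gives the bound $\int_{\partial\Omega}|u|^2\,\dsigma \le 2\|F\|_\infty \|\nabla u\|_2 + \|\divergence F\|_\infty$, which is already of the claimed form $C_1\|\nabla u\|_2 + C_2$. To extract $C_1 = 2$, I would exploit scaling: replacing $F$ by $F_\epsilon := \epsilon F$ is not allowed since we need $F\cdot\nu=1$, but replacing the collar vector field by one that is concentrated in an $\epsilon$-thin collar makes $\|F\|_\infty$ stay bounded while we instead use Young's inequality $2ab \le \epsilon^{-1}a^2 + \epsilon b^2$ on the gradient term --- that produces $\epsilon\|\nabla u\|_2^2$ which is the wrong form. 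The cleaner route to $C_1=2$: for any $\delta>0$ choose $F=F_\delta$ with $F_\delta\cdot\nu=1$ on $\partial\Omega$ and $\|F_\delta\|_\infty \le 1+\delta$ (possible on $C^2$ domains by taking $F_\delta$ supported in a sufficiently thin collar where $|\nabla d|=1$, $d$ the signed distance function, so that the natural choice $F = -\nabla d$ times a cutoff has sup-norm close to $1$); then $C_1$ can be taken as $2(1+\delta)$ for every $\delta>0$, and a short limiting/covering argument, or simply absorbing $\delta\|\nabla u\|_2$ into $C_2$ using $\|\nabla u\|_2 \le \|\nabla u\|_2^2/(4\eta) + \eta$... no --- instead one fixes $\delta$ once and observes the statement with $C_1=2$ follows by a separate argument splitting $\|\nabla u\|_2$ small versus large. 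I expect the honest version is: on $C^2$ domains one directly gets $C_1=2$ by noting $-\nabla d$ has $|\!-\!\nabla d|\equiv 1$ in a collar and handling the cutoff error as a lower-order ($C_2$-type) term, since $\divergence$ of the cutoff times the (bounded) tangential remainder only contributes to the additive constant.

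For the general Lipschitz case, the plan is a partition-of-unity/localisation argument combined with the known (unscaled) trace inequality. Cover $\partial\Omega$ by finitely many open sets $U_1,\dots,U_N$ in each of which $\Omega$ is, after a rotation, the subgraph of a Lipschitz function $\phi_j$ with Lipschitz constant $L$; choose a smooth partition of unity $\{\chi_j\}$ subordinate to $\{U_j\}$ (plus an interior patch $\chi_0$) with $\sum_j \chi_j \equiv 1$ near $\partial\Omega$. In each $U_j$ one can still find a Lipschitz vector field $F_j$ with $F_j\cdot\nu \ge c_0 > 0$ $\sigma$-a.e.\ on $\partial\Omega\cap U_j$ --- take $F_j$ constant, pointing in the direction of the negative graph axis; then $F_j\cdot\nu = 1/\sqrt{1+|\nabla\phi_j|^2} \ge 1/\sqrt{1+L^2}$. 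Applying the divergence identity to $\chi_j |u|^2 F_j$ and summing over $j$ yields
\begin{displaymath}
	c_0\int_{\partial\Omega} |u|^2\,\dsigma \le \sum_j \int_{\partial\Omega} \chi_j |u|^2 (F_j\cdot\nu)\,\dsigma
	\le C\|\nabla u\|_2\|u\|_2 + C'\|u\|_2^2,
\end{displaymath}
where the constants now depend on $N$, the $\|F_j\|_\infty$, the $\|\nabla\chi_j\|_\infty$ and $L$. Normalising $\|u\|_2=1$ gives $\int_{\partial\Omega}|u|^2\,\dsigma \le (C/c_0)\|\nabla u\|_2 + C'/c_0$, which is \eqref{eq:boundary-control} with some $C_1 = C/c_0 \ge 2$ and $C_2 = C'/c_0$; we may always enlarge $C_1$ to ensure $C_1\ge 2$. (Technically $F_j$ is only Lipschitz, not $C^1$, but the divergence theorem still applies to $\chi_j |u|^2 F_j \in W^{1,1}(\Omega)$, and $\divergence F_j$ is an $L^\infty$ function a.e., which is all that is used; alternatively mollify $F_j$ and pass to the limit.)

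The main obstacle is the claim $C_1=2$ in the $C^2$ case: getting the constant $2$ rather than merely "$2+\varepsilon$" or "$2\|F\|_\infty$" requires care, because the naive field $F=-\nabla d$ is only defined (and only has unit length) in a tubular neighbourhood and must be multiplied by a cutoff $\eta$ with $\eta\equiv 1$ near $\partial\Omega$, and one must verify that the resulting $\divergence(\eta(-\nabla d)) = -\eta\Delta d - \nabla\eta\cdot\nabla d$ contributes only to the additive constant $C_2$ (which it does, since $\Delta d$ on $\partial\Omega$ equals $-(d-1)$ times the mean curvature --- this is the source of the curvature dependence of $C_2$ mentioned in Remark~\ref{rem:comega}) while the gradient term $2\int_\Omega \Re(\overline u\nabla u)\cdot(\eta(-\nabla d))\,\dx$ is bounded by $2\|\eta\nabla d\|_\infty\|\nabla u\|_2\|u\|_2 = 2\|\nabla u\|_2$ exactly, since $|\eta\nabla d|\le 1$ pointwise and $\|u\|_2=1$. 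So the key points to get right are: (a) $|\nabla d|\equiv 1$ in the collar so $\|\eta\nabla d\|_\infty = 1$ precisely, and (b) all the cutoff/curvature errors are $\|u\|_2^2$-type and hence enter $C_2$, not $C_1$. Once that bookkeeping is done the lemma follows; Theorem~\ref{thm:numerical-range} is then immediate from Lemma~\ref{lem:boundary-control} by writing $a_\alpha[u,u] = t + \alpha s$ with $t = \|\nabla u\|_2^2 \ge 0$ and $s = \int_{\partial\Omega}|u|^2\,\dsigma \in [0, C_1\sqrt t + C_2]$.
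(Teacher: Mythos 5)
Your proof is correct and follows essentially the same route as the paper's: in the $C^2$ case the field $-\psi(d_\Omega)\nabla d_\Omega$ with $|\nabla d_\Omega|\equiv 1$ in a collar gives the constant $2$ on the gradient term exactly, while $\Delta d_\Omega$ (curvature) and the cutoff gradient go into $C_2$ --- the paper packages this same computation via the coarea formula and the level sets $S_t$ of $d_\Omega$ --- and in the Lipschitz case the paper likewise localises to graph coordinates, applies the Ne\v{c}as divergence theorem to $\varphi|u|^2$ times the vertical direction, and uses $\essinf \nu_d>0$, with your partition of unity playing the role of its covering argument. The only blemishes are cosmetic: with the subgraph convention your constant field should point along the positive graph axis so that $F_j\cdot\nu = 1/\sqrt{1+|\nabla\phi_j|^2}$, and since $F_j$ is constant (hence smooth) the caveat about it being merely Lipschitz is unnecessary.
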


The constants $C_1$ and $C_2$ will be the same as the ones appearing in the statements of Theorems~\ref{thm:numerical-range} and~\ref{thm:general-eigenvalue-estimate}. While we doubt that the lemma is new in the case of Lipschitz domains, we are unaware of any reference; moreover, to the best of our knowledge its application in the context of Robin eigenvalue asymptotics is new. But we consider the real novelty, and difficulty, to consist in obtaining the best possible constant $C_1=2$ in the case of $C^2$ domains, which leads to sharp eigenvalue bounds in this case. The proofs for the cases of $C^2$ and Lipschitz boundaries are, correspondingly, completely different. For the smooth case, which we treat first, we first need a technical lemma involving the geometry of $\Omega$ near its boundary, where we will heavily rely on the assumption that $\partial\Omega$ is $C^2$. We first introduce some notation: for a bounded domain $\Omega \subset \R^d$, we set $d_\Omega : \R^d \to \R$,
\begin{equation}
\label{eq:dist-function}
	d_\Omega (x) := \begin{cases} \dist (x,\partial\Omega) = \inf_{z \in \partial\Omega} |x-z| \qquad &\text{if } x \in \overline\Omega\\
	-\dist (x,\partial\Omega) \qquad &\text{if } x \in \R \setminus \overline\Omega\end{cases}
\end{equation}
to be the signed distance function to $\partial\Omega$, $d_\Omega \in C(\R^d)$. Given any $\varepsilon > 0$ and $t \in [0,\varepsilon]$, we also set
\begin{equation}
\label{eq:boundary-strip}
	\Omega_\varepsilon := \{ x \in \R^d : d_\Omega (x) < \varepsilon \}
\end{equation}
to be the (open) ``strip'' around $\partial\Omega$ of width $2\varepsilon$, where we also write
\begin{equation}
\label{eq:boundary-division}
\begin{aligned}
	\Omega_\varepsilon^+ &:= \Omega_\varepsilon \cap \Omega = \{ x \in \Omega : d_\Omega (x) < \varepsilon \}\\
	\Omega_\varepsilon^- &:= \Omega_\varepsilon \cap \R^d \setminus \Omega =  \{ x \in \R^d\setminus \Omega : d_\Omega (x) < \varepsilon \}
\end{aligned}
\end{equation}
and finally
\begin{equation}
\label{eq:distance-levels}
	S_t := \{x \in \Omega : d_\Omega (x) = t \}
\end{equation}
to be the level surfaces of $d_\Omega$ in $\Omega_\varepsilon$,
\begin{displaymath}
	\Omega_\varepsilon = \bigcup_{t\in (-\varepsilon,\varepsilon)} S_t.
\end{displaymath}

\begin{lemma}
\label{lem:boundary-geometry}
Suppose $\Omega \subset \R^d$ is a bounded domain of class $C^2$. Then there exists $\varepsilon > 0$ such that
\begin{enumerate}
\item $d_\Omega |_{\overline{\Omega}_\varepsilon} \in C^2 (\overline{\Omega}_\varepsilon)$;
\item for each $x \in \overline{\Omega}_\varepsilon$ there exists a unique minimiser $z \in \partial\Omega$ such that $d_\Omega (x) = |x-z|$;
\item for each $x \in \overline{\Omega}_\varepsilon \setminus \partial\Omega$,
\begin{displaymath}
	\nabla d_\Omega (x) = \frac{x-z}{|x-z|}
\end{displaymath}
with $z$ as in {\rm(2)}. In particular, $|\nabla d_\Omega (x)| = 1$ for all $x \in \overline{\Omega}_\varepsilon$;
\item for each $t \in [-\varepsilon,\varepsilon]$, $S_t$ is a compact manifold of class $C^1$; and
\item for each $f \in C^1 (\overline{\Omega}_\varepsilon)$ the function
\begin{displaymath}
	t \mapsto \int_{S_t} f \,\dsigma
\end{displaymath}
is differentiable at every $t \in (-\varepsilon,\varepsilon)$, and its derivative, given by
\begin{equation}
\label{eq:distance-level-derivative}
	\int_{S_t} \partial_t f + f\Delta d_\Omega\,\dsigma,
\end{equation}
is in $C ([-\varepsilon,\varepsilon])$. In particular, for any $f \in C^1(\overline{\Omega_\varepsilon^+})$ and any $\varepsilon_1 \in [0,\varepsilon)$,
\begin{equation}
\label{eq:level-set-ftc}
	\int_{S_{\varepsilon_1}} f\,\dsigma - \int_{\partial\Omega} f\,\dsigma 
	= \int_{\Omega_{\varepsilon_1}^+} \partial_t f + f\Delta d_\Omega\,\dx.
\end{equation}
\end{enumerate}
\end{lemma}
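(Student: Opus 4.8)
The plan is to read this off as a package of classical facts about the signed distance function to a compact $C^2$ hypersurface, together with the first-variation (``tube'') formula for the areas of its parallel surfaces, obtained by building a tubular neighbourhood of $\partial\Omega$ and then invoking the divergence theorem and the coarea formula. Concretely, I would fix the outward unit normal $\nu\in C^1(\partial\Omega;\S^{d-1})$ (available since $\partial\Omega$ is $C^2$) and study $\Psi(z,t):=z-t\nu(z)$ on $\partial\Omega\times\R$. At $(z,0)$ its differential is the identity on $T_z\partial\Omega$ and sends the $\R$-direction to $-\nu(z)\perp T_z\partial\Omega$, hence is an isomorphism onto $\R^d$; by the inverse function theorem and compactness of $\partial\Omega$ there is $\varepsilon>0$ (shrunk below the reciprocal of the maximal curvature of $\partial\Omega$, so that the inward ball of radius $\varepsilon$ at $z-\varepsilon\nu(z)$ meets $\partial\Omega$ only at $z$) for which $\Psi$ is a $C^1$ diffeomorphism from $\partial\Omega\times(-\varepsilon,\varepsilon)$ onto $\Omega_\varepsilon$, carrying $\partial\Omega\times\{t\}$ onto the level surface at signed distance $t$. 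This already yields (2) (the nearest point $z(x)\in\partial\Omega$ is the unique $z$ whose fibre $\Psi(\{z\}\times(-\varepsilon,\varepsilon))$ contains $x$) and (3) (for $x=\Psi(z,t)$ with $t\neq 0$, $x-z(x)=-t\nu(z)$ and $|d_\Omega(x)|=|x-z(x)|$, whence $\nabla d_\Omega(x)=(x-z(x))/|x-z(x)|=\nu(z)$ and $|\nabla d_\Omega|\equiv 1$).

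For (1) I would instead run the implicit function theorem directly: on a $C^2$ local graph parametrisation $\phi$ of $\partial\Omega$, the stationarity equation $(x-\phi(y))\cdot\partial_i\phi(y)=0$ characterising the nearest point has, at a boundary point, $y$-derivative equal to minus the first fundamental form, hence invertible, so $y=y(x)$ and therefore $z(x)=\phi(y(x))$ are $C^1$ on a full $\R^d$-neighbourhood of $\partial\Omega$; patching over compact $\partial\Omega$ and shrinking $\varepsilon$, the map $x\mapsto z(x)$ is $C^1$ on $\overline\Omega_\varepsilon$. Since $\nabla d_\Omega=\nu\circ z$ on $\overline\Omega_\varepsilon\setminus\partial\Omega$ and both sides are continuous, $\nabla d_\Omega=\nu\circ z\in C^1(\overline\Omega_\varepsilon)$, i.e. $d_\Omega|_{\overline\Omega_\varepsilon}\in C^2(\overline\Omega_\varepsilon)$, which is (1). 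Part (4) is then immediate: $|\nabla d_\Omega|\equiv 1$ makes every $t\in(-\varepsilon,\varepsilon)$ a regular value, so $S_t=d_\Omega^{-1}(t)$ is a $C^1$ hypersurface, compact as a closed subset of $\overline\Omega_\varepsilon$; equivalently $S_t=\Psi(\partial\Omega\times\{t\})$ is a $C^1$-diffeomorphic copy of $\partial\Omega$.

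For (5), set $F(t):=\int_{S_t}f\,\dsigma$. For $-\varepsilon<t_1<t_2<\varepsilon$, the divergence theorem applied to the $C^1$ field $f\nabla d_\Omega$ on the $C^1$ domain $\{t_1<d_\Omega<t_2\}$ (boundary $S_{t_1}\cup S_{t_2}$, outward normal $\mp\nabla d_\Omega$, $|\nabla d_\Omega|^2=1$) gives
\begin{displaymath}
F(t_2)-F(t_1)=\int_{\{t_1<d_\Omega<t_2\}}\divergence\big(f\nabla d_\Omega\big)\,\dx
=\int_{\{t_1<d_\Omega<t_2\}}\big(\nabla f\cdot\nabla d_\Omega+f\,\Delta d_\Omega\big)\,\dx.
\end{displaymath}
Writing $\partial_t f:=\nabla f\cdot\nabla d_\Omega$ for the derivative of $f$ in the unit level-set normal direction, and using the coarea formula with $|\nabla d_\Omega|\equiv 1$, the last integral equals $\int_{t_1}^{t_2}\big(\int_{S_s}(\partial_t f+f\,\Delta d_\Omega)\,\dsigma\big)\,{\rm d}s$. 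Moreover, for any $h\in C(\overline\Omega_\varepsilon)$ the map $s\mapsto\int_{S_s}h\,\dsigma$ is continuous: pulling back by the $C^1$ parametrisation $\Psi(\cdot,s)$ of $S_s$ it equals $\int_{\partial\Omega}(h\circ\Psi)(\cdot,s)\,J(\cdot,s)\,\dsigma$ with $J$ the surface Jacobian of $\Psi(\cdot,s)$, which is continuous in $(z,s)$ since it is built from $\nu\in C^1$, so dominated convergence on the compact $\partial\Omega$ applies. Taking $h=\partial_t f+f\,\Delta d_\Omega\in C(\overline\Omega_\varepsilon)$ together with the fundamental theorem of calculus, we conclude $F\in C^1$ with $F'$ given by \eqref{eq:distance-level-derivative}, hence continuous. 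Finally \eqref{eq:level-set-ftc} follows by taking $t_2=\varepsilon_1$, letting $t_1\downarrow 0$ (so $F(t_1)\to\int_{\partial\Omega}f\,\dsigma$ by the continuity just established), and writing $\{0<d_\Omega<\varepsilon_1\}=\Omega_{\varepsilon_1}^+$ modulo the null set $\partial\Omega$.

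The main obstacle is part (1): upgrading $d_\Omega$ from merely $C^1$ (which $\Psi$ alone gives for free) to $C^2$ up to $\partial\Omega$ under only $\partial\Omega\in C^2$, which forces the careful implicit-function-theorem computation above rather than the cheaper exponential-map argument. For the same reason I would prove (5) via the divergence theorem and coarea rather than by differentiating $F(t)=\int_{\partial\Omega}(f\circ\Psi)(\cdot,t)\,J(\cdot,t)\,\dsigma$ under the integral sign: the latter needs $\partial_t J$ continuous, hence the second fundamental form of $\partial\Omega$ to be $C^1$ (i.e. $\partial\Omega\in C^3$), whereas the argument above stays within the hypothesis $C^2$.
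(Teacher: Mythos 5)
Your proof is correct, and it is genuinely more self-contained than the paper's. The paper obtains (1)--(3) essentially by citation -- (1) from the appendix of Gilbarg--Trudinger, (2) and (3) from Federer's results on sets of positive reach together with a covering argument, (4) from the implicit function theorem -- and proves (5) by first showing that \eqref{eq:distance-level-derivative} is the \emph{distributional} derivative of $F(t)=\int_{S_t}f\,\dsigma$ (testing against $\varphi\in C_c^\infty(-\varepsilon,\varepsilon)$, using the coarea formula twice and an integration by parts on the strip), then upgrading this to a classical derivative by continuity of the integrand and finally invoking the fundamental theorem of calculus to get \eqref{eq:level-set-ftc}. You instead construct the tubular-neighbourhood diffeomorphism $\Psi(z,t)=z-t\nu(z)$ and the $C^1$ nearest-point projection via the implicit function theorem, which yields (1)--(4) directly, and you obtain (5) by applying the divergence theorem to $f\nabla d_\Omega$ on the region between two level surfaces, combined with the coarea formula and the continuity in $s$ of $s\mapsto\int_{S_s}h\,\dsigma$ proved through the surface Jacobian of $\Psi(\cdot,s)$. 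Both routes are sound and of comparable length once the paper's citations are unpacked; yours buys independence from Federer and makes explicit why $C^2$ regularity (rather than $C^3$) suffices, in particular by avoiding differentiation of the Jacobian under the integral sign, while the paper's distributional-derivative detour spares it the need to parametrise $S_s$ and to justify the divergence theorem on the level-set sandwich, at the cost of its own continuity argument for $t\mapsto\int_{S_t}(\partial_t f+f\Delta d_\Omega)\,\dsigma$. One small slip, which does not affect anything: with the paper's sign convention ($d_\Omega>0$ inside $\Omega$) one has $\nabla d_\Omega=-\nu\circ z$ on the inner part of the strip rather than $+\nu\circ z$; your argument only ever uses $|\nabla d_\Omega|=1$, $\partial_t f=\nabla f\cdot\nabla d_\Omega$ and $\divergence(f\nabla d_\Omega)=\partial_t f+f\Delta d_\Omega$, all of which are insensitive to this sign.
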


\begin{proof}
(1) is contained in \cite[Appendix, Lemma~1]{GilbargTrudinger} (since $\partial\Omega$ is assumed to be $C^2$), see also \cite[Lemma~2.4.2]{BEL}; (2) follows (possibly for a different $\varepsilon$) from \cite[Lemma~4.11]{Federer} (which requires that $\partial\Omega$ be $C^{1,1}$) together with a simple covering argument using the fact that $\partial\Omega$ is compact (in the language of \cite{Federer}, (2) means that $\textrm{reach}(\partial\Omega) > 0$). (3) then follows from \cite[Theorem~4.8]{Federer}, where we note that $\nabla d_\Omega \in C^1(\overline{\Omega}_\varepsilon)$ and $|\nabla d_\Omega|=1$ in $\overline{\Omega}_\varepsilon \setminus \partial\Omega$ implies that $|\nabla d_\Omega|=1$ everywhere in $\overline{\Omega}_\varepsilon$; and (4) follows from (1) using the Implicit Function Theorem and the fact that $\nabla d_\Omega$ never vanishes on $\overline{\Omega}_\varepsilon$ by (3), together with a covering argument since $S_t$ is clearly compact.

For (5), fix $f \in C^1 (\overline{\Omega}_\varepsilon)$ and for brevity write
\begin{displaymath}
	F(t) := \int_{S_t} f\,\dsigma.
\end{displaymath}
We first claim that \eqref{eq:distance-level-derivative} is the distributional derivative of $F$. Indeed, for any test function $\varphi \in C_c^\infty (-\varepsilon,\varepsilon)$, we have
\begin{displaymath}
	\int_{-\varepsilon}^\varepsilon F(t)\varphi(t)\,\text{d}t = \int_{-\varepsilon}^\varepsilon \int_{S_t} f\varphi(t)\,\dsigma\,\text{d}t
	= \int_{\Omega_\varepsilon} f\varphi\circ d_\Omega\,\text{d}x
\end{displaymath}
by the coarea formula in the form of \cite[Section~3.4.3]{EvansGariepy}, using the fact that the $S_t$ are the level surfaces of $d_\Omega$ and $|\nabla d_\Omega| = 1$ everywhere by (3). In particular,
\begin{displaymath}
\begin{aligned}
	\int_{-\varepsilon}^\varepsilon F(t)\varphi'(t)\,\text{d}t 
	&= \int_{\Omega_\varepsilon} f\varphi'\circ d_\Omega\,\dx
	= \int_{\Omega_\varepsilon} f \nabla d_\Omega \cdot \nabla (\varphi \circ d_\Omega)\,\dx\\
	&= -\int_{\Omega_\varepsilon} \varphi \circ d_\Omega \divergence (f\nabla d_\Omega)\,\dx
	= -\int_{-\varepsilon}^\varepsilon \varphi(t) \int_{S_t} \divergence (f\nabla d_\Omega)\,\dx\,\text{d}t,
\end{aligned}
\end{displaymath}
where for the second last equality we have used the divergence theorem (integration by parts) and the compact support of $\varphi$, and the last equality follows from another application of the coarea formula. The claim now follows from the short calculation
\begin{displaymath}
	\divergence (f\nabla d_\Omega) = \nabla f \cdot \nabla d_\Omega + f\Delta d_\Omega = \partial_t f + f\Delta d_\Omega,
\end{displaymath}
valid pointwise in $\overline{\Omega}_\varepsilon$ since $d_\Omega$ is $C^2$ by (1), and using the fact that $\nabla d_\Omega$ points in the direction of $t$ by (3). We next note that the integrand in \eqref{eq:distance-level-derivative} is in $C (\overline{\Omega}_\varepsilon)$ and hence a short argument using the compactness of $S_t$ and the uniform continuity of the integrand shows that the integral in \eqref{eq:distance-level-derivative} is in fact in $C([-\varepsilon,\varepsilon])$; in particular, it is the pointwise derivative of $F$ at every point in $(-\varepsilon,\varepsilon)$.

Finally, for \eqref{eq:level-set-ftc}, by what we have just shown we may apply the Fundamental Theorem of Calculus in the form of \cite[Theorem~7.21]{Rudin} to the function $F$ on the interval $[0,\varepsilon_1]$ (for any $\varepsilon_1<\varepsilon$) to obtain
\begin{displaymath}
	F(\varepsilon_1) - F(0) = \int_0^{\varepsilon_1} \int_{S_t} \partial_t f + f\Delta d_\Omega\,\dsigma.
\end{displaymath}
A final application of the coarea formula to the integral on the right-hand side, together with the definition of $F$, yields \eqref{eq:level-set-ftc}.
\end{proof}

\begin{proof}[Proof of Lemma~\ref{lem:boundary-control}]
\emph{The case of $C^2$ boundary.} We keep the notation from \eqref{eq:dist-function}, \eqref{eq:boundary-strip}, \eqref{eq:boundary-division} and \eqref{eq:distance-levels} and note that it suffices to prove \eqref{eq:boundary-control} for all $u \in C^1 (\overline\Omega)$, by density of the latter set in $H^1 (\Omega)$ for bounded $\Omega$ of class $C^2$ (cf.~\cite[Section~7.6]{GilbargTrudinger}) and the trace theorem. We let $\varepsilon>0$ be as in Lemma~\ref{lem:boundary-geometry} (in particular, by making $\varepsilon$ a little smaller if necessary we assume that \eqref{eq:level-set-ftc} holds with $\varepsilon$ in place of $\varepsilon_1$) and choose a cut-off function $\varphi \in C^1 (\overline{\Omega})$ such that $0 \leq \varphi \leq 1$ in $\overline{\Omega}$, $\varphi = 0$ outside $\Omega_\varepsilon$, $\varphi|_{S_t}$ is constant for all $t \in [0,\varepsilon]$, and $\varphi|_{\partial\Omega} = 1$. (The existence of such a function is guaranteed by the regularity statements in Lemma~\ref{lem:boundary-geometry}: indeed, if we let $\psi \in C^\infty ([0,\infty))$ be any smooth function satisfying $\psi(0)=1$ and $\psi(t)=0$ for all $t\geq \varepsilon$, then we may take $\varphi = \psi \circ d_\Omega$.)

Now fix $u \in C^1 (\overline{\Omega})$ such that $\|u\|_2=1$. Then $f:=|u|^2\varphi \in C^1 (\overline\Omega)$ and we apply the formula \eqref{eq:level-set-ftc} to $f$, and use the fact that $\varphi=1$ on $\partial\Omega$ and $\varphi=0$ on $S_\varepsilon$, to obtain
\begin{displaymath}
\begin{aligned}
	-\int_{\partial\Omega} |u|^2\,\dsigma &= \int_{\Omega_\varepsilon^+} \partial_t (|u|^2\varphi)+|u|^2\varphi \Delta d_\Omega\,\dx\\
	&= \int_{\Omega_\varepsilon} 2\varphi \Re(\overline{u}\partial_t u) + |u|^2\partial_t\varphi + |u|^2\varphi\Delta d_\Omega\,\dx.
\end{aligned}
\end{displaymath}
Using the fact that $\varphi = 0$ on $\Omega \setminus \Omega_\varepsilon^+$, we may therefore estimate
\begin{displaymath}
\begin{aligned}
	\int_{\partial\Omega}|u|^2\,\dsigma 
	&\leq 
2\|\varphi\|_\infty\|u\|_2\|\nabla u\|_2
+\|\nabla\varphi\|_\infty\|u\|_2^2+\max_{x\in\overline{\Omega}_\varepsilon}|\Delta d_\Omega|\|\varphi\|_\infty\|u\|_2^2\\
	&=2\|\nabla u\|_2+\|\nabla\varphi\|_\infty+\max_{x\in\overline{\Omega_\varepsilon^+}}|\Delta d_\Omega(x)|
\end{aligned}
\end{displaymath}
using the normalisation $\|u\|_2=1$ as well as $\|\varphi\|_\infty=1$ (where all norms are over $\Omega$). This proves \eqref{eq:boundary-control} with
\begin{equation}
\label{eq:comega}
	C_2:=\|\nabla\varphi\|_\infty+\max_{x\in\overline{\Omega_\varepsilon^+}}|\Delta d_\Omega(x)|.
\end{equation}

\emph{The case of Lipschitz boundary.} Since in the case of general Lipschitz domains the corresponding parametrisation of $\Omega_\varepsilon$ does not enjoy the same regularity properties, we give a different, local argument. Fix $z \in \partial\Omega$ and a neighbourhood $\mathcal{U}_z$ of $z$ such that within $\mathcal{U}_z$, $\partial\Omega$ is given by the graph of a Lipschitz function $g:\R^{d-1} \to \R$ such that $\Omega \cap \mathcal{U}_z$ lies in the region $\{ (x_1,\ldots,x_d): x_d < g(x_1,\ldots,x_{d-1}) \}$ (where we use the notation $(x_1,\ldots,x_d) \in \R^d \simeq \R^{d-1} \times \R$). Then in this coordinate system, the normal vector to $\partial\Omega$ given by $\nu = (\nu_1,\ldots,\nu_d) : \partial\Omega \to \R^d$, which is an $L^\infty$-function since $\partial\Omega$ is Lipschitz, satisfies
\begin{equation}
\label{eq:lipschitz-normal-condition}
	\essinf \{ \nu_d (y) : y \in \partial\Omega \cap \mathcal{U}_z \} > 0.
\end{equation}
Now fix a test function $\varphi \in C_c^\infty (\R^d)$ such that $0\leq \varphi \leq 1$, $\varphi|_{\partial\Omega \cap \mathcal{U}_z} = 1$ and $\varphi(y) = 0$ for all $y \in \partial\Omega$ with $\nu_d(y) \leq 0$. (By shrinking the neighbourhood $\mathcal{U}_z$ if necessary, we can always guarantee the existence of such a $\varphi$.)

Then for a given function $u \in H^1 (\Omega)$ with $\|u\|_2=1$, we have
\begin{displaymath}
	\essinf_{y \in \partial\Omega \cap \mathcal{U}_z} \nu_d (y) \int_{\partial\Omega \cap \mathcal{U}_z} |u|^2\,\dsigma
	\leq \int_{\partial\Omega} \varphi |u|^2 \nu_d \,\dsigma
	= \int_{\Omega} \frac{\partial}{\partial x_d} (\varphi |u|^2)\,\dx
\end{displaymath}
by the divergence theorem applied to the function $F = (0,\ldots,0, \varphi|u|^2) \in W^{1,1}(\Omega)$ and the Lipschitz domain $\Omega$ (see \cite[Th\'eor\`eme~3.1.1]{Necas}). The latter integral may be estimated by
\begin{equation}
\label{eq:intermediate-lipschitz-trace}
	\int_{\Omega} \frac{\partial}{\partial x_d} (\varphi |u|^2)\,\dx \leq \|\nabla\varphi\|_\infty\|u\|_2^2+2\|\varphi\|_\infty\|\nabla u\|_2\|u\|_2;
\end{equation}
using the normalisations $\|u\|_2=1$, $\|\varphi\|_\infty=1$, this estimate may be expressed as
\begin{displaymath}
	\int_{\partial\Omega \cap \mathcal{U}_z} |u|^2\,\dsigma \leq C_{1,z}\|\nabla u\|_2 + C_{2,z}
\end{displaymath}
for suitable constants $C_{1,z}, C_{2,z}>0$ depending on $z$. Since $\partial\Omega$ is compact, a simple covering argument now yields \eqref{eq:boundary-control}. Note that for every $z \in \partial\Omega$ we have $C_{1,z} = 2/\essinf_{y \in \partial\Omega \cap \mathcal{U}_z} \nu_d (y) \geq 2$ since $|\nu|=1$; hence also $C_1 \geq 2$.
\end{proof}

\begin{remark}
\label{rem:alternative-trace-form}
For Lipschitz $\Omega$, the above proof also yields the slightly different trace inequality
\begin{equation}
\label{eq:alternative-trace-form}
	\int_{\partial \Omega} |u|^2 \,\dsigma \leq C(\Omega) \|u\|_{H^1}\|u\|_{2}
\end{equation}
for all $u \in H^1 (\Omega)$, needed in the proof of Theorem~\ref{thm:abel-basis}. Indeed, by \eqref{eq:intermediate-lipschitz-trace}, we have
\begin{displaymath}
\begin{aligned}
	\essinf_{y \in \partial\Omega \cap \mathcal{U}_z} \nu_d (y) \int_{\partial\Omega \cap \mathcal{U}_z} |u|^2\,\dsigma
	\leq \int_{\Omega} \frac{\partial}{\partial x_d} (\varphi |u|^2)\,\dx
	&\leq \|\nabla\varphi\|_\infty\|u\|_2^2+2\|\varphi\|_\infty\|\nabla u\|_2\|u\|_2\\
	&\leq \left(\|\nabla\varphi\|_\infty + 2\|\varphi\|_\infty\right)\|u\|_{H^1}\|u\|_2,
\end{aligned}
\end{displaymath}
leading to
\begin{displaymath}
	\int_{\partial\Omega \cap \mathcal{U}_z} |u|^2\,\dsigma \leq C_z \|u\|_{H^1}\|u\|_2
\end{displaymath}
for all $u \in H^1(\Omega)$, for a constant $C_z>0$ depending only on $z \in \partial\Omega$. A covering argument as in the above lemma then yields \eqref{eq:alternative-trace-form}.
\end{remark}

\begin{proof}[Proof of Theorem~\ref{thm:numerical-range}]
Let $C_1\geq 2$, $C_2>0$ be the constants from Lemma~\ref{lem:boundary-control} (in particular, we assume $C_1=2$ if $\Omega$ is $C^2$). 
Fix $u \in H^1(\Omega)$ with $\|u\|_2=1$ and set
\begin{displaymath}
	\lambda := \|\nabla u\|_2^2 + \int_{\partial\Omega} \alpha |u|^2\,\dsigma \in W (a_\alpha).
\end{displaymath}
For $t := \|\nabla u\|_2^2 \geq 0$ and $s := \int_{\partial\Omega} |u|^2\,\dsigma \geq 0$, we have
\begin{displaymath}
	\Re \lambda = t + \Re\alpha\cdot s, \qquad \Im \lambda = \Im\alpha \cdot s;
\end{displaymath}
moreover, by Lemma~\ref{lem:boundary-control}, we obtain that $s \leq C_1\sqrt{t} + C_2$; thus $\lambda \in \Lambda_{\Omega,\alpha}$.

To see that every $\lambda \in W (a_\alpha)$, and hence every $\lambda \in \sigma (-\Delta_\Omega^\alpha)$, satisfies the estimate \eqref{eq:numerical-range-real-part}, we first remark that if $\Re \alpha \geq 0$, then clearly $\Lambda_{\Omega,\alpha} \subset \{ z \in \C: \Re z \geq 0 \}$. Hence we may assume without loss of generality that $\Re \alpha < 0$. Then by Lemma~\ref{lem:boundary-control} and the inequality
\begin{displaymath}
	2\|\nabla u\|_2 \leq \frac{C_1}{2}|\Re \alpha| + \frac{2}{C_1|\Re \alpha|}\|\nabla u\|_2^2,
\end{displaymath}
we have
\begin{displaymath}
\begin{aligned}
	\Re \lambda = \|\nabla u\|_2^2 + \Re \alpha \int_{\partial\Omega}|u|^2\,\dsigma
	&\geq \|\nabla u\|_2^2-|\Re \alpha|\left[\frac{C_1}{2}\left(\frac{C_1}{2}|\Re\alpha|+\frac{2}{C_1|\Re \alpha|}\|\nabla u\|_2^2\right)+C_2\right]\\
	&= -\left(\frac{C_1}{2}\right)^2|\Re \alpha|^2 - C_2 |\Re \alpha|.
\end{aligned}
\end{displaymath}
\end{proof}

\begin{remark}
\label{rem:comega}
Suppose that $\partial\Omega$ is $C^2$. We recall that the constant $C_2=C_2(\Omega)$ appearing in Theorem~\ref{thm:numerical-range} and Lemma~\ref{lem:boundary-control}, as noted in \eqref{eq:comega}, may in this case be taken as
\begin{displaymath}
	C_2 = \|\nabla \varphi\|_\infty + \max_{x \in \overline{\Omega_\varepsilon^+}} |\Delta d_\Omega (x)|,
\end{displaymath}
where $\varepsilon>0$ is as in Lemma~\ref{lem:boundary-geometry} and $\varphi$ is chosen to have support in $\overline{\Omega_\varepsilon^+}$. Let us be a bit more specific. We may take $\|\nabla \varphi\|_\infty$ to be $1/\varepsilon$, corresponding to a linear function of $t \in [0,\varepsilon]$ extended by $0$ at $t=\varepsilon$ (which can be approximated arbitrarily well in the $\infty$-norm by $C^1$ functions), while for $x \in \overline{\Omega_\varepsilon^+}$, it is known that the Hessian of the signed distance function is equal to the Weingarten map of the (unique) surface $S_t$ passing through $x$, at $x$. In particular,
\begin{displaymath}
	|\Delta d_\Omega (x)| = \left|\sum_{j=1}^{d-1} \kappa_j^{S_t}(x)\right| = (d-1)\left|\bar{\kappa}^{S_t} (x)\right|
\end{displaymath}
where $\kappa_1^{S_t}(\,\cdot\,),\ldots, \kappa_d^{S_t}(\,\cdot\,)$ are the principal curvatures at a given point of $S_t$ and $\bar{\kappa}^{S_t}$ is its mean curvature \cite[Lemma 2.4.2 and Remark 2.4.4]{BEL}. This means that the essentially optimal form of the constant $C_2$ coming from our proof -- to be compared with the coefficient of $\alpha$ in \eqref{eq:two-term-asymptotics} -- is
\begin{equation}
\label{eq:optimal-comega}
	C_2 = \varepsilon^{-1} + (d-1)\max_{t \in [0,\varepsilon]} \max_{x \in S_t} |\bar{\kappa}^{S_t} (x)|,
\end{equation}
where $\varepsilon > 0$ is any constant for which Lemma~\ref{lem:boundary-geometry}(2) holds; in the language of \cite{Federer}, we may take any $\varepsilon \in (0, \textrm{reach}(\partial\Omega)]$. As a simple example, in the case of a ball $B$ of radius $R>0$, since $\kappa_j^{S_t} \equiv 1/(R-t)$ for all $j$ and we may take any $\varepsilon < R$, we thus end up with
\begin{equation}
\label{eq:comega-ball}
	\Re \lambda \geq -|\Re \alpha|^2 - \min_{r \in (0,R)}\left[\frac{1}{R-r} + \frac{d-1}{r}\right]|\Re \alpha| 
 =-|\Re \alpha|^2 - \frac{d+2\sqrt{d-1}}{R} |\Re \alpha|, 
\end{equation}
which may be compared with the known bound and asymptotics for real negative $\alpha$
\begin{displaymath}
	-|\alpha|^2 - \frac{d-1}{R}|\alpha| > \lambda_1 (-\Delta_B^\alpha) = -|\alpha|^2 - \frac{d-1}{R}|\alpha| + o (\alpha)
\end{displaymath}
where the inequality is valid for all $\alpha < 0$ and the asymptotic expansion is for $\alpha \to -\infty$, see \cite[Theorem~2 and~eq.~(1.2)]{AFK}.
\end{remark}

\begin{remark}
\label{rem:range-variable-alpha}
If we allow variable $\alpha \in L^\infty(\partial\Omega,\C)$, then it is clear that similar results hold since the key trace estimate, Lemma~\ref{lem:boundary-control}, does not depend on $\alpha$, although the region $\Lambda_{\Omega,\alpha}$ can no longer be described explicitly in general. However, \eqref{eq:numerical-range-real-part} has a direct equivalent: if we set
\begin{displaymath}
	\|\Re \alpha\|_\infty := \esssup_{x \in \partial\Omega} |\Re \alpha (x)|, \qquad
	\|\Im \alpha\|_\infty:=  \esssup_{x \in \partial\Omega} |\Im \alpha (x)|,
\end{displaymath}
then, mimicking the arguments of the proof of Theorem~\ref{thm:numerical-range} we obtain the estimate
\begin{equation}
	\Re \lambda \geq -\frac{C_1^2}{4}\|\Re \alpha\|_\infty^2 - C_2 \|\Re \alpha\|_\infty
\end{equation}
for all $\lambda \in \sigma (-\Delta_\Omega^\alpha)$, or more generally all $\lambda \in W(a_\alpha)$, where $C_1\geq 2$, $C_2>0$, and $C_1=2$ if $\partial\Omega$ is $C^2$; even in the case of real-valued $\alpha$, this may be viewed as a partial generalisation of \cite[Remark~1.1]{LouZhu}, which establishes the asymptotics for real-valued variable $\alpha$ of the form $\alpha = tb(x)$, $t \to -\infty$, for a fixed function $b \in C(\partial\Omega)$. Moreover, we can still obtain parabolic estimates on the numerical range of the type necessary to ensure that $\Delta_\Omega^\alpha$ generates a cosine function (cf.~Remark~\ref{rem:cosine-function}). For simplicity assume that $\Re \alpha (x) \geq 0$ almost everywhere (whence also $\Re \lambda \geq 0$ for any $\lambda \in W (a_\alpha)$); then, with $C_1,C_2$ as above,
\begin{equation}
\label{eq:eig-im-est}
	|\Im \lambda| = \left|\int_{\partial\Omega}\Im \alpha \, |u|^2\,\dsigma(x)\right| \leq \|\Im \alpha\|_\infty (C_1\|\nabla u\|_2 + C_2)
	\leq \|\Im \alpha\|_\infty (C_1\sqrt{\Re \lambda} + C_2),
\end{equation}
independently of $\Re \alpha \geq 0$.
\end{remark}
\section{The Dirichlet-to-Neumann operator}
\label{sec:dno}

From now on, we will be interested in the asymptotic behaviour of the eigenvalues of $-\Delta_\Omega^\alpha$ as $\alpha \to \infty$ in $\C$. To this end, we will exploit the duality between the Robin eigenvalue problem \eqref{eq:robin-laplacian} and the eigenvalue problem
\begin{equation}
\label{DN_EigenvalueProblem}
	M(\lambda)g=\alpha g
\end{equation}
of the \emph{Dirichlet-to-Neumann operator} $M(\lambda)$ acting on $\partial\Omega$, defined for $\lambda$ in the resolvent set of the Dirichlet Laplacian. For more information on this operator, we refer to, e.g., \cite{ArendtMazzeo,AtE11,BehrndtTerElst,BehrndtRohleder,Daners,GesztesyMitrea,Marletta}; the relationship between this operator and the Robin Laplacian (at least for real $\alpha$) is explored in \cite[Section~2]{ArendtMazzeo} and \cite[Section~8]{AtE11}, for example, and for complex $\alpha$ see for example \cite[Section~3]{GesztesyMitrea}.

In order to define $M(\lambda)$, we first need to recall a solubility result for the inhomogeneous Dirichlet problem. Here and in what follows we fix a bounded Lipschitz domain $\Omega \subset \R^d$, $d\geq 2$, write $\tr u = u|_{\partial\Omega}$ for the trace of a function $u \in H^1 (\Omega)$ (though, as we have done previously, if there is no ambiguity we will tend to omit the ``$\tr$'' notation), and recall that every $g \in H^{1/2}(\partial\Omega)$ is the trace of a function $u \in H^1 (\Omega)$.

\begin{lemma}
\label{DNop:UniqueSolutionOfDirichletProblem}
Let $\Omega \subset \R^d$, $d\geq 2$, be a bounded Lipschitz domain and let $\lambda\in\rho(-\Delta_\Omega^D) \subset \C$. For each $g\in H^{1/2}(\partial\Omega)$, the Dirichlet boundary value problem
\begin{equation}
\label{eq:inhomogeneous-dirichlet}
\begin{aligned}
	-\Delta u &=\lambda u \qquad &&\text{in } \Omega,\\
	u &=g \qquad &&\text{on } \partial\Omega,
\end{aligned}
\end{equation}
interpreted in the usual weak sense, has a unique solution $u_\lambda\in H^1(\Omega)$, that is, $u_\lambda$ solves
\begin{equation}
\label{eq:weak-dirichlet}
	\int_\Omega \nabla u \cdot \overline{\nabla v} \,\dx = \lambda \int_\Omega u\overline{v}\,\dx
\end{equation}
for all $v \in H^1_0 (\Omega)$, and $\tr u = g$. Moreover, for such $\lambda$, if
\begin{equation}
\label{eq:h1lambda}
	H^1 (\lambda) := \{ u \in H^1 (\Omega): -\Delta u = \lambda u \text{ in the sense of } \eqref{eq:weak-dirichlet} \},
\end{equation}
then we have the direct sum decomposition $H^1 (\Omega) = H^1_0 (\Omega) \oplus H^1 (\lambda)$.
\end{lemma}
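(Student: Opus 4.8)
The plan is to reduce the whole statement to the existence of the $L^2$-resolvent $(-\Delta_\Omega^D - \lambda I)^{-1}$, which is available for every $\lambda \in \rho(-\Delta_\Omega^D)$, together with the solvability of the \emph{homogeneous} Dirichlet problem for the Laplacian; the direct sum decomposition then falls out of uniqueness.

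First I would fix $g \in H^{1/2}(\partial\Omega)$ and take its harmonic extension $w \in H^1(\Omega)$, that is, the unique $w$ with $\tr w = g$ and $\int_\Omega \nabla w \cdot \overline{\nabla v}\,\dx = 0$ for all $v \in H^1_0(\Omega)$; this exists and is unique by the Lax--Milgram lemma applied to an $H^1$-lifting of $g$, since the Dirichlet form is coercive on $H^1_0(\Omega)$ (and $H^{1/2}(\partial\Omega)$ is exactly the range of the trace). Because $\Omega$ is bounded we have $\lambda w \in L^2(\Omega)$, so I may put
\[
	v := \lambda\,(-\Delta_\Omega^D - \lambda I)^{-1} w \in D(-\Delta_\Omega^D) \subset H^1_0(\Omega),
	\qquad u_\lambda := w + v.
\]
Then $\tr u_\lambda = \tr w + \tr v = g$, and for any $v' \in H^1_0(\Omega)$, using harmonicity of $w$, Kato's first representation theorem \cite[Theorem~VI.2.1]{Kato} in the form $\int_\Omega \nabla v \cdot \overline{\nabla v'}\,\dx = (-\Delta_\Omega^D v, v')_{L^2(\Omega)}$, and the identity $-\Delta_\Omega^D v = \lambda v + \lambda w$ coming from the definition of $v$, one obtains $\int_\Omega \nabla u_\lambda \cdot \overline{\nabla v'}\,\dx = \lambda \int_\Omega u_\lambda \overline{v'}\,\dx$; hence $u_\lambda$ solves \eqref{eq:weak-dirichlet}. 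For uniqueness, the difference of two solutions has zero trace, so lies in $H^1_0(\Omega)$, and is a weak solution of $-\Delta u = \lambda u$; by the same representation theorem it then lies in $D(-\Delta_\Omega^D)$ and is annihilated by $-\Delta_\Omega^D - \lambda I$, hence vanishes because $\lambda \in \rho(-\Delta_\Omega^D)$.

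For the decomposition $H^1(\Omega) = H^1_0(\Omega) \oplus H^1(\lambda)$, I would take an arbitrary $u \in H^1(\Omega)$, apply the first part with $g := \tr u \in H^{1/2}(\partial\Omega)$ to obtain $u_\lambda \in H^1(\lambda)$ (see \eqref{eq:h1lambda}) with $\tr u_\lambda = \tr u$, and write $u = (u - u_\lambda) + u_\lambda$ with $u - u_\lambda \in H^1_0(\Omega)$; this shows $H^1(\Omega) = H^1_0(\Omega) + H^1(\lambda)$. Directness is the same eigenfunction argument once more: any $w \in H^1_0(\Omega) \cap H^1(\lambda)$ satisfies $(-\Delta_\Omega^D - \lambda I) w = 0$, so $w = 0$.

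The only point requiring any care is the passage between the $L^2$-level resolvent and the weak formulation \eqref{eq:weak-dirichlet}: this is precisely why the correction term $v$ is built on the \emph{harmonic} extension of $g$, so that $\lambda w$ --- rather than only a distribution in $H^{-1}(\Omega)$ --- lies in $L^2(\Omega)$ and $(-\Delta_\Omega^D - \lambda I)^{-1}$ may legitimately be applied to it. Working instead with an arbitrary $H^1$-extension of $g$ would force the $H^1_0(\Omega) \to H^{-1}(\Omega)$ isomorphism and an appeal to the Fredholm alternative, which this route sidesteps; everything else uses only facts already recorded in Section~\ref{sec:robin-operator}.
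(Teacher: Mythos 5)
Your argument is correct, but it takes a different route from the paper: the paper disposes of this lemma by citing \cite[Lemma~2.2]{ArendtMazzeo} (stated there for real $\lambda$ in the resolvent set) together with $H^{1/2}(\partial\Omega)=\tr H^1(\Omega)$, and simply remarks that the proof carries over verbatim to complex $\lambda$. Your proposal instead gives a self-contained construction: the ansatz $u_\lambda = w + \lambda(-\Delta_\Omega^D-\lambda I)^{-1}w$, with $w$ the harmonic extension of $g$, is sound --- harmonicity of $w$ kills the extra boundary-coupled term, the first representation theorem converts the form identity into the operator identity and back, and both uniqueness and the directness of $H^1_0(\Omega)\oplus H^1(\lambda)$ reduce, exactly as you say, to injectivity of $-\Delta_\Omega^D-\lambda I$ on $D(-\Delta_\Omega^D)$. (Note also that your harmonic-extension step is itself the case $\lambda=0$ of the lemma, legitimately available since $\sigma(-\Delta_\Omega^D)\subset(0,\infty)$ by Theorem~\ref{thm:dirichlet-and-neumann}, and there is no circularity because you prove it directly via Lax--Milgram.) What your route buys is transparency about why complex $\lambda$ costs nothing --- the only input is the $L^2$-resolvent, which exists for every $\lambda\in\rho(-\Delta_\Omega^D)$ --- and it avoids the $H^1_0\to H^{-1}$ isomorphism and Fredholm alternative; what the paper's route buys is brevity and consistency with the framework of \cite{ArendtMazzeo} that it reuses later for the Dirichlet-to-Neumann operator.
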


\begin{proof}
For $\lambda \in \R \cap \rho (-\Delta_\Omega^D)$, this follows immediately from \cite[Lemma~2.2]{ArendtMazzeo}, together with the fact that $H^{1/2} (\partial\Omega) = \tr H^1 (\Omega)$; for general $\lambda \in \rho (-\Delta_\Omega^D)$ the same proof works verbatim.
\end{proof}

We denote by $P(\lambda): H^{1/2}(\partial\Omega) \to H^1(\Omega)$ the Poisson operator given by
\begin{equation}
\label{eq:poisson}
	g \mapsto u_\lambda,
\end{equation}
where $u_\lambda$ solves \eqref{eq:inhomogeneous-dirichlet}, which is well defined for any $\lambda \in \rho (-\Delta_\Omega^D)$; indeed, one may show that $P(\lambda)$ is a bijection from $H^{1/2}(\partial\Omega)$ onto $H^1(\lambda)$ as defined in \eqref{eq:h1lambda} and in fact a right inverse of the trace operator. We can now define the Dirichlet-to-Neumann operator. For $\lambda \in \rho (-\Delta_\Omega^D)$, we first define a sesquilinear form $q_\lambda: H^{1/2} (\partial \Omega) \times H^{1/2} (\partial \Omega) \to \C$ by
\begin{equation}
\label{eq:dno-form}
	q_\lambda [g,h] = \int_\Omega \nabla P(\lambda)g \cdot \overline{\nabla P(\lambda)h} - \lambda P(\lambda)g\,\overline{P(\lambda)h}\,\dx.
\end{equation}
The (negative) \emph{Dirichlet-to-Neumann operator} $M(\lambda): D(M(\lambda)) \subset L^2(\partial\Omega) \to L^2(\partial\Omega)$ is then the operator in $L^2(\partial\Omega)$ associated with $-q_\lambda$, which a short calculation shows to be given by
\begin{equation}
\label{eq:dno-definition}
\begin{aligned}
	D(M(\lambda)) &= \left\{ g \in H^{1/2} (\partial\Omega): \frac{\partial}{\partial\nu} P(\lambda)g \in L^2(\partial\Omega) \right\},\\
	M(\lambda) &= -\frac{\partial}{\partial\nu} P(\lambda),
\end{aligned}
\end{equation}
where the normal derivative $\frac{\partial}{\partial\nu} P(\lambda)g$ was defined in \eqref{eq:l2-normal-derivative}. In words, the Dirichlet-to-Neumann operator maps given \emph{Dirichlet data} $g=\tr u$ to the \emph{Neumann data} $-\frac{\partial u}{\partial\nu}$ of the same solution $u=P(\lambda)g$ of $-\Delta u = \lambda u$.

\begin{lemma}
\label{lem:dno-bounded-h1-l2}
Let $\lambda \in \rho(-\Delta_\Omega^D)$. The operator $-M(\lambda)$ is closed, densely defined and m-sectorial, and has compact resolvent in $L^2(\partial\Omega)$. In particular, its spectrum consists of eigenvalues of finite algebraic multiplicity.
\end{lemma}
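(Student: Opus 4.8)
The plan is to realise $-M(\lambda)$ as the operator associated with the form $q_\lambda$ on the form domain $H^{1/2}(\partial\Omega)$ (equivalently, $M(\lambda)$ is associated with $-q_\lambda$ as in \eqref{eq:dno-form}--\eqref{eq:dno-definition}) and then invoke Kato's first representation theorem \cite[Theorem~VI.2.1]{Kato}: it suffices to check that, as a form in the Hilbert space $L^2(\partial\Omega)$, $q_\lambda$ is densely defined, sectorial and closed; m-sectoriality of $-M(\lambda)$ together with its resolvent estimate, as well as the inclusion $W(-M(\lambda))\subset W(q_\lambda)$, then follow. Compactness of the resolvent will be deduced afterwards from the compact embedding $H^{1/2}(\partial\Omega)\hookrightarrow L^2(\partial\Omega)$, exactly as the analogous statement for the Robin Laplacian was obtained in Theorem~\ref{thm:robin-operator}. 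Density of $q_\lambda$ is immediate since $H^{1/2}(\partial\Omega)$ is dense in $L^2(\partial\Omega)$; and for real $\lambda$ the form $q_\lambda$ is symmetric, so $-M(\lambda)$ is then self-adjoint and bounded below.

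The analytic core --- the step I expect to be the \emph{main obstacle} --- is the estimate
\[
	\|P(\lambda)g\|_{L^2(\Omega)}\le c_{\lambda,\Omega}\,\|g\|_{L^2(\partial\Omega)}\qquad\text{for all }g\in H^{1/2}(\partial\Omega),
\]
i.e.\ that the ($\lambda$-dependent) Poisson operator \eqref{eq:poisson}, a priori only bounded from $H^{1/2}(\partial\Omega)$ into $H^1(\Omega)$, extends to a bounded operator $L^2(\partial\Omega)\to L^2(\Omega)$. I would prove this by reduction to $\lambda=0$: writing $v:=P(0)g$ for the harmonic extension of $g$ and $w:=P(\lambda)g-v\in H_0^1(\Omega)$, a short computation using $-\Delta v=0$ and $-\Delta P(\lambda)g=\lambda P(\lambda)g$ gives $(-\Delta_\Omega^D-\lambda)w=\lambda v$, hence $w=\lambda(-\Delta_\Omega^D-\lambda I)^{-1}v$, and since $-\Delta_\Omega^D$ is self-adjoint with $\lambda\in\rho(-\Delta_\Omega^D)$ (Theorem~\ref{thm:dirichlet-and-neumann}),
\[
	\|P(\lambda)g\|_{L^2(\Omega)}\le\Bigl(1+\tfrac{|\lambda|}{\dist(\lambda,\sigma(-\Delta_\Omega^D))}\Bigr)\,\|P(0)g\|_{L^2(\Omega)}.
\]
The remaining boundedness $P(0)\colon L^2(\partial\Omega)\to L^2(\Omega)$ is the classical $L^2$-solvability of the Dirichlet problem on a bounded Lipschitz domain; alternatively it, or indeed the displayed estimate directly, may be quoted from the boundary-triple treatment of the Dirichlet-to-Neumann map (e.g.\ \cite{GesztesyMitrea,BehrndtTerElst}). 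Everything else is essentially bookkeeping once this estimate is in hand.

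Granting the estimate, sectoriality is quick: for $g\in H^{1/2}(\partial\Omega)$ with $\|g\|_{L^2(\partial\Omega)}=1$, putting $u:=P(\lambda)g$ we have $q_\lambda[g,g]=\|\nabla u\|_{L^2(\Omega)}^2-\lambda\|u\|_{L^2(\Omega)}^2$, so with $K:=c_{\lambda,\Omega}^2$ the estimate gives $\|u\|_{L^2(\Omega)}^2\le K$, whence $\Re q_\lambda[g,g]\ge-|\Re\lambda|K$ and $|\Im q_\lambda[g,g]|=|\Im\lambda|\,\|u\|_{L^2(\Omega)}^2\le|\Im\lambda|K$; thus $W(q_\lambda)$ lies in a horizontal half-strip, hence in a sector $\{z\in\C:|\arg(z-\gamma)|\le\theta\}$ for a suitable $\gamma\in\R$ and $\theta<\pi/2$. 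For closedness, recall (Lemma~\ref{DNop:UniqueSolutionOfDirichletProblem} and the ensuing discussion) that $P(\lambda)$ is a Banach-space isomorphism of $H^{1/2}(\partial\Omega)$ onto the closed subspace $H^1(\lambda)\subset H^1(\Omega)$, with inverse the trace, so $g\mapsto\|P(\lambda)g\|_{H^1(\Omega)}$ is an equivalent norm on $H^{1/2}(\partial\Omega)$; combining this with the estimate above shows that for $\mu$ large enough $\Re q_\lambda[g,g]+\mu\|g\|_{L^2(\partial\Omega)}^2\ge\|u\|_{H^1(\Omega)}^2\gtrsim\|g\|_{H^{1/2}(\partial\Omega)}^2$, while the reverse bound $|q_\lambda[g,h]|\lesssim\|g\|_{H^{1/2}(\partial\Omega)}\|h\|_{H^{1/2}(\partial\Omega)}$ is clear, so the form norm of $q_\lambda$ is equivalent to the complete norm of $H^{1/2}(\partial\Omega)$ and $q_\lambda$ is closed. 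Kato's first representation theorem then yields that $-M(\lambda)$ is closed, densely defined and m-sectorial, with numerical range in the sector above; and since its form domain $H^{1/2}(\partial\Omega)$ is compactly embedded in $L^2(\partial\Omega)$ ($\partial\Omega$ being a compact Lipschitz manifold), $-M(\lambda)$ has compact resolvent. Consequently $\sigma(-M(\lambda))$, and hence $\sigma(M(\lambda))$, is discrete and consists of eigenvalues of finite algebraic multiplicity, accumulating only at $\infty$.
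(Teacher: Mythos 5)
Your argument is correct, but it takes a genuinely different route to the key point than the paper does. The paper disposes of sectoriality (and, implicitly, closedness) of $q_\lambda$ in one stroke: it shows that $\int_\Omega |\nabla u|^2 - \Re\lambda\,|u|^2\,\dx + \omega\int_{\partial\Omega}|u|^2\,\dsigma$ is an equivalent norm on $H^1(\Omega)$ for suitable $\omega\geq 0$, which follows directly from Maz'ya's inequality in the $\varepsilon$-form $\|u\|_{L^2(\Omega)}^2 \leq \varepsilon\|\nabla u\|_{L^2(\Omega)}^2 + c_\varepsilon\|u\|_{L^2(\partial\Omega)}^2$ (quoted from \cite{AtE11}); composing with the bounded trace $H^1(\Omega)\to H^{1/2}(\partial\Omega)$ then gives the coercivity estimate \eqref{eq:dn-sectorial}, and the rest is the same Kato-plus-compact-embedding bookkeeping you carry out. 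You instead base everything on the bound $\|P(\lambda)g\|_{L^2(\Omega)}\leq c_{\lambda,\Omega}\|g\|_{L^2(\partial\Omega)}$, reduced to $\lambda=0$ via the Dirichlet resolvent (that reduction, $w=\lambda(-\Delta_\Omega^D-\lambda I)^{-1}P(0)g$, is correct, as is your closedness argument through the equivalence of $\|g\|_{H^{1/2}(\partial\Omega)}$ with $\|P(\lambda)g\|_{H^1(\Omega)}$). The trade-off: your route yields a sharper localisation of $W(q_\lambda)$ (a horizontal half-strip with explicit dependence on $\Re\lambda$, $\Im\lambda$, rather than merely a sector), but the harmonic case $P(0)\colon L^2(\partial\Omega)\to L^2(\Omega)$ is \emph{not} elementary on a Lipschitz domain --- it rests on $L^2$-solvability of the Dirichlet problem (Dahlberg/Jerison--Kenig type results, or the mapping properties in \cite{GesztesyMitrea,BehrndtTerElst}), i.e.\ genuinely deeper input than the trace-type inequality the paper uses; note also that Maz'ya's inequality alone cannot replace it, since for data merely in $L^2(\partial\Omega)$ the gradient term is not controlled. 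As long as that estimate is quoted, as you propose, the proof is complete.
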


\begin{proof}
Everything except the sectoriality follows immediately since $H^{1/2}(\partial\Omega)$ is densely and compactly embedded in $L^2 (\partial\Omega)$, and $q_\lambda$ is closed on $H^{1/2}(\partial\Omega)$. For the sectoriality of the operator, it suffices to show that $q_\lambda$ is sectorial, that is, that there exist constants $\omega, \mu \in \R$ such that
\begin{equation}
\label{eq:dn-sectorial}
	\Re q_\lambda [g,g] + \omega \|g\|_{L^2(\partial\Omega)}^2 \geq \mu \|g\|_{H^{1/2}(\partial\Omega)}^2
\end{equation}
for all $g \in H^{1/2} (\partial\Omega)$; to prove \eqref{eq:dn-sectorial}, by the fact that the trace map is bounded from $H^1(\Omega)$ to $H^{1/2}(\partial\Omega)$ it certainly suffices to show that for any $\lambda \in \C$ there exists $\omega \geq 0$ such that
\begin{displaymath}
	\int_\Omega |\nabla u|^2 - \Re\lambda |u|^2\,\dx + \omega \int_{\partial\Omega} |u|^2\,\dsigma,
\end{displaymath}
$u \in H^1(\Omega)$, defines an equivalent norm on $H^1 (\Omega)$. But this, in turn, follows immediately from Maz'ya's inequality in the form of \cite[eq.~(4)]{AtE11}. We conclude that $q_\lambda$ and $M(\lambda)$ are sectorial. (See also \cite[Corollary~2.2 and Section~4.4]{AtE12} for a slightly different but equivalent approach in the case of real $\lambda$, which can however be carried over verbatim to complex $\lambda$.)
\end{proof}

\begin{remark}
\label{rem:dno-various}
(1) It may be shown that $D(M(\lambda)) = H^1 (\partial \Omega)$ for any $\lambda \in \rho (-\Delta_\Omega^D)$, use \cite[Theorem 4.25]{McLean} with $s=1/2$; however, we will not need this.

(2) It is also possible to define the Dirichlet-to-Neumann operator for $\lambda \in \sigma (-\Delta_\Omega^D)$, either as a multi-valued operator, or by factoring out the eigenfunctions of the eigenvalue $\lambda$ of the Dirichlet Laplacian from $H^1(\Omega)$. The conclusion of Lemma~\ref{lem:dno-bounded-h1-l2} continues to hold with appropriate modifications; all the details may be found in \cite{ArendtMazzeo}.

(3) In dimension $d=1$, i.e. for a bounded, non-degenerate interval, the Dirichlet-to-Neumann operator can be represented by the $2 \times 2$-matrix given by \eqref{Interval:DNOperatorMatrix}. Obviously, Lemma~\ref{lem:dno-bounded-h1-l2} continues to hold in this case.
\end{remark}


\begin{lemma}
\label{MeroDN}
The Dirichlet-to-Neumann operator $M(\lambda)$ is meromorphic with respect to the spectral parameter $\lambda\in\C$. Its singularities are poles of finite order and coincide with the eigenvalues of the corresponding Dirichlet Laplacian, i.e., the set of singularities of $\lambda\mapsto M(\lambda)$ is $\sigma (-\Delta_\Omega^D)$.
For $\lambda \in\rho(-\Delta_\Omega^D)$, $M(\lambda)$, is a self-adjoint holomorphic operator family and the corresponding quadratic forms are holomorphic of type (a). 
\end{lemma}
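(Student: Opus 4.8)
The plan is to analyse the operator $M(\lambda)$ through its explicit representation via the Poisson operator, $M(\lambda) = -\frac{\partial}{\partial\nu}P(\lambda)$, and to trace the $\lambda$-dependence back to the resolvent of the Dirichlet Laplacian. First I would fix a reference point $\lambda_0 \in \rho(-\Delta_\Omega^D)$ and observe that for $\lambda$ near $\lambda_0$ one can write $P(\lambda)$ explicitly in terms of $P(\lambda_0)$ and the Dirichlet resolvent: if $u = P(\lambda)g$ solves $-\Delta u = \lambda u$, $\tr u = g$, then writing $u = P(\lambda_0)g + w$ with $w \in H^1_0(\Omega)$, one gets $(-\Delta_\Omega^D - \lambda)w = (\lambda - \lambda_0)P(\lambda_0)g$, hence $w = (\lambda - \lambda_0)(-\Delta_\Omega^D - \lambda)^{-1}P(\lambda_0)g$. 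Thus $P(\lambda) = P(\lambda_0) + (\lambda-\lambda_0)(-\Delta_\Omega^D - \lambda)^{-1}P(\lambda_0)$, which exhibits $P(\lambda)$ as a bounded-holomorphic family from $H^{1/2}(\partial\Omega)$ to $H^1(\Omega)$ on $\rho(-\Delta_\Omega^D)$, since the Dirichlet resolvent is holomorphic there. Composing with the (bounded, $\lambda$-independent) map taking $u$ with $\Delta u \in L^2$ to its normal derivative then gives holomorphy of $M(\lambda)$ on $\rho(-\Delta_\Omega^D)$; at the level of forms, the representation \eqref{eq:dno-form} combined with holomorphy of $P(\lambda)$ and the elementary $\lambda$-dependence of the integrand shows $q_\lambda$ is holomorphic of type (a) (sectoriality being uniform on compacta by Lemma~\ref{lem:dno-bounded-h1-l2} and its proof, where $\omega$ depends only locally on $\lambda$). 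Self-adjointness of the family, $M(\lambda)^* = M(\overline\lambda)$, follows because $q_\lambda^* = q_{\overline\lambda}$: conjugating \eqref{eq:dno-form} swaps the roles of $g$ and $h$ and replaces $\lambda$ by $\overline\lambda$, using that $\overline{P(\lambda)g} = P(\overline\lambda)\overline g$, which in turn follows from uniqueness in Lemma~\ref{DNop:UniqueSolutionOfDirichletProblem} applied to the conjugated equation.

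For the meromorphy statement and the identification of the singular set with $\sigma(-\Delta_\Omega^D)$, I would argue in two directions. That the singularities are contained in $\sigma(-\Delta_\Omega^D)$ is immediate from the above: $M$ is holomorphic on $\rho(-\Delta_\Omega^D)$. Near an eigenvalue $\mu \in \sigma(-\Delta_\Omega^D)$, the Dirichlet resolvent has a pole of finite order (order one, since $-\Delta_\Omega^D$ is self-adjoint, by Theorem~\ref{thm:dirichlet-and-neumann}) with residue the spectral projection $\Pi_\mu$ onto the finite-dimensional eigenspace; feeding this into $P(\lambda) = P(\lambda_0) + (\lambda-\lambda_0)(-\Delta_\Omega^D-\lambda)^{-1}P(\lambda_0)$ shows $P(\lambda)$, and hence $M(\lambda)$, has at worst a simple pole at $\mu$ with residue of finite rank (at most $\dim \ker(-\Delta_\Omega^D - \mu)$), proving the finite-order / finite-rank pole claim. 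Conversely, to see that every $\mu \in \sigma(-\Delta_\Omega^D)$ really is a singularity and not a removable point, I would use the duality: if $M$ extended holomorphically across $\mu$, then the Poisson problem $-\Delta u = \mu u$, $\tr u = g$, would be uniquely solvable for all $g \in H^{1/2}(\partial\Omega)$ in $H^1(\lambda)$-sense, contradicting the existence of a nontrivial Dirichlet eigenfunction (which lies in $H^1_0$, forcing non-uniqueness). Alternatively and more concretely, one can test the residue against a Dirichlet eigenfunction $\phi$: $\frac{\partial\phi}{\partial\nu} \not\equiv 0$ on $\partial\Omega$ (by the uniqueness statement in Lemma~\ref{DNop:UniqueSolutionOfDirichletProblem} — a $\lambda$-harmonic function vanishing on the boundary together with its normal derivative would be identically zero, since $H^1(\Omega) = H^1_0(\Omega) \oplus H^1(\lambda)$ and it would lie in both summands only if zero), which forces the residue of $M(\lambda)$ at $\mu$ to be nonzero.

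The main obstacle I expect is the clean verification that the singular set is \emph{exactly} $\sigma(-\Delta_\Omega^D)$, i.e.\ that no Dirichlet eigenvalue is a removable singularity; this hinges on showing the normal derivative of a Dirichlet eigenfunction does not vanish identically, which is essentially the unique-continuation-flavoured content of the direct sum decomposition in Lemma~\ref{DNop:UniqueSolutionOfDirichletProblem}. A secondary technical point is bookkeeping the precise sense in which $q_\lambda$ is "holomorphic of type (a)" in Kato's terminology — one must check that the form domain $H^{1/2}(\partial\Omega)$ is $\lambda$-independent (clear from \eqref{eq:dno-form}) and that $\lambda \mapsto q_\lambda[g,g]$ is holomorphic for each fixed $g$ with locally uniform sectoriality, which follows from the holomorphy of $P(\lambda)$ together with the uniform sectoriality estimate already contained in the proof of Lemma~\ref{lem:dno-bounded-h1-l2}. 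The self-adjoint-holomorphic property is then formal given $q_\lambda^* = q_{\overline\lambda}$, appealing to the relevant statements in \cite[Chapter~VII]{Kato}.
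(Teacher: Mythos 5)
Your overall route is essentially the paper's: the identity $P(\lambda)=P(\lambda_0)+(\lambda-\lambda_0)(-\Delta_\Omega^D-\lambda I)^{-1}P(\lambda_0)$ that you derive is exactly the perturbation formula the paper invokes (in the form \eqref{eq:pertformula}, quoted from Behrndt--Rohleder with base point $0$), and your arguments for holomorphy on $\rho(-\Delta_\Omega^D)$, for $M(\overline\lambda)=M(\lambda)^*$ via $\overline{P(\lambda)g}=P(\overline\lambda)\,\overline g$, and for the forms being holomorphic of type (a) (fixed form domain $H^{1/2}(\partial\Omega)$, holomorphy of $\lambda\mapsto q_\lambda[g,g]$, sectoriality from Lemma~\ref{lem:dno-bounded-h1-l2}) agree in substance with the paper, which deduces meromorphy and the finite order of the poles from the fact that \eqref{eq:pertformula} depends polynomially on $\lambda$ and on the Dirichlet resolvent. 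One small caution: ``composing with the $\lambda$-independent normal-derivative map'' is loose at the operator level (on a Lipschitz domain $\partial_\nu$ is bounded only into $H^{-1/2}(\partial\Omega)$ on $\{u\in H^1(\Omega):\Delta u\in L^2(\Omega)\}$), but your form-level type (a) argument, via Kato, is the correct and sufficient route, and it is the one the lemma actually asserts.

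There is, however, a genuine gap in the one step where you go beyond the paper, namely the claim that every $\mu\in\sigma(-\Delta_\Omega^D)$ is a non-removable singularity. You justify $\partial_\nu\phi\not\equiv 0$ for a Dirichlet eigenfunction $\phi$ by appealing to the decomposition $H^1(\Omega)=H^1_0(\Omega)\oplus H^1(\lambda)$ of Lemma~\ref{DNop:UniqueSolutionOfDirichletProblem}; but that decomposition is stated, and is true, only for $\lambda\in\rho(-\Delta_\Omega^D)$, and it fails precisely at $\lambda=\mu$: the eigenfunction $\phi$ itself lies in $H^1_0(\Omega)\cap H^1(\mu)$ and is nonzero, so ``lying in both summands'' forces nothing, and the argument is circular. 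The fact you actually need --- that $\phi\in H^1_0(\Omega)$ with $-\Delta\phi=\mu\phi$ and $\partial_\nu\phi=0$ in the sense of \eqref{eq:l2-normal-derivative} must vanish --- is a unique continuation statement: extend $\phi$ by zero to a ball $B\supset\overline\Omega$; since both Cauchy data vanish, the extension is a weak solution of $-\Delta u=\mu u$ on $B$ vanishing on the open set $B\setminus\overline\Omega$, hence vanishing identically (real-analyticity of solutions), a contradiction; applied to linear combinations of eigenfunctions this also gives linear independence of their Neumann traces, so the residue of $M(\lambda)$ at $\mu$, which your computation exhibits as a nonzero multiple of $\partial_\nu\Pi_\mu P(0)$, is indeed nonzero. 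Your first alternative --- that a holomorphic extension of $M$ across $\mu$ would yield unique solvability of the Dirichlet problem at $\mu$ --- is also not rigorous as stated: holomorphy of the family away from $\mu$ does not by itself produce a Poisson operator at $\mu$, and non-uniqueness alone would not rule out a removable singularity (compare the multivalued extension in Remark~\ref{rem:dno-various}). Note that the paper's own proof only records that the singularities are finite-order poles contained in $\sigma(-\Delta_\Omega^D)$; your residue argument is a sensible way to get the exact coincidence, but only once the unique-continuation ingredient replaces the appeal to the direct sum decomposition.
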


\begin{proof}
Suppose $\lambda\in\rho(-\Delta_\Omega^D)$. Then the Poisson operator $P(\lambda)$ given by \eqref{eq:poisson} and its adjoint $P^\ast(\lambda): H_0^{-1}(\Omega)\rightarrow H^{-1/2}(\partial\Omega)$ are well defined. We now invoke a perturbation formula for $M(\lambda)$ in terms of the fixed operator $M(0)$ (see \cite[Lemma~2.4 for $\mu=0$]{BehrndtRohleder}):
\begin{equation}\label{eq:pertformula}
	M(\lambda)
	=M(0)+\lambda P(0)^\ast\left(I+\lambda(-\Delta_\Omega^D - \lambda I)^{-1}\right)P(0).
\end{equation}
This expression depends polynomially on $\lambda$ and on the resolvent of the Dirichlet Laplacian which is known to be a meromorphic function with poles of finite order, as follows from Theorem~\ref{thm:dirichlet-and-neumann}. 
This proves that $M(\lambda)$, $\lambda \in\rho (-\Delta_\Omega^D)$, is a holomorphic operator family.
 It is self-adjoint holomorphic, i.e.\ $M(\overline\lambda)=(M(\lambda))^*$, by \eqref{eq:pertformula} and using that $M(0)$ is self-adjoint and that $\rho (-\Delta_\Omega^D)$ is symmetric about the real axis.
The corresponding quadratic forms $q_{\lambda}$  are holomorphic of type (a)  (see \cite[Section~VII.4.2]{Kato}), 
where the sectoriality was proved in Lemma~\ref{lem:dno-bounded-h1-l2}.
\end{proof}

\begin{remark}
One can show by exactly the same argument as in the proof of Theorem~\ref{thm:holo} that the corresponding eigenprojections can be chosen to depend holomorphically on $\lambda \in \rho (-\Delta_\Omega^D)$.
\end{remark}

We can now state the following duality result linking the eigenvalues of the operators $M(\lambda)$ and $-\Delta_\Omega^\alpha$. While this is standard, and in the case of real $\alpha$ and $\lambda$ well known (see \cite[Theorem~3.1]{ArendtMazzeo}), for completeness' sake we give a proof.
In fact, the connection between elliptic differential operators and operators of Dirichlet-to-Neumann type, or so-called Titchmarsh--Weyl $M$-functions, is also known in the non-selfadjoint case (see~\cite[Theorem~4.10]{BMNW}) but here we give a direct proof including the eigenfunctions.

\begin{theorem}
\label{thm:robin-dn-duality}
Let $\Omega \subset \R^d$, $d\geq 2$, be a bounded Lipschitz domain. For any $\alpha \in \C$ and any $\lambda \in \rho (-\Delta_\Omega^D)$, we have that $\lambda \in \sigma (-\Delta_\Omega^\alpha)$ if and only if $\alpha \in \sigma (M(\lambda))$. Moreover, $u$ is an eigenfunction of $-\Delta_\Omega^\alpha$ corresponding to $\lambda$ if and only if $\tr u$ is an eigenfunction of $M(\lambda)$ corresponding to $\alpha$.
\end{theorem}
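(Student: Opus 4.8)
The plan is to prove the statement at the level of point spectra, which suffices since both $\sigma(-\Delta_\Omega^\alpha)$ and $\sigma(M(\lambda))$ consist only of eigenvalues (Theorem~\ref{thm:robin-operator}(4) and Lemma~\ref{lem:dno-bounded-h1-l2}). Both implications, together with the eigenfunction correspondence, will be obtained from the Poisson operator $P(\lambda)$ and the direct sum decomposition $H^1(\Omega) = H^1_0(\Omega) \oplus H^1(\lambda)$ of Lemma~\ref{DNop:UniqueSolutionOfDirichletProblem}, the point being that the trace operator restricts to a bijection between the relevant eigenspaces with inverse $P(\lambda)$.

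First I would treat the forward direction. Suppose $\lambda$ is an eigenvalue of $-\Delta_\Omega^\alpha$ with eigenfunction $u \in D(-\Delta_\Omega^\alpha)$. Then $\Delta u = -\lambda u \in L^2(\Omega)$, and testing \eqref{eq:l2-normal-derivative} against $v \in H^1_0(\Omega)$ shows that $u$ satisfies \eqref{eq:weak-dirichlet}, i.e.\ $u \in H^1(\lambda)$ in the sense of \eqref{eq:h1lambda}. Since $\lambda \notin \sigma(-\Delta_\Omega^D)$, the decomposition $H^1(\Omega) = H^1_0(\Omega) \oplus H^1(\lambda)$ forces $\tr u \neq 0$, for otherwise $u \in H^1_0(\Omega) \cap H^1(\lambda) = \{0\}$. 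Setting $g := \tr u \neq 0$, uniqueness in Lemma~\ref{DNop:UniqueSolutionOfDirichletProblem} gives $u = P(\lambda)g$, and since $\frac{\partial u}{\partial\nu} \in L^2(\partial\Omega)$ we have $g \in D(M(\lambda))$ by \eqref{eq:dno-definition}. The Robin boundary condition $\frac{\partial u}{\partial\nu} + \alpha u = 0$ then reads $M(\lambda)g = -\frac{\partial}{\partial\nu}P(\lambda)g = -\frac{\partial u}{\partial\nu} = \alpha\, u|_{\partial\Omega} = \alpha g$, so $\alpha$ is an eigenvalue of $M(\lambda)$ with eigenfunction $g = \tr u$.

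For the converse, given an eigenfunction $g \in D(M(\lambda)) \setminus \{0\}$ of $M(\lambda)$ for the eigenvalue $\alpha$, I would set $u := P(\lambda)g \in H^1(\lambda)$, so that $u$ solves \eqref{eq:weak-dirichlet}, hence $\Delta u = -\lambda u \in L^2(\Omega)$, and $\tr u = g$ since $P(\lambda)$ is a right inverse of the trace. By the definition \eqref{eq:dno-definition} of $M(\lambda)$ we get $\frac{\partial u}{\partial\nu} = -M(\lambda)g = -\alpha g = -\alpha\,\tr u \in L^2(\partial\Omega)$, whence $u \in D(-\Delta_\Omega^\alpha)$ and $-\Delta_\Omega^\alpha u = -\Delta u = \lambda u$; moreover $u \neq 0$ because $\tr u = g \neq 0$. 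Thus $\lambda \in \sigma(-\Delta_\Omega^\alpha)$. The two constructions $u \mapsto \tr u$ and $g \mapsto P(\lambda)g$ are mutually inverse by uniqueness in Lemma~\ref{DNop:UniqueSolutionOfDirichletProblem} together with $\tr \circ P(\lambda) = \mathrm{id}$, which yields both the spectral equivalence and the claimed bijection between eigenspaces.

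I do not expect a genuine obstacle here: the argument is essentially bookkeeping with the trace and Poisson operators. The one point requiring a little care is matching the two notions of ``weak Laplacian'' appearing in the excerpt --- the distributional one built into $D(-\Delta_\Omega^\alpha)$ (with $\Delta u \in L^2(\Omega)$) and the one in \eqref{eq:weak-dirichlet} defining $H^1(\lambda)$ --- and the corresponding compatibility of the $L^2$-normal derivative \eqref{eq:l2-normal-derivative} with the definition \eqref{eq:dno-definition} of $M(\lambda)$; both reductions are just integration by parts and rely only on the regularity $\Delta u \in L^2(\Omega)$, which holds automatically for eigenfunctions.
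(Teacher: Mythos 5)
Your argument is correct, but it runs at a different level than the paper's own proof. The paper stays entirely at the level of sesquilinear forms: it writes the Robin eigenvalue equation in the weak form \eqref{eq:robin-ev-eqn-weak-form} (test functions in $H^1(\Omega)$) and the Dirichlet-to-Neumann eigenvalue equation in the weak form for $q_\lambda$ (test functions in $H^{1/2}(\partial\Omega)$), then uses the direct sum decomposition $H^1(\Omega)=H^1_0(\Omega)\oplus H^1(\lambda)$ of Lemma~\ref{DNop:UniqueSolutionOfDirichletProblem} to upgrade the latter to the statement \eqref{eq:dn-ev-weak-form} valid for all $v\in H^1(\Omega)$, and concludes by comparing the two identities, with $g=\tr u$, i.e.\ $u=P(\lambda)g$. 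You instead work with the strong descriptions of the operators: the explicit domain characterisation of $-\Delta_\Omega^\alpha$ (with $\Delta u\in L^2(\Omega)$, $\frac{\partial u}{\partial\nu}\in L^2(\partial\Omega)$ and the Robin condition) and the description \eqref{eq:dno-definition} of $M(\lambda)=-\frac{\partial}{\partial\nu}P(\lambda)$, matching $L^2$-normal derivatives via \eqref{eq:l2-normal-derivative}. Both routes hinge on the same two ingredients (uniqueness/decomposition from Lemma~\ref{DNop:UniqueSolutionOfDirichletProblem} and the trace/Poisson correspondence), so the difference is one of packaging: your version is arguably more transparent bookkeeping and immediately exhibits the Robin boundary condition as the eigenvalue equation $M(\lambda)g=\alpha g$, but it leans on the strong characterisations of $D(-\Delta_\Omega^\alpha)$ and of the operator associated with $-q_\lambda$, which the paper only asserts (with references, resp.\ ``a short calculation'') rather than proves; the paper's form-level comparison avoids invoking these characterisations altogether and is self-contained given only the definitions of $a_\alpha$ and $q_\lambda$. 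Your handling of the nondegeneracy points (that $\tr u\neq 0$ because $H^1_0(\Omega)\cap H^1(\lambda)=\{0\}$ for $\lambda\in\rho(-\Delta_\Omega^D)$, and that $u=P(\lambda)g\neq 0$ because $\tr u=g\neq 0$) is correct and in fact slightly more explicit than the paper's.
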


\begin{proof}
Note first that the spectra of $M(\lambda)$ and $-\Delta_\Omega^\alpha$ consist only of eigenvalues of finite multiplicity. Now $\lambda \in \sigma (-\Delta_\Omega^\alpha)$ for given $\alpha \in \C$ with eigenfunction $u \in H^1(\Omega)$ if and only if
\begin{equation}
\label{eq:robin-ev-eqn-weak-form}
	a_\alpha [u,v] = \int_\Omega \nabla u \cdot \overline{\nabla v}\,\dx+\int_{\partial\Omega} \alpha u\overline{v}\,\dsigma
	= \lambda \int_\Omega u\overline{v}\,\dx
\end{equation}
for all $v \in H^1(\Omega)$, while $\alpha \in \sigma (M(\lambda))$ for given $\lambda \in \rho(-\Delta_\Omega^D)$ with eigenfunction $g \in H^{1/2}(\partial\Omega)$ if and only if
\begin{equation*}
	q_\lambda [g,h] = \int_\Omega \nabla P(\lambda)g \cdot \overline{\nabla P(\lambda)h} - \lambda P(\lambda)g\,\overline{P(\lambda)h}\,\dx
	= -\alpha \int_{\partial\Omega} g\overline{h}\,\dsigma
\end{equation*}
for all $h \in H^{1/2}(\partial\Omega)$. Using the fact that $P(\lambda)g$ satisfies \eqref{eq:weak-dirichlet} together with the direct sum decomposition of Lemma~\ref{DNop:UniqueSolutionOfDirichletProblem}, it follows that the eigenfunction $g$ satisfies
\begin{equation}
\label{eq:dn-ev-weak-form}
	\int_\Omega \nabla P(\lambda)g \cdot \overline{\nabla v} - \lambda P(\lambda)g\,\overline{v}\,\dx
	= -\alpha \int_{\partial\Omega} g\,\overline{\tr v}\,\dsigma
\end{equation}
for all $v \in H^1(\Omega)$. Comparing \eqref{eq:robin-ev-eqn-weak-form} and \eqref{eq:dn-ev-weak-form} leads immediately to the statement $\lambda \in \sigma (-\Delta_\Omega^\alpha)$ if and only if $\alpha \in \sigma (M(\lambda))$ (as long as $\lambda \in \rho (-\Delta_\Omega^D)$), with $g = \tr u$, or, equivalently, $u = P(\lambda)g$.

\end{proof}

\begin{remark}
\label{rem:jordan-chains}
A corresponding statement holds for any generalised eigenfunctions, as shown very recently in \cite{BehrndTerElst-Jordan}. Indeed, suppose $\{u_0,u_1,\dots,u_m\}$ and $\{\phi_0,\phi_1,\dots,\phi_n\}$ are Jordan chains of the operators $-\Delta_\Omega^\alpha$ and $M(\lambda)$, respectively, that is, the function $u_0$ is an eigenfunction of $-\Delta_\Omega^\alpha$ corresponding to an eigenvalue $\lambda$ and $u_1,\dots,u_m$ are generalised eigenfunctions for $\lambda$, and that the same holds for $\phi_0,\phi_1,\dots,\phi_n$ and $M(\lambda)$ with respect to the eigenvalue $\alpha$. Then we have $n=m$ and the Jordan chains are characterised by $\tr u_k=\phi_k$ for $k=0,\dots,n$ \cite[Theorem 4.1]{BehrndTerElst-Jordan}. (Note that this theorem is proved for more general Schr\"odinger operators with complex Robin boundary conditions.)
\end{remark}

\begin{proof}[Proof of Theorem~\ref{thm:Eigencurve_EitherConvDir_Div}]
By Lemma~\ref{MeroDN} the Dirichlet-to-Neumann operator $M(\lambda)$ is a meromorphic operator family whose set of singularities consists of poles of finite order and coincides with the spectrum $\sigma(-\Delta_\Omega^D)$ of the corresponding Dirichlet Laplacian. Now let $(\alpha_k)_k$ be any complex sequence with $\alpha_k\to\infty$ as $k\to\infty$. Assume that the eigenvalues $\lambda_k:=\lambda(\alpha_k)$ on a common analytic branch remain bounded as $k\to\infty$; without loss of generality we may suppose that $\lambda_k \to \lambda_0 \in \C$ as $k \to \infty$. Then by Theorem~\ref{thm:robin-dn-duality}, for each $k$ we may write $\alpha_k = \alpha(\lambda_k)$ for the Dirichlet-to-Neumann eigenvalues, which likewise belong to a common analytic branch. For this branch we have $\alpha_k \to \infty$ as $\lambda_k \to \lambda_0$. By definition, this means that $\lambda_0$ must be a singularity of the operator family $M(\lambda)$. The only possibility is that $\lambda_0 \in \sigma(-\Delta_\Omega^D)$.
\end{proof}

\section{The points of accumulation of the Robin eigenvalues}
\label{sec:limit-points}

In this section we study the question of which values $\lambda \in \C$ can be reached as points of accumulation of the eigenvalues of $-\Delta_\Omega^\alpha$ as $\alpha \to \infty$, also in dependence on how $\alpha \to \infty$ in $\C$; our principal aim is to prove Theorem~\ref{thm:EitherConvDir_Div}. As mentioned in the introduction, the Dirichlet-to-Neumann operator will be used in the proof, more precisely of part (2). For (1), we will draw on some ideas similar to the ones of \cite{CCH} for the case of real negative $\alpha \to -\infty$; in particular, the following lemma, which we will use repeatedly, recalls \cite[Lemma~2.1]{CCH}. Throughout this section we suppose $\Omega \subset \R^d$ to be a fixed bounded Lipschitz domain; and for $A\subseteq\C$ the set of points of accumulation of $A$ is denoted by $\myacc (A)$.

\begin{lemma}
\label{lem:convergence-h1-criterion}
Let $(\alpha_k)_{k\in\N} \subset \C$ be any divergent sequence in $\C$ and for each $k\in \N$ select a Robin eigenvalue $\lambda_k := \lambda (\alpha_k) \in \sigma(-\Delta_\Omega^{\alpha_k})$ (we do \emph{not} require the $\lambda_k$ to belong to the same analytic eigenvalue curve). Suppose that:
\begin{enumerate}
\item[(i)] the sequence $(\lambda_k)_{k \in \N}$ is bounded, and
\item[(ii)] for each $k \in \N$ there exists an associated eigenfunction $\psi_k$ with $L^2(\Omega)$-norm $1$, such that the sequence $\{\|\psi_k\|_{H^1(\Omega)}\}_{k\in\N}$ of $H^1(\Omega)$-norms is bounded.
\end{enumerate}
Then
\begin{displaymath}
	\myacc \{ \lambda_k : k \in \N \} \subseteq \sigma (-\Delta_\Omega^D).
\end{displaymath}
Moreover, if up to a subsequence $\lambda_k \to \lambda \in \sigma (-\Delta_\Omega^D)$, then up to a further subsequence the $\psi_k$ converge weakly in $H^1(\Omega)$ to a Dirichlet eigenfunction associated with $\lambda$.
\end{lemma}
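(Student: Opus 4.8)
The plan is to exploit the weak form \eqref{eq:weak-eigenvalue-equation} of the Robin eigenvalue equation together with the two standing hypotheses, passing to weak limits in $H^1(\Omega)$. First I would use hypothesis (ii): since $\{\psi_k\}$ is bounded in $H^1(\Omega)$ and $H^1(\Omega) \hookrightarrow L^2(\Omega)$ compactly (as $\Omega$ is a bounded Lipschitz domain), after extracting a subsequence we may assume $\psi_k \rightharpoonup \psi$ weakly in $H^1(\Omega)$ and $\psi_k \to \psi$ strongly in $L^2(\Omega)$, so that in particular $\|\psi\|_{L^2(\Omega)} = 1$ and $\psi \neq 0$. By hypothesis (i), after a further extraction $\lambda_k \to \lambda$ for some $\lambda \in \C$; the goal is to show $\lambda \in \sigma(-\Delta_\Omega^D)$ and that $\psi$ is a corresponding Dirichlet eigenfunction, which in turn pins down $\myacc\{\lambda_k\}$ since every point of accumulation is the limit of some such subsequence.

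The key step is to show that $\psi \in H_0^1(\Omega)$, i.e.\ that $\tr \psi = 0$ on $\partial\Omega$. To see this, I would return to \eqref{eq:weak-eigenvalue-equation} with the test function $v = \psi_k$ itself:
\begin{displaymath}
	\|\nabla \psi_k\|_{L^2(\Omega)}^2 + \alpha_k \int_{\partial\Omega} |\tr\psi_k|^2\,\dsigma = \lambda_k \|\psi_k\|_{L^2(\Omega)}^2 = \lambda_k.
\end{displaymath}
Since $\|\nabla\psi_k\|_{L^2(\Omega)}^2$ and $\lambda_k$ are both bounded while $|\alpha_k| \to \infty$, this forces $\int_{\partial\Omega} |\tr\psi_k|^2\,\dsigma \to 0$, i.e.\ $\tr\psi_k \to 0$ in $L^2(\partial\Omega)$. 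On the other hand, the trace operator $\tr\colon H^1(\Omega) \to L^2(\partial\Omega)$ is compact (again by Lipschitz regularity of $\partial\Omega$), so $\psi_k \rightharpoonup \psi$ in $H^1(\Omega)$ implies $\tr\psi_k \to \tr\psi$ strongly in $L^2(\partial\Omega)$. Uniqueness of limits gives $\tr\psi = 0$, hence $\psi \in H_0^1(\Omega)$.

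Finally I would pass to the limit in the weak equation to identify $\lambda$ and $\psi$. Fix any $v \in H_0^1(\Omega) \subset H^1(\Omega)$; then the boundary integral in \eqref{eq:weak-eigenvalue-equation} vanishes and we get $\int_\Omega \nabla\psi_k \cdot \overline{\nabla v}\,\dx = \lambda_k \int_\Omega \psi_k \overline{v}\,\dx$ for every $k$. The left-hand side converges to $\int_\Omega \nabla\psi \cdot \overline{\nabla v}\,\dx$ by weak convergence of the gradients, and the right-hand side to $\lambda \int_\Omega \psi\overline{v}\,\dx$ by $\lambda_k \to \lambda$ and $\psi_k \to \psi$ in $L^2$; hence $\int_\Omega \nabla\psi \cdot \overline{\nabla v}\,\dx = \lambda \int_\Omega \psi\overline{v}\,\dx$ for all $v \in H_0^1(\Omega)$, which is exactly the weak form of $-\Delta\psi = \lambda\psi$ with Dirichlet boundary conditions. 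Since $\psi \in H_0^1(\Omega)$ and $\psi \neq 0$, this says $\lambda$ is a Dirichlet eigenvalue with eigenfunction $\psi$, giving both $\myacc\{\lambda_k : k \in \N\} \subseteq \sigma(-\Delta_\Omega^D)$ and the asserted weak convergence of the $\psi_k$. I expect the main (minor) obstacle to be bookkeeping the successive subsequence extractions cleanly, and making sure the two compactness facts invoked — compact embedding $H^1(\Omega) \hookrightarrow\hookrightarrow L^2(\Omega)$ and compactness of the trace map on a bounded Lipschitz domain — are cited rather than re-proved; neither is genuinely difficult here.
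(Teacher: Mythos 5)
Your proposal is correct and follows essentially the same route as the paper's proof: the same use of the weak eigenvalue equation with $v=\psi_k$ to deduce $\int_{\partial\Omega}|\psi_k|^2\,\dsigma\to 0$ from the boundedness of $\lambda_k$ and $\|\nabla\psi_k\|_2$ together with $|\alpha_k|\to\infty$, the same compactness arguments (compact embedding $H^1(\Omega)\hookrightarrow L^2(\Omega)$ and compactness of the trace map) to get $\psi\in H^1_0(\Omega)$ with $\|\psi\|_2=1$, and the same passage to the limit against test functions in $H^1_0(\Omega)$. The only (harmless) cosmetic difference is that you make the nonvanishing of $\psi$ explicit, which the paper leaves implicit.
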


\begin{proof}
Let $\lambda$ be any point of accumulation; without loss of generality we suppose that $\lim\limits_{k\to\infty} \lambda_k = \lambda$. We first claim that
\begin{displaymath}
	\int_{\partial\Omega} |\psi_k|^2\,\dsigma \to 0;
\end{displaymath}
in fact, this follows since
\begin{displaymath}
	\int_{\partial\Omega} |\psi_k|^2\,\dsigma = \frac{1}{\alpha_k}\left[\lambda_k - \int_\Omega |\nabla \psi_k|^2\,\dx\right]
\end{displaymath}
for $\alpha_k\neq 0$, and by assumption the $\lambda_k$ and $\int_\Omega |\nabla \psi_k|^2\,\dx$ are bounded. Next, since the $\psi_k$ are bounded in $H^1(\Omega)$, up to a subsequence there exists $\psi \in H^1(\Omega)$ such that $\psi_k \rightharpoonup \psi$ weakly in $H^1(\Omega)$. The convergence is strong in $L^2 (\Omega)$ by compactness of the embedding $H^1(\Omega) \hookrightarrow L^2(\Omega)$, while the traces converge in $L^2(\partial\Omega)$ by compactness of the trace mapping. In particular, $\psi$ has zero trace, so $\psi \in H^1_0 (\Omega)$. Finally, using the eigenvalue equation for $\lambda_k$ and the weak $H^1$-convergence of the $\psi_k$, for all $\varphi \in H^1_0 (\Omega)$ (whose traces vanish) we have
\begin{displaymath}
	\int_\Omega \nabla \psi \cdot \overline{\nabla \varphi}\,\dx 
=\lim_{k\to\infty} \int_\Omega \nabla \psi_k \cdot \overline{\nabla \varphi}\,\dx
	= \lim_{k\to\infty}\lambda_k \int_\Omega \psi_k \overline{\varphi}\,\dx 
=\lambda \int_\Omega \psi \overline{\varphi}\,\dx.
\end{displaymath}
Since $\psi \in H^1_0 (\Omega)$, this says exactly that $\lambda$ is an eigenvalue, and $\psi$ a corresponding eigenfunction, of the Dirichlet Laplacian.
\end{proof}

We can now give the proof of Theorem~\ref{thm:EitherConvDir_Div}(1); we will in fact prove the following slightly more precise version, which also allows us to conclude convergence of the eigenfunctions. As before, we do not require our eigenvalues to belong to the same analytic curve. Finally, we recall that the sector $T_\theta^-$ was introduced in Definition~\ref{def:sectors}.

\begin{theorem}
\label{thm:robin-accum-sector}
Let $(\alpha_k)_{k \in \N}$ be any divergent sequence in the sector $\C \setminus T_\theta^-$ for some $\theta>0$, and for each $k \in \N$ let $\lambda_k := \lambda (\alpha_k) \in \sigma(-\Delta_\Omega^{\alpha_k})$ be any corresponding eigenvalue. Then
\begin{displaymath}
	\myacc \{ \lambda_k : k \in \N \} \subseteq \sigma (-\Delta_\Omega^D).
\end{displaymath}
Moreover, if up to a subsequence $\lambda_k \to \lambda \in \sigma (-\Delta_\Omega^D)$, then there exist eigenfunctions for $\lambda_k$ which, possibly up to a further subsequence, converge weakly to a Dirichlet eigenfunction for $\lambda$.
\end{theorem}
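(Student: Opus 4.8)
The plan is to deduce everything from Lemma~\ref{lem:convergence-h1-criterion}, whose hypotheses are (i) boundedness of the selected eigenvalues and (ii) uniform $H^1(\Omega)$-boundedness of an associated sequence of $L^2$-normalised eigenfunctions. Hypothesis (i) comes for free after passing to a subsequence: if $\myacc\{\lambda_k:k\in\N\}$ is empty there is nothing to prove, and otherwise, given any accumulation point $\lambda$, we pass to a subsequence (not relabelled) along which $\lambda_k\to\lambda$, so that $L:=\sup_k|\lambda_k|<\infty$. The substance of the proof is therefore the verification of (ii). First I would fix, for each $k$, an $L^2(\Omega)$-normalised eigenfunction $\psi_k$ of $-\Delta_\Omega^{\alpha_k}$ for $\lambda_k$ and test the weak eigenvalue equation \eqref{eq:weak-eigenvalue-equation} against $\psi_k$ itself, which gives
\begin{displaymath}
	\|\nabla \psi_k\|_2^2 + \alpha_k \int_{\partial\Omega} |\psi_k|^2\,\dsigma = \lambda_k .
\end{displaymath}
Writing $g_k:=\|\nabla\psi_k\|_2$ and $s_k:=\int_{\partial\Omega}|\psi_k|^2\,\dsigma\ge 0$ and separating real and imaginary parts yields $g_k^2=\Re\lambda_k-(\Re\alpha_k)s_k$ and $(\Im\alpha_k)s_k=\Im\lambda_k$.

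The key step is then a short case distinction that consumes the hypothesis $\alpha_k\in\C\setminus T_\theta^-$. If $\Re\alpha_k\ge 0$, then immediately $g_k^2\le\Re\lambda_k\le L$. If $\Re\alpha_k<0$, then membership in $\C\setminus T_\theta^-$ forces $\theta<\pi/2$, $\Im\alpha_k\neq 0$ (a real negative number would lie in $T_\theta^-$), and $|\Re\alpha_k|\le(\cot\theta)|\Im\alpha_k|$; combining this with $s_k=\Im\lambda_k/\Im\alpha_k$ and $s_k\ge 0$ gives
\begin{displaymath}
	g_k^2 = \Re\lambda_k + |\Re\alpha_k|\,s_k = \Re\lambda_k + |\Re\alpha_k|\,\frac{|\Im\lambda_k|}{|\Im\alpha_k|} \le L\,(1+\cot\theta).
\end{displaymath}
In either case $g_k^2$ is bounded by a constant independent of $k$, so $\|\psi_k\|_{H^1(\Omega)}^2=g_k^2+1$ is uniformly bounded and (ii) holds. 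Lemma~\ref{lem:convergence-h1-criterion} then delivers at once both $\lambda=\lim_k\lambda_k\in\sigma(-\Delta_\Omega^D)$, hence $\myacc\{\lambda_k:k\in\N\}\subseteq\sigma(-\Delta_\Omega^D)$, and, via the ``Moreover'' part of that lemma, the weak $H^1(\Omega)$-convergence of a further subsequence of the $\psi_k$ to a Dirichlet eigenfunction for $\lambda$, which is exactly the ``Moreover'' of the theorem.

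I do not expect a genuine obstacle here: the only mildly delicate point is the elementary sector geometry, namely checking that $z\in\C\setminus T_\theta^-$ together with $\Re z<0$ pins down $\theta<\pi/2$ and $|\Re z|\le(\cot\theta)|\Im z|$ with $\cot\theta>0$, and keeping the real/imaginary bookkeeping consistent with the sign conventions; this is precisely where the hypothesis on the location of the $\alpha_k$ is used, and everything else is a direct appeal to Lemma~\ref{lem:convergence-h1-criterion}. It is worth noting that, unlike in the numerical-range estimates, the trace inequality of Lemma~\ref{lem:boundary-control} is not needed: when $\Re\alpha_k<0$ the boundary mass $s_k$ is controlled through the imaginary part of the eigenvalue equation via $s_k=\Im\lambda_k/\Im\alpha_k$.
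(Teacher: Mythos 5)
Your proof is correct and follows essentially the same route as the paper: reduce to Lemma~\ref{lem:convergence-h1-criterion} by extracting a convergent subsequence and establishing a uniform $H^1$-bound on normalised eigenfunctions via the real and imaginary parts of the tested eigenvalue equation, splitting into the cases $\Re\alpha_k\ge 0$ and $\Re\alpha_k<0$ (the latter being exactly the paper's case of bounded $|\Re\alpha_k/\Im\alpha_k|$, which your sector computation $|\Re\alpha_k|\le(\cot\theta)|\Im\alpha_k|$ makes explicit). No gaps.
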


\begin{proof}[Proof of Theorem~\ref{thm:robin-accum-sector} and hence of Theorem~\ref{thm:EitherConvDir_Div}(1)]
It suffices to show that under the stated assumptions if the $\lambda_k$ (or any subsequence thereof) are bounded, then they always admit eigenfunctions $\psi_k$ which are bounded in $H^1(\Omega)$ under the normalisation $\|\psi_k\|_{2}=1$, since we may then directly apply Lemma~\ref{lem:convergence-h1-criterion} in order to obtain the conclusion of the theorem. To this end, first assume without loss of generality that the sequence $\lambda_k$ actually converges to some $\lambda \in \C$; we distinguish between two possibilities, which together completely cover the sector $\C \setminus T_\theta^-$: (i) $\Re \alpha_k \geq 0$ for all $k \in \N$; (ii) $\left| \frac{\Re \alpha_k}{\Im \alpha_k} \right|$ remains bounded.

In case (i), we suppose that, for each $\lambda_k$, $\psi_k$ is any associated eigenfunction such that $\|\psi_k\|_{2} = 1$ and simply observe that
\begin{displaymath}
 \int_\Omega |\nabla \psi_k|^2\,\dx + \Re \alpha_k \int_{\partial\Omega}|\psi_k|^2\,\dsigma =	\Re \lambda_k \to \Re \lambda.
\end{displaymath}
Since $\Re \alpha_k \geq 0$, this is only possible if the sequence $(\|\nabla \psi_k\|_{2)}^2)_{k\in\N}$ remains bounded, which in turn means that the $\psi_k$ form a bounded sequence in $H^1(\Omega)$.

For (ii), we let the $\psi_k$ be as before but now consider the imaginary parts: we have that
\begin{displaymath}
	 \Im \alpha_k \int_{\partial\Omega} |\psi_k|^2\,\dsigma =\Im \lambda_k \to \Im \lambda;
\end{displaymath}
by assumption on the $\alpha_k$, this means that
\begin{displaymath}
	\Re \alpha_k \int_{\partial\Omega} |\psi_k|^2\,\dsigma \quad\text{ and hence also }\quad \alpha_k \int_{\partial\Omega} |\psi_k|^2\,\dsigma
\end{displaymath}
must in particular remain bounded as $k \to \infty$. Since $\lambda_k$ was also assumed bounded, we conclude that
\begin{displaymath}
	\int_\Omega |\nabla \psi_k|^2\,\dx = \lambda_k - \alpha_k \int_{\partial\Omega} |\psi_k|^2\,\dsigma
\end{displaymath}
likewise remains bounded (recall that $\|\psi_k\|_{2} = 1$), meaning that the $\psi_k$ are bounded in $H^1(\Omega)$.
\end{proof}

We next turn to the proof of Theorem~\ref{thm:EitherConvDir_Div}(2). This is in fact an immediate application of the fact that the Dirichlet-to-Neumann operator is unbounded.

\begin{proof}[Proof of Theorem~\ref{thm:EitherConvDir_Div}(2)]
First suppose that $\lambda \not\in \sigma (-\Delta_\Omega^D)$ and let $M(\lambda)$ be the Dirichlet-to-Neumann operator introduced in Section~\ref{sec:dno} (see \eqref{eq:dno-definition}). Then by Lemma~\ref{lem:dno-bounded-h1-l2}, $M(\lambda)$ admits a sequence of eigenvalues $\alpha_k \in \C$ such that $|\alpha_k| \to \infty$. By Theorem~\ref{thm:robin-dn-duality}, for each such $\alpha_k \in \C$, we have that $\lambda \in \sigma (-\Delta_\Omega^{\alpha_k})$.

For $\lambda \in \sigma (-\Delta_\Omega^D)$ the argument is the same except that $M(\lambda)$ becomes a multivalued operator; see Remark~\ref{rem:dno-various}(2).
\end{proof}

\begin{remark}
\label{rem:limit-points-various}
We draw explicit attention to the marked contrast between parts (1) and (2) of Theorem~\ref{thm:EitherConvDir_Div}: on the one hand, for $\alpha$ diverging away from the negative real semiaxis (more precisely outside the sector $T_\theta^-$ for arbitrarily small $\theta>0$), all eigenvalues either diverge absolutely or converge to points in the Dirichlet spectrum. Moreover, this is not just true of the individual analytic curves of eigenvalues but for any arbitrary sequence of eigenvalues in this region.

On the other hand, for \emph{any} $\lambda \in \C$ we can find an infinite sequence of parameters $\alpha_k$, which must end up ``close'' to the negative real semi-axis, for which $\lambda$ is a Robin eigenvalue (this is where the sufficiently large eigenvalues of the Dirichlet-to-Neumann operator $M(\lambda)$ are to be found, for any $\lambda$). Thus the whole of $\C$ can be obtained as points of accumulation if we place no restriction on $\alpha$. The reason why this is not inconsistent with Theorem~\ref{thm:Eigencurve_EitherConvDir_Div} is that there we are interested in the behaviour of the \emph{analytic curves of eigenvalues} (rather than sequences of $\alpha_k$ which may be drawn from different analytic curves).
\end{remark}

\section{Higher-dimensional examples: the hyperrectangle and the ball}
\label{sec:examples}

\subsection{The interval revisited}
\label{subsec:ExampleInterval}

We first return to the one-dimensional case sketched in Section~\ref{sec:interval}. We start by giving the remaining details, in particular the proofs of Theorems~\ref{Interval:thm:ConvergenceDirichletSpectrum} and~\ref{thm:DivergingEigenvaluesInterval}. We will later use results to discuss the eigenvalue asymptotics for higher dimensional rectangles, in Section~\ref{subsec:hyperrectangles}.


\subsubsection{On the Dirichlet-to-Neumann matrix}
	\label{subsubsec:dn-matrix}

The general solution $u$ of the Dirichlet problem on the open interval $\Omega=(-a,a)$ for $a>0$ \eqref{Interval:DirichletProblem} with Dirichlet data $g=(g_1,g_2)^T\in\mathbb{C}^2$ is given by
\begin{equation}\label{eq:1-dim-EF-cos-sin}
	u(x)=C_+\cos(\sqrt{\lambda} x)+ C_-\sin(\sqrt{\lambda} x).
\end{equation}
The coefficients $C_+$ and $C_-$ are given by
\begin{equation}
\label{Interval:GeneralSolution_coefficients}
	C_+:=\frac{ g_2 + g_1}{2\cos(\sqrt{\lambda} a)}\qquad\text{and}\qquad C_-:=\frac{g_2- g_1}{2\sin(\sqrt{\lambda} a)},
\end{equation}
where $C_+=0$ if $u$ is odd and $C_-=0$ if $u$ is even. The normal derivatives of $u$ read
\begin{displaymath}
	-u'(-a) =\sqrt{\lambda}\left(-\frac{ g_2+ g_1}{2}\tan \sqrt{\lambda} a -\frac{ g_2- g_1}{2}\cot \sqrt{\lambda} a \right),
\end{displaymath}
and similarly
\begin{displaymath}
	u'(+a) =\sqrt{\lambda}\left(-\frac{ g_2+ g_1}{2}\tan \sqrt{\lambda} a +\frac{ g_2- g_1}{2}\cot \sqrt{\lambda} a \right),
\end{displaymath}
meaning that the $\mathbb{C}^{2\times 2}$-valued Dirichlet-to-Neumann operator is given by
\begin{equation}
\label{Interval:DNOperator}
	M(\lambda)=\frac{\sqrt{\lambda}}{2}	
	\begin{pmatrix}
	\tan \sqrt{\lambda} a -\cot \sqrt{\lambda} a & \tan \sqrt{\lambda} a+\cot \sqrt{\lambda} a \\
	\tan \sqrt{\lambda} a  +\cot \sqrt{\lambda} a & \tan \sqrt{\lambda} a-\cot \sqrt{\lambda} a
	\end{pmatrix}.
\end{equation}
The expression \eqref{Interval:DNOperatorMatrix} for $M(\lambda)$ used in Section~\ref{sec:interval} then follows from the identities
\begin{align*}
\tan z-\cot z = -2\cot 2z\qquad\text{and}\qquad\tan z+\cot z = 2\csc 2z
\end{align*}
for $z\neq 0$.

\subsubsection{The convergent eigenvalues}
	\label{subsub:convergentEigenvalues}

We begin with the easy case of the eigenvalues which remain bounded as $\alpha \to \infty$.

\begin{proof}[Proof of Theorem \ref{Interval:thm:ConvergenceDirichletSpectrum}]
The poles of $\cot$ and $\csc$ are of order one, and thus so are the poles of the meromorphic Dirichlet-to-Neumann operator $M(\lambda)$ given by \eqref{Interval:DNOperatorMatrix}. 
If $\lambda(\alpha)$ remains bounded as $\alpha\rightarrow\infty$ in $\C$, the only possibility for this behaviour is that $\sqrt{\lambda}$ approaches one of said poles, namely $\sqrt{\lambda}\rightarrow \pi j/(2a)$ for any $j\in\mathbb{Z}$, that is $\lambda\rightarrow \pi^2 j^2/(4a^2)$ as $\alpha\rightarrow\infty$.
\end{proof}

\begin{remark}
In principle, one could derive additional terms in the asymptotic expansion of $\lambda(\alpha)$ as $\alpha \to \infty$, in powers of $\alpha^{-1}$; let us sketch briefly how one might get further information. The poles being the eigenvalues of the Dirichlet Laplacian allows us to obtain a partial fraction decomposition $M(\lambda)=A_j/(\sqrt{\lambda}-\sqrt{\lambda_j})+G_j(\sqrt{\lambda})$ for a matrix-valued function $G_j$ which is holomorphic (thus bounded) in a neighbourhood of $\sqrt{\lambda_j}$, and matrices $A_j$. Calculating the residues $\pm\pi j/(2a^2)$ of the on- and off-diagonal components of $M(\lambda)$, we can write down $A_j$ explicitly, which, together with the bounded $G_j$ terms, may yield a more detailed statement.
\end{remark}

\subsubsection{Divergence away from the positive real axis}

We next consider the case of the eigenvalues $\lambda$ diverging to $\infty$ away from the positive real axis, that is, for which $\Im \sqrt{\lambda} \to \pm\infty$. We recall from Section~\ref{sec:interval} that then the matrix $M(\lambda)$ has two divergent eigenvalues $\alpha$ whose squares both behave like
\begin{equation}
\label{Interval:Asymptotics_ExampleChapter}
	\alpha^2 = -\lambda + \mathcal{O}_\pm\left(\lambda\e^{\mp 2a\Im\sqrt{\lambda}}\right).
\end{equation}
This equation implies the anticipated asymptotic behaviour $\lambda\sim -\alpha^2$, however, we are also interested in the asymptotic remainder term. Consequently, our next goal is to invert this equation from $\alpha(\lambda)$ to obtain the asymptotic equation for $\lambda(\alpha)$ and thus the proof of Theorem~\ref{thm:DivergingEigenvaluesInterval}. 

We will restrict ourselves to the case $\Im \sqrt{\lambda} \to +\infty$. We first sketch the idea behind our inversion, namely an application of Rouch\'e's theorem, because we will also use this again in Section~\ref{subsec:balls} when considering the ball. 
Let $\tau\geq 0$ and let $h:\C\to\C$ be a continuous function such that $h(z)\to 0$ as $\Im z\rightarrow+\infty$. 
Suppose that $\alpha=\alpha(\lambda)$, as a holomorphic function of $\lambda$, satisfies the asymptotics 
\begin{equation}\label{eq:asy-with-remainder-before-2.4-proof}
\alpha(\lambda)=\I \sqrt{\lambda} +\tau +g(\sqrt{\lambda})
\end{equation}
as $\Im \sqrt{\lambda}\rightarrow +\infty$ for a certain error term $g(\sqrt{\lambda})$ which is $\mathcal{O}(h(\sqrt{\lambda}))$; for the choice of $h$ see \eqref{eq:IntervalDNAsymptotics} for the interval and \eqref{eq:asympt-alpha-of-lambda} for the ball. For given $\lambda$ and hence $\alpha = \alpha (\lambda)$, we define a new holomorphic function $f_{\alpha}$ by $f_{\alpha}(z):=\I z+\tau-\alpha$, whose only zero is given by $z_{\alpha}:=\I(\tau-\alpha)$. Then \eqref{eq:asy-with-remainder-before-2.4-proof} becomes
\begin{equation*}
f_\alpha(z) +g(z) = 0
\end{equation*}
if and only if $z=\sqrt{\lambda(\alpha)}$. Let $B^\alpha:=B_{r_\alpha}(z_{\alpha})$ be a ball with centre $z_{\alpha}$ and some given radius $r_\alpha>0$. Then, by Rouch\'{e}'s theorem, if $\alpha$ is sufficiently large and $|g|<|f_{\alpha}|$ on $\partial B^\alpha$, both $f_{\alpha}$ and $f_{\alpha}+g$ have exactly one zero in $B^\alpha$.

This technique proves not only the existence of an eigenvalue of the Robin Laplacian that satisfies said asymptotics, but gives an error term in the asymptotic expansion of $\lambda(\alpha)$ as follows.
By construction, for each $z\in\partial B^\alpha$ we have $ |f_{\alpha}(z)|=r_{\alpha}$. Moreover, $g=\mathcal{O}(h)$ as $\Im z\to +\infty$ implies the existence of some constant $\delta>0$, such that
\begin{equation}\label{ineq:RoucheInequality2}
|g(z)|\leq\delta |h(z)|
\end{equation}
on $\partial B^{\alpha}$ for all sufficiently large $\alpha$. For all such $\alpha$ we want $r_\alpha$ to satisfy 
\begin{equation}
\label{ineq:RoucheInequality} \delta\left|h(z)\right| <  r_\alpha, \quad z\in\partial B^{\alpha}.
\end{equation}
To ensure this inequality, the decay of $h$ is crucial: if it is too slow, then the method fails. 
This will be clarified in the following proof.

\begin{proof}[Proof of Theorem~\ref{thm:DivergingEigenvaluesInterval}]
The eigenvalues $\alpha_\pm$ of the Dirichlet-to-Neumann matrix \eqref{Interval:DNOperatorMatrix} read
\begin{equation}
\alpha_\pm = \sqrt{\lambda}\left(\pm\csc(2a\sqrt{\lambda})-\cot(2a\sqrt{\lambda})\right).
\end{equation}
As $\alpha\rightarrow\infty$ in $\C$ we have either $|\sqrt{\lambda}|\rightarrow\infty$ or $\sqrt{\lambda}$ is forced to approach a zero of $\sin(2a\,\cdot\,)$, which corresponds to a Dirichlet eigenvalue. The second case, in particular, requires $\lambda$ to remain bounded and thus is covered by Theorem~\ref{Interval:thm:ConvergenceDirichletSpectrum} (alternatively, one could adapt the proof of Theorem~\ref{thm:EitherConvDir_Div} to dimension $d=1$). We divide the proof into four steps:

\textit{Step 1:} We assume that for some given $\theta \in (0,\pi)$ some Robin eigenvalue $\lambda$ diverges to $\infty$ away from the real axis, inside the sector $S_\theta^+$ which, in particular, yields $\Im\sqrt{\lambda}\rightarrow +\infty$. In Section \ref{subsec:interval_divergent} we saw that for this behaviour of $\sqrt{\lambda}$ we obtain 
\begin{displaymath}\label{IntervalRev:AlphaAsympt}
	\alpha = +\I\sqrt{\lambda} + \mathcal{O} \left(\sqrt{\lambda}\e^{- 2a\Im\sqrt{\lambda}}\right)
\end{displaymath}
as $\Im\sqrt{\lambda}\rightarrow \infty$; for more details see \eqref{cotAsy}-\eqref{Interval:Asymptotic_alpha(lambda)}. (The other case $\Im\sqrt{\lambda}\rightarrow -\infty$ will be discussed in Step 3.)

\textit{Step 2:} It remains to invert this asymptotical behaviour by means of the Rouch\'e inversion technique sketched above and to show, based on the assumed asymptotic behaviour of $\alpha$, the existence of exactly two (see {Step 3 and 4}) divergent eigenvalues $\lambda$ which obey \eqref{eq:DivergingEigenvaluesInterval} away from the real axis. Here we deal with the inversion; as mentioned before, we will only consider the case $\Im\sqrt{\lambda}\to +\infty$ in detail. So let $\tau=0$, that is $f_\alpha(z_{\alpha})=0$ for $z_{\alpha}=-\I\alpha$. 
Here we take $h(z):=z \e^{-2a\Im z}$, which satisfies $h(z)\to 0$ as $\Im z\to +\infty$. By construction, every point $z\in\partial B^\alpha$ is represented by
\begin{equation}
\label{eq:sqrtlambda-polar-rep}
	z = z_{\alpha} +r_\alpha \e^{\I\phi} = -\I\alpha+r_\alpha \e^{\I\phi} 
\end{equation}
for some $\phi\in[0,2\pi)$. Our goal is to estimate $h$ as in \eqref{ineq:RoucheInequality}: a short calculation using \eqref{eq:sqrtlambda-polar-rep} gives
\begin{displaymath}
\begin{split}
	|h(z)| 
	=\left| z \e^{-2 a\Im z}  \right| 
	&=\left| \left(-\I\alpha+r_\alpha \e^{\I\phi} \right) \exp\left[-2 a\Im\left(-\I\alpha+r_\alpha \e^{\I\phi} \right)\right]  \right| \\
	&\leq  \left(|\alpha|+r_\alpha \right) 
		\exp\left(2 a r_\alpha \right)\exp\left(2 a\Re(\alpha) \right).
\end{split}
\end{displaymath}
We now choose $r_\alpha$ to ensure \eqref{ineq:RoucheInequality} on $\partial B^\alpha$. To this end, it suffices to find $r_{\alpha}>0$ such that
\begin{equation}\label{ineq:r-alpha}
 \delta \left(|\alpha|+r_\alpha \right)\e^{2 a\Re\alpha} < r_{\alpha} \e^{-2 a r_\alpha }
\end{equation}
for sufficiently large $\alpha$. To show this, we make the \emph{Ansatz} 
\begin{equation}\label{eq:r-alpha-ansatz}
r_{\alpha}= C |\alpha|\e^{2 a\Re\alpha}
\end{equation}
for a suitable constant $C>0$ (in fact we may take any $C>\delta$). Then, for such an $r_\alpha$, \eqref{ineq:r-alpha} is equivalent to
\begin{displaymath}
	\delta |\alpha| \e^{2a\Re\alpha}\left(1+C\e^{2a\Re\alpha}\right)\e^{2Ca|\alpha|\e^{2a\Re\alpha}} < C|\alpha|\e^{2a\Re\alpha},
\end{displaymath}
that is,
\begin{equation}\label{ineq:r-alpha-ansatz-equivalent-form}
	\delta \left(1 + C\e^{2a\Re\alpha}\right)\e^{2Ca|\alpha|\e^{2a\Re\alpha}} < C.
\end{equation}
Since $C\e^{2a\Re\alpha} \to 0$ and $\e^{2Ca|\alpha|\e^{2a\Re\alpha}} \to 1$ as $\Re\alpha\to -\infty$, the left-hand side of \eqref{ineq:r-alpha-ansatz-equivalent-form} converges to $\delta $ and hence \eqref{ineq:r-alpha} is satisfied whenever $\Re \alpha$ is sufficiently large negative, how large depending only on $a$, $\delta$ and $C$. In particular, for the \emph{Ansatz} \eqref{eq:r-alpha-ansatz}, the inequality \eqref{ineq:r-alpha} is then valid.


We arrive at
\begin{displaymath}
	\sqrt{\lambda} (\alpha) = -\I\alpha +\mathcal{O}\left( \alpha \e^{2 a\Re\alpha}\right) 
\end{displaymath}
as $\Re\alpha\rightarrow -\infty$, and thus
\begin{displaymath}
\lambda(\alpha) = -\alpha^2 +\mathcal{O}\left( \alpha^2 \e^{2 a\Re\alpha}\right).
\end{displaymath}

\textit{Step 3:} We remark briefly on the adaptation of the above proof to the assumption $\Im \sqrt{\lambda}\rightarrow -\infty$: one now chooses $f_\alpha(z)=-\I z-\alpha$ which vanishes only for $z_\alpha=+\I \alpha$. Similar calculations as above lead to
\begin{equation}
\label{Interval:Asymptotic_lambda(alpha)_negative}
	\sqrt{\lambda(\alpha)} = +\I\alpha +\mathcal{O}\left(\alpha\e^{2a\Re\alpha}\right)
\end{equation}
as $\Re\alpha\rightarrow -\infty$. 

\textit{Step 4 -- Conclusion:} We obtain that in both cases $\Im \sqrt{\lambda}\rightarrow \pm\infty$ the real part $\Re \alpha$ is always negative and divergent, that is, each divergent $\sqrt{\lambda}$ within a sector of the form $S_\theta^+$ or $S_\theta^-$ (note that $\sqrt{\lambda}\in S_\theta^+$ iff $-\sqrt{\lambda}\in S_\theta^-$) corresponds to $\sqrt{\lambda}\sim\I\alpha$ and $\sqrt{\lambda}\sim -\I\alpha$, respectively. We conclude that, under the assumption that $\alpha$ diverges in a sector $T_\varphi^-$ with $\varphi \in (0,\pi/2)$, then there are exactly two divergent eigenvalues $\lambda\in \C\setminus T_{2\theta}^+$, and these both satisfy $\lambda\sim -\alpha^2$ as $\Re\alpha\rightarrow -\infty$. Moreover, this implies that if $\Re\alpha$ remains bounded from below, then there are no divergent eigenvalues $\lambda\rightarrow\infty$ in $\C\setminus T_{\theta}^+$ for any $0<\theta<\pi/2$.
\end{proof}


\subsubsection{Divergence near the positive real axis}

Finally, we return to the case where $\lambda$ diverges near the positive real axis, say, in such a way that $\Im\lambda\neq 0$ (and hence $\Im \sqrt{\lambda}$) remains bounded. In this case the asymptotics of the Dirichlet-to-Neumann matrix is less obvious: each of its entries is meromorphic with poles on the real axis and those on the off-diagonal do not vanish asymptotically if $\Im\sqrt{\lambda}$ remains bounded. Since poles and zeros of the Dirichlet-to-Neumann operator $M(\lambda)$ correspond to Dirichlet and Neumann eigenvalues, respectively, which are discrete points on the real axis, the chosen path will pass arbitrarily closely to every single one of them. The question arises which associated paths of $\alpha$ in the complex plane correspond to such $\lambda$ paths.

\begin{figure}[h]
\centering
  \includegraphics[scale=0.33]{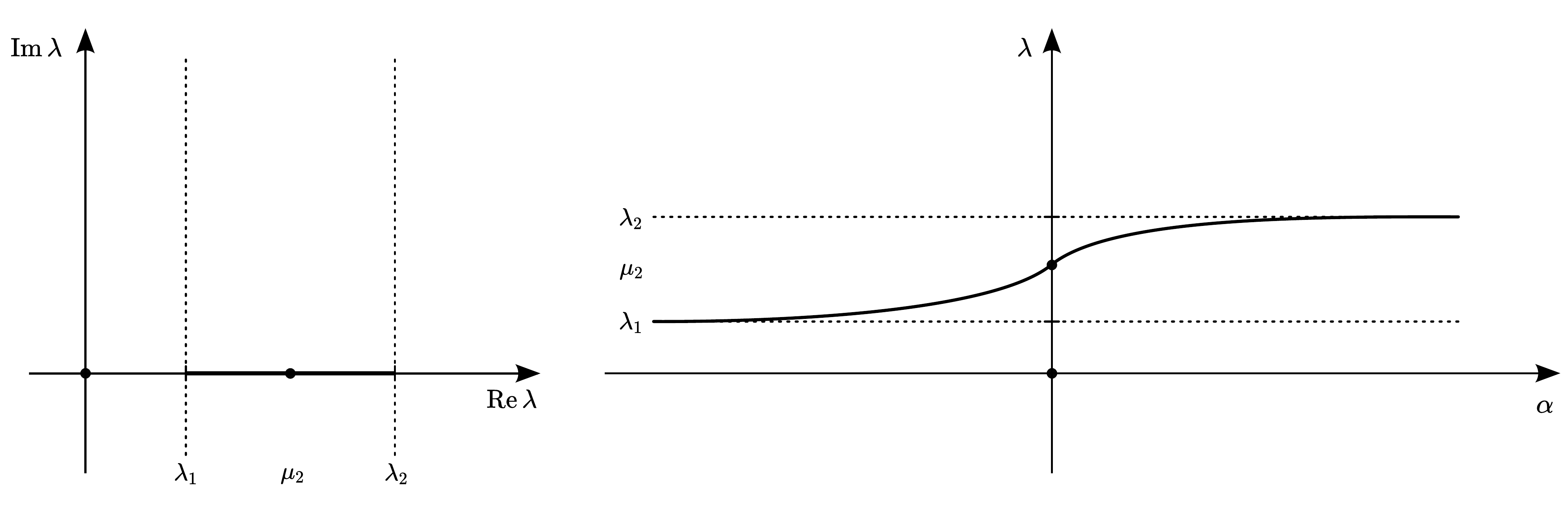}
  \caption{On the left a path in the $\lambda$-plane from one pole $\lambda_1$ of the Dirichlet-to-Neumann operator to the next one $\lambda_2$ while passing a zero (the second Neumann eigenvalue $\mu_2$). On the right-hand side the real curve $\lambda(\alpha)\in\mathbb{R}$ increasing from $\lambda_1$ to $\lambda_2$ as $\alpha\in\mathbb{R}$ tends from $-\infty$ to $+\infty$ \cite[Section 4.3]{BFK}.}
  \label{fig:ProblemCurve}
\end{figure}

It would appear that any such $\lambda$ path requires $\Re\alpha$ to be unbounded from below, cf.~Figure~\ref{fig:ProblemCurve}. The explicit form of $M(\lambda)$ \eqref{Interval:DNOperatorMatrix} allows us to calculate its two eigenvalues $\alpha_\pm$ explicitly, that is, for $z=2\sqrt{\lambda}a$,
\begin{equation}
	\label{eq:interval-DN-EV}
\alpha_\pm 
	=\sqrt{\lambda}\left(-\cot z\pm\csc z\right)\\
	=-\I\sqrt{\lambda}\left(\frac{\e^{2\I z}+1}{\e^{2\I z}-1}\mp \frac{2\e^{\I z}}{\e^{2\I z}-1}\right)
	 =\pm\I\sqrt{\lambda}\;\frac{\e^{\I z}\mp 1}{\e^{\I z}\pm 1}.
\end{equation}
From this equation it follows that $\alpha\sim\pm\I\sqrt{\lambda}$ as $\Im\sqrt{\lambda}\to\infty$ in $\C$ which is the same result as calculated in \eqref{eq:IntervalDNAsymptotics}. However, consider $\sqrt{\lambda}=\sqrt{\lambda}(\tau)$ following some (continuous) path described by $\sqrt{\lambda}=x(\tau)+\I y(\tau)$ for an unbounded function $x:[0,1)\to\R$ which diverges as $\tau\to 1$ and some bounded $y:[0,1)\to\R$. Firstly note that, since the imaginary part $y$ of $\sqrt{\lambda}$ remains bounded, for each such path and each sector $T_\theta^\pm$ there exists some $\tau_\theta\in[0,1)$ such that $\sqrt{\lambda}\in T_\theta^\pm$ for all $\tau_\theta < \tau < 1$; up to a possibly different $\theta>0$ so too does $\lambda$. Secondly, the boundedness of $\Im\lambda$ implies $y=\mathcal{O}(1/x)$. Without loss of generality, we consider $\alpha_+$; from
\begin{displaymath}
\alpha_+
	= (\I x-y)\frac{\e^{2\I ax}-\e^{2ay}}{\e^{2\I ax}+\e^{2ay}}
\end{displaymath}
by a somewhat tideous calculation we arrive at
\begin{align*}
\Re\alpha_+ 
	&=\frac{1}{2}\,\frac{y\left(1-\e^{4ay}\right)}{\cosh(2ay)-\cos(2ax)}-\frac{x\sin(2ax)}{\cosh(2ay)-\cos(2ax)}.
\end{align*}
Both denominators are bounded (and they can only vanish if $y=0$) and so is the numerator of the first quotient. The second numerator, however, diverges (indefinitely) as $x=\Re\sqrt\lambda\to\infty$. 

This proves Proposition~\ref{prop:strip}.

\subsection{Hyperrectangles}
\label{subsec:hyperrectangles}

Based on our understanding of the interval we can easily obtain results for $d$-dimensional \emph{hyperrectangles}, sometimes called cuboids or rectangular parallelepipeds. Fix the dimension $d\geq 2$, choose intervals $(-a_j,a_j)$ for $a_1,\dots,a_d>0$, and set
\begin{equation*}
	Q:=(-a_1,a_1)\times \dots\times (-a_d,a_d).
\end{equation*}
Denote the one-dimensional Robin Laplacian on the edge $e_j:=(-a_j,a_j)$ for $j=1,\dots,d$ by $\mathcal{A}_j$; then there exists a sequence of eigenvalues $\{\lambda_k(\alpha)\}_{k\geq 1}\subseteq \sigma(-\Delta_Q^\alpha) \subset \C$ of the Robin Laplacian $-\Delta_Q^\alpha$ on $Q$ such that each eigenvalue is given by a sum of eigenvalues of the constituent operators $\mathcal{A}_j$, that is
\begin{equation}
\label{eq:hyperrectangle-eigenvalue-representation}
\lambda_k(\alpha)=\sum_{j=1}^d \lambda^{(j)}(\alpha),
\end{equation}
where $\lambda^{(j)}\in\sigma(\mathcal{A}_j)$; indeed, each sum $\lambda^{(1)}+\dots +\lambda^{(d)}$ equals an eigenvalue of $-\Delta_Q^\alpha$.

\begin{remark}
\label{rem:hyperrectangle-riesz}
By Theorem \ref{thm:abel-basis} (2) we know that in $d=1$ dimension and for each $\alpha\in\C$ there exists a Riesz basis of $L^2((-a,a))$ consisting of the eigenfunctions of $-\Delta_{(-a,a)}^\alpha$. Now let $d\geq 2$; for the $d$-dimensional hyperrectangle $Q\subset\R^d$ it follows by separation of variables and using orthogonality of the coordinate axes that there exists a Riesz basis of $L^2(Q)$ consisting of products of the one-dimensional eigenfunctions of the operators $-\Delta_{(-a_j,a_j)}^\alpha$ for $j=1,\dots,d$, which correspond to the eigenvalues of the form \eqref{eq:hyperrectangle-eigenvalue-representation}. Now the basis property (in particular the linear independence) implies that all eigenvalues of $-\Delta_Q^\alpha$ are of the form in~\eqref{eq:hyperrectangle-eigenvalue-representation}.
\end{remark}

Theorems~\ref{Interval:thm:ConvergenceDirichletSpectrum} and~\ref{thm:DivergingEigenvaluesInterval} state that, if $\alpha\rightarrow\infty$ in a sector of the form $T_\phi^-$ for some $0<\phi<\pi/2$, on each $e_j$ there are two eigenvalues of $\mathcal{A}_j$, call them $\lambda_1^{(j)},\lambda_2^{(j)}$, which diverge like $-\alpha^2$.

Consequently, since a single diverging eigenvalue of $\mathcal{A}_j$ can be added to $d-1$ non-divergent eigenvalues on the remaining edges, and for each divergent one we have infinitely many choices, there are infinitely many divergent eigenvalues of $-\Delta_Q^\alpha$ which behave like $-2\alpha^2$. In the next step we choose two divergent eigenvalues $\lambda^{(1)}$ of $\mathcal{A}_1$ and $\lambda^{(2)}$ of $\mathcal{A}_2$ and $d-2$ non-divergent eigenvalues of $\mathcal{A}_3,\dots,\mathcal{A}_d$. Adding everything up we obtain infinitely many eigenvalues of $-\Delta_Q^\alpha$ behaving like $-2\alpha^2$. We proceed successively up to step $(d-1)$ to obtain infinitely many eigenvalues that behave like $-(d-1)\alpha^2$. However, the last step works differently: since there are two divergent eigenvalues for each $\mathcal{A}_j$, we obtain not infinitely many but $2^d$ possibilites for an eigenvalue of $-\Delta_Q^\alpha$ to satisfy the asymptotics $-d\alpha^2$. This results in the following theorem.

\begin{theorem}
\label{hyperrectangles:spectrum}
Let $Q\subset \mathbb{R}^d$, $d\geq 2$, be a hyperrectangle and suppose that $\alpha \to \infty$ in a sector of the form $T_\varphi^-$ for some $\varphi \in (0,\pi/2)$ (see Definition~\ref{def:sectors}). Then for each $j=1,\dots,d-1$ there are infinitely many divergent eigenvalues $\lambda(\alpha)$ of $-\Delta_Q^\alpha$ such that the leading term asymptotics reads $\lambda(\alpha)\sim -j\alpha^2$ and 
$2^d$ eigenvalues which behave like $\lambda(\alpha)\sim -d\alpha^2$.
\end{theorem}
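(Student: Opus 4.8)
The plan is to reduce the whole statement to the one-dimensional picture of Section~\ref{subsec:ExampleInterval} by separation of variables. The structural fact we build on, recorded in Remark~\ref{rem:hyperrectangle-riesz}, is that the products $u^{(1)}\otimes\dots\otimes u^{(d)}$ of one-dimensional eigenfunctions of the edge operators $\mathcal{A}_j=-\Delta_{(-a_j,a_j)}^\alpha$ form a Riesz basis of $L^2(Q)$; hence every eigenvalue of $-\Delta_Q^\alpha$ is of the form $\lambda=\sum_{j=1}^d\lambda^{(j)}$ with $\lambda^{(j)}\in\sigma(\mathcal{A}_j)$, and, by linear independence of the products, the algebraic multiplicity of such a $\lambda$ is exactly the number of $d$-tuples $(\lambda^{(1)},\dots,\lambda^{(d)})$, counted with their one-dimensional multiplicities, whose entries sum to $\lambda$. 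The one-dimensional input, valid since $\alpha\to\infty$ in $T_\varphi^-$ with $\varphi\in(0,\pi/2)$, is the combination of Theorems~\ref{Interval:thm:ConvergenceDirichletSpectrum} and~\ref{thm:DivergingEigenvaluesInterval}: for each $j$ the operator $\mathcal{A}_j$ has exactly two analytic eigenvalue curves $\lambda_1^{(j)}(\alpha),\lambda_2^{(j)}(\alpha)$ that diverge, both with $\lambda_i^{(j)}(\alpha)=-\alpha^2+o(\alpha^2)$ (indeed with the exponentially small error of Theorem~\ref{thm:DivergingEigenvaluesInterval}), whereas all remaining eigenvalue curves of $\mathcal{A}_j$ stay bounded.

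For the first assertion, fix $j\in\{1,\dots,d-1\}$, choose $S\subseteq\{1,\dots,d\}$ with $|S|=j$ and an index $i_0\notin S$ (possible since $j<d$). On every edge $e_i$ with $i\in S$ take the divergent curve $\lambda_1^{(i)}$; on every edge $e_i$ with $i\notin S$ and $i\ne i_0$ take a fixed bounded eigenvalue curve of $\mathcal{A}_i$; and on $e_{i_0}$ let $\mu_n^{(i_0)}(\alpha)$, $n\in\N$, run through the infinitely many bounded eigenvalue curves of $\mathcal{A}_{i_0}$. By the reduction above, for each $n$ the sum is an eigenvalue curve of $-\Delta_Q^\alpha$, and since exactly $j$ of its $d$ summands behave like $-\alpha^2$ while the remaining $d-j$ stay bounded, its leading behaviour is $-j\alpha^2$. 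As the $\mu_n^{(i_0)}$ are pairwise distinct analytic functions, so are the resulting sums, giving the desired infinite family of eigenvalue curves of $-\Delta_Q^\alpha$ asymptotic to $-j\alpha^2$.

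For the last assertion put $S=\{1,\dots,d\}$: choosing on each edge $e_i$ one of $\lambda_1^{(i)},\lambda_2^{(i)}$ yields $2^d$ $d$-tuples, and each corresponding sum is an eigenvalue curve of $-\Delta_Q^\alpha$ behaving like $-d\alpha^2$. Conversely, if $\lambda=\sum_{i=1}^d\lambda^{(i)}$ is an eigenvalue curve of $-\Delta_Q^\alpha$ with $\lambda(\alpha)\sim-d\alpha^2$, and $m$ of its summands diverge (hence are $\sim-\alpha^2$) while the other $d-m$ remain bounded, then $\lambda(\alpha)\sim-m\alpha^2$, forcing $m=d$; so every summand is one of $\lambda_1^{(i)},\lambda_2^{(i)}$ and $\lambda$ is among the $2^d$ sums just constructed. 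The count is exact because the $2^d$ products of the corresponding one-dimensional eigenfunctions are linearly independent — on each factor they come from the two distinct eigenvalues $\lambda_1^{(i)}\ne\lambda_2^{(i)}$ — so they exhaust the algebraic multiplicity of the cluster of eigenvalues asymptotic to $-d\alpha^2$, which is therefore precisely $2^d$. The only genuinely delicate points of the argument are exactly these multiplicity and distinctness bookkeeping steps, together with the implicit reliance on the two-divergent-curves-per-edge statement; the substantive analytic work is already contained in Theorems~\ref{Interval:thm:ConvergenceDirichletSpectrum},~\ref{thm:DivergingEigenvaluesInterval} and Remark~\ref{rem:hyperrectangle-riesz}.
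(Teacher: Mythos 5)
Your argument is correct and is essentially the paper's own proof: separation of variables via the Riesz basis of product eigenfunctions (Remark~\ref{rem:hyperrectangle-riesz}), the two divergent curves per edge from Theorems~\ref{Interval:thm:ConvergenceDirichletSpectrum} and~\ref{thm:DivergingEigenvaluesInterval}, and then the combinatorial count of which summands diverge. Your extra bookkeeping on multiplicities and the converse count for the $-d\alpha^2$ cluster only makes explicit what the paper leaves implicit.
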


\begin{remark}
As already mentioned in Section \ref{sec:introduction} there are several results on the eigenvalue asymptotics for domains with less regularity and real parameter $\alpha$. However, there are no results for general Lipschitz domains but only for those having a finite number of ``model corners''. Just like in the case of real $\alpha$ \cite{BFK} we expect that the asymptotics is mainly driven by the ``most acute'' corner(s) of the domain -- the sharper the corner(s), the larger the (negative) leading coefficient of the asymptotics.
\end{remark}

\subsection{$d$-dimensional balls}
\label{subsec:balls}

We next consider the model case of higher dimensional balls
\begin{equation*}
	\Omega=B=B_1(0)\subset\mathbb{R}^d
\end{equation*}
in dimension $d \geq 2$. We will use the notation $\partial B = \mathbb{S}^{d-1}$ interchangeably. 

\subsubsection{On spherical harmonics}

We briefly recall a few properties of the eigenvalues and eigenfunctions of the Laplace-Beltrami operator on $\mathbb{S}^{d-1}$, which will be useful in the sequel. Details can be found in \cite[Section 2.2]{DunklXu}.

\begin{definition}
For $l\in\mathbb{N}_0$ let $\mathcal{P}_l^\C(d)$ denote the space of all homogeneous polynomials of degree $l$ in $d$ variables with complex coefficients and define
\begin{enumerate}
	\item $\mathcal{P}_l^\C(\mathbb{S}^{d-1}):=\left\{P|_{\mathbb S^{d-1}}:P\in\mathcal P_l^\C(d)\right\},$
	\item $\mathcal{H}_l^\C(d):=\left\{ P\in\mathcal{P}_l^\C(d): \Delta P=0  \right\},$
	\item $\mathcal{H}_l^\C(\S^{d-1}):=\left\{P|_{\S^{d-1}}:P\in\mathcal \mathcal{H}_l^\C(d)\right\}$.
\end{enumerate}
\end{definition}

\begin{theorem}
Let $E_l$ be the complex eigenspaces of the Laplace-Beltrami operator $\Delta_\omega$ on $\S^{d-1}$. Then we have $E_l=\mathcal{H}_l^\C(\S^{d-1})$,
\begin{displaymath}
	M_l^d:=\dim E_l=\dim \mathcal{H}_l^\C(\S^{d-1}) 
	=	\begin{pmatrix}
			d+l-1\\ l-1
		\end{pmatrix}
	-
		\begin{pmatrix}
			d+l-3\\l-1
		\end{pmatrix},
\end{displaymath}
and
\begin{displaymath}
	L^2_\C(\S^{d-1})=\bigoplus_{l=0}^\infty E_l.
\end{displaymath}
\end{theorem}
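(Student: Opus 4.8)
The plan is to reduce the assertion to the classical theory of spherical harmonics, for which a complete reference is \cite[Section~2.2]{DunklXu}; I sketch the argument for completeness. First I would pass to polar coordinates $x=r\omega$ with $r=|x|>0$ and $\omega\in\S^{d-1}$, under which the Euclidean Laplacian splits as $\Delta=\partial_r^2+\tfrac{d-1}{r}\partial_r+\tfrac{1}{r^2}\Delta_\omega$, where $\Delta_\omega$ denotes the Laplace--Beltrami operator on $\S^{d-1}$. If $P\in\mathcal{H}_l^\C(d)$ and $Y:=P|_{\S^{d-1}}$, then $P(x)=r^lY(\omega)$ by homogeneity, and inserting this into $\Delta P=0$ yields $l(l+d-2)Y+\Delta_\omega Y=0$; hence $\mathcal{H}_l^\C(\S^{d-1})\subseteq\ker(\Delta_\omega+l(l+d-2))$. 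Since the numbers $l(l+d-2)$, $l\in\N_0$, are pairwise distinct, the spaces $\mathcal{H}_l^\C(\S^{d-1})$ lie in distinct eigenspaces of the self-adjoint operator $\Delta_\omega$ and are therefore mutually orthogonal in $L^2_\C(\S^{d-1})$.

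Next I would prove that $\bigoplus_l\mathcal{H}_l^\C(\S^{d-1})$ is dense in $L^2_\C(\S^{d-1})$. The algebraic input is the decomposition $\mathcal{P}_l^\C(d)=\mathcal{H}_l^\C(d)\oplus|x|^2\mathcal{P}_{l-2}^\C(d)$, equivalently the surjectivity of $\Delta\colon\mathcal{P}_l^\C(d)\to\mathcal{P}_{l-2}^\C(d)$ with kernel $\mathcal{H}_l^\C(d)$; iterating it expresses every $P\in\mathcal{P}_l^\C(d)$ as $\sum_{k\ge 0}|x|^{2k}H_{l-2k}$ (a finite sum) with $H_{l-2k}\in\mathcal{H}_{l-2k}^\C(d)$, so that $P|_{\S^{d-1}}\in\bigoplus_k\mathcal{H}_{l-2k}^\C(\S^{d-1})$. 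Thus the span of $\bigcup_l\mathcal{H}_l^\C(\S^{d-1})$ contains all polynomial restrictions to $\S^{d-1}$, and these are dense in $C(\S^{d-1})$ by the Stone--Weierstrass theorem, hence dense in $L^2_\C(\S^{d-1})$. Since $\Delta_\omega$ is self-adjoint with compact resolvent on the compact manifold $\S^{d-1}$, the space $L^2_\C(\S^{d-1})$ is the orthogonal Hilbert-space sum of its finite-dimensional eigenspaces; by the density just established, a nonzero eigenvector orthogonal to every $\mathcal{H}_l^\C(\S^{d-1})$ cannot exist, so $\Delta_\omega$ has no eigenvalue other than the $-l(l+d-2)$ and no eigenspace strictly larger than the corresponding $\mathcal{H}_l^\C(\S^{d-1})$. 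This gives $E_l=\mathcal{H}_l^\C(\S^{d-1})$ and the claimed orthogonal decomposition of $L^2_\C(\S^{d-1})$.

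Finally, for the dimension count I would combine $\dim\mathcal{P}_l^\C(d)=\binom{d+l-1}{d-1}$, the number of degree-$l$ monomials in $d$ variables (stars and bars), with the short exact sequence $0\to\mathcal{H}_l^\C(d)\to\mathcal{P}_l^\C(d)\xrightarrow{\Delta}\mathcal{P}_{l-2}^\C(d)\to0$, obtaining $M_l^d=\dim\mathcal{P}_l^\C(d)-\dim\mathcal{P}_{l-2}^\C(d)=\binom{d+l-1}{d-1}-\binom{d+l-3}{d-1}$ (with the usual convention $\binom{n}{k}=0$ for $n<k$, which covers $l=0,1$). The only ingredient requiring genuine work is the harmonic decomposition $\mathcal{P}_l^\C(d)=\mathcal{H}_l^\C(d)\oplus|x|^2\mathcal{P}_{l-2}^\C(d)$; I would prove it using the Fischer inner product on $\mathcal{P}_l^\C(d)$, under which multiplication by $|x|^2$ is the adjoint of $\Delta$, so that $|x|^2\mathcal{P}_{l-2}^\C(d)$ and $\ker\Delta=\mathcal{H}_l^\C(d)$ are orthogonal complements --- or else simply quote \cite[Section~2.2]{DunklXu}. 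Apart from this, every step is routine, so I do not anticipate a serious obstacle.
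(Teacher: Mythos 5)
Your proposal is correct, and since the paper itself gives no proof of this statement—it simply recalls it from \cite[Section~2.2]{DunklXu}—your sketch (harmonic decomposition $\mathcal{P}_l^\C(d)=\mathcal{H}_l^\C(d)\oplus|x|^2\mathcal{P}_{l-2}^\C(d)$ via the Fischer inner product, restriction to the sphere, Stone--Weierstrass density, self-adjointness and compact resolvent of $\Delta_\omega$, and the dimension count from the exact sequence) is essentially the same standard argument that the cited reference carries out. One small remark: your formula $\dim E_l=\binom{d+l-1}{d-1}-\binom{d+l-3}{d-1}$ is the correct classical expression (e.g.\ it gives $3$ for $d=3$, $l=1$), whereas the binomial coefficients as printed in the paper's statement appear to contain a typo.
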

The $E_l$ are eigenspaces of the Laplace-Beltrami operator and one can calculate that the corresponding eigenvalues $-\mu_l^2$ in $-\Delta_\omega Y_{l,j} = \mu_k^2 Y_{l,j}$ are given by $-\mu_l^2=-l(d+l-2)$.

\subsubsection{The Dirichlet Laplacian and Dirichlet-to-Neumann operator on balls}
\label{subsec:DirichletLapOnBalls}

Let $u$ solve the Dirichlet eigenvalue problem
\begin{displaymath}
	\begin{aligned}
	-\Delta u &= \lambda u\qquad &&\text{on } B, \\
	u &= g\qquad &&\text{on }\partial B,
	\end{aligned}
\end{displaymath}
for given $g \in L^2 (\partial B)$. Then we can write $u \in L^2 (B)$ and $g \in L^2 (\partial B)$ in their unique series representations
\begin{equation}
\label{Ball_DirichletProblem}
	u(r,\omega)=\sum_{l=0}^\infty \sum_{j=0}^{M_l^d} u_{l,j}(r)Y_{l,j}(\omega),\qquad\quad 
		g(\omega) =\sum_{l=0}^\infty \sum_{j=0}^{M_l^d} g_{l,j}Y_{l,j}(\omega)
\end{equation}
depending on the radius $0\leq r\leq 1$, the angles $\omega\in\mathbb{S}^{d-1}$, and the spherical harmonics $Y_{l,j}\in\mathcal{H}_l^\C(\S^{d-1})$. 
Let $\Delta_\omega$ be the Laplace-Beltrami operator on $\S^{d-1}\subset\mathbb{R}^d$. If we rewrite the Laplace operator in polar coordinates and take the boundary conditions $u_{l,j}(1)=g_{l,j}$ and $u_{l,j}(0)=\delta_{0l}u_{0,j}$ for $j=0,\dots,M_l^d$ and some bounded sequence $(u_{0,j})_{j\geq 0}$ into account, the corresponding Bessel differential equation
is solved by
\begin{equation}\label{ball:bessel-ansatz}
	u_{l,j}(r)=\frac{g_{l,j}}{J_{\frac{d}{2}+l-1}(\lambda )} \;r^{1-\frac{l}{2}} J_{\frac{d}{2}+l-1}(\lambda r).
\end{equation}
Using the identity 
\begin{equation}
\label{RecurrenceRelationDerivative}
	J_m'(\sqrt{\lambda})=\frac{m}{\lambda}J_m(\sqrt{\lambda})-J_{m+1}(\sqrt{\lambda})
\end{equation}
for all $m\in\mathbb{C}$, see \cite[Chap. XVII, 17.21 (B)]{WhittakerWatson}, by taking the normal derivative $\partial_r$ of \eqref{ball:bessel-ansatz} we obtain that for any given $l \in \N_0$ 
the Dirichlet-to-Neumann operator maps the Dirichlet data $g_{l,j}$ onto 
\begin{equation}\label{DirichletToNeumannforBalls}
 \sqrt{\lambda}\frac{J_{\frac{d}{2}+l}(\sqrt{\lambda})}{J_{\frac{d}{2}+l-1}(\sqrt{\lambda})}-l
\end{equation}
and we define $M^{(l)}(\lambda)$ by \eqref{DirichletToNeumannforBalls}. This part $M^{(l)}(\lambda)$ of the Dirichlet-to-Neumann operator on $\partial B$ in the subspace $\mathcal{H}_l^{\C}$, identified in the canonical way with $\C^{M_l^d}$ via the eigenfunctions of $\Delta_\omega|_{\mathcal{H}_l^{\C}}$, is representable by a $M_l^d \times M_l^d$ diagonal matrix each of whose diagonal entries is equal to \eqref{DirichletToNeumannforBalls}.
The Dirichlet-to-Neumann operator $M(\lambda)$ is then obtained by summing over all subspaces $\mathcal{H}_l^{\C}$, that is, it may be represented as a diagonal matrix. In particular, for each $l$, there  are exactly $M_l^d$ eigenvalues $\alpha$ of the Dirichlet-to-Neumann operator $M(\lambda)$ equal to $M^{(l)}(\lambda)$, and for our purposes it suffices to consider the $M^{(l)} (\lambda)$ individually. 
\begin{remark}\label{rem:ball-riesz}
Since each $M^{(l)}(\lambda)$ is diagonal and the Jordan chains of the Robin Laplacian and the corresponding Dirichlet-to-Neumann operator are of the same length, see Remark \ref{rem:jordan-chains}, root vectors and eigenfunctions coincide and the eigennilpotents are always zero.
\end{remark}

Observe that the zeros of the denominator in \eqref{DirichletToNeumannforBalls} are simple and so are the poles of the whole operator. The numerator does not cancel any of the poles, which follows from \eqref{RecurrenceRelationDerivative}.

If we assume that $J_m(\sqrt{\lambda_0})=0=J_{m+1}(\sqrt{\lambda_0})$ for some $m\in\mathbb{C}$ and some $\lambda_0\in\mathbb{C}\backslash\{0\}$ (we only need $m\in\frac{1}{2}\mathbb{N}_0$), then this implies $J_m'(\sqrt{\lambda_0})=0$ and $J_m$ has a zero of order 2, a contradiction.

It follows that $M^{(l)}$ is a meromorphic function having only simple, real poles and an essential singularity in $\pm\infty$. We have that 
\begin{equation*}
	M^{(l)}(\lambda) g=\alpha g\qquad\Rightarrow\qquad |\alpha| = \left| \sqrt{\lambda}\frac{J_{\frac{d}{2}+l}(\sqrt{\lambda})}{J_{\frac{d}{2}+l-1}(\sqrt{\lambda})}-l \right| <\infty
\end{equation*}
implies that for $|\alpha|\rightarrow\infty$ the right-hand side is forced to diverge as well. Using \eqref{DirichletToNeumannforBalls}, we are led via an explicit formula to the same dichotomy we saw in the general case in Theorem~\ref{thm:EitherConvDir_Div}. Namely, there are two possibilities: either $\lambda$ converges to the Dirichlet spectrum or diverges absolutely. In the latter case, as with the interval, we may further distinguish between eigenvalues $\lambda$ diverging away from the positive real axis or in the vicinity of it. This leads to the following three cases.
\begin{enumerate}
\item $\sqrt{\lambda}$ approaches a pole of $M^{(l)}(\lambda)$, i.e. a zero of $J_{\frac{d}{2}+l-1}$, meaning the eigenvalue $\lambda$ converges to some element of the Dirichlet spectrum;
\item $\lambda \to \infty$ in a sector of the form $\C \setminus T_{2\theta}^+$ for some small $\theta > 0$ (see Definition~\ref{def:sectors}, that is, Assumption~\ref{assumptionSector} holds. In this case, $\sqrt{\lambda}$ remains in $S_{\theta}^\pm$ and the quotient of the Bessel functions in the expression for $M^{(l)}$ remains bounded, see \eqref{eq:BesselExpansionOrderZero});
\item the more complicated case of divergence, where $\lambda \to \infty$ in a sector $T_{2\theta}^+$.
\end{enumerate}

We analyse the three cases separately.

\subsubsection{The convergent eigenvalues}
We start with the convergent eigenvalues; we are interested in establishing the rate of convergence. As we intimated for the interval, we may consider the residues of the Dirichlet-to-Neumann operator, which also in the case of balls can be reduced to a scalar problem. For $m\in\mathbb{R}$ and $p\in\mathbb{N}_0$, we denote the $p$th zero of the Bessel function $J_m$ of order $m$ by $j_{m,p}\in\mathbb{R}$.

\begin{theorem}
\label{lem:Ball:DirichletConvergenceRate}
Fix $l,p\in\mathbb{N}_0$. The eigenvalues $\lambda=\lambda(\alpha)$ converging to the Dirichlet spectrum satisfy
\begin{equation}
	\lambda(\alpha) = j_{\frac{d}{2}-l+1,p}^2 -\frac{2j_{\frac{d}{2}-l+1,p}^2}{\alpha} +\mathcal{O}\left(\frac{1}{\alpha^2}\right)
\end{equation}
as $|\alpha|\rightarrow\infty$.
\end{theorem}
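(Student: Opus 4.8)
The plan is to invert the equation $M^{(l)}(\lambda)g=\alpha g$ near a pole of $M^{(l)}$, using the explicit representation \eqref{DirichletToNeumannforBalls} together with a local expansion of the Bessel quotient around a zero of $J_{\frac{d}{2}+l-1}$. (Note: the index $\frac{d}{2}-l+1$ appearing in the statement should read $\frac{d}{2}+l-1$, matching the denominator of \eqref{DirichletToNeumannforBalls}; I will write $m:=\frac{d}{2}+l-1$ and locate $\sqrt\lambda$ near $j_{m,p}$.) Fix $l,p\in\N_0$ and abbreviate $\beta:=j_{m,p}$, so that $J_m(\beta)=0$ and, by the simplicity of the zeros established just before the statement of the theorem, $J_m'(\beta)\neq 0$. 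By Theorem~\ref{Interval:thm:ConvergenceDirichletSpectrum}'s analogue here (the dichotomy discussed immediately above the theorem, and Theorem~\ref{thm:EitherConvDir_Div}), the only eigencurves $\lambda(\alpha)$ that stay bounded as $\alpha\to\infty$ are those with $\sqrt{\lambda(\alpha)}\to\beta$ for some zero $\beta$ of some $J_m$; so it suffices to expand $\alpha$ as a function of $\zeta:=\sqrt\lambda$ near $\zeta=\beta$ and then invert.

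The first step is the local expansion of $\alpha=\alpha(\zeta)$. Write $\zeta=\beta+\delta$ with $\delta\to 0$. Using $J_m(\zeta)=J_m'(\beta)\delta+O(\delta^2)$ and $J_{m+1}(\beta)\neq 0$ (which, via \eqref{RecurrenceRelationDerivative}, is exactly the statement that the numerator does not cancel the pole), one gets
\begin{equation}
\label{eq:ball-conv-expansion}
	\alpha(\zeta)=\zeta\,\frac{J_{m+1}(\zeta)}{J_m(\zeta)}-l
	=\frac{\beta J_{m+1}(\beta)}{J_m'(\beta)}\cdot\frac{1}{\delta}+a_0+a_1\delta+O(\delta^2),
\end{equation}
for explicit constants $a_0,a_1$ computable from the Taylor coefficients of $J_m,J_{m+1}$ at $\beta$ and from the Bessel ODE; the leading residue can moreover be simplified using \eqref{RecurrenceRelationDerivative} at $\zeta=\beta$, which gives $J_m'(\beta)=-J_{m+1}(\beta)$, so the residue equals $-\beta$. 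Thus $\alpha(\zeta)=-\beta/\delta+a_0+O(\delta)$, i.e. $\delta=-\beta/\alpha+O(\alpha^{-2})$, hence $\zeta=\beta-\beta/\alpha+O(\alpha^{-2})$ and, squaring, $\lambda=\zeta^2=\beta^2-2\beta^2/\alpha+O(\alpha^{-2})$, which is precisely the claimed asymptotics with $\beta^2=j_{m,p}^2$.

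To make the inversion rigorous (rather than merely a formal power-series manipulation) I would invoke the Rouché-type argument already set up in Section~\ref{subsubsec:dn-matrix} and reused for the ball: having isolated the simple pole of $\alpha(\cdot)$ at $\beta$, the function $\zeta\mapsto\alpha(\zeta)-\alpha$ has, by the residue computation, exactly one zero in a small disc $B_{r_\alpha}(\beta-\beta/\alpha)$ for each large $\alpha$, provided $r_\alpha$ is chosen of order $|\alpha|^{-2}$ so that the $O(\delta)$ remainder in \eqref{eq:ball-conv-expansion} is dominated on the boundary circle; this both proves existence of the eigencurve and pins down the error term as $O(\alpha^{-2})$. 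The main obstacle is bookkeeping rather than conceptual: one must verify that $J_{m+1}(\beta)\neq 0$ (done above the statement via the order-two-zero contradiction), compute $a_0$ carefully enough to be sure it does not contribute at order $\alpha^{-1}$ to $\lambda$ (it only enters $\delta$ at order $\alpha^{-2}$, and after squaring and multiplying by $2\beta$ it stays at order $\alpha^{-1}$ inside the stated error — so one must check the constant in front is absorbed, which it is since the $\beta^2/\alpha$ term comes purely from the residue), and control the Bessel Taylor remainders uniformly on the shrinking circles. None of these steps is deep, but the cancellation $J_m'(\beta)=-J_{m+1}(\beta)$ from \eqref{RecurrenceRelationDerivative} is the one place where the specific structure of the problem is essential, and getting the coefficient $-2$ exactly right depends on it.
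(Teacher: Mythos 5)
Your proposal is correct and follows essentially the same route as the paper: the paper's proof is exactly the residue computation $\Res_{j_{m,p}} M^{(l)} = -j_{m,p}$ via the recurrence $J_m'(j_{m,p})=-J_{m+1}(j_{m,p})$, followed by the same inversion $\sqrt{\lambda(\alpha)}=j_{m,p}-j_{m,p}/\alpha+\mathcal{O}(\alpha^{-2})$ and squaring (and you are right that the index in the statement should be $\tfrac d2+l-1$, consistent with \eqref{DirichletToNeumannforBalls}). The Rouch\'e justification you sketch is an additional layer of rigour the paper omits here, but it does not change the argument.
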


\begin{proof}
The statement is proved by a simple calculation. Indeed, setting $m_l:=\frac{d}{2}-l+1$ and using \eqref{RecurrenceRelationDerivative}, one may show that
\begin{align*}
\Res_{j_{m_l,p}}\left( M^{(l)}\right)
	&=\lim_{\sqrt{\lambda}\rightarrow j_{m_l,p}} (\sqrt{\lambda}-j_{m_l,p})\left( \sqrt{\lambda}\frac{J_{m_l+1}(\sqrt{\lambda})}{J_{m_l}(\sqrt{\lambda})}-l\right) \\
	&=j_{m_l,p}J_{m_l+1}(j_{m_l,p})\; \lim_{\sqrt{\lambda}\rightarrow j_{m_l,p}} \left(\frac{J_{m_l}(\sqrt{\lambda})-J_{m_l}(j_{m_l,p})}{\sqrt{\lambda} -j_{m_l,p}}\right)^{-1} \\
	&=j_{m_l,p}J_{m_l+1}(j_{m_l,p})\; \left( J_{m_l}'(j_{m_l,p}) \right)^{-1} \\
	&=j_{m_l,p}J_{m_l+1}(j_{m_l,p})\; \left( \frac{m_l}{j_{m_l,p}} J_{m_l}(j_{m_l,p})-J_{m_l+1}(j_{m_l,p}) \right)^{-1}  \\
	&= -j_{m_l,p}
\end{align*}
for the $p$th pole. From
\begin{align*}
(\sqrt{\lambda}-j_{m_l,p})\alpha 
	 = (\sqrt{\lambda}-j_{m_l,p}) M^{(l)}(\lambda)
	&=\Res_{j_{m_l,p}}\left( M^{(l)}\right) +\mathcal{O}(\sqrt{\lambda}-j_{m_l,p})\\
	&=-j_{m_l,p} +\mathcal{O}(\sqrt{\lambda}-j_{m_l,p})
\end{align*} 
as $\sqrt{\lambda}\rightarrow j_{m_l,p}$, it follows that
\begin{align*}
\sqrt{\lambda(\alpha)} = j_{m_l,p}-\frac{j_{m_l,p}}{\alpha}+\mathcal{O}\left(\frac{1}{\alpha^2}\right)
\end{align*}
and hence for the eigenvalue $\lambda$
\begin{align*}
\lambda(\alpha) = j_{m_l,p}^2 -2\frac{j_{m_l,p}^2}{\alpha} +\mathcal{O}\left(\frac{1}{\alpha^2}\right).
\end{align*}
\end{proof}

\subsubsection{Divergence away from the positive real axis}
\label{subsub:existenceDiv}

We next study those divergent eigenvalues $\lambda$ which remain away from the positive real axis, that is, we now take Assumption~\ref{assumptionSector}. (Here, unlike for the interval, the assumption of divergence in a sector, that is, that $\Im\sqrt{\lambda}$ grows sufficiently rapidly compared with $\Re\sqrt{\lambda}$, will turn out to be important.) We first need an asymptotic expansion of $M^{(l)}$ for large $\lambda$, which in turn requires knowledge of the asymptotics of the Bessel functions appearing in \eqref{DirichletToNeumannforBalls}. To this end, let $H_m^{(1)}, H_m^{(2)}$ be the \textit{Hankel functions} of the first and second kind, that is,
\begin{equation*}
2 J_m(\sqrt{\lambda})=H_m^{(1)}(\sqrt{\lambda})+H_m^{(2)}(\sqrt{\lambda}),
\end{equation*}
and set
\begin{align*}
P(m,\sqrt{\lambda})&= \sum_{l=0}^\infty (-1)^l \frac{\Gamma(m+2l+1/2)}{(2l)! \Gamma(m-2l+1/2)(2\sqrt{\lambda})^{2l}}\\
	&=1-\frac{(4m^2-1)(4m^2-9)}{2!(8\sqrt{\lambda})^2} \\
	&\qquad\qquad\qquad+\frac{(4m^2-1)(4m^2-9)(4m^2-25)(4m^2-49)}{4!(8\sqrt{\lambda})^4}-\dots, \\
Q(m,\sqrt{\lambda})&=\sum_{l=0}^\infty (-1)^l \frac{\Gamma(m+(2l+1)+1/2)}{(2l+1)! \Gamma(m-(2l+1)+1/2)(2\sqrt{\lambda})^{2l+1}} \\
	&=\frac{4m^2-1}{8\sqrt{\lambda}}-\frac{(4m^2-1)(4m^2-9)(4m^2-25)}{3!(8\sqrt{\lambda})^3}+\dots.
\end{align*}
It is known that (see \cite[9.2, p. 364]{Stegun})
\begin{align}
\begin{split}\label{eq:HankelAsymptotics}
H_m^{(1)}(\sqrt{\lambda})&=\sqrt{\frac{2}{\pi \sqrt{\lambda}}}\left( P(m,\sqrt{\lambda})+\I Q(m,\sqrt{\lambda})\right) \e^{\I \sqrt{\lambda}-\frac{\I\pi}{4}(2m+1)}\qquad (-\pi<\arg \sqrt{\lambda} <2\pi), \\
H_m^{(2)}(\sqrt{\lambda})&=\sqrt{\frac{2}{\pi \sqrt{\lambda}}}\left( P(m,\sqrt{\lambda})-\I Q(m,\sqrt{\lambda})\right) \e^{-\I \sqrt{\lambda}+\frac{\I\pi}{4}(2m+1)}\qquad (-2\pi<\arg \sqrt{\lambda} <\pi). 
\end{split}
\end{align}
Let $Q_l(m,\sqrt{\lambda}),P_l(m,\sqrt{\lambda})$ be the sums up to the $l$th summand. To obtain the order $1/\sqrt{\lambda}$, we only have to consider the first terms in the expansions of $P$ and $Q$ to obtain
\begin{align}
J_m(\sqrt{\lambda}) 
	&= \frac{1}{2}\left(H_m^{(1)}(\sqrt{\lambda})+H_m^{(2)}(\sqrt{\lambda})\right) \label{eq:BesselAsymptotics} \\
	&\sim \frac{1}{2}\sqrt{\frac{2}{\pi \sqrt{\lambda}}}
		\Big[
			\left( P_0(m,\sqrt{\lambda})+\I Q_0(m,\sqrt{\lambda})\right) \e^{\I \sqrt{\lambda}-\frac{\I\pi}{4}(2m+1)}  \nonumber  \\
		&\qquad\qquad\qquad\qquad\qquad\qquad+	\left(P_0(m,\sqrt{\lambda})-\I Q_0(m,\sqrt{\lambda})\right) \e^{-\I \sqrt{\lambda}+\frac{\I\pi}{4}(2m+1)}
		\Big] \nonumber  \\
		\begin{split}\label{eq:BesselExpansionOrderZero}
	&=
	 \sqrt{\frac{1}{2\pi \sqrt{\lambda}}}
		\Bigg[
			\left( 1+\I \frac{4m^2-1}{8\sqrt{\lambda}}\right) \e^{\I \sqrt{\lambda}-\frac{\I\pi}{4}(2m+1)}   \\
			&\qquad\qquad\qquad\qquad\qquad\qquad+	\left( 1-\I \frac{4m^2-1}{8\sqrt{\lambda}}\right) \e^{-\I \sqrt{\lambda}+\frac{\I\pi}{4}(2m+1)}
		\Bigg] 	.
		\end{split}
\end{align}
The non-polynomial terms $\e^{\I \sqrt{\lambda}}$ and $\e^{-\I \sqrt{\lambda}}$ of $H_m^{(1)}(\sqrt{\lambda})$ and $H_m^{(2)}(\sqrt{\lambda})$ yield exponential decrease and increase in $S_\theta^+$ and $S_\theta^-$, respectively. In a neighbourhood of the real axis, however, the remainder terms of the increasing expansion dominate the leading terms of the decreasing expansion on the other side of the real axis. This is why we want to add up both terms to obtain \cite[9.2.1, p. 364]{Stegun}, i.e.
\begin{equation}\label{eq:BesselExpansionOnReAxis}
J_m(\sqrt{\lambda}) =  \frac{1}{\sqrt{2\pi \sqrt{\lambda}}} \left(2\cos\left(\sqrt{\lambda}-\frac{\I\pi}{4}[2m+1]\right)+\e^{|\Im\sqrt{\lambda}|}\mathcal{O}\left(|\sqrt{\lambda}|^{-1}\right)\right)
\end{equation}
as $\sqrt{\lambda}\rightarrow\infty$ outside $T_\theta^-$ (in particular in $T_\theta^+$). This expansion outside $T_\theta^+$ (in particular in $T_\theta^-$) is obtained by point reflection of $J_m(\sqrt{\lambda})$ in zero. 

Considering the two cases where $\sqrt{\lambda} \to \infty$ in $S_\theta^-$ and $S_\theta^+$, separately, we arrive at
\begin{equation}
\label{asymptBehaviourBessel}
J_m(\sqrt{\lambda})=
\begin{cases}
 \frac{1}{\sqrt{2\pi \sqrt{\lambda}}}\left(1-\I \frac{4m^2-1}{8\sqrt{\lambda}}\right) \e^{-\I\sqrt{\lambda}} \e^{\frac{\I\pi}{4}(1+2m)}+\mathcal{O}\left(\frac{1}{\lambda}\right)  \\
 \frac{1}{\sqrt{2\pi \sqrt{\lambda}}}\left(1+\I \frac{4m^2-1}{8\sqrt{\lambda}}\right) \e^{+\I\sqrt{\lambda}} \e^{- \frac{\I\pi}{4}(1+2m)}+\mathcal{O}\left(\frac{1}{\lambda}\right) 
\end{cases}
\end{equation}
as $\sqrt{\lambda}\rightarrow\infty$ in $S_\theta^-$ and $S_\theta^+$, respectively. Using \eqref{asymptBehaviourBessel} and recalling the relation $m=\frac{d}{2}+l-1$, we may obtain after an elementary calculation that
\begin{equation}\label{eq:BesselQuotAsymptotics}
	\frac{J_{m+1}(\sqrt{\lambda})}{J_{m}(\sqrt{\lambda})}
	= \pm\I+\frac{d-1}{2\sqrt{\lambda}}+\frac{l}{\sqrt{\lambda}} +\mathcal{O}\left(\frac{1}{\lambda}\right)
\end{equation}
in $S_\theta^\pm$. 
Recalling \eqref{DirichletToNeumannforBalls}, this means that for each $l \in \N_0$ we obtain an $\alpha = \alpha (\lambda)$ with the behaviour
\begin{equation}
\label{eq:asympt-alpha-of-lambda}
	\alpha= M^{(l)}(\sqrt{\lambda}) = \mp\I\sqrt{\lambda}+\frac{d-1}{2}+\mathcal{O}\left(\frac{1}{\sqrt{\lambda}}\right)
\end{equation}
as $\sqrt{\lambda}\rightarrow\infty$ in $S_\theta^\pm$, respectively. This leads to the following theorem.

\begin{theorem}
\label{thm:ExistenceDiv}
Let $\Omega=B_1(0)\subseteq\mathbb{R}^d$, $d\geq 2$, and let $\alpha\rightarrow\infty$ in a sector of the form $T_\phi^-$ for any $0<\phi<\pi/2$. Then there are infinitely many Robin eigenvalues $\lambda(\alpha)$ such that 
\begin{equation}
\label{Balls:Asymptotics}
	\lambda(\alpha)= -\alpha^2 +\alpha(d-1)+\mathcal{O}(1)
\end{equation}
as $\alpha\rightarrow\infty$ in $T_\phi^-$.
\end{theorem}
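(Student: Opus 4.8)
The plan is to invert the asymptotic relation \eqref{eq:asympt-alpha-of-lambda} for each fixed $l\in\N_0$ separately, using the Rouch\'e inversion argument sketched in Section~\ref{subsub:existenceDiv} (around \eqref{eq:asy-with-remainder-before-2.4-proof}) and already carried out for the interval in the proof of Theorem~\ref{thm:DivergingEigenvaluesInterval}, and then to collect the resulting eigenvalue branches. By the block-diagonal structure of $M(\lambda)$ (each block being $M^{(l)}(\lambda)$ acting as a scalar on $\mathcal{H}_l^\C$) together with the duality Theorem~\ref{thm:robin-dn-duality}, for $\lambda\in\rho(-\Delta_B^D)$ one has $\lambda\in\sigma(-\Delta_B^\alpha)$ precisely when $\alpha=M^{(l)}(\lambda)$ for some $l$. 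Hence it suffices, for each $l$, to exhibit a branch $\lambda=\lambda_l(\alpha)$ solving $M^{(l)}(\lambda)=\alpha$ with $\lambda_l(\alpha)\notin\sigma(-\Delta_B^D)$ for large $|\alpha|$ and with the stated asymptotics.

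\textbf{The inversion.} Fix $l$ and work with the square-root branch for which $\sqrt\lambda$ lies in the sector $S_\theta^\pm$ associated (via $\lambda=(\sqrt\lambda)^2$) to $\lambda\in\C\setminus T_{2\theta}^+$; there, by \eqref{eq:asympt-alpha-of-lambda}, $M^{(l)}(\lambda)=\pm\I\sqrt\lambda+\frac{d-1}{2}+g_l(\sqrt\lambda)$ with $g_l(z)=\mathcal{O}(1/z)$ as $\Im z\to\pm\infty$, the implied constant depending only on $l$. Applying the Rouch\'e inversion with $\tau=\frac{d-1}{2}$ and decay function $h(z)=1/z$: set $f_\alpha(z):=\pm\I z+\frac{d-1}{2}-\alpha$, whose unique zero is $z_\alpha=\mp\I\bigl(\alpha-\frac{d-1}{2}\bigr)$, and take a ball $B^\alpha=B_{r_\alpha}(z_\alpha)$ of radius $r_\alpha$ of order $1/|\alpha|$. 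Since $|g_l|=\mathcal{O}(1/|\alpha|)$ uniformly on $\partial B^\alpha$, the inequality $|g_l|<|f_\alpha|=r_\alpha$ on $\partial B^\alpha$ is easy to arrange for all $|\alpha|$ large (here the \emph{polynomial} decay of $h$, in contrast to the interval's exponential factor $z\e^{-2a\Im z}$, makes the argument considerably more robust). Rouch\'e's theorem then yields exactly one zero $\sqrt{\lambda_l(\alpha)}=z_\alpha+\mathcal{O}(1/|\alpha|)$ of $f_\alpha+g_l$ in $B^\alpha$, i.e.\ exactly one solution of $M^{(l)}(\lambda)=\alpha$ near $z_\alpha^2$. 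Squaring, and using that $|z_\alpha|$ grows linearly in $|\alpha|$,
\begin{displaymath}
	\lambda_l(\alpha)=z_\alpha^2+2z_\alpha\,\mathcal{O}(1/|\alpha|)+\mathcal{O}(1/|\alpha|^2)=-\Bigl(\alpha-\tfrac{d-1}{2}\Bigr)^2+\mathcal{O}(1)=-\alpha^2+(d-1)\alpha+\mathcal{O}(1)
\end{displaymath}
as $\alpha\to\infty$ in $T_\phi^-$.

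\textbf{Consistency checks and "infinitely many".} Three routine verifications remain. First, for $\alpha\in T_\phi^-$ with $\phi<\pi/2$ one may choose $\theta\in(0,\pi/2-\phi)$ so that $z_\alpha\approx\mp\I\alpha$, and hence the whole ball $B^\alpha$, actually lies in the sector $S_\theta^\pm$ on which \eqref{eq:asympt-alpha-of-lambda} is valid; this is a short argument about arguments of complex numbers, using only $\phi<\pi/2$ and the uniformity of the error term. Second, $\lambda_l(\alpha)\notin\sigma(-\Delta_B^D)$ for $|\alpha|$ large: since $\lambda_l(\alpha)=-\alpha^2+\mathcal{O}(|\alpha|)$ and $\alpha\in T_\phi^-$ with $\phi<\pi/2$, the argument of $\lambda_l(\alpha)$ stays near $\pi$, bounded away from the positive real semi-axis on which $\sigma(-\Delta_B^D)$ lies, so the duality of Theorem~\ref{thm:robin-dn-duality} applies and $\lambda_l(\alpha)\in\sigma(-\Delta_B^\alpha)$. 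Third, the branches $\lambda_l(\alpha)$ for distinct $l$ are genuinely distinct for $|\alpha|$ large: $M^{(l)}$ and $M^{(l')}$ with $l\neq l'$ are distinct meromorphic functions (Bessel quotients of different orders), so their solution curves cannot coincide identically, and in fact the $l$-dependent $\mathcal{O}(1/\sqrt\lambda)$ term of $g_l$ separates $\sqrt{\lambda_l(\alpha)}$ and $\sqrt{\lambda_{l'}(\alpha)}$ at order $1/|\alpha|$. Running the construction over all $l\in\N_0$ therefore produces infinitely many eigenvalues with the claimed asymptotics, as in Theorem~\ref{thm:EitherConvDir_Div}.

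\textbf{Main obstacle.} The difficulty here is bookkeeping rather than conceptual: tracking the two square-root branches and the sector placement so that \eqref{eq:asympt-alpha-of-lambda} is invoked on the correct $S_\theta^\pm$, and checking that the Rouch\'e ball $B^\alpha$ remains inside that sector uniformly as $\alpha\to\infty$ in $T_\phi^-$. Unlike the interval argument, no delicate exponentially small choice of radius is needed, and unlike the interval we do not claim these are \emph{all} the divergent eigenvalues, only that infinitely many of the stated form exist; if a sharper remainder were desired one would record the $\mathcal{O}(1)$ constant explicitly (it equals $-\tfrac{(d-1)^2}{4}$ plus a contribution from the next order of $g_l$), but the statement as given follows at once from the above.
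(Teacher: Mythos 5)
Your proposal is correct and follows essentially the same route as the paper's proof: for each fixed $l$ you invert the expansion \eqref{eq:asympt-alpha-of-lambda} by the Rouch\'e argument with $\tau=\frac{d-1}{2}$, decay function $h(z)=1/z$ and a ball of radius of order $1/|\alpha|$ about $z_\alpha$, then square to get \eqref{Balls:Asymptotics}. The extra consistency checks you record (placement of $B^\alpha$ in the sector $S_\theta^\pm$, avoidance of the Dirichlet spectrum so that Theorem~\ref{thm:robin-dn-duality} applies, and the count over $l\in\N_0$ giving infinitely many branches) are points the paper leaves implicit, and they are handled correctly.
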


\begin{proof}
\label{proof:existenceDiv}
We invert \eqref{eq:asympt-alpha-of-lambda} to obtain \eqref{Balls:Asymptotics} using Rouch\'e's theorem, as explained in Section~\ref{subsec:ExampleInterval}. First, let $\tau=(d-1)/2$ and let $\alpha\in\mathbb{C}$ be large in some non-trivial sector $T_\phi^-$ for any large $0<\phi<\pi/2$. Following the approach of the aforementioned section \ref{subsec:ExampleInterval} and restricting ourselved to the \textit{positive} case of $\Im\sqrt{\lambda}\to +\infty$, we have
\begin{equation}
\label{f(lambda)=0}
	f_\alpha(z):= \frac{d-1}{2}+\I z-\alpha \qquad\text{and}\qquad g(\sqrt{\lambda})= \mathcal{O}\left(h(\sqrt{\lambda})\right)
\end{equation} 
for the error term $h(\sqrt{\lambda})=1/\sqrt{\lambda}\to 0$ as $\Im\sqrt{\lambda}\to +\infty$. Then the unique zero $z_\alpha$ of $f_\alpha$ reads $z_\alpha=\I\left(\frac{d-1}{2}-\alpha\right)$. Let $r_\alpha>0$ be the radius of the Ball $B^\alpha=B_{r_\alpha}(z_\alpha)$. Then for $z\in B^\alpha$  and sufficiently large $\alpha$ we can estimate
\begin{align*}
|h(z)|
	\leq \frac{c}{\left| z_\alpha+\e^{\I\phi}\frac{p}{|\alpha|}\right|}
		=\frac{c}{\left|\I\frac{d-1}{2}-\I\alpha+\e^{\I\phi}\frac{p}{|\alpha|}  \right|}
		<\frac{c'}{|\alpha|}
\end{align*}
for some constants $c,c'>0$. We make the Ansatz $|f_\alpha(z)|=r_\alpha =C/|\alpha|$ for a suitable constant $C>0$. Consequently, since \eqref{ineq:RoucheInequality2} holds for some $\delta>0$ for sufficiently large $\alpha$, a similar calculation as in the proof of Theorem \ref{thm:DivergingEigenvaluesInterval} yields $|g(z)|< |f_\alpha(z)|$ on $\partial B_{\frac{C}{|\alpha|}}(z_\alpha)$. We end up with the existence of exactly one eigenvalue $\lambda$ which behaves like $\sqrt{\lambda(\alpha)} = -\I \alpha +\frac{d-1}{2}+ \mathcal{O}\left(1/|\alpha|\right)$, that is, together with the case $\Im\sqrt{\lambda}\to -\infty$, there are exactly two eigenvalues which behave like
\begin{equation*}
\lambda(\alpha) = -\alpha^2+ (d-1)\alpha +\mathcal{O}(1)
\end{equation*}
as $\Re\alpha\rightarrow -\infty$.
\end{proof} 

\begin{remark}
For any divergent eigenvalue curve $\lambda (\alpha)$ on the ball, there is a complete asymptotic expansion in powers of $\alpha$, and the above method might be used to obtain arbitrarily many terms of it. Indeed, the asymptotics of the Bessel functions (or more precisely the Hankel functions) provides us with everything needed to determine the asymptotics of the Dirichlet-to-Neumann operators $M^{(l)}(\lambda)$, and thus of $\alpha$ as $\Im\sqrt{\lambda}\to\pm \infty$: taking more terms in \eqref{eq:BesselAsymptotics} results in more terms in \eqref{asymptBehaviourBessel} and so too in \eqref{eq:asympt-alpha-of-lambda}, which can then again be inverted.
\end{remark}

\subsubsection{Divergence near the positive real axis}

We reconsider the asymptotics in \eqref{eq:BesselExpansionOnReAxis} and observe the oscillating nature of the cosine part as $\Re\sqrt{\lambda}$ increases -- the summand $\frac{\I\pi}{4}[2m+1]$ appearing in the argument is simply a phase shift. Suppose that $\Im\sqrt{\lambda}$ remains bounded as $\sqrt{\lambda}\rightarrow\infty$ in $T_\theta^+$, i.e. we explicitly do not apply Assumption~\ref{assumptionSector}. Then it appears that $J_m(\sqrt{\lambda})$ is dominated by $\lambda^{1/4}\cos(\sqrt{\lambda})$. However, the cosine has zeros on the real axis and thus, in a neighbourhood of said zeros, we need to consider further terms in the asymptotic expansion~\eqref{eq:BesselQuotAsymptotics}. 
This then leads to more complicated behaviour which might be adressed in a later work.

\newcommand{\cprime}{$'$}


\bibliography{bkl-biblio-new}
\bibliographystyle{acm}

\end{document}